\title{A proof of the Erd\H{o}s--Faber--Lov\'asz conjecture}
\date{January 25, 2023}
\author[Kang]{Dong Yeap Kang}
\author[Kelly]{Tom Kelly}
\author[K\"uhn]{Daniela K\"uhn}
\author[Methuku]{Abhishek Methuku}
\author[Osthus]{Deryk Osthus}
\thanks{This project has received partial funding from the European Research
Council (ERC) under the European Union's Horizon 2020 research and innovation programme (grant agreement no. 786198, D.~K\"uhn and D.~Osthus).
The research leading to these results was also partially supported by the EPSRC, grant nos. EP/N019504/1 (D.~Kang, T. Kelly and D.~K\"uhn) and EP/S00100X/1 (A.~Methuku and D.~Osthus).}
\def\@tocline#1#2#3#4#5#6#7{\relax
  \ifnum #1>\c@tocdepth \else
    \par \addpenalty\@secpenalty\addvspace{#2}\begingroup \hyphenpenalty\@M
    \@ifempty{#4}{\@tempdima\csname r@tocindent\number#1\endcsname\relax
    }{\@tempdima#4\relax
    }\parindent\z@ \leftskip#3\relax \advance\leftskip\@tempdima\relax
    \rightskip\@pnumwidth plus4em \parfillskip-\@pnumwidth
    #5\leavevmode\hskip-\@tempdima
      \ifcase #1
       \or\or \hskip 1em \or \hskip 2em \else \hskip 3em \fi #6\nobreak\relax
    \hfill\hbox to\@pnumwidth{\@tocpagenum{#7}}\par \nobreak
    \endgroup
  \fi}
\begin{document}

\begin{abstract} \noindent
The Erd\H{o}s--Faber--Lov\'{a}sz conjecture (posed in 1972) states that the chromatic index of any linear hypergraph on $n$ vertices is at most $n$. In this paper, we prove this conjecture for every large $n$. We also provide stability versions of this result, which confirm a prediction of Kahn.
\end{abstract}
\maketitle

\section{Introduction}
\label{intro-section}
Graph and hypergraph colouring problems are central to combinatorics, with applications and connections to many other areas, such as geometry, algorithm design, and information theory.  As one illustrative example, the fundamental Ajtai--Koml{\'{o}}s--Pintz--Spencer--Szemer\'edi (AKPSS) theorem~\cite{AKPSS82} shows that locally sparse uniform hypergraphs have large independent sets.  This was initially designed to disprove the famous Heilbronn conjecture in combinatorial geometry but has found numerous further applications e.g., in coding theory.  The AKPSS theorem was later strengthened by Frieze and Mubayi~\cite{FM13} to show that linear $k$-uniform hypergraphs with $k \ge 3$ have small chromatic number.  Here a hypergraph $\cH$ is \textit{linear} if every two distinct edges of $\cH$ intersect in at most one vertex.

\subsection{The Erd\H{o}s--Faber--Lov\'{a}sz conjecture}
In 1972, Erd\H{o}s, Faber, and Lov\'{a}sz conjectured (see~\cite{erdos1981}) the following equivalent statements. Let $n \in \mathbb{N}$.

\begin{itemize}
    \item[(i)] If $A_1 , \dots , A_n$ are sets of size $n$ such that every pair of them shares at most one element, then the elements of $\bigcup_{i=1}^{n} A_i$ can be coloured by $n$ colours so that all colours appear in each $A_i$.
    
    \item[(ii)] If $G$ is a graph that is the union of $n$ cliques, each having at most $n$ vertices, such that every pair of cliques shares at most one vertex, then the chromatic number of $G$ is at most $n$.
    
    \item[(iii)] If $\cH$ is a linear hypergraph with $n$ vertices, then the chromatic index of $\cH$ is at most $n$.
\end{itemize}

 Here the \textit{chromatic index} $\chi'(\cH)$ of a hypergraph $\cH$ is the smallest number of colours needed to colour the edges of $\cH$ so that any two edges that share a vertex have different colours. The formulation (iii) is the one that we will consider throughout the paper. This can be viewed as the dual version of (i) and (ii). 
 For simplicity, we will refer to this conjecture as the EFL conjecture.

Erd\H{o}s considered this to be `one of his three most favorite combinatorial problems' (see e.g.,~\cite{kahn1997}). The simplicity and elegance of its formulation initially led the authors to believe it to be easily solved (see e.g., the discussion in~\cite{cg1998} and~\cite{erdos1981}). It was initially designed as a simple test case for a more general theory of hypergraph colourings. However, as the difficulty became apparent Erd\H{o}s offered successively increasing rewards for a proof of the conjecture, which eventually reached \$500. 

Previous progress towards the conjecture includes the following results. 
Seymour~\cite{seymour1982} proved that every $n$-vertex linear hypergraph $\cH$ has a matching of size at least $e(\cH)/n$, where $e(\cH)$ is the number of edges in $\cH$. (Note that this immediately follows from the validity of the EFL conjecture, but it is already difficult to prove.) Kahn and Seymour~\cite{KS1992} proved that every $n$-vertex linear hypergraph has fractional chromatic index at most $n$. Chang and Lawler~\cite{chang1988} showed that every $n$-vertex linear hypergraph has chromatic index at most $\lceil 3n/2 - 2 \rceil$. A breakthrough of Kahn~\cite{kahn1992coloring} yielded an approximate version of the conjecture, by showing that every $n$-vertex linear hypergraph has chromatic index at most $n + o(n)$. Recently Faber and Harris~\cite{FH2019} proved the conjecture for linear hypergraphs whose edge sizes range between 3 and $cn^{1/2}$ for a small absolute constant $c>0$. More background and earlier developments related to the EFL conjecture are detailed in the surveys of Kahn~\cite{kahn1995, kahn1997} and of Kayll~\cite{kayll2016}.  See also the recent survey by the authors~\cite{KKKMO2021survey}. 

\subsection{Main results}\label{subsec:intro_main}

In this paper we prove the EFL conjecture for every large $n$.

\begin{theorem}\label{main-thm}
    For every sufficiently large $n$, every linear hypergraph $\cH$ on $n$ vertices has chromatic index at most $n$.
\end{theorem}

There are three constructions for which Theorem~\ref{main-thm} is known to be tight: a complete graph $K_n$ for any odd integer $n$ (and minor modifications thereof\COMMENT{e.g., odd $K_n$ minus an edge.}), a finite projective plane of order $k$ on $n = k^2 + k + 1$ points, and a degenerate plane $\{ \{1,2\},\dots,\{1,n\},\{2,\dots,n\} \}$. Note that the first example has bounded edge size (two), while the other two examples have unbounded edge size as $n$ tends to infinity. This makes the conjecture particularly challenging.

Kahn's proof~\cite{kahn1992coloring} is based on a powerful method known as the R\"odl nibble. Roughly speaking, this method builds a large matching using an iterative probabilistic procedure. It was originally developed by R\"odl~\cite{rodl1985} to prove the Erd\H{o}s--Hanani conjecture~\cite{ErdosHanani} on combinatorial designs. Another famous result based on this method is the Pippenger--Spencer theorem~\cite{PippengerSpencer}, which implies that the chromatic index of any uniform hypergraph $\cH$ of maximum degree $D$ and codegree $o(D)$ is $D+o(D)$. (Note that this in turn implies that the EFL conjecture holds for all large $r$-uniform linear hypergraphs of bounded uniformity $r \geq 3$.) In a seminal paper, Kahn~\cite{kahn1996asymptotically} later developed the approach further to show that the same bound $D+o(D)$ even holds for the list chromatic index (an intermediate result in this direction, which also strengthens the Pippenger--Spencer theorem, was the main ingredient of his proof in~\cite{kahn1992coloring}). The best bound on the $o(D)$ error term for the list chromatic index of such hypergraphs was obtained by Molloy and Reed~\cite{MR2000}, and for the chromatic index, the best bound was proved in~\cite{KKMO2020}.  Our proof will also rely on certain properties of the R\"odl nibble.  

In addition, our proof makes use of powerful colouring results for locally sparse graphs (Theorems~\ref{local-sparsity-lemma} and~\ref{aks-local-sparsity}).
This line of research goes back to Ajtai, Koml{\'{o}}s, and Szemer\'edi~\cite{AKS1980} who (preceding R\"odl~\cite{rodl1985}) developed a very similar semi-random nibble approach to give an upper bound $O(k^2 / \log k)$ on the Ramsey number $R(3,k)$ by finding large independent sets in triangle-free graphs (the matching lower bound $R(3,k) = \Omega(k^2 / \log k)$ was later established by Kim~\cite{kim1995_ramsey}, also using a semi-random approach). The above Ramsey bound by Ajtai, Koml{\'{o}}s and Szemer\'edi was subsequently strengthened by a highly influential result of Johansson~\cite{johansson1996}, who showed that triangle-free graphs of maximum degree $\Delta$ have chromatic number $O(\Delta/\log \Delta)$ (and a related result was proved independently by Kim~\cite{kim1995}).  The result of Frieze and Mubayi~\cite{FM13} mentioned at the start of Section~\ref{intro-section} is one of several analogues and generalizations of Johansson's Theorem.
It also turns out that the condition of being triangle-free can be relaxed (in various ways) to being `locally sparse'~\cite{AIS19, AKS99, DKPS20, Vu02}. 
We will be able to apply such results to suitable parts of the line graph of our given linear hypergraph $\cH$.  

One step in our proof involves what may be considered a `vertex absorption' argument; here certain vertices not covered by a matching produced by the R\"odl nibble are `absorbed' into the matching to form a colour class. (Vertex) absorption as a systematic approach was introduced by R\"odl, Ruci\'nski, and Szemer\'edi~\cite{rrs2006} to find spanning structures in hypergraphs (with precursors including~\cite{egp1991, krivelevich1997}). Absorption ideas were first used for edge decomposition problems in~\cite{kuhn2013hamilton} to solve Kelly's conjecture on tournament decompositions. (Note that the EFL conjecture can also be viewed as a (hyper-)edge decomposition problem --
we will use this interpretation to colour `small' edges, see Section~\ref{sec:overview}.)
We will make use of an application of the main result of~\cite{kuhn2013hamilton} to the overfull subgraph conjecture (which was derived in~\cite{GKO2016}).

Kahn~\cite{kahn1995} predicted\COMMENT{In fact, he conjectured this in an informal way -- he said ``there ought to be some slack in the bound away from these extremes", as well as mentioning the strengthening of EFL by Berge-F\"uredi-Meyniel. Unfortunately, it seems to require some more ideas to prove Berge-F\"uredi-Meyniel conjecture,  even for bounded edge size case, as the neighbours of the high-degree vertices in $U$ are not longer well-distributed (i.e., cannot guarantee the regularity conditions of the absorbers), where our absorption lemmas rely much on this.} that the bound in the EFL conjecture can be improved if $\cH$ is far from being one of the extremal examples mentioned above. We confirm his prediction by proving a stability result as follows.

\COMMENT{Moreover, perhaps we can make the chromatic index $o(n)$ if $|e| \in [d_1 , g(n)]$, if $1 \ll d_1 \leq g(n) \ll \sqrt{n}$ -- it is also interesting to see how we can bound the chromatic index depending on the choice of $g(n)$.}

\begin{theorem}\label{main-thm-2}
    For every $\delta > 0$, there exist $n_0, \sigma > 0$ such that the following holds. For any $n \geq n_0$, if $\cH$ is an $n$-vertex linear hypergraph with maximum degree at most $(1-\delta)n$ such that the number of edges of size $(1 \pm \delta)\sqrt{n}$ in $\cH$ is at most $(1-3\delta)n$, then the chromatic index of $\cH$ is at most $(1-\sigma)n$.
\end{theorem}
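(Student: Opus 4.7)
The plan is to track the proof of Theorem~\ref{main-thm} and identify the slack produced by the two hypotheses. They are tailored to rule out the three known extremal constructions: $K_n$ and the degenerate plane are eliminated by $\Delta(\cH)\leq(1-\delta)n$, while the projective plane of order $k$ with $n=k^2+k+1$ has exactly $n$ edges, all of size $k+1\in(1\pm\delta)\sqrt{n}$, and is eliminated by the cap of $(1-3\delta)n$ on edges of medium size.

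I partition $E(\cH)=E_s\cup E_m\cup E_\ell$ by edge size: small ($<(1-\delta)\sqrt{n}$), medium ($\in(1\pm\delta)\sqrt{n}$), and large ($>(1+\delta)\sqrt{n}$). By hypothesis $|E_m|\leq(1-3\delta)n$, so the medium edges can be assigned pairwise distinct colours from a palette $C_m$ of size $(1-3\delta)n$. For large edges, linearity and $\Delta(\cH)\leq(1-\delta)n$ force every vertex to lie in at most $(1-\delta)n/((1+\delta)\sqrt{n})=O(\sqrt{n})$ large edges, so $\chi'(E_\ell)=O(\sqrt{n})=o(n)$. For small edges, binning by size into $O(\log n)$ nearly-uniform sub-hypergraphs and applying the Kahn / Pippenger--Spencer bound (already used in the main proof) yields $\chi'(E_s)\leq(1-\delta)n+o(n)$.

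These three chromatic indices cannot simply be added, so the key step is to merge the colourings on a single palette of size $(1-\sigma)n$. I would colour medium edges first using $C_m$, and for each remaining edge $e$ form the list $L(e)$ of colours in $[(1-\sigma)n]$ not used by any medium edge meeting $e$. Every vertex $v$ has at most $d_m(v)$ forbidden colours and total degree at most $(1-\delta)n$, so at least $(1-\sigma)n-d_m(v)=(d_s(v)+d_\ell(v))+(\delta-\sigma)n$ colours remain available on each edge through $v$: a uniform list-size surplus of $(\delta-\sigma)n$ over the local degree of the uncoloured sub-hypergraph. A list-edge-colouring extension via the R\"odl nibble, combined with the vertex-absorption framework used to prove Theorem~\ref{main-thm}, should then finish the colouring for $\sigma=\sigma(\delta)>0$ chosen small enough.

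The main obstacle is this integration step. One has to ensure that the nibble-plus-absorption argument actually gains a \emph{linear} $\sigma n$ colours of slack, rather than the $o(n)$ slack that the main proof needs; this requires checking that every tight step in Theorem~\ref{main-thm} corresponds to structural closeness to one of the three extremal examples, so that the stability hypotheses translate into a quantitative improvement. Local sparsity results such as Theorems~\ref{local-sparsity-lemma} and~\ref{aks-local-sparsity} will be crucial for controlling the small-edge part in the presence of the medium pre-colouring, and the interaction between medium edges (already coloured) and small edges sharing vertices with them is where the most delicate work is likely to lie.
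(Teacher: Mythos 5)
Your proposal correctly identifies what the two hypotheses buy structurally, but the decomposition and the colouring strategy have a genuine gap, and the actual proof in the paper takes a different and much more economical route.

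The most serious problem is your treatment of the ``medium'' edges, which you define as those of size $(1\pm\delta)\sqrt n$ and propose to colour with pairwise distinct colours. With this convention your ``small'' class contains edges of size just below $(1-\delta)\sqrt n$, and such an edge can intersect on the order of $n$ medium edges: it has about $\sqrt n$ vertices, each of which lies in up to about $\sqrt n$ medium edges by linearity. If all medium edges carry distinct colours, the forbidden list for such a small edge could have size $\Theta(n)$, which exhausts the entire palette of $(1-\sigma)n$ colours. The hypothesis $e(\cH_{\mathrm{ex}}) \leq (1-3\delta)n$ bounds the global count of medium edges but gives no control over how many meet a single edge, so distinct-colouring the medium edges destroys the list surplus you then rely on. A related problem is that your appeal to Pippenger--Spencer/Kahn for $E_s$ assumes bounded uniformity, but in your partition $E_s$ contains edges of size up to $(1-\delta)\sqrt n$, which is not bounded; there is no direct nibble bound of the form $\chi'(E_s)\leq(1-\delta)n+o(n)$ available here.

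The paper's proof of Theorem~\ref{main-thm-2} avoids both issues by using the paper's own size partition from Definition~\ref{def:edgesize}: ``small'' means $|e|\leq r_1$ (a fixed constant), ``medium'' means $r_1<|e|\leq r_0$, and ``large'' is everything bigger, including the FPP-extremal range. The key step you are missing is Theorem~\ref{large-edge-thm}: after noting that $e(\cH_{\mathrm{ex}})\leq(1-3\delta)n$ forces $\vol_{\cH}(\cH_{\mathrm{ex}})<1-\delta$, the paper applies Theorem~\ref{large-edge-thm}, which always lands in its non-extremal case~\ref{non-extremal-colouring} and produces a proper colouring of $\cH_{\mathrm{med}}\cup\cH_{\mathrm{large}}$ with at most $(1-\sigma)n$ colours \emph{and} with the additional property that all medium-edge colours lie in a small palette $C_{\mathrm{med}}$ of size $\leq\gamma n$. (This in turn uses the Reordering Lemma~\ref{reordering-lemma} and the locally sparse graph colouring via Corollary~\ref{sparsity-corollary}; the concentration of medium colours comes from Proposition~\ref{prop:med-edges}, which exploits that $\Delta(\cH_{\mathrm{med}})\leq n/(r_1-1)$ by linearity.) Only then are the genuinely small edges coloured: an edge $e\in\cH_{\mathrm{small}}$ has $|e|\leq r_1$, so it meets at most $2r_1 n/(r_0-1)\leq\beta n$ large edges and hence has a forbidden list of size $\leq\beta n$; combined with $\Delta(\cH_{\mathrm{small}})\leq(1-\delta)n$, Kahn's list-colouring Theorem~\ref{thm:kahn} finishes using colours from $[(1-\sigma)n]\setminus C_{\mathrm{med}}$. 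Note that the nibble-plus-absorption and Vizing machinery of Sections~7--10, which you suggest invoking, is not used in the proof of Theorem~\ref{main-thm-2} at all: the proof is entirely contained in Section~6. What you correctly sensed as ``the main obstacle'' — getting a linear amount of slack out of the medium/large colouring and tying it to the small edges — is exactly what Theorem~\ref{large-edge-thm} delivers, and without that ingredient (or a substitute of comparable strength) the proposal does not close.
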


Recall that in an $n$-vertex projective plane, every edge has size close to $\sqrt{n}$, and every pair of its vertices is covered by an edge. 
In an $n$-vertex linear hypergraph, it is easy to see that the fact that the number of edges of size $(1 \pm o(1)) \sqrt{n}$ is at least $(1-o(1))n$ is equivalent to the fact that $(1-o(1))\binom{n}{2}$ pairs of vertices are covered by the edges of size $(1 \pm o(1)) \sqrt{n}$. Thus, Theorem~\ref{main-thm-2} says that if an $n$-vertex linear hypergraph $\cH$ is not too close to a projective plane and has maximum degree at most $(1-o(1))n$, then it has chromatic index at most $(1-o(1))n$. 

Our methods also easily give a further improvement for hypergraphs which are very far from being extremal.

\begin{theorem}\label{weak-stability-thm}
    For every $\eps > 0$, there exist $n_0, \eta > 0$ such that the following holds.  For any $n \geq n_0$, if $\cH$ is an $n$-vertex linear hypergraph with maximum degree at most $\eta n$ and no edge $e\in \cH$ such that $\eta \sqrt n < |e| < \sqrt n / \eta$, then the chromatic index of $\cH$ is at most $\eps n$.
\end{theorem}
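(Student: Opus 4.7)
The plan is to split $E(\cH)$ by edge size into two pieces, handle each with very different tools, and colour them on disjoint palettes of roughly $\eps n/2$ colours each.  Given $\eps>0$, I would take $\eta := \eps/4$ (say) and $n_0$ large.  Set $\cH_1 := \{e\in\cH : |e|\leq \eta\sqrt n\}$ and $\cH_2 := \{e\in\cH : |e|\geq \sqrt n/\eta\}$; by the hypothesis these two classes partition $E(\cH)$, and both are linear hypergraphs on $V(\cH)$.

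For the small edges $\cH_1$, the maximum degree is at most $\Delta(\cH)\leq\eta n$.  I would invoke a nibble-based bound of the form
\[
\chi'(\cH_1) \;\leq\; (1+o(1))\,\Delta(\cH_1) \;\leq\; (1+o(1))\,\eta n,
\]
which is the natural analogue, for non-uniform linear hypergraphs with $\Delta\to\infty$, of Kahn's approximate EFL theorem~\cite{kahn1992coloring} and its list-chromatic-index strengthening~\cite{kahn1996asymptotically}.  For $n$ large this is at most $\eps n/2$.

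For the large edges $\cH_2$, linearity forces the large edges through any vertex $v$ to be pairwise disjoint outside $\{v\}$, each covering at least $\sqrt n/\eta-1$ other vertices, so $d_{\cH_2}(v)\leq 2\eta\sqrt n$ for $n$ large.  Double-counting incidences then gives
\[
\frac{\sqrt n}{\eta}\cdot e(\cH_2) \;\leq\; \sum_{e\in\cH_2}|e| \;=\; \sum_v d_{\cH_2}(v) \;\leq\; 2\eta\,n^{3/2},
\]
so $e(\cH_2)\leq 2\eta^2 n \leq \eps n/2$.  I would simply assign each edge of $\cH_2$ its own fresh colour; combining the two palettes yields $\chi'(\cH)\leq \eps n$.

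The only non-routine input is the nibble bound on $\chi'(\cH_1)$ in the regime $\Delta(\cH_1)\ll n$: while the standard Pippenger--Spencer and Kahn statements are typically cited for uniform (or near-regular) hypergraphs, extending them to the non-uniform linear case with unbounded-but-sub-$\Delta$ edge sizes may require either a direct reference or, failing that, a list-colouring version of the R\"odl nibble run on a shared palette of size $(1+o(1))\Delta(\cH_1)$ after splitting $\cH_1$ into near-uniform pieces by edge size.  Verifying that this reduction does not inflate the palette by a logarithmic factor is the main technical point; everything else is essentially bookkeeping.
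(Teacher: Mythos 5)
Your treatment of the very large edges $\cH_2$ (those of size $\geq\sqrt n/\eta$) is correct and essentially matches what the paper does. The gap is in the small-edge piece $\cH_1$, and it is not the peripheral ``logarithmic factor'' you flag at the end --- the central step fails.

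The bound $\chi'(\cH_1)\le(1+o(1))\Delta(\cH_1)$ that you invoke is false for linear hypergraphs whose edge sizes are allowed to grow with $n$. The Pippenger--Spencer and Kahn theorems (and Theorem~\ref{thm:kahn} as stated in the paper) have the form $\chi'=(1+o_r(1))\Delta$ for $r$-uniform linear hypergraphs with $r$ \emph{fixed}; the error term degrades as $r$ grows and the statement breaks down entirely once $r$ is comparable to $\Delta$. Your $\cH_1$ permits $|e|$ as large as $\eta\sqrt n$, which is precisely the dangerous regime. Concretely: take a prime power $k$ with $k+1\approx\eta\sqrt n$, embed a projective plane of order $k$ on its $k^2+k+1\approx\eta^2 n\le n$ points into $[n]$, and leave the remaining vertices isolated. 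This $\cH_1$ is linear, every edge has size $k+1\le\eta\sqrt n$, and $\Delta(\cH_1)=k+1\approx\eta\sqrt n\le\eta n$, so it satisfies all your hypotheses; but it is intersecting, so $\chi'(\cH_1)=k^2+k+1\approx\Delta(\cH_1)^2\gg\Delta(\cH_1)$. Your fallback plan of splitting $\cH_1$ into near-uniform pieces by edge size does not rescue the step, because this example \emph{is} already uniform --- it would be one of those pieces.

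The paper's proof of Theorem~\ref{weak-stability-thm} avoids this by using a three-way, not two-way, split: edges of bounded size $\le 1/\eta$ (where Kahn's theorem does apply, since the uniformity bound becomes an absolute constant once $\eps$ is fixed), medium edges with $1/\eta<|e|<\eta\sqrt n$, and the very large edges you also isolate. For the medium edges the nibble is not used at all. Instead the reordering lemma (Lemma~\ref{reordering-lemma}) is applied iteratively to peel off a bounded number of near-uniform subfamilies $W_1,\ldots,W_{k-1}$ together with a ``good'' remainder in which every edge has small forward-degree; the remainder is coloured greedily (Proposition~\ref{greedy-colouring-prop}), while each $W_i$ is coloured via Corollary~\ref{aks-sparsity-corollary}, a local-sparsity bound on the chromatic number of the line graph coming from the Alon--Krivelevich--Sudakov theorem~\cite{AKS99}. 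The mechanism there is triangle-sparsity of the line graph: linearity plus the size window $1/\eta\le|e|\le\eta\sqrt n$ forces $|N_{L}(e)\cap N_L(f)|=O(\eta n)$ for adjacent $e,f$, so $L(W_i)$ has $O(\eta)\cdot\Delta(L)^2$ edges in every neighbourhood and AKS yields $\chi(L(W_i))=O(\Delta(L)/\log(1/\eta))\ll\eps n$ once $\eta\ll\eps$. This is the tool that beats the projective-plane obstruction, and it is the ingredient your argument is missing.
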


The proof of Theorem~\ref{main-thm} can be turned into a randomized polynomial-time algorithm.
The necessary modifications are discussed in detail in~\cite{KKKMO2021_focs}.

\subsection{Related results and open problems}

Formulation~(ii) of the EFL conjecture can be viewed as a statement implying that a local restriction on the local density of a graph has a strong influence on its global structure.  A famous example where this is not the case is the construction by Erd\H os of graphs of high girth and high chromatic number.  Another well known instance where this fails is a bipartite version of the EFL conjecture due to Alon, Saks, and Seymour (see Kahn~\cite{kahn1991}); they conjectured that if a graph $G$ can be decomposed into $k$ edge-disjoint bipartite graphs, then the chromatic number of $G$ is at most $k+1$. This conjecture was a generalisation of the Graham--Pollak theorem~\cite{gp1972} on edge decompositions of complete graphs into bipartite graphs, which has applications to communication complexity. However, this conjecture was disproved by Huang and Sudakov~\cite{HS2012} in a strong form, i.e., it is not even close to being true.

A natural generalization of the EFL conjecture was suggested by Berge~\cite{berge1989} and F\"{u}redi~\cite{furedi1986}; if $\cH$ is a linear hypergraph with vertex set $V$, then the chromatic index of $\cH$ is at most $\max_{v\in V}|\bigcup_{e\ni v}e|$.  This would be a direct generalization of Vizing's theorem on the chromatic index of graphs.  Another beautiful question leading on from Theorem~\ref{main-thm} is whether it can be extended to list colourings (this was first raised by Faber~\cite{F17}). Many more related results and problems, along with a detailed sketch of some aspects of the proof of the EFL conjecture are given in the recent survey by the authors~\cite{KKKMO2021survey}. 

After completion of the initial version of the paper,  the authors were able to build on the methods of Sections~\ref{fpp-extremal-section} and~\ref{large-edge-section} to solve a problem of Erd\H{o}s on colouring cliques overlapping in a bounded number of vertices~\cite{KKKMO2021_tEFL}.
Moreover, based on a nibble argument, Kelly, K\"uhn and Osthus~\cite{KKO2021} proved asymptotically tight bounds on the list chromatic number of the union of nearly disjoint graphs of bounded maximum degree
(which confirms a special case of a conjecture of Vu on colouring graphs of small codegree~\cite{Vu02} and generalizes Kahn’s bound on the list chromatic index of linear uniform hypergraphs of bounded maximum degree~\cite{kahn1996asymptotically}).

\section{Overview}
\label{sec:overview}

In this section, we provide an overview of the proof of Theorem~\ref{main-thm}.  More detailed sketches of certain aspects of the proof can also be found in the recent survey by the authors~\cite[Section 5]{KKKMO2021survey}.

\subsection{Colouring linear hypergraphs with bounded edge sizes}\label{subsection:small-edge-overview}

Here, we discuss the proof of Theorem~\ref{main-thm} in the special case when all edges of $\cH$ have bounded size. 
We will construct our colouring by successively constructing suitable matchings (which form the colour classes).
The key observation motivating our approach is the following: if $\mathrm{(i)}$ a vertex $x$ has very high degree
(i.e.~close to $n$), then almost all of these matchings need to cover $x$, which is quite challenging.
If on the other hand $\mathrm{(ii)}$ the degree of $x$ is not too high, then we have considerable flexibility as to which of the
matchings will cover $x$. This makes it feasible to colour the edges at $x$ via a probabilistic nibble approach.
But since $\cH$ is linear, the seemingly more difficult case $\mathrm{(i)}$ can only occur if almost all edges incident to $x$ have size two, i.e.~are \emph{graph} edges. This allows us to build on graph theoretical techniques to colour certain leftover edges incident to $x$. We now make this intuition more precise.

In this subsection, we fix constants satisfying the hierarchy
\begin{equation*}
    0 < 1/n_0  \ll \xi \ll 1 / r \ll \gamma \ll \eps \ll \rho \ll 1,
\end{equation*}
we let $n \geq n_0$, and we let $\cH$ be an $n$-vertex linear hypergraph such that every $e\in \cH$ satisfies $2 \leq |e| \leq r$. We first describe the ideas which already lead to the near-optimal bound $\chi'(\cH) \leq n + 1$.  A more detailed sketch of this proof is provided in the authors' survey~\cite[Section 5.1]{KKKMO2021survey}.

Let $G$ be the graph with $V(G) \coloneqq V(\cH)$ and $E(G) \coloneqq \{e \in \cH : |e| = 2\}$.  The first step of the proof is to include every edge of $G$ in a `reservoir' $R$ independently with probability $1/2$ that we will use for `absorption'.  With high probability, each $v\in V(\cH)$ satisfies $d_{R}(v) = d_G(v)/2 \pm \xi n$.
Since $\cH$ is linear, this easily implies that $\Delta(\cH\setminus R) \leq (1/2 + \xi)n$.\COMMENT{$d_{\cH \setminus R}(v) = d_{\cH \setminus G}(v)+ d_{G \setminus R}(v) \approx d_{\cH \setminus G}(v) + d_G(v)/2 = 1/2(d_G(v) + 2d_{\cH \setminus G}(v)) \leq n/2$.} So by the Pippenger--Spencer theorem~\cite{PippengerSpencer}, we obtain the nearly optimal bound $\chi'(\cH\setminus R) \leq (1/2 + \gamma)n$.  Now using $R$ as a `vertex-absorber', ideally, we would like to extend the colour classes of $\cH \setminus R$ to cover as many vertices of $U$ as possible, where  $U\coloneqq\{u\in V(\cH) : d_G(u) \geq (1 - \eps)n\}$. This would allow us to control the maximum degree in the hypergraph consisting of uncoloured edges, so that it can then be coloured with few colours. To that end, we need the following important definition.

\begin{restatable}[Perfect and nearly-perfect coverage]{definition}{perfectcoverage}\label{def:coverage}
    Let $\cH$ be a linear multi-hypergraph, let $\cN$ be a set of edge-disjoint matchings in $\cH$, and let $S \subseteq U \subseteq V(\cH)$.
    
    \begin{itemize}
        \item We say $\cN$ has \emph{perfect coverage} of $U$ if each $N \in \cN$ covers $U$.
        
        \item We say $\cN$ has \emph{nearly-perfect coverage of $U$ with defects in $S$} if
        \begin{enumerate}[(i)]
            \item each $u \in U$ is covered by at least $|\cN|-1$ matchings in $\cN$ and
    
            \item each $N \in \cN$ covers all but at most one vertex in $U$ such that $U \setminus V(N) \subseteq S$.
        \end{enumerate}
    \end{itemize}
\end{restatable}

We will construct some $\cH'\subseteq \cH$ and a proper edge-colouring $\psi : \cH' \rightarrow C$ such that 
\begin{enumerate}[(a)]
    \item \label{(a)} $\cH'\supseteq  \cH\setminus R$,
    \item $|C| = (1/2 + \gamma)n$, and
    \item \label{(c)} the set of colour classes $\{\psi^{-1}(c) : c\in C\}$ has nearly-perfect coverage of $U$ (with defects in $U$).
\end{enumerate}

Crucially, \ref{(a)} means that $\cH\setminus \cH'$ is a graph and \ref{(a)} and \ref{(c)} imply that it satisfies $\Delta(\cH \setminus \cH') \leq n - |C|$. (Indeed, every vertex $u\in U$ satisfies $d_\cH(u) \leq n - 1$ and $d_{\cH'}(u) \geq |C|-1$ as $u$ is covered by all but at most one of the colour classes of $\psi$, and every vertex $v\notin U$ satisfies $d_{\cH \setminus \cH'}(v) \leq d_{R}(v) \leq ((1 - \eps)/2 + \xi)n < n - |C|$.)  Therefore, Vizing's theorem~\cite{vizing1965} implies that $\chi'(\cH\setminus \cH') \leq \Delta(\cH\setminus \cH') + 1 \leq n - |C| + 1$, so altogether we have $\chi'(\cH) \leq \chi'(\cH') + \chi'(\cH\setminus \cH') \leq n + 1$, as claimed.

To construct $\cH'$ and $\psi$ we iteratively apply the R\"{o}dl nibble to (the leftover of) $\cH \setminus R$ to successively construct large matchings $N_i$ which are then removed from $\cH \setminus R$ and form part of the colour classes of $\psi$. (The R\"odl nibble is applied implicitly via Corollary~\ref{cor:sparse_egj}, which guarantees a large matching in a suitable hypergraph.) Crucially, each matching $N_i$ exhibits pseudorandom properties, which allow us to use some edges of $R$ to extend $N_i$ into a matching $M_i$ (which will form a colour class of $\psi$) with nearly-perfect coverage of $U$, as desired. (This is why we apply the R\"odl nibble in our proof rather than the Pippenger--Spencer theorem.) Thus, $R$ acts as a `vertex-absorber' for $U \setminus V(N_i)$ and the final edge decomposition of the unused edges of $R$ into matchings is achieved by Vizing's theorem. (Actually, this only works if $\cH \setminus R$ is nearly regular, which is not necessarily the case. Thus, we first embed $\cH \setminus R$ in a suitable nearly regular hypergraph $\cH^*$ and prove that the respective matchings in $\cH^*$ have nearly-perfect coverage of $U$, which suffices for our purposes.)

Let us now discuss how to improve the bound $\chi'(\cH) \leq n + 1$ to $\chi'(\cH) \leq n$.  Let $S \coloneqq \{u \in U : d_G(u) < n - 1\}$, and note that if $\{\psi^{-1}(c) : c \in C\}$ has either perfect coverage of $U$, or nearly-perfect coverage of $U$ with defects in $S$, then $\Delta(\cH\setminus \cH') \leq n - 1 - |C|$.  In this case, we may use the same argument as before with Vizing's theorem to obtain $\chi'(\cH) \leq n$.  However, it is not always possible to find such a colouring.  For example, if $\cH$ is a complete graph $K_n$ for odd $n$ (which is one of the extremal examples for Theorem~\ref{main-thm}), then $U = V(\cH)$ and $S = \varnothing$, so it is not possible for even a single colour class to have nearly-perfect coverage of $U$ with defects in $S$. However, we can adapt the above nibble-absorption-Vizing approach to work whenever $\cH$ is not `close' to $K_n$ in the following sense.

\begin{restatable}[$(\rho, \eps)$-full]{definition}{full}\label{def:full}
\label{def:rho-eps-full}
  Let $\cH$ be an $n$-vertex linear hypergraph, and let $G$ be the graph with $V(G) \coloneqq V(\cH)$ and $E(G) \coloneqq \{e \in \cH : |e| = 2\}$.  For $\eps, \rho \in (0, 1)$, $\cH$ is \textit{$(\rho,\eps)$-full} if
  \begin{itemize}
  \item $|\{u \in V(\cH) : d_G(u) \geq (1 - \eps)n\}| \geq (1 - 10 \eps) n$, and
  \item $|\{v \in V(\cH) : d_G(v) = n - 1\}| \geq (\rho-15\eps)n$.
  \end{itemize}
\end{restatable}

As mentioned above, when $\cH$ is not $(\rho, \eps)$-full we can adapt the nibble-absorption-Vizing approach to show that $\chi'(\cH) \leq n$ (with a reservoir of density $\rho$ rather than $1/2$). If $\cH$ is $(\rho, \eps)$-full then we will ensure that the leftover $\cH \setminus \cH' \subseteq R$ is a quasirandom almost regular graph (which involves a more careful choice of $R$ -- again it will have density close to $\rho$ rather than $1/2$ but now it consists of a `random' part and a `regularising' part). This allows us to apply a result \cite{GKO2016} on the overfull subgraph conjecture (see Corollary~\ref{cor:optcol}) which implies that $\chi'(\cH \setminus \cH') \leq \Delta(\cH \setminus \cH')$. (The result in \cite{GKO2016} is obtained as a straightforward consequence of the result in \cite{kuhn2013hamilton} that robustly expanding regular graphs have a Hamilton decomposition, and thus, a $1$-factorisation if they have even order.)

\subsection{Colouring linear hypergraphs where all edges are large}\label{subsection:large-edge-overview}

Now we discuss how to prove Theorems~\ref{main-thm} and~\ref{main-thm-2} when all edges of $\cH$ have size at least some large constant.  This case of Theorem~\ref{main-thm} is also sketched in more detail in the authors' survey~\cite[Section 5.2]{KKKMO2021survey}.  In this step it is often very useful to consider the line graph $L(\cH)$ of $\cH$ and use the fact that $\chi(L(\cH)) = \chi'(\cH)$.
In this subsection, we fix constants satisfying the hierarchy
\begin{equation*}
    0 < 1/n_0  \ll 1 / r \ll \sigma \ll \delta \ll 1,
\end{equation*}
we let $n \geq n_0$, and we let $\cH$ be an $n$-vertex linear hypergraph such that every $e\in \cH$ satisfies $|e| > r$.  Now we sketch a proof that $\chi'(\cH) \leq n$ for such $\cH$.  If $\cH$ is a finite projective plane of order $k$, where $k^2 + k + 1 = n$, then the line graph $L(\cH)$ is a clique $K_n$. Thus, $\chi'(\cH) = \chi(L(\cH)) = n$, so the bound $\chi'(\cH) \leq n$ is best possible.  Thus, we refer to the case where $\cH$ has approximately $n$ edges of size $(1 \pm \delta)\sqrt n$ as the `FPP-extremal' case.  We also sketch how to prove the improved bound $\chi'(\cH) \leq (1 - \sigma)n$ if $\cH$ is not in the FPP-extremal case.   As we discuss in the next subsection, we will need this result in the proof of Theorem~\ref{main-thm}.

Consider an ordering $\ordering$ of the edges of $\cH$ according to their size, i.e., $e \ordering e'$ if $|e| > |e'|$ for every $e,e' \in \cH$. For an edge $e \in \cH$, let $\fwddeg_\cH(e)$ denote the number of edges in $\cH$ which intersect $e$ and precede $e$ in $\ordering$. Clearly, a greedy colouring following this size-monotone ordering achieves a bound of $\chi'(\cH) \leq \max_i \fwddeg_\cH(e_i) + 1$ (this bound was also used in~\cite{chang1988, kahn1992coloring}). Moreover, since $\cH$ is linear, it is easy to see that if this greedy colouring algorithm fails to produce a colouring with at most $(1 - \sigma)n$ colours, i.e., if an edge $e$ satisfies $\fwddeg_\cH(e) \ge (1 - \sigma) n$, then almost all of the corresponding edges that intersect $e$ and precede $e$ must have size close to $|e|$.

Surprisingly, if one allows some flexibility in the ordering (in particular, if we allow it to be size-monotone only up to some edge $e^*$ such that $\fwddeg_\cH(e^*) \geq (1 - \sigma)n$ while every edge $f$ with $e^*\ordering f$ satisfies $\fwddeg_\cH(f) < (1 - \sigma)n$), then one can show much more: Either we can modify the ordering to reduce the number of edges which come before $e^*$, or there is a set $W\subseteq \cH$ (where $e^*$ is the last edge of $W$) such that \begin{enumerate}[(W1)]
    \item\label{W1overview} $|e^*| \approx |e|$ for every $e \in W$, and
    \item\label{W2overview} the edges of $W$ cover almost all pairs of vertices of $\cH$.
    \end{enumerate}
(The precise statement is given in Lemma~\ref{reordering-lemma} (Reordering lemma).)

If $|e^*| \leq (1- \delta) \sqrt{n}$, then one can show that  $L(W)$ induces a `locally sparse' graph (as $\cH$ is linear).  Moreover,~\ref{W1overview} implies that the maximum degree of $L(W)$ is not too large, and thus one can show that $\chi(L(W))$ is much smaller than $(1 - \sigma)n$ (leaving enough room to colour the edges preceding $W$ with a new set of colours). This together with \ref{W2overview} allows us to extend the colouring of $W$ to all of $\cH$ using a suitable modification of the above greedy colouring procedure for the remaining edges in $\cH$ to obtain that $\chi'(\cH) \leq (1-\sigma)n$, as desired.

If $|e^*| \geq (1- \delta) \sqrt{n}$, then we first colour the edges of size at least $(1- \delta) \sqrt{n}$ (in particular, the edges of $W$) as follows. Let $\cH' \subseteq \cH$ be the hypergraph consisting of these edges. 
Since each edge of size at least $(1-\delta)\sqrt{n}$ covers $\binom{(1-\delta)\sqrt{n}}{2}$ pairs of the vertices among $\binom{n}{2}$ pairs, by the linearity of $\cH'$,
\begin{align*}
    e(\cH') \leq \binom{n}{2} \binom{(1-\delta)\sqrt{n}}{2}^{-1} = \frac{\sqrt{n}(n-1)}{(1-\delta)((1-\delta)\sqrt{n} - 1)} \leq \frac{n}{(1-\delta)(1-1.5\delta)} \leq (1+3\delta)n.
\end{align*}
If $e(\cH') \leq n$, then, of course, we may colour the edges of $\cH'$ with different colours. Otherwise, if $t \coloneqq e(\cH') - n > 0$, the main idea is to find a matching of size $t$ in the complement of $L(\cH')$ (where $L(\cH')$ will be close to being a clique of order not much more than $n$). By assigning the same colour to the edges of $\cH'$ that are adjacent in this matching, we obtain $\chi'(\cH') = \chi(L(\cH')) \leq n$ (see Lemma~\ref{extremal-case-lemma}). Now we extend the colouring to all of $\cH$ using a suitable modification of the above greedy colouring procedure again to obtain that $\chi'(\cH) \leq n$, as desired.

\subsection{Combining colourings of the large and small edges}\label{subsection:combining-overview}

We now describe how one can prove Theorem \ref{main-thm} by building on the ideas described in Sections \ref{subsection:small-edge-overview} and \ref{subsection:large-edge-overview}. In this subsection and throughout the rest of the paper we work with constants satisfying the following hierarchy:
\begin{equation}
  0< 1 / n_0 \ll 1/r_0 \ll \xi \ll  1 / r_1 \ll \beta \ll \kappa \ll \gamma_1 \ll \eps_1 \ll \rho_1 \ll \sigma \ll \delta \ll  \gamma_2 \ll \rho_2 \ll \eps_2 \ll 1.
\end{equation}
Some of these constants are used to characterize the edges of a hypergraph by their size, as follows.

\begin{restatable}[Edge sizes]{definition}{edgesizes}\label{def:edgesize}
Let $\cH$ be an $n$-vertex linear hypergraph with $n \ge n_0$.  
\begin{itemize}

  \item Let $\cH_{\rm small} \coloneqq \{ e \in \cH \: : \: |e| \leq r_1 \}$.  An edge $e\in \cH$ is \textit{small} if $e\in\cH_{\mathrm{small}}$.
  
  \item Let $\cH_{\rm med} \coloneqq \{ e \in \cH \: : \: r_1 < |e| \leq r_0 \}$.  An edge $e\in\cH$ is \textit{medium} if $e\in\cH_{\mathrm{med}}$.
  
  \item Let $\cH_{\rm large} \coloneqq \{ e \in \cH \: : \: |e| > r_0 \}$.  An edge $e\in \cH$ is \textit{large} if $e\in\cH_{\mathrm{large}}$.
    
  \item Let $\cH_{\mathrm{ex}} \coloneqq \{ e \in \cH \: : \: |e| = (1 \pm \delta)\sqrt n\}$.  An edge $e\in\cH$ is \textit{FPP-extremal} if $e\in\cH_{\mathrm{ex}}$.
  
  \item Let $\cH_{\rm huge} \coloneqq \{ e \in \cH \: : \: |e| \geq \beta n / 4 \} $.  An edge $e\in \cH$ is \textit{huge} if $e\in\cH_{\mathrm{huge}}$.
  
  \end{itemize}
\end{restatable}
Note that $\cH_{\rm small}, \cH_{\rm med}, \cH_{\rm large}$ form a partition of the edges of $\cH$ (see Figure~\ref{fig:size}). Also note that if $\cH$ is an $n$-vertex linear hypergraph and $1/n \ll \alpha < 1$, then\COMMENT{Since $\cH$ is linear, if there are $k$ edges of size at least $\alpha n$, then $n \geq \sum_{i=1}^k\max\{\alpha n - i, 0\}$.}
\begin{equation}\label{eqn:huge-edge-bound}
    |\{e \in \cH : |e| \geq \alpha n\}| \leq 2 / \alpha.
\end{equation}
\begin{figure}
\centering
\begin{tikzpicture}
\draw(0,0)--(15,0);

\foreach \x/\xtext in {0/$1$,2/$r_1$,4/$r_0$,6/$(1-\delta)\sqrt{n}$,9/$(1+\delta)\sqrt{n}$,11/${\beta n/4}$,15/$n$}
    \draw(\x,5pt)--(\x,-5pt) node[below] {\xtext};

\draw[decorate, decoration={brace}, yshift=3ex]  (0.1,0) -- node[above=0.5ex] {small}  (1.9,0);

\draw[decorate, decoration={brace}, yshift=3ex]  (2.1,0) -- node[above=0.5ex] {medium}  (3.9,0);

\draw[decorate, decoration={brace}, yshift=3ex]  (4.1,0) -- node[above=0.5ex] {large}  (14.9,0);

\draw[decorate, decoration={brace, mirror}, yshift=-5.5ex]  (6.1,0) -- node[below=0.5ex] {FPP-extremal}  (8.9,0);

\draw[decorate, decoration={brace, mirror}, yshift=-5.5ex]  (11.1,0) -- node[below=0.5ex] {huge}  (14.9,0);

\end{tikzpicture}
\caption{Types of edges based on their size}
\label{fig:size}
\end{figure} In the proof of Theorem~\ref{main-thm}, given an $n$-vertex linear hypergraph $\cH$ with $n \ge n_0$ (where we assume $\cH$ has no singleton edges), we first find a proper edge-colouring $\psi_1 : \cH_{\mathrm{med}}\cup\cH_{\mathrm{large}} \rightarrow C_1$ as discussed in Section~\ref{subsection:large-edge-overview}, and then we extend it to a proper $n$-edge-colouring of $\cH_{\mathrm{small}}$ by adapting the argument presented in Section~\ref{subsection:small-edge-overview}.  The proof proceeds slightly differently depending on whether we are in the FPP-extremal case.  As discussed in the previous subsection, in the non-FPP-extremal case, $\chi'(\cH_{\mathrm{med}}\cup\cH_{\mathrm{large}}) \leq (1 - \sigma)n$, so we may assume $|C_1| = (1 - \sigma)n$.  In this case, we let $\gamma \coloneqq \gamma_1$, $\eps \coloneqq \eps_1$, and $\rho \coloneqq \rho_1$; in the FPP-extremal case, we let $\gamma \coloneqq \gamma_2$, $\rho \coloneqq \rho_2$, and $\eps \coloneqq \eps_2$ (see {Step}~\ref{step:step1} of  Section~\ref{proof-section} for the details).  

We define $G$ and $U$ as in Section~\ref{subsection:small-edge-overview}, and we define a suitable `defect' set $S \subseteq U$ (whose choice now depends on the structure of $\cH$). In order to extend the colouring $\psi_1$ of $\cH_{\mathrm{med}}\cup\cH_{\mathrm{large}}$ to $\cH$, we need it to satisfy a few additional properties, which are provided by Theorem~\ref{large-edge-thm}.  Roughly, we need that 
\begin{enumerate}[(1)]
    \item \label{(1)} each colour class of $\psi_1$ covers at most $\beta n$ vertices, with exceptions for colour classes containing huge or medium edges, and
    \item \label{(2)}  at most $\gamma n$ colours are assigned by $\psi_1$ to colour medium edges. 
\end{enumerate}

We choose a `reservoir' $R$ from $E(G)$; how we choose it depends on whether we are in the FPP-extremal case.  In the non-FPP-extremal case, we choose it as described in Section~\ref{subsection:small-edge-overview}, and in the FPP-extremal case, we include every edge of $G$ incident to a vertex of $U$ to be in $R$ independently with probability $\rho$  (see {Step}~\ref{step:step2} of Section~\ref{proof-section} for the details).

\begin{figure}
     \centering
     \begin{subfigure}[b]{\textwidth}
        \centering
        \begin{tikzpicture}
        \draw(0,0)--(15,0);
        
        \foreach \x in {0,1,2,3,8,11,13,15}
        	\draw(\x,5pt)--(\x,-5pt);
        	
        \path(0,0) -- node[below=0.3ex] {$C_{\rm diff}$}  (1,0);
        \path(1,0) -- node[below=0.3ex] {$C_{\rm huge}$}  (2,0);
        \path(2,0) -- node[below=0.3ex] {$C_{\rm med}$}  (3,0);
        \path(3,0) -- node[below=0.3ex] {$C_{\rm large}$}  (8,0);
        \path(8,0) -- node[below=0.3ex] {$C_{\rm main} \setminus C_{\rm large}$}  (11,0);
        \path(11,0) -- node[below=0.3ex] {$C_{\rm buff}$}  (13,0);
        \path(13,0) -- node[below=0.3ex] {$C_{\rm final}$}  (15,0);
        
        \draw[decorate, decoration={brace}, yshift=3ex]  (0.1,0) -- node[above=0.5ex] {$C_{1}$}  (7.9,0);
        
        \draw[decorate, decoration={brace,mirror}, yshift=-4.5ex]  (3.1,0) -- node[below=0.5ex] {$C_{\rm main}$}  (10.9,0);
        \draw[decorate, decoration={brace,mirror}, yshift=-4.5ex]  (0.1,0) -- node[below=0.5ex] {$C_{\rm hm}$}  (2.9,0);
        
        \draw[decorate, decoration={calligraphic brace,amplitude=10pt,mirror}, yshift=-8ex]  (3.1,0) -- node[below=2.2ex] {$C_2$}  (12.9,0);
        
        \end{tikzpicture}
        \caption{Non-FPP-extremal case}
    \end{subfigure}
	
     \begin{subfigure}[b]{\textwidth}
        \centering
        \begin{tikzpicture}
        \draw(0,0)--(15,0);
        
        \foreach \x in {0,1,2,3,11,13,15}
        	\draw(\x,5pt)--(\x,-5pt);
        	
        \path(0,0) -- node[below=0.3ex] {$C_{\rm diff}$}  (1,0);
        \path(1,0) -- node[below=0.3ex] {$C_{\rm huge}$}  (2,0);
        \path(2,0) -- node[below=0.3ex] {$C_{\rm med}$}  (3,0);
        \path(3,0) -- node[below=0.3ex] {$C_{\rm main}$}  (11,0);
        \path(11,0) -- node[below=0.3ex] {$C_{\rm buff}$}  (13,0);
        \path(13,0) -- node[below=0.3ex] {$C_{\rm final}$}  (15,0);
        
        \draw[decorate, decoration={brace}, yshift=3ex]  (0.1,0) -- node[above=0.5ex] {$C_{1}$}  (14.9,0);
        
        \draw[decorate, decoration={brace,mirror}, yshift=-4.5ex]  (0.1,0) -- node[below=0.5ex] {$C_{\rm hm}$}  (2.9,0);
        
        \draw[decorate, decoration={brace,mirror}, yshift=-4.5ex]  (3.1,0) -- node[below=0.5ex] {$C_2$}  (12.9,0);
        
        \end{tikzpicture}
        \caption{FPP-extremal case}
    \end{subfigure}

    \caption{Subsets of colours partitioning $[n]$. The entire line represents $[n]$, and segments represent colour sets. $C_2$ will be used to colour the small edges via the nibble-absorption approach described in Section~\ref{subsection:small-edge-overview}, and $C_1$ will be used to colour the large and medium edges via the methods described in Section~\ref{subsection:large-edge-overview}. $C_{\rm final}$ will be used to colour the edges of the leftover graph as described in Sections~\ref{subsection:small-edge-overview}  and~\ref{subsection:combining-overview}.}
    \label{fig:colourset}
\end{figure} 
In {Step}~\ref{step:step3}, we define various subsets of colours which partition $[n]$, where each subset of colours is used for a different purpose. 
Figure~\ref{fig:colourset} shows how these subsets are defined in the non-FPP-extremal case and the FPP-extremal case, respectively.
For clarity of presentation, we will use a less refined partition $C_{\rm hm} \cup C_2 \cup C_{\rm final} = [n]$ in this overview section.  Recall that $C_1$ is the set of colours assigned to medium or large edges by $\psi_1$.  Let  $C_{\mathrm{hm}} \subseteq C_1$ be the set of colours assigned to a huge or medium edge by $\psi_1$.

Note that $e(\cH_{\mathrm{huge}}) \leq 8/\beta$ by \eqref{eqn:huge-edge-bound}, so consequently, by \ref{(2)}, $|C_{\mathrm{hm}}| \leq 3\gamma n / 2$.
For each $c\in C_{\mathrm{hm}}$, we use Lemma~\ref{huge-edge-absorption-lemma} to extend  $\psi_1^{-1}(c)$ (in the sense of Section ~\ref{subsection:small-edge-overview}) using edges of $R$, so that $\{ \psi_1^{-1}(c) : c \in C_{\rm hm} \}$ has nearly perfect coverage of $U$ with defects in $S$ (see {Step}~\ref{step:step5} of Section~\ref{proof-section} for the details). 
However, there is possibly an exceptional colour class, which we call \textit{difficult} (see Definition~\ref{defn:difficult}).
This situation arises if $\cH$ is close to being a degenerate plane, which is defined in Subsection~\ref{subsec:intro_main}.
If $\cH$ is the degenerate plane, then there is a huge edge $e$ of size $n - 1$, and $U$ consists of a single vertex of degree $n - 1$.
Even though $\cH$ is not $(\rho, \eps)$-full, if $c$ is assigned to the edge $e$, it is clearly impossible to extend $\psi_1^{-1}(c)$ to have perfect coverage of $U$, which would be necessary in order to finish the colouring with Vizing's theorem in the final step. However, if there is a difficult colour class that we cannot absorb, then we show that we can colour $\cH$ directly (see Lemma~\ref{difficult-edge-absorption}). We deal with extending the difficult colour class/matching in {Step}~\ref{step:diff} of Section~\ref{proof-section}.

In {Steps}~\ref{step:step6} and~\ref{step:step7}, we colour $\cH_{\mathrm{small}} \setminus R$ using a set of colours $C_2$ such that
\begin{enumerate}[(A)]
    \item \label{C_2-size} $|C_2|$ is slightly larger than $(1 - \rho + \gamma)n$ and
    \item \label{C_2-hm-intersection} $C_2 \cap C_{\rm hm} = \varnothing$.
\end{enumerate}
Using the nibble-absorption approach described in Section~\ref{subsection:small-edge-overview}, we construct some $\cH'$ with $\cH_{\mathrm{small}}\setminus R \subseteq \cH' \subseteq \cH_{\mathrm{small}}$ and a proper edge-colouring $\psi_2 : \cH' \rightarrow C_2$ such that 
\begin{enumerate}[(A)]
\stepcounter{enumi}
\stepcounter{enumi}
    \item \label{psi_2-compatibility} $\psi_2$ is compatible with $\psi_1$
     and
    \item \label{psi_2-coverage} $\{\psi^{-1}_1(c) \cup \psi^{-1}_2(c) : c \in C_{\rm hm} \cup C_2\}$ has nearly-perfect coverage of $U$ with defects in $S$.
\end{enumerate}
This is accomplished via Lemmas~\ref{lem:nibble2} and \ref{lem:leftover}.
(Actually, as in Section~\ref{subsection:small-edge-overview} we obtain this coverage property only for a suitable auxiliary hypergraph $\cH^* \supseteq \cH'$, but we again ignore this here for simplicity.)  
For this, the following properties are crucial (see also the condition~\ref{C4} of Lemma~\ref{lem:nibble2}):
\begin{enumerate}[(I)]
    \item\label{(I)} $\psi_1^{-1}(c)$ covers at most $\beta n$ vertices for each $c\in C_2$ (this follows from \ref{(1)} and \ref{C_2-hm-intersection}), and
    \item\label{(II)} every vertex $v\in V(\cH)$ is contained in at most $n / (r_0-1)$ edges that are assigned a colour in $[n] \setminus C_{\rm hm} \supseteq C_2$ by $\psi_1$ (since for any $c \in [n] \setminus C_{\rm hm}$, either $\psi_1^{-1}(c)$ is empty or all the edges in $\psi_1^{-1}(c)$ are large).
\end{enumerate}
\noindent Thus, from~\ref{(II)} and ~\ref{C_2-size} we can deduce the following. 
\begin{enumerate}[resume*]
    \item\label{(III)} Each edge in $\cH_{\rm small}$ still has slightly more than $(1 - \rho)n$ colours available in $C_2$ that do not conflict with $\psi_1$ (since any edge of $\cH_{\rm small}$ intersects at most $r_1 n/(r_0-1)$ large edges and $r_1/(r_0-1) \ll \gamma$).
\end{enumerate}
  
We will use \ref{(I)}--\ref{(III)} to show that the effect of the previously coloured edges (by $\psi_1$) on the R\"odl nibble argument is negligible, i.e., we can adapt the arguments of Section \ref{subsection:small-edge-overview}, so that the colouring $\psi_2$ of $\cH'$ is compatible with $\psi_1$.

In the final step of the proof ({Step}~\ref{step:step10}), we colour the leftover graph $\cH_{\mathrm{small}}\setminus \cH' \subseteq R$ using colours from $C_{\mathrm{final}}$.  
(Prior to this, in 
{Step}~\ref{step:step8}, we prove that the leftover is indeed a graph that moreover is quasirandom and almost regular if $\cH$ is $(\rho, \eps)$-full, and in {Step}~\ref{step:step9} we prove that its maximum degree is appropriately bounded.)
In the non-FPP-extremal case, we can define $C_2$ to satisfy that $C_1 \subseteq C_2 \cup C_{\rm hm}$ (see Figure~\ref{fig:colourset}).  
Thus, we can reserve the set of colours $C_{\rm final}$ (of size close to $\rho n$) so that these colours are used neither by $\psi_1$ nor by $\psi_2$. Then we can colour the leftover graph as described in Section~\ref{subsection:small-edge-overview}.
In the FPP-extremal case, we may have $|C_1| = n$, and we need to find a proper edge-colouring of $\cH_{\mathrm{small}}\setminus \cH'$ using colours from $C_{\mathrm{final}}$ while avoiding conflicts with $\psi_1$.  But in this case most pairs of vertices are contained in an edge of $\cH_{\mathrm{ex}}$, which implies that $|U|$ is small. Moreover, every edge of the leftover graph is incident to a vertex of $U$. These two properties allow us to colour the leftover graph $\cH_{\mathrm{small}}\setminus \cH'$ with $\Delta(\cH_{\mathrm{small}}\setminus \cH')$ colours
while using \ref{(1)} and \ref{(II)} to avoid conflicts with $\psi_1$, as desired.
This leftover colouring is constructed in Lemma~\ref{hall-based-finishing-lemma}.

\subsection{Organisation of the paper}

In Section~\ref{notation-section}, we introduce some notation that we use throughout the paper, and in Section~\ref{preliminary-section} we collect some tools that we use in the proof.  In Section~\ref{fpp-extremal-section} we prove Theorem~\ref{main-thm} for hypergraphs where every edge has size at least $(1 - \delta)\sqrt n$, and in Section~\ref{large-edge-section}, we prove Theorem~\ref{large-edge-thm}, which we use to colour the large and medium edges of our hypergraph.  In Section~\ref{large-edge-section}, we also prove Theorems~\ref{main-thm-2} and~\ref{weak-stability-thm}. (In particular, Theorems~\ref{main-thm-2} and~\ref{weak-stability-thm} do not rely on the subsequent sections. Conversely, the assertions of Theorems~\ref{main-thm-2} and~\ref{weak-stability-thm} are not used directly in the proof of Theorem~\ref{main-thm}.)   In Section~\ref{absorption-section}, we prove several lemmas that we use for vertex absorption, and in Section~\ref{small-non-graph-section}, we show how to combine the results of Section~\ref{absorption-section} with hypergraph matching results (based on the R\"odl nibble) to colour the small edges of our hypergraph not in the reservoir.  In Section~\ref{graph-edge-section}, we prove Lemma~\ref{hall-based-finishing-lemma} and introduce Corollary~\ref{cor:optcol}, both of which are used in the final step of the proof to colour the uncoloured reservoir edges.  In Section~\ref{reservoir-section} we show how we select the reservoir edges, and finally in Section~\ref{proof-section}, we prove Theorem~\ref{main-thm}.
 
\section{Notation}\label{notation-section}

For $n \in \mathbb{N}$, we write $[n] := \{k \in \mathbb{N} \: : \:1 \leq k \leq n \}$. We write $c = a \pm b$ if $a-b \leq c \leq a+b$. We use the `$\ll$' notation to state our results. Whenever we write a hierarchy of constants, they have to be chosen from right to left. More precisely, if we claim that a result holds whenever $0 < a \ll b \le 1$, then this means that there exists a non-decreasing function $f : (0,1] \mapsto (0,1]$ such that the result holds for all $0 < a,b \le 1$ with $a \le f(b)$. We will not calculate these functions explicitly. Hierarchies with more constants are defined in a similar way.

A \emph{hypergraph} $\cH$ is an ordered pair $\cH = (V(\cH), E(\cH))$ where $V(\cH)$ is called the vertex set and $E(\cH) \subseteq 2^{V(\cH)}$ is called the edge set. 
If $E(\cH)$ is a multiset, we refer to $\cH$ as a multi-hypergraph. Throughout the paper we usually write $\cH$ instead of $E(\cH)$. We say that a (multi-)hypergraph $\cH$ is $r$-uniform if for every $e \in E(\cH)$ we have $|e| = r$. In particular, $2$-uniform hypergraphs are simply called \emph{graphs}.

Given any multi-hypergraph $\cH$, let $v(\cH)$ denote the number of vertices in $\cH$ and let $e(\cH)$ denote the number of edges in $\cH$. Let $\cH^{(i)} \coloneqq \{e \in \cH : |e| = i \}$. Throughout the paper, we usually denote $\cH^{(2)}$ by $G$. For any subset $S \subseteq V(\cH)$, let $\cH|_S$ be the multi-hypergraph with the vertex set $V(\cH|_S) := S$ and edge set $\cH|_S := \{e \cap S \: : \: e \in \cH\:\:{\rm and}\:\:e \cap S \ne \varnothing \}$. For any vertex $v \in V(\cH)$, let $E_{\cH}(v) \coloneqq \{ e \in \cH : v \in e\}$. We define the \emph{degree} of $v$ by $d_{\cH}(v) \coloneqq |E_{\cH}(v)|$. More generally, for any given multiset $R \subseteq \cH$, let $E_{R}(v)$ denote the multiset of edges incident to $v$ in $R$, and $d_{R}(v) := |E_{R}(v)|$.
We denote the minimum and maximum degrees of the vertices in $\cH$ by $\delta(\cH)$ and $\Delta(\cH)$, respectively. Let $V^{(d)}(\cH) \coloneqq \{ v \in \cH : d_{\cH}(v) = d \}$. Moreover, if $d\in\mathbb N$, then let $V^{(d)}_+(\cH) \coloneqq \{ v \in \cH : d_{\cH}(v) \ge d \}$ and if $x \in (0, 1)$, then let $V^{(x)}_+(\cH) \coloneqq \{ v \in \cH : d_{\cH}(v) \ge xn \}$, where $n \coloneqq v(\cH)$. For any edge $e \in \cH$, the set $N_{\cH}(e)$ denotes the multiset of edges $f \in E(\cH) \setminus \{e \}$ that intersect $e$. The subscript $\cH$ from $N_{\cH}(e)$ may be omitted if it is clear from the context.  If $\cH'\subseteq \cH$ and $e\in \cH$, then we denote $N_\cH(e)\cap \cH'$ by $N_{\cH'}(e)$, even if $e\notin\cH'$.

The \emph{line graph} $L(\cH)$ of a (multi-)hypergraph $\cH = (V(\cH), E(\cH))$ is the graph whose vertex set is $E(\cH)$, where two vertices in $L(\cH)$ are adjacent if the corresponding edges in $E(\cH)$ have a non-empty intersection. A \emph{matching} $M$ in $\cH$ is a subset of pairwise disjoint edges of $\cH$. We often regard $M$ as a hypergraph with $V(M) \coloneqq \bigcup_{e \in M}e$. For any vertex $u \in V(\cH)$, we say $u$ is \textit{covered} by a matching $M$ if $u\in e$ for some $e\in M$. For any $X\subseteq V(\cH)$, we say that a matching $M$ \textit{covers} $X$ if $M$ covers every vertex in $X$. For any integer $k \geq 0$ and a (multi-)hypergraph $\cH$, a map $\phi : \cH \to [k]$ is a \emph{proper edge-colouring} of $\cH$ if $\phi(e) \ne \phi(f)$ for any pair of distinct edges $e,f \in \cH$ such that $e \cap f \ne \varnothing$. For any integer $i$ and a proper edge-colouring $\phi : \cH \to [k]$, let $\phi^{-1}(i)$ be the set of edges $e \in \cH$ with $\phi(e) = i$. (Note that $\phi^{-1}(i)$ is a matching, for any $i$.)

A (multi-)hypergraph $\cH$ is \emph{linear} if for any distinct $e,f \in \cH$, $|e \cap f| \leq 1$. A linear hypergraph may contain singleton edges (but no edge is repeated). A linear multi-hypergraph may contain multiple singleton edges incident to the same vertex but any edge of size at least two cannot be repeated (as that would contradict linearity). Given any linear multi-hypergraph $\cH$ on $n$ vertices, and any $W \subseteq \cH$, the \emph{normalised volume} of $W$ is defined as  $\vol_\cH(W) \coloneqq \sum_{e\in W}\binom{|e|}{2} / \binom{n}{2}$. We sometimes omit the subscript and write $\vol(W)$ instead of $\vol_\cH(W)$ when it is clear from the context. Note that since $\cH$ is linear, $\vol_\cH(W) \le 1$ for any $W \subseteq \cH$.

Given any graph $G$, for any subset of vertices $V' \subseteq V (G)$, we denote the subgraph of $G$ induced by $V'$ as $G[V'] \coloneqq (V', E')$, where $E' \coloneqq \{e \in E(G) : e \subseteq V' \}$. We write $G - V'  \coloneqq G[V \setminus V']$. If $V' = \{v\}$, then we simply write $G - v$ instead of $G - \{v\}$. For any disjoint pair of subsets $S, T \subseteq V(G)$, let $E_G(S, T) := \{ st \in E(G) : s \in S, t \in T \}$, and let $e_G(S,T) \coloneqq |E_G(S, T)|$. Let $\overline{G} \coloneqq (V(G), \overline{E(G)})$ denote the complement of a graph $G$.

For the readers' convenience, we also restate the important definitions from Section~\ref{sec:overview} here.
\perfectcoverage*
\full*
\edgesizes*

\section{Preliminaries}\label{preliminary-section}
We often use the following weighted version of Chernoff's inequality.

\begin{theorem}[Weighted Chernoff's inequality~\cite{CL2002}]\label{thm:chernoff}
Let $c_1 , \dots , c_m > 0$ be real numbers, let $X_1 , \dots , X_m$ be independent random variables taking values $0$ or $1$, let $X \coloneqq \sum_{i=1}^{m} c_i X_i$ and let $C \coloneqq \max_{i \in [m]} c_i$. Then,
\begin{equation*}
	\mathbb{P}(|X - \mathbb{E}(X)| \geq t) \leq 2 e^{\frac{-t^2}{2C(\mathbb{E}[X] + t/3)}}.
\end{equation*}
\end{theorem}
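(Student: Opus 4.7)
The plan is to derive this by the classical Bernstein-type moment generating function (MGF) method. First I would apply the exponential Markov inequality: for any $\lambda > 0$,
\[
\mathbb{P}(X - \mathbb{E}[X] \geq t) \;\leq\; e^{-\lambda t}\,\mathbb{E}\!\left[e^{\lambda (X - \mathbb{E}[X])}\right],
\]
and use the independence of the $X_i$ to factor the MGF as $\prod_{i=1}^m \mathbb{E}[e^{\lambda c_i (X_i - p_i)}]$, where $p_i \coloneqq \mathbb{E}[X_i] \in [0,1]$. This reduces the problem to controlling the MGF of each centred, bounded summand $Z_i \coloneqq c_i(X_i - p_i)$ individually.

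Next, for each $Z_i$ I would derive the standard Bernstein MGF estimate. Writing $e^x = 1 + x + \sum_{k\geq 2} x^k/k!$ and using the crude bound $k! \geq 2 \cdot 3^{k-2}$ for $k \geq 2$ together with $|Z_i| \leq c_i \leq C$, one obtains, for every $0 < \lambda < 3/C$,
\[
\ln \mathbb{E}\!\left[e^{\lambda Z_i}\right] \;\leq\; \frac{\lambda^2 \operatorname{Var}(Z_i)}{2(1 - C\lambda/3)}.
\]
Since $\operatorname{Var}(Z_i) = c_i^2 p_i(1 - p_i) \leq C \cdot c_i p_i$, summing over $i$ gives $\sum_i \operatorname{Var}(Z_i) \leq C\,\mathbb{E}[X]$, and hence
\[
\ln \mathbb{E}\!\left[e^{\lambda(X - \mathbb{E}[X])}\right] \;\leq\; \frac{\lambda^2\, C\,\mathbb{E}[X]}{2(1 - C\lambda/3)}.
\]

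Finally, I would optimise by the classical Bernstein choice $\lambda \coloneqq t/(C\,\mathbb{E}[X] + Ct/3)$, which lies in $(0, 3/C)$, and substituting into the exponential Markov bound collapses the expression to
\[
\mathbb{P}(X - \mathbb{E}[X] \geq t) \;\leq\; \exp\!\left(-\frac{t^2}{2C(\mathbb{E}[X] + t/3)}\right).
\]
The symmetric lower-tail bound follows by applying the identical argument to the variables $-X_i$ (the summands $-Z_i$ still satisfy $|{-Z_i}| \leq C$ and have the same variance), and a union bound introduces the factor $2$. The only delicate step is the Taylor-expansion bound on each $Z_i$'s MGF, where the additive $t/3$ term in the denominator originates; this is a routine but careful calculation. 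Since the statement is attributed to Chung and Lu, one could alternatively just cite their proof verbatim.
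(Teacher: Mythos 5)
Your proposal is correct. The paper simply cites Chung and Lu and does not reprove the inequality, so there is no internal proof to compare against; your Bernstein-type MGF derivation is the standard route (and essentially the one in the cited reference). The key reductions you use all check out: each $Z_i \coloneqq c_i(X_i-p_i)$ satisfies $|Z_i|\leq c_i\leq C$ and $\operatorname{Var}(Z_i)=c_i^2p_i(1-p_i)\leq C\,c_ip_i$, so $\sum_i\operatorname{Var}(Z_i)\leq C\,\mathbb{E}[X]$, and the choice $\lambda=t/\bigl(C\,\mathbb{E}[X]+Ct/3\bigr)$ does lie in $(0,3/C)$ and makes the exponent collapse to exactly $-t^2/\bigl(2C(\mathbb{E}[X]+t/3)\bigr)$. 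For the lower tail your phrasing (``apply the argument to $-X_i$'') is a hair imprecise since $-X_i\notin\{0,1\}$, but as you immediately clarify, what matters is that the centred summands $-Z_i$ are still bounded by $C$ and have the same variance, while the $\mathbb{E}[X]$ in the final denominator comes from the variance bound and not from $\mathbb{E}[-X]$; so the symmetric bound and the factor $2$ from the union bound are justified.
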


\subsection{Pseudorandom hypergraph matchings}

Now we state a special case of a recent result of Ehard, Glock, and Joos~\cite{EGJ2020} that provides a matching covering  almost all vertices of every set in a given collection of sets. This result will be used in the proof of Lemma~\ref{lem:pseudorandom_matching}. A similar result, but with weaker bounds, was proved earlier by Alon and Yuster~\cite{AY2005}. The proof in~\cite{EGJ2020} is derived via an averaging argument from a result on the chromatic index of hypergraphs by Molloy and Reed~\cite{MR2000}, which in turn relies on the R\"{o}dl nibble.

\begin{theorem}[Ehard, Glock, and Joos~\cite{EGJ2020}]\label{thm:egj}
Let $r \geq 2$ be an integer, and let $\varepsilon \coloneqq 1/(1500r^2)$. There exists $\Delta_0$ such that the following holds for all $\Delta \geq \Delta_0$. Let $\cH$ be an $r$-uniform linear hypergraph with $\Delta(\cH) \leq \Delta$ and $e(\cH) \leq \exp(\Delta^{\eps^2})$. Let $\cF^*$ be a set of subsets of $V(\cH)$ such that $|\cF^*| \leq \exp(\Delta^{\varepsilon^2})$ and $\sum_{v \in S}d_\cH(v) \geq \Delta^{26/25}$ for any $S \in \cF^*$. Then, there exists a matching $M_0$ in $\cH$ such that  for any $S \in \cF^*$, we have $|S \cap V(M_0)| = (1 \pm \Delta^{-\varepsilon^2})\sum_{v \in S}d_\cH(v)/\Delta$.
\end{theorem}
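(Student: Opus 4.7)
The plan is to view the desired matching $M_0$ as a pseudorandom matching produced by the R\"odl nibble, while tracking the random variables $|S \cap V(M_0)|$ for all $S \in \cF^*$ simultaneously throughout the process. The ratio $d_\cH(v)/\Delta$ in the conclusion suggests that we should first pad $\cH$ to a near-$\Delta$-regular, $r$-uniform, linear hypergraph $\cH^*$ on a disjoint enlargement of $V(\cH)$: add dummy edges on pairwise-disjoint fresh vertex sets so as to raise every $v \in V(\cH)$ to degree $(1\pm o(1))\Delta$ without violating linearity. A near-perfect matching $M^*$ in $\cH^*$ covers each $v \in V(\cH)$ through a uniformly random edge among its $(1\pm o(1))\Delta$ incident edges, so setting $M_0 := M^* \cap \cH$ gives $\mathbb{P}(v \in V(M_0)) = (1\pm o(1))d_\cH(v)/\Delta$ and hence, by linearity of expectation, $\mathbb{E}|S\cap V(M_0)|=(1\pm o(1))\sum_{v\in S}d_\cH(v)/\Delta$ for every $S\in\cF^*$.

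To build $M^*$, the plan is to run the R\"odl nibble on $\cH^*$ for $T=\Theta(\eps^{-1}\log\Delta)$ rounds: in round $t$, activate each surviving edge independently with probability $\eps/\Delta(\cH^*_t)$ and retain those activated edges whose vertices are shared with no other activated edge. Standard nibble analysis (as in Pippenger--Spencer~\cite{PippengerSpencer}) shows that $\Delta(\cH^*_t)\approx \Delta(1-\eps)^t$ throughout, and for every $S\in\cF^*$ the tracked quantity
\begin{equation*}
    X_S^{(t)} \;\coloneqq\; \sum_{v\in S\cap V(\cH^*_t)}d_{\cH^*_t}(v)
\end{equation*}
evolves in expectation as $X_S^{(t)} \approx X_S^{(0)}(1-\eps)^t$. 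After $T$ rounds, almost every vertex of $\cH^*$ has been covered, and the trajectory for $X_S^{(t)}$ yields $|S\cap V(M^*)|=(1\pm\Delta^{-\eps^2})\sum_{v\in S}d_{\cH^*}(v)/\Delta(\cH^*)$; restricting to $\cH$ gives the claimed bound on $|S\cap V(M_0)|$.

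The hardest part is the concentration: keeping every $X_S^{(t)}$ within relative error $\Delta^{-\eps^2}$ of its expected value \emph{simultaneously} for all $S\in\cF^*$ and all rounds $t\le T$. At each round the change in $X_S^{(t)}$ decomposes as a sum of bounded contributions governed by the activations, which admit an Azuma--Hoeffding or Freedman-type martingale bound with failure probability at most $\exp(-\Delta^{c\eps^2})$ for some $c=c(r)>0$; here the hypothesis $\sum_{v\in S}d_\cH(v)\ge \Delta^{26/25}$ is precisely what guarantees the expected value dominates the admissible noise. A union bound over the $T\cdot|\cF^*|\le O(\log\Delta)\exp(\Delta^{\eps^2})$ events then produces the desired matching. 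A technically cleaner alternative, which is the route actually taken by Ehard, Glock, and Joos, is to first apply the Molloy--Reed chromatic index theorem to properly edge-colour $\cH$ with $(1+o(1))\Delta$ colours: a uniformly random colour class already has the correct expectation by a double-counting identity, while the pseudorandomness of the Molloy--Reed colouring delivers the required concentration across the family $\cF^*$ and bypasses the round-by-round error propagation.
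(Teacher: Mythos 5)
The paper does not actually give a proof of this statement: Theorem~\ref{thm:egj} is imported as a black box, and the paper's ``proof'' is the one-paragraph remark that it is a direct specialisation of~\cite[Theorem 1.2]{EGJ2020} under the weight functions $w_S(e)\coloneqq|S\cap e|$, noting $w_S(\cH)=\sum_{v\in S}d_\cH(v)$ and $w_S(e)\le r$, with $\delta=1/30$. Your proposal instead attempts to re-prove the underlying result of Ehard, Glock, and Joos from first principles, which is a genuinely different and far more demanding route. That is legitimate, but in that case the sketch needs to actually carry the weight, and at present it does not.

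Two concrete gaps. First, the tracked quantity $X_S^{(t)}=\sum_{v\in S\cap V(\cH^*_t)}d_{\cH^*_t}(v)$ does not decay like $(1-\eps)^t$: both the survival probability of a vertex and the degree shrinkage of a surviving vertex contribute, so the decay involves a factor $(1-\eps)^{\Theta(r)t}$ (a surviving edge must avoid having any of its $r$ vertices covered). More importantly, the trajectory of $X_S^{(t)}$ controls the \emph{residual weight} in $S$, not the coverage $|S\cap V(M^*)|$ directly; you still need to relate ``small residual weight after $T$ rounds'' to ``$|S\setminus V(M^*)|$ is a $(1\pm\Delta^{-\eps^2})$-fraction of $\sum_{v\in S}d_{\cH^*}(v)/\Delta$,'' which requires an additional accounting of the wasted vertices across rounds. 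Second, the concentration step is asserted rather than proved, and it is exactly the crux: with $|\cF^*|\le\exp(\Delta^{\eps^2})$, you must beat failure probability $\exp(-\Delta^{\eps^2})$ per set per round, and the only leverage is the hypothesis $\sum_{v\in S}d_\cH(v)\ge\Delta^{26/25}$, i.e.\ roughly $\Delta^{1/25}$ ``units.'' A martingale/Freedman bound does give failure probability $\exp(-\Omega(\Delta^{1/25-2\eps^2}))$, which with $\eps=1/(1500r^2)$ is comfortably below $\exp(-\Delta^{\eps^2})$ --- but this calculation (and the corresponding control of increments and predictable variance round-by-round) has to be carried out, not claimed. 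Your final paragraph proposes the Molloy--Reed route; be aware that the concentration across $\cF^*$ is not a consequence of the Molloy--Reed \emph{theorem} but of the pseudorandom structure of the colouring produced in its \emph{proof}, which EGJ exploit --- a uniformly random colour class of an arbitrary $(1+o(1))\Delta$-edge-colouring need not concentrate. If your goal is to verify the statement as used in this paper, it is both sufficient and cleaner to do what the authors do: cite \cite[Theorem 1.2]{EGJ2020} and check the hypotheses for $w_S(e)=|S\cap e|$.
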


We remark that Theorem~\ref{thm:egj} is a direct application of~\cite[Theorem 1.2]{EGJ2020} by setting $\delta \coloneqq 1/30$ and the weight functions $w_S (e) := |S \cap e|$ for $S \in \cF^*$, where $w_S( \cH ) \geq \max_{e \in \cH}w_S(e) \Delta^{1+\delta}$ follows by the assumption $\sum_{v \in S}d_\cH(v) \geq \Delta^{26/25}$, since $w_S(\cH) = \sum_{v \in S}d_\cH(v)$ and $w_S(e) \leq r$ for any $e \in \cH$.\COMMENT{\cite[Theorem 1.2]{EGJ2020} is stated as follows.
{\bf Theorem}. Suppose $\delta \in (0,1)$ and $r \in \mathbb{N}$ with $r \geq 2$, and let $\eps \coloneqq \delta/50r^2$. Then there exists $\Delta_0$ such that for all $\Delta \geq \Delta_0$, the following holds: Let $\cH$ be an $r$-uniform hypergraph with $\Delta(\cH) \leq \Delta$ and $\Delta^c (\cH) \leq \Delta^{1-\delta}$ as well as $e(\cH) \leq \exp(\Delta^{\eps^2})$. Suppose that $\cF$ is a set of at most $\exp(\Delta^{\eps^2})$ weight functions on $\cH$. Then, there exists a matching $M$ in $\cH$ such that
$\omega(M) = (1 \pm \Delta^{-\eps})\omega(\cH)/\Delta$ for all $\omega \in \cF$ with $\omega(\cH) \geq \max_{e \in \cH}w(e) \Delta^{1+\delta}$.\\
Indeed, then it is clear (from the assumption of Theorem~\ref{thm:egj}) that $w_S(\cH) = \sum_{v \in S}d_\cH(v) \geq \Delta^{26/25} \geq r \Delta^{31/30} \geq \max_{e \in \cH}w_S(e) \Delta^{1+\delta}$. Hence, we can apply the above theorem to obtain a matching $M_0$ in $\cH$ such that $w_S(M_0) = |S \cap V(M_0)| = (1 \pm \Delta^{-\eps})w_S(\cH)/\Delta = (1 \pm \Delta^{-\eps})\sum_{v \in S}d_\cH(v)/\Delta$, showing that Theorem~\ref{thm:egj} holds.}

Our vertex absorption arguments will actually require that the number of uncovered vertices in $S$ is small but not too small.\COMMENT{Note that Theorem~\ref{thm:egj} may imply that $|S \setminus V(M_0)| = 0$, which is bad for us} So we need the following `sparsified' version of Theorem~\ref{thm:egj}, which allows us to have better control on the number of uncovered vertices. To deduce Corollary~\ref{cor:sparse_egj} from Theorem~\ref{thm:egj}, one simply applies Theorem~\ref{thm:egj} to obtain a matching $M_0$ (in $\cH$) and then we randomly remove each edge of $M_0$ with probability $\gamma$ to obtain a matching $M$ which satisfies the assertion of Corollary~\ref{cor:sparse_egj} with positive probability. We remark that one could also derive Corollary~\ref{cor:sparse_egj} via a direct application of the R\"{o}dl nibble (see~\cite{KKMO2020} for a proof of a stronger result based on stronger assumptions).

\begin{corollary}\label{cor:sparse_egj}
Let $0 < 1 / n_0 \ll 1 / r, \kappa , \gamma < 1$. For any integer $n \geq n_0$, let $\cH$ be an $r$-uniform linear $n$-vertex hypergraph such that every vertex has degree $(1 \pm \kappa)D$, where $D \geq n^{1/100}$. Let $\mathcal{F}$ be a set of subsets of $V(\cH)$ such that $|\mathcal{F}| \le n^{2 \log n}$. Then there exists a matching $M$ of $\cH$ such that for any $S \in \cF$ with $|S| \geq D^{1/20}$, we have $|S \setminus V(M)| = (\gamma \pm 4 \kappa)|S|$.
\end{corollary}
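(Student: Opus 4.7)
The plan is to obtain a near-perfect matching $M_0$ in $\cH$ from Theorem~\ref{thm:egj}, and then form $M$ by keeping each edge of $M_0$ independently with probability $1-\gamma$. The random deletion produces the desired $\gamma$-fraction of uncovered vertices in expectation for each $S \in \cF$, and weighted Chernoff combined with a union bound provides concentration.

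First I would set $\Delta := \lceil (1+\kappa)D \rceil$ and $\cF^* := \{S \in \cF : |S| \geq D^{1/20}\}$. The assumption $D \geq n^{1/100}$ (together with the hierarchy $1/n_0 \ll 1/r, \kappa, \gamma$) makes the quantitative hypotheses of Theorem~\ref{thm:egj} trivial to verify: $e(\cH) \leq n\Delta/r \leq n^2$ and $|\cF^*| \leq n^{2\log n}$ are both dominated by $\exp(\Delta^{\eps^2})$, since $\Delta^{\eps^2} \geq n^{\eps^2/100}$; and for every $S \in \cF^*$ we have $\sum_{v\in S} d_\cH(v) \geq (1-\kappa)D^{21/20} \geq \Delta^{26/25}$ because the ratio is $\Omega(D^{1/100})$. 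Theorem~\ref{thm:egj} then yields a matching $M_0$ with $|S \cap V(M_0)| = (1 \pm \Delta^{-\eps^2}) \sum_{v \in S} d_\cH(v)/\Delta$ for every $S \in \cF^*$. Substituting $d_\cH(v) = (1\pm\kappa)D$ and $\Delta = (1+\kappa)D$, and using $\Delta^{-\eps^2} \ll \kappa$, this gives
\[ |S \setminus V(M_0)| \in [0,\, 3\kappa|S|] \quad \text{for every } S \in \cF^*. \]

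Next, I would let $X_e \in \{0,1\}$ be the indicator that $e \in M_0$ is kept in $M$, with the $X_e$ mutually independent Bernoulli variables satisfying $\mathbb{P}(X_e = 1) = 1 - \gamma$. For each $S \in \cF^*$,
\[ |S \cap V(M)| \,=\, \sum_{e \in M_0} |e \cap S|\, X_e, \]
a weighted sum with coefficients at most $r$ and expectation $(1-\gamma)|S \cap V(M_0)|$. Applying Theorem~\ref{thm:chernoff} with $C = r$ and $t = \kappa|S|$, and using $|S| \geq D^{1/20} \geq n^{1/2000}$, shows that the deviation exceeds $\kappa|S|$ with probability at most $2\exp(-\Omega_{r,\kappa}(n^{1/2000}))$, which easily beats $|\cF|^{-1} \geq n^{-2\log n}$. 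A union bound over $\cF^*$ therefore produces a single realisation of $M$ for which $|S \cap V(M)| = (1-\gamma)|S \cap V(M_0)| \pm \kappa|S|$ holds for every $S \in \cF^*$ simultaneously. Rearranging this identity and inserting the bound $|S \setminus V(M_0)| \in [0, 3\kappa|S|]$ gives
\[ |S \setminus V(M)| \,=\, \gamma|S| + (1-\gamma)|S \setminus V(M_0)| \pm \kappa|S| \,\in\, [(\gamma-\kappa)|S|,\, (\gamma+4\kappa)|S|], \]
which is exactly the claimed $(\gamma \pm 4\kappa)|S|$ bound.

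The only real obstacle is to align the parameter hierarchy with the constraints of Theorem~\ref{thm:egj} and to confirm the Chernoff tail is small enough to survive the union bound over $|\cF| \le n^{2\log n}$ sets; both are immediate consequences of $D$ being polynomially large in $n$, so the entire argument reduces to a routine application of Theorem~\ref{thm:egj} followed by an independent random sparsification.
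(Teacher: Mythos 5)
Your argument is correct and follows essentially the same route as the paper's own proof: apply Theorem~\ref{thm:egj} with $\Delta \approx (1+\kappa)D$ to the family $\cF^*$ of large sets, deduce $|S\setminus V(M_0)| \le 3\kappa|S|$, then sparsify $M_0$ by independently deleting each edge with probability $\gamma$ and apply the weighted Chernoff bound plus a union bound over $\cF^*$. The only cosmetic difference is that you track $|S\cap V(M)|$ while the paper tracks the complementary quantity $Y_S = |S\setminus V(M)| - |S\setminus V(M_0)|$; the resulting inequalities are identical.
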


\COMMENT{\begin{proof}
Let $\eps := 1/(1500r^2)$ and $\cF^* \subseteq \cF$ be the collection of all $S \in \cF$ with $|S| \geq D^{1/20}$. Then we have $\sum_{v \in S} d_\cH(v) \geq (1 - \kappa)D|S| \geq D^{26/25}$ as $|S| \geq D^{1/20}$. Then applying Theorem~\ref{thm:egj} with $\Delta \coloneqq (1+\kappa)D$, we obtain a matching $M_0$ in $\cH$ such that for all $S \in \mathcal{F^*}$,
\begin{equation}\label{eqn:s_uncov}
    |S \cap V(M_0)| \geq  (1 - \Delta^{-\varepsilon^2}) \frac{\sum_{v \in S} d_\cH(v)}{\Delta} \geq (1 - D^{-\varepsilon^2}) \frac{1-\kappa}{1+\kappa} |S| \geq (1-3\kappa)|S|.
\end{equation}
Now let $M$ be a matching obtained from $M_0$ by removing each $e \in M_0$ with probability $\gamma$. Let us fix $S \in \mathcal{F}^*$, and let $q \coloneqq |S \setminus V(M_0)| \overset{\eqref{eqn:s_uncov}}{\leq} 3\kappa|S|$. Let $Y_S \coloneqq |S \setminus V(M)| - q = \sum_{e \in M_0} |e \cap S| {\bf 1}_{e \notin M}$. By the linearity of expectation,
\begin{equation}\label{eqn:expected_ys}
    \mathbb{E}[Y_S] = \sum_{e \in M_0} \gamma |e \cap S| = \gamma |S \cap V(M_0)| = \gamma (|S| - q).
\end{equation}
Since $Y_S$ is the weighted sum of independent random indicators with maximum weight at most $r$, by Theorem~\ref{thm:chernoff}, we have
\begin{equation*}
    \mathbb{P} (|Y_S - \mathbb{E}[Y_S]| \geq \kappa |S|) \leq  2 \exp \left ( -\frac{ \kappa^2 |S|^2}{2 r (\gamma |S| + \kappa|S|/3)} \right ) \leq  2 \exp \left ( - r^{-1} \kappa^2 n^{1/2000} / 4 \right ),
\end{equation*}
since $|S| \geq D^{1/20} \geq n^{1/2000}$.  Then with probability at least $1 - \exp \left ( - n^{1/4000} \right )$, $Y_S = \mathbb{E}[Y_S] \pm \kappa |S|$, so by~\eqref{eqn:expected_ys} we have $|S \setminus V(M)| = q + Y_S = q + \gamma(|S|-q) \pm \kappa |S| = (\gamma \pm 4\kappa)|S|$, since $q \leq 3 \kappa |S|$. As $|\cF^*| \leq |\cF| \leq n^{2 \log n}$, by the union bound, 
$|S \setminus V(M)| = (\gamma \pm 4\kappa)|S|$ holds for all $S \in \cF^*$, with positive probability.\COMMENT{at least
\begin{equation*}
    1 - 2 |\cF^*|\exp \left (-2r^{-2} \kappa^2 n^{1/2000} \right ) \geq 1 - 2 \exp \left ( 2 \log^2 n - 2r^{-2} \kappa^2 n^{1/2000} \right ) > 0,
\end{equation*}}
Fixing such a matching $M$ completes the proof.
\end{proof}}

\subsection{Embedding lemma}
The following lemma allows us to embed any linear hypergraph $\cH$ with maximum degree $D$ into an almost regular, uniform, linear hypergraph $\cH_{\rm unif}$ with maximum degree $D$, satisfying some additional properties.

\begin{lemma}\label{lem:embed}
Let $0 < 1/N_0, 1/D_0, 1/C_0 \ll 1/r \leq 1/3$, where $r\in\mathbb N$.  Let $N \geq N_0$, let $C \geq C_0$, let $D \geq D_0$, and let $\cH$ be an $N$-vertex linear multi-hypergraph with $\Delta(\cH) \leq D$.  If every $e\in\cH$ satisfies $|e| \leq r$, then there exists an $r$-uniform linear hypergraph $\cH_{\rm unif}$ such that the following hold.
\begin{itemize}
    \item[\mylabel{E1}{(\ref{lem:embed}.1)}] $\cH \subseteq \cH_{\rm unif}|_{V(\cH)}$ and $\cH_{\rm unif}|_{V(\cH)} \setminus \cH$ only contains singleton edges.
    
    \item[\mylabel{E2}{(\ref{lem:embed}.2)}]For any $v \in V(\cH_{\rm unif})$, $D-C \leq d_{\cH_{\rm unif}}(v) \leq D$. Moreover, if $d_{\cH}(v) \geq D-C$ for $v \in V(\cH)$, then $d_{\cH_{\rm unif}}(v) = d_{\cH}(v)$.

    \item[\mylabel{E3}{(\ref{lem:embed}.3)}] $v(\cH_{\rm unif}) \leq r(r-1)^2 D^3 N$.
\end{itemize}
\end{lemma}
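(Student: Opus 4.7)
The plan is to build $\cH_{\rm unif}$ in three stages. In \emph{Stage 1}, I pad each edge $e \in \cH$ with $|e| < r$ up to size $r$ by appending $r - |e|$ fresh auxiliary vertices, using disjoint fresh vertices for different edges. This yields an $r$-uniform linear hypergraph $\cH_1$ with $\cH \subseteq \cH_1|_{V(\cH)}$, preserving the original degrees on $V(\cH)$ and giving every auxiliary vertex degree exactly $1$. In \emph{Stage 2}, for each $v \in V(\cH)$ with $d_\cH(v) < D - C$, I add $D - C - d_\cH(v)$ new edges of size $r$, each consisting of $v$ together with $r - 1$ further disjoint fresh auxiliary vertices. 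After Stage 2, every $v \in V(\cH)$ has degree $\max(d_\cH(v), D - C) \in [D - C, D]$, the restriction of each new edge to $V(\cH)$ is a singleton, and \ref{E1} together with the $V(\cH)$-part of \ref{E2} are secured.

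\emph{Stage 3} is the heart of the argument: it must regularize the auxiliary set $Y$ (currently of degree $1$), lifting each $y \in Y$ and each newly introduced fresh vertex up to degree in $[D - C, D]$ without violating linearity with the Stage 1 and 2 edges. The key observation is that each $y \in Y$ lies in exactly one earlier edge, and that edge's intersection with $Y$ has size at most $r - 1$. Hence the conflict graph $F$ on $Y$ (with $y \sim y'$ iff they share an earlier edge) has $\Delta(F) \leq r - 2$ and admits a proper $(r-1)$-coloring $Y = Y_1 \cup \cdots \cup Y_{r-1}$. I then pick a prime $p \in [D, 2D]$ via Bertrand's postulate, pad each $Y_i$ with fresh vertices so that $|Y_i|$ becomes a multiple of $rp$, and partition the padded $Y_i$ into groups of size $rp$. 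On each group I place a resolvable transversal design $\mathrm{TD}(r, p)$ (available since $p$ is a prime power with $p \geq r$, via the standard MOLS/affine-plane construction) and then delete $p - (D - 1)$ of its $p$ parallel classes, obtaining a $(D - 1)$-regular $r$-uniform linear hypergraph on the group.

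Setting $\cH_{\rm unif}$ to be the union of all three stages, linearity across stages holds because any Stage 3 edge is confined to a single group, hence to a single color class $Y_i$, where by construction no two vertices share an earlier edge; consequently any Stage 1 or 2 edge meets any Stage 3 edge in at most one vertex. The degrees come out right: every $y \in Y$ ends up with $d_{\cH_{\rm unif}}(y) = 1 + (D - 1) = D$, every Stage 3 padding vertex with degree $D - 1$, both inside $[D - C, D]$ since $C \geq C_0 \gg 1$. Finally, $|V(\cH_{\rm unif})| \leq N + |Y| + (r-1) \cdot rp = O(r N D)$, comfortably below $r(r-1)^2 D^3 N$, so \ref{E3} holds with plenty of room. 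The main obstacle is precisely Stage 3 --- arranging a near-regular, $r$-uniform, linear sub-hypergraph on a slight enlargement of $Y$ that does not conflict with the already-placed edges --- and it is resolved by combining the conflict-graph coloring with a resolvable transversal design, exploiting the enormous slack in the target vertex bound.
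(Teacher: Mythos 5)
Your proof is correct, but it takes a genuinely different route from the paper's. The paper (following the method of \cite[Lemma~8.1]{KKMO2020}) first pads every edge of $\cH$ to size $r$ with fresh vertices to form $\cH^*$, then takes $T := (r-1)^2D^2$ vertex-disjoint copies of $\cH^*$ and, for each $v \in V(\cH^*)$ with $d_{\cH^*}(v) < D - C$, places on the $T$ clones $\{v^1,\dots,v^T\}$ a $T$-vertex $r$-uniform linear hypergraph built from truncated Steiner systems $S(2,r,M)$ whose degrees fall in $[D - d_{\cH^*}(v) - c_r,\; D - d_{\cH^*}(v)]$. Linearity with the existing edges is then automatic because each clone set meets each copy of $\cH^*$ in at most one vertex. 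You instead avoid cloning entirely: after padding you boost the degree of every low-degree \emph{original} vertex with disjoint pendant $r$-edges, and then regularize the auxiliary vertices in place, using a proper $(r-1)$-colouring of the conflict graph (whose maximum degree is $\leq r-2$ since each auxiliary vertex lies in a unique earlier edge with at most $r-1$ auxiliary vertices) to split $Y$ into classes that are independent with respect to earlier edges, before tiling each class with truncated resolvable transversal designs $\mathrm{TD}(r,p)$ for a prime $p \in [D,2D]$. Both approaches lean on classical design existence results (Wilson-type Steiner systems for the paper; MOLS/affine-plane transversal designs for you), and both are correct; the paper's cloning makes the cross-linearity check trivial at the cost of a factor $T = (r-1)^2D^2$ in the vertex count, matching the stated bound $r(r-1)^2D^3N$, while your construction requires the extra conflict-graph argument but produces a much smaller host hypergraph, of size $O(rDN)$, well within the permitted bound \ref{E3}.
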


The proof of Lemma~\ref{lem:embed} is a straightforward modification of the proof of~\cite[Lemma 8.1]{KKMO2020}. Here we briefly sketch the proof. First, let $\cH^*$ be an $r$-uniform linear hypergraph obtained from $\cH$ by adding $r - |e|$ new vertices to each $e \in \cH$. Let $T \coloneqq (r-1)^2 D^2$. For every (sufficiently large) integer $d \le D$, by considering Steiner systems, one can easily construct a linear $T$-vertex $r$-uniform hypergraph $\cH_d$ such that every vertex of $\cH_d$ has degree between $d-c_r$ and $d$ for some constant $c_r$ depending on $r$. We define our desired multi-hypergraph $\cH_{\rm unif}$ by taking the union of $T$ vertex-disjoint copies of $\cH^*$, where the first copy is identified with $\cH^*$. Then, for each $v \in V(\cH^*)$ with $d_{\cH^*}(v) < D - C$, let $v^1 , \dots , v^T$ be the $T$ clone vertices of $v \in V(\cH^*)$ in $\cH_{\rm unif}$, and extend $\cH_{\rm unif}$ by making $\cH_{\rm unif}[\{v^1 , \dots , v^T\}]$ induce a copy of $\cH_{D - d_{\cH^*}(v)}$, which implies that $D - c_r \leq d_{\cH_{\rm unif}}(v^i) \leq D$ for $1 \le i \le T$.\COMMENT{Moreover, if $v^1 , \dots , v^T$ are the $T$ clone vertices of some  vertex $v \in V(\cH^*)$ with $d_{\cH^*}(v) \geq D - C$, then $d_{\cH_{\rm unif}}(v^i) = d_{\cH^*}(v)$ for $1 \le i \le T$. One can then check that the lemma follows by taking $C_0$ sufficiently large, since $C \ge C_0$.}

\subsection{Some colouring results}

We use Vizing's theorem~\cite{vizing1965} in the final step of our proof when $\cH$ is not $(\rho, \eps)$-full in the non-FPP-extremal case.  We also use the following stronger form in Lemma~\ref{difficult-edge-absorption}.

\begin{theorem}[Vizing~\cite{vizing1965}]\label{thm:vizing}
Every graph $G$ with $\Delta(G) \leq D$ satisfies $\chi'(G) \leq D + 1$.  Moreover, if $G$ contains at most two vertices of degree $D$, then $\chi'(G) \leq D$.
\end{theorem}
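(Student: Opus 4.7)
The plan is to prove both parts by induction on $e(G)$ using the classical Vizing fan and Kempe chain technique. For the first part I would remove any edge $e = u v_0$ of $G$ and, by the inductive hypothesis, properly edge-colour $G - e$ with $D + 1$ colours. If some colour is missing at both $u$ and $v_0$, that colour extends the colouring to $e$ and I am done. Otherwise I grow a maximal \emph{Vizing fan} at $u$: a sequence $v_0, v_1, \dots, v_k$ of distinct neighbours of $u$ such that for each $i \geq 1$ the edge $u v_i$ carries a colour $c_i$ that is missing at $v_{i-1}$. Since $d(u) \leq D$, there is a colour $\alpha$ missing at $u$; either I can shift colours backwards along the fan (recolouring $u v_{i-1}$ with $c_i$ and $u v_k$ with a colour missing at $v_k$ that is also missing at $u$) to free up $e$, or, picking $\beta$ missing at $v_k$, I perform an $\alpha\beta$-Kempe chain swap starting at $v_k$ that reduces the situation to the shiftable case. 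Since the fan has at most $D$ vertices this terminates, giving a proper $(D+1)$-edge-colouring of $G$.

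For the moreover statement I would argue by contradiction. Assume $G$ has at most two vertices of degree $D$ but $\chi'(G) = D + 1$. Passing to an edge-critical subgraph $G'$ with $\chi'(G') = D + 1$, edge-criticality together with $\Delta(G') \leq D$ forces $\Delta(G') = D$, and $G'$ still has at most two vertices of maximum degree. The key tool is \emph{Vizing's adjacency lemma}: in an edge-critical graph with $\chi' = \Delta + 1$, every vertex $v$ adjacent to a vertex $u$ of degree $d(u)$ has at least $\max\{2,\, \Delta - d(u) + 1\}$ neighbours of degree $\Delta$. Applied to a neighbour of any maximum-degree vertex $u$ of $G'$, this yields that $u$ has at least two maximum-degree neighbours in $G'$, contradicting the assumption that $G'$ contains at most two maximum-degree vertices in total.

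I expect the main obstacle to be the proof of Vizing's adjacency lemma itself: the naive fan construction only gives the weaker bound $D + 1$, and sharpening it requires a careful bookkeeping of how successive Kempe chain swaps interact with the degrees and colour deficits of the fan vertices, in particular ruling out the scenario where a swap creates a shorter fan rooted at a vertex of the wrong degree. An alternative (and arguably cleaner) route that avoids explicitly invoking the adjacency lemma is to appeal to Fournier's theorem: since at most two vertices of $G$ have maximum degree, the subgraph they induce contains no cycle, hence is a forest, and Fournier's strengthening of Vizing's theorem immediately yields $\chi'(G) \leq D$.
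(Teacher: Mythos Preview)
The paper does not prove this theorem: it is stated in the preliminaries section with a citation to Vizing's original paper and used as a black box, so there is no ``paper's own proof'' to compare against.

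That said, your sketch is a sound outline of the classical argument. The fan-and-Kempe-chain proof for the first part is standard. For the moreover part, your route via Vizing's adjacency lemma works, though your one-sentence application is slightly garbled: the clean way to finish is to note that in a $\Delta$-critical graph every vertex has at least two neighbours of degree $\Delta$ (an immediate consequence of the adjacency lemma), which is impossible if there are at most two such vertices in total. Your alternative via Fournier's theorem is also valid and indeed cleaner for this particular corollary.
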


The following theorem is used as one of the ingredients to prove Theorems~\ref{main-thm-2} and~\ref{weak-stability-thm} in Section~\ref{large-edge-section}, as well as in Section~\ref{small-non-graph-section} to colour a small `leftover' part in Lemma~\ref{lem:leftover}.

\begin{theorem}[Kahn~\cite{kahn1996asymptotically}]\label{thm:kahn}
Let $0 < {1}/{D_0} \ll  \alpha, {1}/{r} < 1$, and let $D \geq D_0$. Let $\cH$ be a linear hypergraph such that $\Delta(\cH) \leq D$, and every $e\in\cH$ satisfies $|e| \leq r$. Let $C$ be a set with $|C| \geq (1+\alpha)D$, and for each $e \in \cH$, let $C(e) \subseteq C$ and $|C(e)| \leq \alpha D / 2$. Then there exists a proper edge-colouring $\phi : \cH \to C$ such that $\phi(e) \notin C(e)$ for each $e \in \cH$.
\end{theorem}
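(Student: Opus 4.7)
The plan is to reduce the statement to Kahn's asymptotic list chromatic index theorem from~\cite{kahn1996asymptotically}, which is precisely the result being cited.

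First, I would reinterpret the task as a list edge-colouring problem. For each $e \in \cH$, set $L(e) \coloneqq C \setminus C(e)$, so that
\begin{equation*}
|L(e)| \geq |C| - |C(e)| \geq (1+\alpha)D - \alpha D/2 = (1+\alpha/2)D.
\end{equation*}
A proper edge-colouring $\phi : \cH \to C$ with $\phi(e) \notin C(e)$ for every $e$ is exactly a proper list edge-colouring of $\cH$ from the lists $\{L(e) : e \in \cH\}$. So it suffices to show $\cH$ is list-edge-colourable from any list assignment in which each list has size at least $(1+\alpha/2)D$.

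Next, since Kahn's theorem is most naturally stated for uniform hypergraphs, I would apply Lemma~\ref{lem:embed} (with $N \coloneqq v(\cH)$ and a large enough constant $C$) to embed $\cH$ in an $r$-uniform linear hypergraph $\cH_{\rm unif}$ with $\Delta(\cH_{\rm unif}) \leq D$. By the construction sketched after Lemma~\ref{lem:embed}, each original edge $e \in \cH$ corresponds to a unique edge $e' \in \cH_{\rm unif}$ with $e' \cap V(\cH) = e$; moreover, distinct $e_1, e_2 \in \cH$ that intersect in $\cH$ still give intersecting $e_1', e_2'$ in $\cH_{\rm unif}$. I would assign the list $L(e') \coloneqq C \setminus C(e)$ to each such $e'$ and the full list $C$ to every remaining edge of $\cH_{\rm unif}$. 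All resulting lists have size at least $(1+\alpha/2)D$.

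Kahn's theorem now asserts that, since $\cH_{\rm unif}$ is $r$-uniform linear with $\Delta(\cH_{\rm unif}) \leq D$, its list chromatic index is at most $(1+o_{D \to \infty}(1))D$. The hierarchy $1/D_0 \ll \alpha, 1/r$ is arranged precisely to let us swallow this error term into $\alpha/2$ for every $D \geq D_0$, so a proper list edge-colouring of $\cH_{\rm unif}$ from the lists above exists. Restricting via $\phi(e) \coloneqq \phi(e')$ produces the desired colouring of $\cH$, since any conflict $\phi(e_1) = \phi(e_2)$ with $e_1 \cap e_2 \neq \varnothing$ in $\cH$ would yield the conflict $\phi(e_1') = \phi(e_2')$ in $\cH_{\rm unif}$.

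The main obstacle, if one wanted a self-contained proof rather than a black-box invocation, would be to reprove Kahn's list-colouring nibble in our setting --- for instance by iteratively applying Corollary~\ref{cor:sparse_egj} to an auxiliary $(r{+}1)$-uniform incidence hypergraph whose edges encode colour--edge pairs, while carefully updating each edge's available list (and defect sets of the form $\{e : c \in L(e)\}$) from round to round. Given the direct citation of~\cite{kahn1996asymptotically}, however, the only nontrivial check is the quantitative one between the $(1+o(1))D$ bound in Kahn's theorem and the list sizes $(1+\alpha/2)D$, which is automatic from the hierarchy.
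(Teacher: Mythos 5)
Your proposal takes essentially the same route as the paper: reinterpret the constraint as a list assignment $L(e) \coloneqq C \setminus C(e)$ with $|L(e)| \geq (1+\alpha/2)D$, and invoke Kahn's asymptotic list-chromatic-index theorem from~\cite{kahn1996asymptotically}. The uniformization step via Lemma~\ref{lem:embed} is, however, superfluous: Kahn's theorem is stated for hypergraphs with bounded edge size and small codegree $\Delta^c(\cH) = o(D)$, not specifically for $r$-uniform ones. Since $\cH$ is linear (so its codegree is at most $1$) and every edge has size at most $r$, the hypotheses of Kahn's theorem already hold for $\cH$ directly, and the embedding into $\cH_{\rm unif}$ together with the subsequent restriction argument can be dropped --- which is exactly what the paper does in its one-line reduction.
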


Theorem~\ref{thm:kahn} immediately follows from ~\cite[Theorem 1.1]{kahn1996asymptotically} by defining a list $S(e) \coloneqq C \setminus C(e)$ of available colours for each $e \in \cH$.\COMMENT{The original statement of Kahn~\cite[Theorem 1.1]{kahn1996asymptotically} is stated as follows: If $\cH$ is a hypergraph $\cH$ with $\Delta(\cH) \leq D$, $|e| \leq r$ for a fixed $r$, and $\Delta^c (\cH) = o(D)$, then $\chi_\ell'(\cH) = D + o(D)$, where $o(D)$ tends to 0 as $D \to \infty$. This implies the following: for any $r \in \mathbb{N}$ and $\alpha > 0$, there exists $D_0 = D_0(\alpha , r)$ such that for any $D \geq D_0$ and linear hypergraph $\cH$ with $\Delta(\cH) \leq D$ and $|e| \leq r$ for any $e \in \cH$, $\chi_\ell'(\cH) \leq (1+\alpha)D$.}

\subsection{A special case of the Lov\'asz's $(g,f)$-factor theorem}
For any non-negative integer functions $g,f : V(G) \to \mathbb{Z}$, a subset $F \subseteq E(G)$ is a \emph{$(g,f)$-factor} in $G$ if $g(w) \leq d_F(w) \leq f(w)$ for each $w \in V(G)$.

Lov\'asz~\cite[Theorem 7.3]{lovasz1970} (or see~\cite[Corollary 35.1b]{schrijver_volA}) proved a result which ensures the existence of a $(g,f)$-factor.
Here we state a special case of the result.

\begin{theorem}[Lov\'asz~\cite{lovasz1970}]\label{thm:gf_factor}
Let $G$ be a graph, and let $f,g : V(G) \to \mathbb{Z}$ be integer functions such that $0 \leq g(x) < f(x) \leq d_G(x)$ for each $x \in V(G)$. Then there exists a $(g,f)$-factor in $G$ if and only if for all disjoint subsets $S,T \subseteq V(G)$,
$\sum_{s \in S} f(s) + \sum_{t \in T} (d_G(t) - g(t)) \geq e_G(S,T)$.
\end{theorem}

\section{Colouring FPP-extremal and larger edges}\label{fpp-extremal-section}

In this section we prove Theorem~\ref{main-thm} when all edges are FPP-extremal or larger, as follows.

\begin{lemma}\label{extremal-case-lemma}
Let $0 < 1/n_0 \ll \delta \ll 1$, and let $n \geq n_0$. If $\cH$ is an $n$-vertex linear hypergraph where every $e\in \cH$ satisfies $|e| \geq (1 - \delta)\sqrt n$, then $\cH$ has a proper edge-colouring with $n$ colours, where each colour is assigned to at most two edges.
\end{lemma}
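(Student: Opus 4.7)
The plan is to produce the required colouring by identifying $t := \max\{0, e(\cH) - n\}$ pairwise-disjoint pairs of disjoint edges of $\cH$, assigning each such pair a shared colour and giving every remaining edge its own fresh colour; this uses exactly $t + (e(\cH)-2t) = n$ colours, each on at most two edges.

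First I bound $e(\cH)$. Linearity gives $\sum_{e\in\cH}\binom{|e|}{2} \leq \binom{n}{2}$, and combined with $|e| \geq (1-\delta)\sqrt n$ this yields $e(\cH) \leq (1-\delta)^{-2}n + O(1) \leq (1+3\delta)n$, so $t \leq 4\delta n$. If $e(\cH) \leq n$ then assigning a distinct colour to each edge finishes the proof, and I henceforth assume $e(\cH) > n$.

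Then the task reduces to finding a matching of size $t$ in $\overline{L(\cH)}$. A greedy construction driven by the de~Bruijn--Erd\H{o}s theorem (every intersecting linear hypergraph on $n$ vertices has at most $n$ edges) gets most of the way: while the set of currently unpaired edges has size exceeding $n$, it cannot be intersecting and so contains two disjoint edges, which I pair and remove. This terminates with a matching of size at least $(e(\cH)-n)/2 = t/2$.

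The main obstacle is upgrading this matching from size $\geq t/2$ to size $t$, a factor of two that the greedy alone does not give. I would iterate Berge's augmenting-path theorem in $\overline{L(\cH)}$: given a matching $M$ of size $k < t$ with unmatched set $I \subseteq \cH$ satisfying $|I| > 2n - e(\cH)$, either $I$ contains two disjoint edges (directly enlarging $M$) or there is a length-$3$ augmenting path $f_0\text{-}e_1\text{-}e_2\text{-}f_3$ with distinct $f_0, f_3 \in I$ and $(e_1, e_2) \in M$, allowing $M$ to be rearranged to size $k+1$. To rule out the non-existence of all augmenting paths I would analyse the resulting Tutte--Berge barrier: for each pair $(e_1, e_2) \in M$ at least one of $e_1, e_2$ must be `universal' for $I$ (meet every edge of $I$), or else both $e_1$ and $e_2$ miss exactly one common edge of $I$. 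Combining de~Bruijn--Erd\H{o}s on the appropriate intersecting subfamilies with the volume constraint $\sum_{e\in\cH}\binom{|e|}{2} \leq \binom{n}{2}$ and the size bound $|e| \geq (1-\delta)\sqrt n$ should force a contradiction whenever $k < t$. The technical heart of the argument, and the main difficulty I anticipate, is closing this last contradiction uniformly, which may require distinguishing the structural case where $\cH$ is close to a projective plane from that where it resembles an affine-plane configuration.
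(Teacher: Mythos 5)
Your reduction to finding a matching of size $t = e(\cH) - n$ in $\overline{L(\cH)}$ is exactly the paper's Observation~\ref{lem:chromindex}, and your volume bound $t = O(\delta n)$ together with the de~Bruijn--Erd\H{o}s greedy giving a matching of size at least $t/2$ are both correct. But the proposal does not actually prove the lemma: the step you flag as ``the technical heart'' --- upgrading the matching from size $t/2$ to size $t$ via augmenting-path / Tutte--Berge analysis --- is left as a program, not an argument. You write that the barrier analysis ``should force a contradiction'' but give no mechanism for it, and this is precisely where the delicacy lies: near the extremal configurations (projective-plane-like $\cH$), the line graph $L(\cH)$ is close to a clique, so $\overline{L(\cH)}$ is close to empty, and Tutte--Berge barriers are genuinely complicated to exclude with only the volume bound and intersecting-family counting you cite.

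The paper circumvents the augmenting-path difficulty entirely with a different construction. It first notices (Proposition~\ref{lem:chromindex0}) that if one can exhibit $2t$ pairwise-intersecting edges $e_1,\dots,e_{2t}$ with each pair $\{e_{2i-1},e_{2i}\}$ \emph{useful} --- meaning $|N(e_{2i-1})\cap N(e_{2i})|\le n-2$ --- then each pair can be greedily matched to a fresh edge $z_i$ outside the common neighbourhood, directly producing a matching of size $t$ in $\overline{L(\cH)}$ with no factor-of-two loss and no augmentation needed. The work then shifts to producing usable usefulness estimates (Proposition~\ref{na-prop}) and a careful case split on the distribution of edge sizes around the threshold $k\approx\sqrt n$ (the sets $A^-,A^+,B$) together with a counting argument to find the $2t$ useful pairs inside the clique $\cH\setminus V(N)$ left uncovered by a maximal matching $N$. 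This is a qualitatively different route: it replaces ``maximum matching via augmentation'' by ``direct greedy construction, enabled by a codegree/common-neighbourhood bound.'' If you wanted to salvage your approach, you would need to supply an unconditional structural theorem about barriers in $\overline{L(\cH)}$ that distinguishes the projective-plane-type extremal cases, and I would expect that to reproduce much of the paper's case analysis by other means.
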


First we need the following simple observations.  For a hypergraph $\cH$, recall that $L(\cH)$ denotes the line graph of $\cH$ and $\overline{L(\cH)}$ denotes its complement.

\begin{obs}\label{lem:chromindex}
Let $\cH$ be an $n$-vertex linear hypergraph. If there is a matching $N$ in $\overline{L(\cH)}$ of size $e(\cH) - n$, then $\cH$ has a proper edge-colouring with $n$ colours, where each colour is assigned to at most two edges.\COMMENT{We may assume that $e(\cH) > n$, otherwise the proposition trivially follows by colouring the edges of $\cH$ with different colours. Let $t \coloneqq e(\cH) - n$ and $N \coloneqq \{e_1 f_1 , \dots , e_t f_t \}$ be a matching of size $t$ in $\overline{L(\cH)}$. We assign the same colour to $e_i$ and $f_i$ for each $i\in [t]$ and a distinct colour to every $e \in L(\cH) \setminus \bigcup_{i=1}^{t} \{e_i,f_i \}$. 
This gives a proper edge-colouring of $\cH$ with
\begin{equation*}
    (e(\cH)-2t) + t = e(\cH) - (e(\cH) - n) = n
\end{equation*}
colours, where each colour is assigned to at most two edges in $\cH$, as desired.}\qed
\end{obs}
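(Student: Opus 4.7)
The plan is to turn the matching $N$ directly into the colour classes of size two in the desired colouring. The case $e(\cH) \leq n$ is immediate: simply assign pairwise distinct colours to the edges of $\cH$, using at most $n$ colours with each colour class a singleton. So I would assume $e(\cH) > n$ and set $t \coloneqq e(\cH) - n$, so that $|N| = t$; write the edges of $N$ as $\{e_1 f_1, \dots, e_t f_t\}$, where each $e_i f_i$ is a pair of vertices of $L(\cH)$.

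The key observation is that, since $N$ is a matching in $\overline{L(\cH)}$, the $2t$ edges $e_1, f_1, \dots, e_t, f_t$ are pairwise distinct elements of $\cH$ (the matching condition on $N$ prevents any edge of $\cH$ from appearing in two different pairs), and for each $i \in [t]$, the edges $e_i$ and $f_i$ are disjoint in $\cH$ (because $e_i f_i \in \overline{L(\cH)}$ means $e_i$ and $f_i$ are non-adjacent in $L(\cH)$, i.e., share no vertex of $\cH$).

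I would then define $\phi$ by assigning colour $i$ to both $e_i$ and $f_i$ for each $i \in [t]$, and a fresh, private colour to each remaining edge of $\cH \setminus \bigcup_{i=1}^{t}\{e_i, f_i\}$. Since each pair $\{e_i, f_i\}$ consists of two disjoint edges, and every other colour class is a singleton, $\phi$ is a proper edge-colouring in which each colour is used on at most two edges. A short count gives the number of colours used as $t + (e(\cH) - 2t) = e(\cH) - t = n$, which completes the proof. There is no real obstacle here: the only point requiring verification is the distinctness of $e_1, f_1, \dots, e_t, f_t$, which is precisely what the matching condition on $N$ provides.
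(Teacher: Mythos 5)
Your proposal is correct and follows essentially the same argument as the paper: pair up the $t = e(\cH)-n$ disjoint edge-pairs given by the matching in $\overline{L(\cH)}$ as two-edge colour classes, give each remaining edge its own colour, and count $(e(\cH)-2t)+t = n$ colours. The extra verification that the $2t$ edges are distinct and each pair disjoint is exactly what the matching condition provides, as you note.
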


A pair $\{e, f\}\subseteq \cH$ in an $n$-vertex hypergraph $\cH$ is \textit{useful} if $e\neq f$, $e\cap f\neq\varnothing$, and $|N(e) \cap N(f)| \leq n-2$.  

\begin{proposition}
\label{lem:chromindex0}
Let $\cH$ be an $n$-vertex linear hypergraph, and let $t \coloneqq e(\cH) - n$.  If $e_1 , \dots , e_{2t} \in \cH$ are distinct pairwise intersecting edges such that $\{e_{2i-1}, e_{2i}\}$ is a useful pair for each $i \in [t]$, then $\cH$ has a proper edge-colouring with $n$ colours, where each colour is assigned to at most two edges.
\end{proposition}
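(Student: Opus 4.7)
My plan is to use Observation~\ref{lem:chromindex}, so I aim to build a matching $N$ of size $t$ in $\overline{L(\cH)}$ from the given data. I will do this greedily: for $i=1,\dots,t$, I will pick an edge $f_i\in\cH\setminus(\{e_1,\dots,e_{2t}\}\cup\{f_1,\dots,f_{i-1}\})$ that is disjoint from at least one of $e_{2i-1},e_{2i}$, let $e_i^*$ be whichever of $e_{2i-1},e_{2i}$ is disjoint from $f_i$, and add $\{e_i^*,f_i\}$ to $N$.

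The crux is showing that such an $f_i$ can always be found, and this is where both hypotheses of the proposition enter simultaneously. The pairwise-intersection hypothesis forces each of the $2t-2$ edges $e_j$ with $j\in[2t]\setminus\{2i-1,2i\}$ to lie in $N(e_{2i-1})\cap N(e_{2i})$; combining this with the useful-pair bound $|N(e_{2i-1})\cap N(e_{2i})|\leq n-2$ shows that at most $(n-2)-(2t-2)=n-2t$ edges of $\cH\setminus\{e_1,\dots,e_{2t}\}$ are adjacent to both $e_{2i-1}$ and $e_{2i}$. Since $|\cH\setminus\{e_1,\dots,e_{2t}\}|=e(\cH)-2t=n-t$, at least $t$ edges in that set are disjoint from at least one of $e_{2i-1},e_{2i}$, and at most $i-1<t$ of them have been used as $f_1,\dots,f_{i-1}$, so a valid $f_i$ remains available.

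Once the $f_i$ are built, I need to check that $N=\{\{e_i^*,f_i\}:i\in[t]\}$ really is a matching in $\overline{L(\cH)}$, but this is routine bookkeeping: the $e_i^*$ are distinct because they come from disjoint pairs $\{e_{2i-1},e_{2i}\}$, the $f_i$ are distinct by the greedy rule, and no $e_i^*$ can coincide with any $f_j$ because the former lie in $\{e_1,\dots,e_{2t}\}$ while the latter are chosen from its complement; and each pair $\{e_i^*,f_i\}$ is a non-edge of $L(\cH)$ by the choice of $e_i^*$. Observation~\ref{lem:chromindex} then produces the desired edge colouring. I do not foresee any real obstacle here: the whole argument is a careful double-counting in which the role of the ``useful pair'' definition is precisely to make the counting go through, and no appeal to Hall's theorem or more elaborate machinery seems necessary.
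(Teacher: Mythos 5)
Your proof is correct and essentially identical to the paper's: both arguments greedily construct a matching of size $t$ in $\overline{L(\cH)}$ by combining the useful-pair bound $|N(e_{2i-1})\cap N(e_{2i})|\leq n-2$ with the observation that pairwise intersection forces the other $2t-2$ edges $e_j$ into $N(e_{2i-1})\cap N(e_{2i})$, and then invoke Observation~\ref{lem:chromindex}. The only cosmetic difference is that you count within $\cH\setminus\{e_1,\dots,e_{2t}\}$ directly, while the paper defines $S:=\cH\setminus((N(e_{2i-1})\cap N(e_{2i}))\cup\{e_{2i-1},e_{2i}\})$ and then notes $S$ avoids $\{e_1,\dots,e_{2t}\}$; these are the same set, so the two write-ups are equivalent.
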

\begin{proof}
We may assume that $e(\cH) > n$.
We will show that there exists a matching $N$ in $\overline{L(\cH)}$ of size $t$.

For $i \in [t]$, suppose we have chosen distinct edges $z_1 , \dots , z_{i-1} \in \cH \setminus \{e_1 , \dots , e_{2t}\}$ where $z_j$ is non-adjacent to at least one of $e_{2j-1}$ or $e_{2j}$ in $L(\cH)$ for $1 \leq j \leq i-1$. We claim that one can choose $z_i \in \cH \setminus \{e_1 , \dots , e_{2t}\}$ distinct from $z_1 , \dots , z_{i-1}$ such that $z_i$ is non-adjacent to either $e_{2i-1}$ or $e_{2i}$ in $L(\cH)$. Indeed, since $|N(e_{2i-1}) \cap N(e_{2i})| \leq n-2$, letting $S \coloneqq \cH \setminus ((N(e_{2i-1}) \cap N(e_{2i})) \cup \{ e_{2i-1} , e_{2i} \})$, we have $|S| \geq e(\cH)-n$ and every $e\in S$ is non-adjacent to at least one of $e_{2i-1}$ or $e_{2i}$. Since $i-1 \leq t - 1 = e(\cH)-n-1$ we can choose $z_i \in S$ distinct from $z_1 , \dots , z_{i-1}$. Moreover, since $S \cap \{e_1 , \dots , e_{2t} \} = \varnothing$, we have $z_i \in \cH \setminus \{e_1 , \dots , e_{2t}\}$, as desired.

Let $z_1, z_2, \ldots, z_t \in \cH \setminus \{e_1 , \dots , e_{2t}\}$ be chosen using the above procedure.
Then since $z_j$ is non-adjacent to either $e_{2j-1}$ or $e_{2j}$ for each $j \in [t]$, we have a matching $N$ in $\overline{L(\cH)}$ of size $t = e(\cH)-n$. Now applying Observation~\ref{lem:chromindex}, the proof is complete.
\end{proof}

\begin{proposition}\label{lem:chromindex1}
Let $\cH$ be an $n$-vertex linear hypergraph, and let $\{A,B \}$ be a partition of $\cH$ such that $|A|+|B|-n \leq |A|/4$.
If for every distinct intersecting $e,f \in A$, the pair $\{e, f\}$ is useful, then $\cH$ has a proper edge-colouring with $n$ colours, where each colour is assigned to at most two edges.
\end{proposition}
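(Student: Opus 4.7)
The goal is to produce a matching in $\overline{L(\cH)}$ of size $t \coloneqq e(\cH)-n$, at which point Observation~\ref{lem:chromindex} finishes the job. We may assume $t \geq 1$ (otherwise give each edge its own colour). The idea is to do a dichotomy based on the matching number $\nu$ of $G_0 \coloneqq \overline{L(\cH)}[A]$: either a large matching in $G_0$ directly supplies what we need, or $G_0$ has a large independent set, which (because independence in $\overline{L(\cH)}[A]$ is the same as pairwise intersection in $\cH$) produces a large pairwise intersecting family in $A$ to which Proposition~\ref{lem:chromindex0} can be applied.

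First I would dispose of the case $\nu(G_0) \geq t$: a matching in $G_0$ of size $t$ is automatically a matching of size $t$ in $\overline{L(\cH)}$, so Observation~\ref{lem:chromindex} yields the desired colouring.

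Next I would treat the case $\nu(G_0) \leq t-1$. Letting $M^*$ be a maximum matching of $G_0$, its endpoint set $V(M^*)$ has size $2\nu(G_0) \leq 2t-2$ and is a vertex cover of $G_0$ (any uncovered edge would augment $M^*$). Hence $I \coloneqq A\setminus V(M^*)$ is independent in $G_0$ with $|I| \geq |A|-2t+2$. Using the hypothesis $|A|+|B|-n \leq |A|/4$, i.e.\ $t \leq |A|/4$, we get $|I| \geq |A|-2t+2 \geq 2t+2 \geq 2t$. Because independence in $\overline{L(\cH)}[A]$ means pairwise adjacency in $L(\cH)$, the set $I$ is a pairwise intersecting family of edges of $\cH$ contained in $A$. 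Pick any distinct $e_1,\dots,e_{2t}\in I$ and pair them arbitrarily as $\{e_{2i-1},e_{2i}\}$ for $i\in [t]$. Each such pair consists of distinct intersecting edges of $A$, hence is useful by hypothesis, and $e_1,\dots,e_{2t}$ are pairwise intersecting as required. Proposition~\ref{lem:chromindex0} then delivers a proper edge-colouring of $\cH$ with $n$ colours, each used at most twice.

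The only subtle point is checking that the numerical hypothesis $t \leq |A|/4$ is tight enough to make the second case go through: it is exactly what guarantees that the independent set $I$ supplied by the standard bound $\tau(G_0)\leq 2\nu(G_0)$ is large enough to extract the $2t$ pairwise intersecting edges required as input to Proposition~\ref{lem:chromindex0}. Once this is verified the remainder of the argument is mechanical.
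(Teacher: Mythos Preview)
Your proof is correct and follows essentially the same approach as the paper's. The only cosmetic difference is that the paper takes a maximum matching $N$ in all of $\overline{L(\cH)}$ rather than in $\overline{L(\cH)}[A]$, and then observes that $A\setminus V(N)$ is pairwise intersecting by maximality of $N$; the numerical bound and the application of Proposition~\ref{lem:chromindex0} are identical.
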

\begin{proof}
We may assume that $|A|+|B| \geq n+1$. Let $N$ be a matching of maximum size in $\overline{L(\cH)}$. If $|N| \geq e(\cH)-n = |A|+|B|-n$, then by Observation~\ref{lem:chromindex}, we have a proper colouring of $\cH$ with the desired properties. Thus we may assume that $|N| \leq |A|+|B|-n$. 
Recalling that $V(N)$ is a subset of edges in $\cH$, we have
\begin{equation*}
    |A \setminus V(N)| - 2(|A|+|B|-n) \geq |A|-4(|A|+|B|-n) \geq 0
\end{equation*}
since we assumed $|A|+|B|-n \leq |A|/4$. By the maximality of $N$, all pairs $e, f \in A \setminus V(N)$ are adjacent. Thus we may choose $2t \coloneqq 2(|A|+|B|-n)$ distinct $a_1 , \dots , a_{2t} \in A \setminus V(N)$ such that $\{a_{2i-1}, a_{2i}\}$ is a useful pair for each $i \in [t]$. Thus we can apply Proposition~\ref{lem:chromindex0} to $\{a_1 , \dots , a_{2t} \}$ to complete the proof.
\end{proof}

Recall that a finite projective plane of order $k$ is a hypergraph on $k^2 + k + 1$ vertices in which the edges have size $k + 1$.  Thus, in the proof of Lemma~\ref{extremal-case-lemma}, it is natural to  work with the minimum $k \in \mathbb N$ such that $k^2 + k + 1 \geq n$.  Assuming $n \in \mathbb N$, we then have
\begin{equation}\label{eqn:rangen}
   k^2 - k + 2 \leq n \leq k^2 + k + 1,
\end{equation}
where the left side of the above inequality holds by the minimality of $k$.  We will often use the following crude bounds that follow from \eqref{eqn:rangen}; if \eqref{eqn:rangen} holds, then since $(k + 1)^2 \geq k^2 + k + 1 \geq n$, we have
\begin{equation}\label{eqn:rangen-crude-lower}
k \geq \sqrt n - 1,
\end{equation}
and since $(k - 1)^2 \leq k^2 - k + 2$, we have
\begin{equation}\label{eqn:rangen-crude-upper}
k \leq \sqrt n + 1.
\end{equation}

Before proving Lemma~\ref{extremal-case-lemma}, we need one more proposition.

\begin{proposition}\label{na-prop}
  Let $0 < 1/n_0 \ll \delta \ll 1$, let $n \geq n_0$, and let $k \in \mathbb N$ satisfy \eqref{eqn:rangen}.  Let $\cH$ be an $n$-vertex linear hypergraph where every $e\in\cH$ satisfies $|e|\geq (1 - \delta)\sqrt n$, and let $e, f \in \cH$ be distinct intersecting edges of size at most $k$.  Let $w \in e\cap f$, and let $m$ be the number of edges of size at most $k - 1$ containing $w$.  If either
  \begin{enumerate}[(i)]
      \item\label{na-prop:(k-1)-edge} at least one of $e$ or $f$ has size at most $k - 1$ or
      \item\label{na-prop:m-bound} $m \leq 1/(3\delta)$,
  \end{enumerate} 
  then $\{e, f\}$ is a useful pair.
\end{proposition}
\begin{proof}
    If at least one of $e$ or $f$ has size at most $k - 1$, then\COMMENT{To obtain the last inequality, note that $(k-1)(k-2) + \frac{n-1}{(1-\delta)\sqrt{n}-1} \le k^2-3k+2 + \sqrt{n} (1+2\delta) \le n - 2k + \sqrt{n} (1+2\delta) \le n - 2(\sqrt{n}-1) + \sqrt{n} (1+2\delta) \le n-2$} 
\begin{align*}
    |N(e) \cap N(f)| &\leq (|e|-1)(|f|-1) + d(w) \leq (k-1)(k-2) + \frac{n - 1}{(1-\delta)\sqrt{n} - 1}\\ 
    & \leq (k^2 - k + 2) - 2k + (1 + 2 \delta) \sqrt{n} 
    \overset{\eqref{eqn:rangen}}{\leq} n - 2k + 3\sqrt{n}/2 \overset{\eqref{eqn:rangen-crude-lower}}{\leq} n - 2,
\end{align*}
as desired.

  Now we may assume $|e| = |f| =k$ and  $m \leq {1}/{(3\delta)}$; that is, \ref{na-prop:m-bound} holds but \ref{na-prop:(k-1)-edge} does not.  Let $A$ be the set of edges in $E_{\cH}(w)$ of size at most $k - 1$, let $B \coloneqq E_{\cH}(w) \setminus (\{e, f\} \cup A)$, and let us consider the number of vertices in $V(\cH) \setminus (e \cup f)$ sharing an edge with $w$.  Since $\cH$ is linear, this quantity is
  \begin{equation*}
      \sum_{e' \in A \cup B} (|e'| - 1) \leq n - |e \cup f| = n - 2k + 1.
  \end{equation*}
  By the definition of $m$, since every edge in $\cH$ has size at least $(1 - \delta)\sqrt n$, we have
\begin{equation*}
    \sum_{e' \in A \cup B} (|e'| - 1) \geq |A| ((1-\delta) \sqrt{n} - 1) + |B|(k-1) = m ((1-\delta) \sqrt{n} - 1) + (d(w) - 2 - m)(k-1).
\end{equation*}
Combining the two inequalities above, we have
\begin{equation*}
    d(w) - 2 \leq \frac{n-2k+1}{k-1} + m \left ( 1 - \frac{(1-\delta)\sqrt{n} - 1}{k-1} \right )
    \overset{\eqref{eqn:rangen-crude-upper}}{\leq} \frac{n-2k+1}{k-1} + 2\delta m.
\end{equation*}
Since $n \geq k^2 - k + 2$, we have
\begin{equation*}\label{eqn:d_w2}
    \frac{n-2k+1}{k-1} = n - 2 - \frac{n(k - 2) + 1}{k - 1} \leq n - 2 - \frac{k^3 - 3k^2 + 4k - 3}{k - 1} = n - 2 - (k - 1)^2 - \frac{k - 2}{k - 1}. 
\end{equation*}
Combining the two inequalities above and using $m \leq {1}/{(3\delta)}$, we have
\begin{equation}\label{eqn:d_w}
    d(w) - 2 \leq n - 2 - (k - 1)^2 - \frac{k - 2}{k - 1} + 2\delta m \leq n - 2 - (k - 1)^2.
\end{equation}
Thus, since $|e| = |f| = k$,
\begin{equation*}
    |N(e) \cap N(f)| \leq (|e|-1)(|f|-1) + d(w) - 2 \overset{\eqref{eqn:d_w}}{\leq} n-2,
\end{equation*}
so $\{e, f\}$ is a useful pair, as desired.
\end{proof}

Now we prove Lemma~\ref{extremal-case-lemma}.

\begin{proof}[Proof of Lemma~\ref{extremal-case-lemma}]
First of all, we may assume that $e(\cH) > n$ and $n \in \mathbb N$.  Choose $k\in \mathbb N$ minimum such that $k^2 + k + 1 \geq n$, and note that \eqref{eqn:rangen}--\eqref{eqn:rangen-crude-upper} hold.
Let $A^- \coloneqq \{e \in \cH \: : \: |e| \leq k-1 \}$, let $A^+ \coloneqq \{e \in \cH \: : \: |e| = k \}$, let $A \coloneqq A^- \cup A^+$, and let $B \coloneqq \{e \in \cH \: : \: |e| \geq k+1 \}$.
Note that
\begin{align*}
    \vol_{\cH}(B) \geq |B| \frac{k(k+1)}{n(n-1)} \overset{\eqref{eqn:rangen}}{\geq} \frac{|B|}{n}.
\end{align*}
Since $|e| \geq (1-\delta)\sqrt{n}$ for all $e\in\cH$, we have\COMMENT{The last inequality uses $\left.\binom{(1 - \delta)\sqrt n}{2}\middle/\binom{n}{2}\right. \geq \frac{(1 - \delta)^2n - (1 - \delta)\sqrt n}{n^2}  \geq \frac{1 - 2\delta + \delta^2 - (1 - \delta)/\sqrt n}{n} \geq \frac{1 - 2\delta}{n}$.}
\begin{align}\label{eqn:volA}
    \vol_{\cH}(A) \geq |A| \binom{(1-\delta)\sqrt{n}}{2} \binom{n}{2}^{-1} \geq |A| \frac{1 - 2\delta}{n}.
\end{align}
Combining the above two inequalities with $\vol_{\cH}(A) + \vol_{\cH}(B) \le 1$, we have
\begin{align}\label{eqn:abn_0}
    |A| + |B| - n \leq 2 \delta |A|.
\end{align}

If $|A^-| \leq 300$, then by Proposition~\ref{na-prop}\ref{na-prop:m-bound}, for any distinct intersecting $e,f \in A$, we have that $\{e, f\}$ is useful. Moreover,~\eqref{eqn:abn_0} implies that $|A| + |B| - n \leq |A|/4$. Thus we can apply Proposition~\ref{lem:chromindex1} to obtain a proper edge-colouring of $\cH$ with the desired properties, proving the lemma in this case.
Hence, we may assume that
\begin{equation}\label{eqn:bounda-}
|A^-| > 300.   
\end{equation}

Note that\COMMENT{To get the last inequality, note that $\frac{k(k-1)}{n-1} \geq \frac{(k^2 + k + 1) - (2k+1)}{n} \geq \frac{n - 3\sqrt{n}}{n}$ by~\eqref{eqn:rangen}, since $k = (1+o(1))\sqrt{n}$.}
\begin{align*}
    \vol_{\cH}(A^+ \cup B) &\geq |B| \frac{k(k+1)}{n(n-1)} + |A^+| \frac{k(k-1)}{n(n-1)}
    \overset{\eqref{eqn:rangen}}{\geq} \frac{|B|}{n} + \frac{|A^+|}{n}\left(\frac{n - 1 - 2k}{n - 1}\right)
    \overset{\eqref{eqn:rangen-crude-upper}}{\geq} \frac{|B|}{n} + \frac{|A^+|}{n} \left ( 1 - \frac{3}{\sqrt{n}} \right ).
\end{align*}
Similarly as in~\eqref{eqn:volA}, we have $\vol_{\cH}(A^-) \geq |A^-| (1 - 2\delta)/n$.
Using these two inequalities, we have
\begin{equation*}
    1 \geq \vol_{\cH}(A^+ \cup B) + \vol_{\cH}(A^-) \geq \frac{|B|}{n} + \frac{|A^+|}{n} \left ( 1 - \frac{3}{\sqrt{n}} \right ) + \frac{|A^-|}{n} \left(1 - 2\delta\right),
\end{equation*}
and thus, by rearranging terms, we obtain
\begin{align}\label{eqn:abn}
    e(\cH) - n = |A^-| + |A^+ \cup B| - n \leq \frac{3|A^+|}{\sqrt{n}} + 2 \delta |A^-|.
\end{align}
Thus if $|A^+| \leq \sqrt{n} |A^-| / 15$, then we have $|A^-| + |A^+ \cup B| - n \leq |A^-|/4$. Using this inequality and Proposition~\ref{na-prop}\ref{na-prop:(k-1)-edge}, we can apply Proposition~\ref{lem:chromindex1} with $A^-$ and $A^+ \cup B$ playing the roles of $A$ and $B$, respectively,  to obtain an edge-colouring of $\cH$ with the desired properties, proving the lemma in this case.
Thus we can assume that
\begin{equation}\label{eqn:bounda+}
    |A^+| > \frac{\sqrt{n}|A^-|}{15} \overset{\eqref{eqn:bounda-}}{\geq }20 \sqrt{n}.
\end{equation}

Now let $t\coloneqq e(\cH) - n$, let $L\coloneqq L(\cH)$ be the line graph of $\cH$, and let $N$ be a maximal matching in $\overline L$.  We assume $|N| < t$, as otherwise by Observation~\ref{lem:chromindex}, we obtain the desired proper edge-colouring of $\cH$, proving the lemma.  Combining this inequality with~\eqref{eqn:abn} and~\eqref{eqn:bounda+}, we have
\begin{equation}\label{eqn:sizeN}
  |N| < t \leq \frac{3|A^+|}{\sqrt{n}} + 2 \delta |A^-| < \frac{5|A^+|}{\sqrt{n}}.
\end{equation}

Most of the remainder of the proof is devoted to the following claim.
\begin{claim}\label{claim:useful-pairs}
    There are $2t$ distinct $e_1, \dots, e_{2t} \in A^{+}$ such that
\begin{enumerate}[(i)]
\item\label{extremal-pairwise-intersecting} $e_1, \dots, e_{2t}$ are pairwise intersecting and
\item\label{extremal-useful-pairs} $\{e_{2i - 1}, e_{2i}\}$ is useful for $i\in[t]$.
\end{enumerate}
\end{claim}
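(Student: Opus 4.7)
The strategy is to select all $2t$ edges from $A^{++} := A^+ \setminus V(N)$. Since $N$ is a maximal matching in $\overline{L(\cH)}$, the set $A^{++}$ is pairwise intersecting, so~(\ref{extremal-pairwise-intersecting}) is automatic. Combining $|N| \leq t - 1$ with~\eqref{eqn:sizeN} gives $|A^{++}| \geq |A^+|(1 - 10/\sqrt n)$, which by~\eqref{eqn:bounda+} comfortably exceeds $2t$ for large $n$. All of the work lies in~(\ref{extremal-useful-pairs}): arranging the selected edges into $t$ useful pairs. By Proposition~\ref{na-prop}, two intersecting edges of $A^+$ form a useful pair whenever their common vertex $w$ is not ``bad'', where $w$ is bad if more than $1/(3\delta)$ edges of $A^-$ pass through it. Letting $Z$ denote the set of bad vertices, a double-counting of (small-edge, vertex) incidences yields $|Z| \leq 3\delta(k-1)|A^-|$, which via~\eqref{eqn:bounda+} is at most $45\delta|A^+|$.

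The first structural step is to rule out that $A^{++}$ is a star. If all edges of $A^{++}$ shared a common vertex $w^*$, then linearity of $\cH$ together with $|e| \geq (1-\delta)\sqrt n$ would force $|A^{++}| \leq d_\cH(w^*) \leq (n-1)/((1-\delta)\sqrt n - 1) < 2\sqrt n$, contradicting $|A^+| > 20\sqrt n$. Hence there exist three edges $f_1, f_2, f_3 \in A^{++}$ with empty common intersection, and comparing any further $g \in A^{++}$ to this triple using linearity of $\cH$ bounds $d^+(w) := |\{e \in A^{++} : w \in e\}| \leq k$ for every vertex $w \in V(\cH)$. It then remains to find a matching of size $t$ in the auxiliary graph $H^*$ on $A^{++}$ whose edges are pairs $\{e, e'\}$ with $e \cap e' \notin Z$. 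A substar argument controls its independence number: if $I \subseteq A^{++}$ is independent in $H^*$ and $e \in I$, then $I \setminus \{e\}$ partitions into substars through the vertices of $e \cap Z$ (each of size at most $d^+(w) \leq k$), so $|I| \leq k|e \cap Z| + 1$. Summing over $e \in I$ and using $\sum_{e \in I} |e \cap Z| \leq |Z| k$ yields $|I|(|I|-1) \leq k^2 |Z|$, hence $\alpha(H^*) = O(\sqrt{\delta n |A^+|})$.

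Combined with the standard bound $\nu(H^*) \geq (|A^{++}| - \alpha(H^*))/2$, a short calculation produces the required $t$ useful pairs whenever $|A^+| = \Omega(\delta n)$. The main obstacle is the complementary regime, in which $|A^+|$ is close to its lower bound $20\sqrt n$ and the matching bound above is tight. Here I would argue more directly, picking $e^* \in A^{++}$ that minimises $|e^* \cap Z|$ and pairing up edges of $A^{++}$ within the substars of $e^*$ indexed by the vertices of $e^* \setminus Z$. Each such pair meets at a common good vertex, so it is useful by Proposition~\ref{na-prop}, and the identity $\sum_{v \in e^*} d^+(v) = |A^{++}| + k - 1$ together with an averaging bound $|e^* \cap Z| \leq |Z| k / |A^{++}|$ suffices to show that at least $(|A^{++}| - k - 1 - (k - 2)|e^* \cap Z|)/2 \geq t$ such pairs are produced.
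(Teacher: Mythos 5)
Your decomposition into the ``large $|A^+|$'' and ``small $|A^+|$'' regimes leaves a gap: neither of your two arguments covers the range $20\sqrt n < |A^+| = o(\delta n)$, which is a valid possibility since $\sqrt n \ll \delta n$ for large $n$.

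For the matching argument, your bound $\alpha(H^*) = O(\sqrt{\delta n |A^+|})$ gives $\nu(H^*) \ge t$ only once $|A^+| = \Omega(\delta n)$, as you acknowledge. But the fallback star argument has exactly the same threshold. Your averaging bound is $|e^* \cap Z| \le |Z|k/|A^{++}|$, and with $|Z| = O(\delta |A^+|)$ and $|A^{++}| = \Theta(|A^+|)$ this only gives $|e^* \cap Z| = O(\delta k)$, hence $(k-2)|e^* \cap Z| = O(\delta k^2) = O(\delta n)$. Your lower bound on the number of pairs is therefore $\bigl(|A^{++}| - O(\delta n)\bigr)/2$, which is negative unless $|A^{++}| = \Omega(\delta n)$. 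So the ``complementary regime'' in your sense is not the range where $|A^+|$ is close to $20\sqrt n$, but only a bounded-constant-factor window around $\delta n$; for $|A^+|$ genuinely small (say $|A^+| = n^{3/5}$ or $|A^+| = \sqrt{\delta} n / \log n$), neither branch applies.

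The paper avoids this by doing two things you do not. First it defines $A^* := \{e \in A^+ : |e \cap V_{\mathrm{bad}}| \ge \sqrt{\delta n}\}$ and removes it, showing $|A^*| \le |A^+|/20$; this guarantees every remaining edge has at least $k - \sqrt{\delta n} \ge \sqrt n/2$ \emph{good} vertices, which is a per-edge statement rather than an average. Second, and crucially, it does not fix a single $e^*$ and count only incidences along $e^*$. Instead it iterates: at each step it counts incidences between the surviving set $S_i \subseteq A^+ \setminus (A^* \cup V(N))$ and \emph{all} good vertices of $V(\cH)$, giving $\sum_{w \notin V_{\mathrm{bad}}} d_{S_i}(w) \ge |S_i|\cdot \sqrt n / 2 > 2n$ as soon as $|S_i| \ge 5\sqrt n$. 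Since some good vertex then has $d_{S_i}(w) \ge 2$, one useful pair is extracted, two edges are removed, and the count $|S_i| \ge 5\sqrt n$ is maintained for $\lceil |A^+|/4 \rceil > t$ iterations. The denominator in that incidence count is $n$ (the ambient vertex set), not $|A^{++}|$, which is why the argument survives when $|A^+|$ is as small as $20\sqrt n$. Your star-around-$e^*$ approach inherently caps the number of incidences at $|A^{++}|-1$ and so cannot beat the $\Theta(\delta n)$ obstruction from the bad vertices of $e^*$; to close the gap you would essentially need to import the paper's iterative, global counting step.
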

\begin{claimproof}
Since $N$ is maximal, $\cH\setminus V(N)$ is a clique in $L$ (recall that $V(N)$ is a subset of edges in $\cH$).  We choose $e_1, \dots, e_{2t}$ in $A^+\setminus V(N)$, which will ensure that~\ref{extremal-pairwise-intersecting} holds.

For each $x \in V(\cH)$, let $A_x \coloneqq \{e \in A^- \: : \: x \in e \}$, and let $V_{\rm bad} \coloneqq \{ x \in V(\cH) \: : \: |A_x| \geq (4\delta)^{-1} \}$.  Thus, if $e, f\in\cH$ are distinct and intersecting such that $w\notin V_{\rm bad}$ where $w\in e\cap f$, then $\{e, f\}$ is useful by Proposition~\ref{na-prop}\ref{na-prop:m-bound}.  We choose $e_1, \dots, e_{2t}$ such that $w\in e_{2i - 1}\cap e_{2i}$ satisfies $w\notin V_{\rm bad}$, which will ensure that~\ref{extremal-useful-pairs} holds.  Let $\mathcal{P} \coloneqq \{(w,e) \: : \: \textrm{$w \in V_{\rm bad}$ and $w \in e \in A^-$}\}$, and note that $|V_{\rm bad}| \cdot (4 \delta)^{-1} \leq |\mathcal{P}| \leq (k-1) |A^-| \leq 2 \sqrt{n} |A^-|$ by \eqref{eqn:rangen-crude-upper}. Thus,
\begin{equation}\label{claim:nobad}
    |V_{\rm bad}| \leq 8 \delta |A^-| \sqrt{n}.
\end{equation}

Now let $A^* \coloneqq \{e \in A^+ \: : \: |e \cap V_{\rm bad}| \geq \sqrt{\delta n} \}$, and note that since every vertex is contained in at most $(n - 1) / (k - 1)$ edges of $A^+$, we have
\begin{equation*}
    |A^*| \sqrt{\delta n} \leq \left|\left\{(x, e) : x \in e \cap V_{\rm bad}, e \in A^* \right\}\right| \leq |V_{\rm bad}| \frac{n-1}{k - 1} \overset{\eqref{claim:nobad}}{\leq} 16 \delta n |A^-|.
\end{equation*}
Therefore,
\begin{equation}\label{eqn:bounda*}
    |A^*| \leq 16 \sqrt{\delta n} |A^-| \overset{\eqref{eqn:bounda+}}{\leq} \frac{|A^+|}{20},
\end{equation}
and thus,
\begin{equation}\label{eqn:bounda+_2}
|A^+ \setminus (A^* \cup V(N))| \geq |A^+| - |A^*| - |V(N)| \overset{\eqref{eqn:sizeN},\eqref{eqn:bounda*}}{\geq} \frac{9|A^+|}{10} \overset{\eqref{eqn:bounda+}}{\geq} 18 \sqrt{n}.
\end{equation}

Now we iterate the following procedure for $i \in [t']$, where $t' \coloneqq \lceil |A^+| / 4 \rceil$. Suppose we have chosen  distinct $e_1 , \dots , e_{2(i-1)} \in A^+ \setminus (A^* \cup V(N))$ such that $\{e_{2j-1} , e_{2j} \}$ is useful for each $ j \in [i - 1]$. Consider the set $S_i \coloneqq A^+ \setminus \left(A^* \cup V(N) \cup \{e_1 , \dots , e_{2(i-1)} \}\right)$,
which has size
\begin{equation}
\label{eq:size:Si}
    |S_i| \geq |A^+ \setminus (A^* \cup V(N))| - 2(t'-1) \overset{\eqref{eqn:bounda+_2}}{\geq} \frac{9|A^+|}{10} - \frac{|A^+|}{2} \overset{\eqref{eqn:bounda+}}{\geq} 5 \sqrt{n}.
\end{equation}
We first show that there exists a useful pair $\{e_{2i-1}, e_{2i}\} \subseteq S_i$. 
For any $e \in S_i$, we have $|e \cap V_{\rm bad}| \le \sqrt{\delta n}$ since $S_i \subseteq A^{+} \setminus A^*$. Therefore, letting $\mathcal{P}_i \coloneqq \{(w,e) : e \in S_i,\: w \in e\setminus V_{\rm bad}\}$, we have the following.

\begin{align*}
    \frac{1}{|V(\cH) \setminus V_{\rm bad}|}\sum_{w \in V(\cH) \setminus V_{\rm bad}} d_{S_i}(w) &= \frac{|\mathcal{P}_i|}{|V(\cH) \setminus V_{\rm bad}|} \geq \frac{|\mathcal{P}_i|}{n}
    = \frac{1}{n} \sum_{e \in S_i} |e \setminus V_{\rm bad}| \\&\geq \frac{|S_i| (k - \sqrt{\delta n})}{n} \overset{\eqref{eq:size:Si},\eqref{eqn:rangen}}{\geq} \frac{5 \sqrt{n} \cdot \sqrt{n}/2}{n} > 2.
\end{align*}
Note that $V(\cH) \setminus V_{\rm bad} \ne \varnothing$ since $|e \setminus V_{\rm bad}| \geq (1-\delta-\sqrt{\delta})\sqrt{n}$ for each $e \in A^+ \setminus A^*$, and $A^+ \setminus A^* \ne \varnothing$ by \eqref{eqn:bounda+_2}.
Thus, the inequality above implies that there exists a vertex $w \in V(\cH) \setminus V_{\rm bad}$ and distinct edges $e_{2i-1},e_{2i} \in S_i$ such that $w \in e_{2i-1} \cap e_{2i}$, and by Proposition~\ref{na-prop}\ref{na-prop:m-bound}, the pair $\{e_{2i-1},e_{2i} \} \subseteq S_i$ is useful. 

The above procedure constructs a useful pair $\{e_{2i-1} , e_{2i} \} \subseteq A^+ \setminus (A^* \cup V(N))$ for each $i \in [t']$.  Recall that since $N$ is maximal, the elements of $A^+ \setminus V(N)$ are pairwise intersecting.  Since $t' \geq |A^+|/4 > t = e(\cH) - n$ by~\eqref{eqn:sizeN}, $e_1, \dots, e_{2t}$ satisfy~\ref{extremal-pairwise-intersecting} and~\ref{extremal-useful-pairs}, as claimed.
\end{claimproof}

Now by combining Claim~\ref{claim:useful-pairs} and Proposition~\ref{lem:chromindex0}, there is a proper edge-colouring of $\cH$ with $n$ colours such that each colour is assigned to at most two edges, which completes the proof of Lemma~\ref{extremal-case-lemma}.
\end{proof}

\section{Colouring large and medium edges}\label{large-edge-section}

The main result of this section is the following, which we use in the proof of Theorem~\ref{main-thm} to colour large and medium edges.  

\begin{theorem}\label{large-edge-thm}
  Let $0 < 1/n_0 \ll 1/r_0 \ll  1/r_1, \beta \ll \gamma_1 \ll \sigma \ll \delta \ll \gamma_2\ll 1$, and let $n \geq n_0$.
  If $\mathcal H$ is an $n$-vertex linear hypergraph where every $e\in \cH$ satisfies $|e| > r_1$, then at least one of the following holds:
  \begin{enumerate}[(\ref{large-edge-thm}:a), topsep = 6pt]
    \item There exists a proper edge-colouring of $\mathcal H$ using at most $(1 - \sigma)n$ colours such that
    \begin{enumerate}[(i)]
    \item\label{non-extremal-huge-edge} every colour assigned to a huge edge is assigned to no other edge,
    \item\label{non-extremal-medium-bound} every medium edge is assigned a colour from a set $C_{\mathrm{med}}$ of size at most $\gamma_1 n$ such that for every $c\in C_{\mathrm{med}}$, at most $\gamma_1 n$ vertices are incident to an edge coloured $c$, and
    \item\label{non-extremal-colour-class-bound} for every colour $c\notin C_{\mathrm{med}}$ not assigned to a huge edge, at most $\beta n$ vertices are incident to an edge coloured $c$.

    \end{enumerate}\label{non-extremal-colouring}
  \item There exists a set of FPP-extremal edges of volume at least $1 - \delta$ and a proper edge-colouring of $\cH$ using at most $n$ colours such that
    \begin{enumerate}[(i)]
    \item\label{extremal-huge-edge} for every colour $c$ assigned to a huge edge, at most $\delta n$ vertices are incident to an edge coloured $c$,
    \item\label{extremal-medium-bound} every medium edge is assigned a colour from a set $C_{\mathrm{med}}$ of size at most $\gamma_2 n$ such that for every $c\in C_{\mathrm{med}}$, at most $\gamma_1 n$ vertices are incident to an edge coloured $c$, and
    \item\label{extremal-colour-class-bound} for every colour $c\notin C_{\mathrm{med}}$ not assigned to a huge edge, at most $\beta n$ vertices are incident to an edge coloured $c$.
    \end{enumerate}\label{extremal-colouring}
  \end{enumerate}
\end{theorem}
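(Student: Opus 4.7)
The plan is to combine an edge-colouring of the medium and huge edges with a greedy edge-colouring of the non-huge large edges on a carefully reordered size-monotone sequence. First, I would assign each of the at most $8/\beta$ huge edges (by \eqref{eqn:huge-edge-bound}) its own fresh colour, handling \ref{non-extremal-huge-edge} and the huge-edge portion of \ref{extremal-huge-edge}. For medium edges, since every $e\in\cH_{\rm med}$ satisfies $|e| > r_1$, linearity yields $\Delta(\cH_{\rm med}) \leq n/r_1$, so Theorem~\ref{thm:kahn} produces a proper edge-colouring of $\cH_{\rm med}$ on a palette $C_{\rm med}$ of size at most $2n/r_1 \ll \gamma_1 n$; a balancing step (splitting each colour class further so that each matching contains at most $\gamma_1 n / r_0$ medium edges) ensures each colour in $C_{\rm med}$ covers at most $\gamma_1 n$ vertices. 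The remaining budget of $(1-\sigma)n - |C_{\rm med}|$ (in case~\ref{non-extremal-colouring}) or $n - |C_{\rm med}|$ (in case~\ref{extremal-colouring}) fresh colours is used for the non-huge large edges.

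Let $\mathcal{L}$ denote the non-huge large edges. I would order $\mathcal{L}$ as $e_1 \ordering e_2 \ordering \cdots$ by non-increasing size, so that a greedy colouring along $\ordering$ uses at most $\max_i \fwddeg_\cH(e_i) + 1$ colours. Processing edges from last to first, I would declare an edge $e^*$ \emph{fixed} if $\fwddeg_\cH(e^*) < (1 - \sigma)n$; otherwise I would attempt to \emph{promote} some preceding neighbour $f$ of $e^*$ with $\fwddeg_\cH(f) < (1-\sigma)n$ by relocating it to the position just after $e^*$, which strictly reduces $\fwddeg_\cH(e^*)$ while leaving every already-fixed edge still fixed. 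If the procedure eventually fixes every edge, greedy yields case~\ref{non-extremal-colouring} directly. The crucial technical ingredient, and the main obstacle, is a \emph{reordering lemma}: if the procedure halts at some $e^*$ that resists fixing, then there is a set $W \subseteq \cH$ with $e^* \in W$ such that (W1) every $e \in W$ satisfies $|e| = (1 \pm o(1))|e^*|$, and (W2) $\vol_\cH(W) \geq 1 - O(\delta)$. Its proof is a double-counting argument: the failure to promote any preceding neighbour $f$ of $e^*$ forces every such $f$ to have $\fwddeg_\cH(f) \geq (1-\sigma)n$, which in turn forces $|f|$ to be close to $|e^*|$ via a volume comparison; iterating on each such $f$'s own predecessors produces $W$.

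If the reordering lemma triggers the case $|e^*| \leq (1-\delta)\sqrt n$, then by linearity together with (W1), $L(W)$ has maximum degree at most $n - 1$ while each neighbourhood in $L(W)$ contains only $O(|e^*|^2) \leq (1-\Omega(\delta))\binom{n-1}{2}$ edges, so $L(W)$ satisfies the hypothesis of Theorem~\ref{local-sparsity-lemma}, and I would apply it to colour $L(W)$ using at most $((999 + |e^*|/\sqrt n)/1000)n$ colours, a substantial saving over the naive $(1-\sigma)n$-bound. By (W2), the edges of $\mathcal{L} \setminus W$ have total volume at most $O(\delta)$; these would be coloured by re-running the fixing-and-promotion procedure on them with a reduced threshold of order $((1 - |e^*|/\sqrt n)/1000 - \sigma)n$, and a second application of the reordering lemma must now reach the first edge of $\ordering$ because otherwise the new ``bottleneck set'' $W'$ would be disjoint from $W$ and their volumes would together exceed $1$, contradicting linearity. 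Thus $\mathcal{L}$ is coloured within $(1-\sigma)n$ total colours, giving case~\ref{non-extremal-colouring}.

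Finally, if $|e^*| \geq (1-\delta)\sqrt n$, then by (W1) all edges of $W$, and by the size-monotone ordering everything preceding them, have size at least $(1-o(1))\sqrt n$; moreover $|e^*|$ must in fact lie in $[(1-\delta)\sqrt n, (1+\delta)\sqrt n]$ since $|e^*| > (1+\delta)\sqrt n$ would force $|W| \leq n/(1+\delta)^2$ by volume, enabling a case~\ref{non-extremal-colouring}-style argument to succeed instead. Hence $W \subseteq \cH_{\mathrm{ex}}$, and property (W2) certifies the FPP-extremal volume bound required by case~\ref{extremal-colouring}. I would then apply Lemma~\ref{extremal-case-lemma} to the sub-hypergraph $\cH^{\geq}\coloneqq\{e\in\cH:|e|\geq (1-\delta)\sqrt n\}$ to obtain a proper edge-colouring of $\cH^{\geq}$ on at most $n$ colours with each colour used on at most two edges; since each non-huge edge of $\cH^{\geq}$ has size at most $\beta n/4$, every non-huge colour class covers at most $\beta n / 2 \leq \beta n$ vertices, verifying \ref{extremal-colour-class-bound}. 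The remaining large edges of size in $(r_0, (1-\delta)\sqrt n)$ all follow $e^*$ in $\ordering$ and hence have forward degree strictly less than $(1-\sigma)n \leq n - |C_{\rm med}|$ by the halting condition, so they are inserted greedily into the existing palette without conflict, completing the proof.
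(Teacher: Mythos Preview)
Your high-level strategy matches the paper's: reorder by size, identify a ``stuck'' edge $e^*$ whose failure to fix forces a large-volume set $W$ of edges near size $|e^*|$, then branch on whether $|e^*|$ lies below or above $(1-\delta)\sqrt n$ and use Theorem~\ref{local-sparsity-lemma} or Lemma~\ref{extremal-case-lemma} accordingly. This is exactly the outline in Section~\ref{subsection:large-edge-overview}. However, two genuine execution gaps remain.

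\textbf{The $\beta$-boundedness of colour classes (property (iii)) is never established.} Your greedy colouring of $\mathcal L$ along $\ordering$ produces matchings, but a matching of edges of size $\leq \beta n/4$ may still cover $\Theta(n)$ vertices. Likewise, the Molloy--Reed colouring of $L(W)$ gives no bound on vertex-coverage of colour classes. The paper handles this via two auxiliary results that you do not invoke: Proposition~\ref{few-large-colour-classes}, which shows that few colour classes can be heavy and lets one split each heavy class at a cost of $O(n/(\alpha^2 r))$ extra colours, and Proposition~\ref{greedy-colouring-prop}, which runs greedy while explicitly avoiding colours that have already become $\alpha$-heavy (this works because at most $2n/(\alpha r)$ colours can ever be heavy, leaving room in the list). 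Without these, (a)(iii) and (b)(iii) are unproved.

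\textbf{The colour budget in case~\ref{extremal-colouring} does not close.} You commit fresh colours to $C_{\rm med}$ and to the huge edges \emph{before} deciding which case you are in, and then apply Lemma~\ref{extremal-case-lemma} to $\cH^{\geq}$, which may consume all $n$ colours. The total then exceeds $n$. The paper avoids this by, in the FPP-extremal case, \emph{reusing} colours from the Lemma~\ref{extremal-case-lemma} palette for the medium edges: it applies the reordering lemma a \emph{first} time with a very loose threshold ($\tau = 1 - \gamma_2/3$) to isolate a set $\cH_1^{\rm good}$ (which contains all medium edges) whose forward degree is at most $\gamma_2 n/2$, and then colours $\cH_1^{\rm good}$ from a designated subset $C_{\rm med}$ of the existing $n$-colour palette via Proposition~\ref{greedy-colouring-prop}, using list constraints to avoid conflicts with $\phi_1$. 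This also forces $|C_{\rm med}|$ up to $\gamma_2 n$ rather than $\gamma_1 n$, matching the asymmetry in the statement of~\ref{extremal-medium-bound}. Your single-threshold fixing procedure cannot recover this; the paper uses \emph{three} applications of the reordering lemma with different $(\tau,K)$ to layer the thresholds.

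Finally, the reordering lemma itself is the heart of the argument, and your sketch (``failure to promote forces $|f|\approx|e^*|$; iterate'') is correct in spirit but suppresses the quantitative trade-off between the size-window parameter and the volume lower bound (Lemma~\ref{reordering-lemma} makes this explicit via $\tau$ and $K$), which is precisely what lets the paper calibrate the three applications.
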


Note that every linear hypergraph $\cH$ satisfies $\vol_{\cH}(\cH) \leq 1$, so in~\ref{extremal-colouring}, the FPP-extremal edges contain almost all of the pairs of vertices.

We now deduce Theorem~\ref{main-thm-2} from Theorem~\ref{thm:kahn} and Theorem~\ref{large-edge-thm}. Note that we don't need the additional properties \ref{non-extremal-huge-edge}--\ref{extremal-colour-class-bound} for this.

\begin{proof}[Proof of Theorem~\ref{main-thm-2}]
Without loss of generality, we may assume that $\delta$ is sufficiently small. Let $0 < 1/n_0 \ll 1/r_0 \ll 1/r_1, \beta \ll \gamma \ll \sigma \ll \delta \ll 1$, and recall $\cH_{\rm small}$, $\cH_{\rm med}$, $\cH_{\rm large}$, and $\cH_{\rm ex}$  were defined in Definition~\ref{def:edgesize}.
By assumption, $e(\cH_{\rm ex}) \leq (1-3\delta)n$, so
\begin{equation*}
   \vol_{\cH}(\cH_{\rm ex}) \leq (1-3\delta)n \cdot \binom{(1+\delta)\sqrt{n}}{2} \binom{n}{2}^{-1} \leq \frac{(1-3\delta)(1+\delta)^2 n}{n-1} < 1 - \delta.
\end{equation*}
Hence, applying Theorem~\ref{large-edge-thm} with $\cH_{\rm med} \cup \cH_{\rm large}$, $\gamma$, $\sigma$, and $\delta$ playing the roles of $\cH$, $\gamma_1$, $\sigma$, and $\delta$, respectively, we obtain a proper edge-colouring $\phi' : \cH_{\rm med} \cup \cH_{\rm large} \to [(1-\sigma)n]$ and $C_{\rm med} \subseteq [(1-\sigma)n]$ such that every $e \in \cH_{\rm med}$ satisfies $\phi'(e) \in C_{\rm med}$ and $|C_{\rm med}| \leq \gamma n$.

For every $e \in \cH_{\rm small}$, let $C(e) := \{ \phi'(f) \: : \: f \in \cH_{\rm large}\:,\:e \cap f \ne \varnothing \}$. Note that for each vertex $w \in V(\cH)$, there are at most $2 r_0^{-1} n$ edges of $\cH_{\rm large}$ incident to $w$. Therefore, for every $e \in \cH_{\rm small}$, there are at most $2r_1 r_0^{-1} n$ edges $f \in \cH_{\rm large}$ such that $e \cap f \ne \varnothing$. Hence, $|C(e)| \leq \beta n$. So applying Theorem~\ref{thm:kahn} with $\cH_{\rm small}$, $3 \beta$, $r_1$, $\lfloor (1-\delta)n \rfloor$, $[(1-\sigma)n] \setminus C_{\rm med}$ playing the roles of $\cH$, $\alpha$, $r$, $D$, $C$, respectively, we obtain a proper edge-colouring $\phi'' : \cH_{\rm small} \to [(1-\sigma)n] \setminus C_{\rm med}$ such that for every $e \in \cH_{\rm small}$, $\phi''(e) \not \in C(e)$, which implies that $\phi''(e) \ne \phi'(f)$ for every $f \in \cH_{\rm large}$ with $e \cap f \ne \varnothing$. Hence $\phi := \phi' \cup \phi'' : \cH \to [(1-\sigma)n]$ is a proper edge-colouring, as desired.
\end{proof}

\subsection{Reordering}

If $\ordering$ is a linear ordering of the edges of a hypergraph $\cH$, for each $e\in\cH$, we define $\fwdnbr_\cH(e)\coloneqq \{f \in N_\cH(e) : f\ordering e \}$ and $\fwddeg_\cH(e)\coloneqq |\fwdnbr_\cH(e)|$.  We omit the subscript $\cH$ when it is clear from the context. 
For each $e\in \cH$, we also let $\cH^{\ordering e} \coloneqq \{f\in \cH : f \ordering e\}$.  The main result of this subsection is the following key lemma, which we use to find the ordering of the edges of $\cH$ mentioned in Section~\ref{subsection:large-edge-overview}.

\begin{lemma}[Reordering lemma]\label{reordering-lemma}
  Let $0 < 1/r_1 \ll \tau, 1/K$ where $\tau < 1$, $K \geq 1$, and $1 - \tau - 7\tau^{1/4}/K > 0$. 
  If $\cH$ is an $n$-vertex linear hypergraph where every $e\in \cH$ satisfies $|e| \geq r_1$, then there exists a linear ordering $\ordering$ of the edges of $\cH$ such that at least one of the following holds.
  \begin{enumerate}[(\ref{reordering-lemma}:a), topsep = 6pt]
  \item Every $e\in\cH$ satisfies $\fwddeg(e) \leq (1 - \tau)n$.\label{reordering-good}
  \item There is a set $W\subseteq \cH$ such that
    \begin{enumerate}[(W1)]
    \item\label{W-max-size} $\max_{e\in W}|e| \leq (1 + 3\tau^{1/4}K^{4})\min_{e\in W}|e|$ and
    \item\label{W-volume} $\vol_{\cH}(W) \geq \frac{(1 - \tau - 7\tau^{1/4}/K)^2}{1 + 3\tau^{1/4}K^4}$.
    \end{enumerate}
    Moreover, if $e^*$ is the last edge of $W$, then
    \begin{enumerate}[(O1)]
    \item for all $f\in \cH$ such that $e^*\ordering f$ and $f\neq e^*$, we have $\fwddeg(f) \leq (1 - \tau)n$ and\label{ordering-goodness}
    \item for all $e,f\in \cH$ such that $f\ordering e \ordering e^*$, we have $|f| \geq |e|$\label{ordering-by-size} (and in particular, $|e^*| = \min_{e \in W}|e|$).
    \end{enumerate}
    \label{reordering-volume}
  \end{enumerate}
\end{lemma}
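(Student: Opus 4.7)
\emph{Plan.}
The proof is algorithmic. Initialise $\ordering$ as a size-monotone linear ordering of the edges of $\cH$ (edges in non-increasing order of size, ties broken arbitrarily) and process edges one at a time from the last to the first. At each step, let the \emph{current edge} $e$ be the last unprocessed edge: if $\fwddeg(e) \leq (1-\tau)n$, declare $e$ fixed and recurse on its predecessor. Otherwise, search for a \emph{mobile} intersecting predecessor $f$, meaning $f \ordering e$, $f \cap e \neq \varnothing$, and $f$ is intersected by fewer than $(1-\tau)n$ edges of $\cH^{\ordering e} \setminus \{f\}$. If such an $f$ exists, move it to the position immediately after $e$; the move lowers $\fwddeg(e)$ by one, and since the mobility condition bounds $f$'s total intersections in $\cH^{\ordering e}$ while the move adds only $e$ to $f$'s new forward-neighbourhood, $f$'s new forward-degree is at most $(1-\tau)n$. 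Continue processing $e$ until it becomes fixed or no mobile $f$ remains.

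If every edge is eventually fixed, the output ordering satisfies case~(a). Otherwise the algorithm halts at some current edge $e^*$. Conditions~(O1) and~(O2) are immediate: every edge following $e^*$ in the current $\ordering$ is already processed and hence has forward-degree at most $(1-\tau)n$; and every edge of $\cH^{\ordering e^*}$ retains its initial size-monotone position, since moves only push edges later, so every $f \ordering e^*$ satisfies $|f| \geq |e^*|$. By the halting condition, every intersecting predecessor $f$ of $e^*$ is \emph{heavy} in the sense that it has at least $(1-\tau)n$ intersecting edges in $\cH^{\ordering e^*}\setminus\{f\}$, and there are more than $(1-\tau)n$ such heavy edges. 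The hypothesis $1/r_1 \ll \tau$ forces $|e^*|$ to be of order at least $r_1$, since otherwise $e^*$ could not have that many intersecting edges under the vertex-degree bound $(n-1)/(r_1-1)$.

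To build $W$, let $H \subseteq \cH^{\ordering e^*}\cup\{e^*\}$ consist of $e^*$ together with all heavy intersecting predecessors of $e^*$, and let $H^+ \coloneqq H \cup \bigcup_{f \in H}\{g\in \cH^{\ordering e^*}: g\cap f \neq \varnothing\}$. Every $g \in H^+$ satisfies $|g| \geq |e^*|$ by~(O2). A double-counting of incidences $(f,g)$ with $f\in H$ and $g\in H^+$ such that $f\cap g\neq\varnothing$ (each $f\in H$ contributing $\geq (1-\tau)n$ such incidences, and each incidence being controlled above by linearity of $\cH$) yields $\vol_{\cH}(H^+) \geq 1 - \tau - 7\tau^{1/4}/K$. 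Discretise the sizes in $H^+$ into a two-level bucketing: $\Theta(K)$ outer buckets (to discard outer-volume outliers) and $\lceil 1/\tau^{1/4}\rceil$ inner buckets of geometric ratio $1+3\tau^{1/4}K^4$ inside the chosen outer bucket. A pigeonhole argument extracts an inner bucket $W$ containing $e^*$ that retains at least a $1/(1+3\tau^{1/4}K^4)$ fraction of $\vol(H^+)$, giving~(W2). The size-ratio condition~(W1) follows from the bucket width, and $e^*$ is automatically the last edge of $W$ since, by~(O2), it is the smallest-sized element of $H^+$ and comes after every element of $\cH^{\ordering e^*}$ in $\ordering$.

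The principal obstacle is obtaining~(W2) with the stated constants. The squared expression $(1 - \tau - 7\tau^{1/4}/K)^2$ reflects two independent volume losses: a first loss in establishing a lower bound on $\vol(H^+)$ via the heavy structure, and a second loss from concentrating that volume in one inner bucket. Matching the specific polynomial exponents $K^4$ and $\tau^{1/4}$ requires a delicate balance between the number of outer buckets, the number of inner buckets, and the inner bucket width, which is the technical heart of the argument. A subsidiary concern is verifying that the move operation preserves the forward-degree invariant, which relies crucially on the mobility condition being formulated in terms of $f$'s intersections in $\cH^{\ordering e^*}$ rather than in its current forward-neighbourhood.
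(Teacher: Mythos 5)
Your algorithmic framework (start with a size\nobreakdash-monotone ordering, fix edges from the last one backwards, halt at $e^*$ when no mobile intersecting predecessor remains) is essentially the paper's argument — the paper just shortcuts it by choosing, among all orderings satisfying (O1)--(O2) for some $e^*$, one that minimizes $e(\cH^{\ordering e^*})$, and your observation that the halting condition forces every intersecting predecessor of $e^*$ to meet more than $(1-\tau)n$ edges of $\cH^{\ordering e^*}$ is exactly the paper's equation preceding the definition of $X$. The construction of $W$, however, has a genuine gap at the step you flagged yourself.

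First, the double\nobreakdash-counting you sketch for $\vol_\cH(H^+)\geq 1-\tau-7\tau^{1/4}/K$ does not deliver that bound. Writing $r:=|e^*|$ and using (O2), every $f\in H$ and every $g\in H^+$ has size $\geq r$, so by linearity each fixed $g$ meets at most roughly $|g|n/r$ edges of size $\geq r$. The incidence count therefore only yields $\sum_{g\in H^+}|g|\gtrsim (1-\tau)^2 nr$, hence $\vol_\cH(H^+)\gtrsim (1-\tau)^2 r^2/n$, which is close to $1$ only in the FPP\nobreakdash-extremal regime $r\approx\sqrt n$ and can otherwise be tiny. Second, the pigeonhole extraction is wrong: with $\Theta(K/\tau^{1/4})$ buckets, pigeonhole only guarantees that \emph{some} bucket carries an $\Omega(\tau^{1/4}/K)$ fraction of the volume — far short of the claimed $1/(1+3\tau^{1/4}K^4)$ — and there is no reason that bucket is the one containing $e^*$, which by (O2) is necessarily the bucket of smallest sizes.

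The paper gets around both issues by not introducing $H^+$ at all: it defines $W:=\{f\ordering e^*:|f|\leq(1+3\tau^{1/4}K^4)r\}$ directly and bounds $|W|$ from below via a two\nobreakdash-stage double count built on Proposition~\ref{fwdnbr-prop}. Stage one: by Proposition~\ref{fwdnbr-prop}\eqref{fwddeg-far-bound} applied to $e^*$, the set $X$ of forward-neighbours of $e^*$ with $|e|\leq(1+K\sqrt\tau)r$ has $|X|\geq(1-\tau-2\sqrt\tau/K)n$. Stage two: for each $e\in X$, apply Proposition~\ref{fwdnbr-prop}\eqref{fwddeg-far-bound} \emph{again}, to $e$ rather than $e^*$, to show all but $6\tau^{1/4}n/K$ of $N(e)\cap\cH^{\ordering e^*}$ lies in $W$; combined with the bound $|N(e)\cap\fwdnbr(e^*)|\leq n/(r-1)+(1+K\sqrt\tau)r^2$ this lower-bounds $|N(e)\cap(W\setminus N(e^*))|$. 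The crucial closing ingredient — which is absent from your bucketing — is the reverse estimate $|N(f)\cap X|\leq|N(f)\cap N(e^*)|\leq|f||e^*|\leq(1+3\tau^{1/4}K^4)r^2$ for $f\in W\setminus N(e^*)$, valid precisely because $X\subseteq N(e^*)$. Dividing the two double-count sides gives $|W|\gtrsim n^2/r^2$, and multiplying by $\binom{r}{2}/\binom{n}{2}\approx r^2/n^2$ converts this to the stated constant volume bound. The $n^2/r^2$ gain over a trivial estimate is exactly what a pigeonhole over $O(K/\tau^{1/4})$ buckets cannot provide.
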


The main application of Lemma~\ref{reordering-lemma} will be with $\tau \ll 1$ and $K=1$ to obtain a set $W_2$ consisting of edges of similar size occupying most of the volume of $\mathcal{H}$ (see the proof of Theorem~\ref{large-edge-thm}). As discussed in Section~\ref{subsection:large-edge-overview}, $W_2$ can be coloured efficiently via Corollary~\ref{sparsity-corollary}.
Prior to this, we will also apply Lemma~\ref{reordering-lemma} with $\tau \sim 1$ and $1/K \ll 1$. The structural information gained from this application will help us to extend the above colouring to all of $\mathcal{H}$.
Note that if \ref{reordering-volume} holds in Lemma~\ref{reordering-lemma}, then \ref{ordering-by-size} implies that $|e| \geq |e^*|$, since $e^* \ordering e^*$.

The set $W$ in Lemma~\ref{reordering-lemma} is obtained via a double counting argument, which shows that if there is no ordering satisfying~\ref{reordering-good}, then we can (essentially) take $W$ to be the first and second neighbourhood (in the line graph of $\mathcal{H}$) of a suitable edge $e^*$.

\begin{proposition}\label{fwdnbr-prop}
  Let $\alpha_1, \alpha_2, \tau \geq 0$.  Let $\cH$ be an $n$-vertex linear hypergraph where every $e\in\cH$ satisfies $|e|\geq 1 + \alpha_2$.  Let $e\in \cH$, let $r\coloneqq |e|$, let $F_1 \subseteq \{f \in N(e) : |f| \geq (1 + \alpha_1)r\}$, let $F_2 \subseteq \{f \in N(e) : (1 + \alpha_1)r > |f| \geq r / (1 + \alpha_2)\}$, let $m_1 \coloneqq |F_1|$, and let $m_2 \coloneqq |F_2|$.  If $r > 1 + \alpha_2$, then
  \begin{equation}\label{fwddeg-inequality}\tag{i}
    (1 + \alpha_1)m_1 + \frac{m_2}{1 + \alpha_2} \leq n + \frac{(1 + \alpha_2)n}{r - 1 - \alpha_2}.
  \end{equation}
  Moreover, if $m_1 + m_2 \geq (1 - \tau)n$ and $\alpha_1 > 0$, then
  \begin{equation}\label{fwddeg-far-bound}\tag{ii}
    m_1 \leq \left(\tau + \frac{(1 + \alpha_2)(1 + \alpha_2 r)}{r - 1 - \alpha_2}\right)\frac{n}{\alpha_1}.
  \end{equation}
\end{proposition}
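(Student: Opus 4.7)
The plan is to derive both parts from a single double-counting bound on $\sum_{f \in N(e)} (|f|-1)$, exploiting the linearity of $\cH$.

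\textbf{Step 1 (hypergraph counting).} For any $v \in V(\cH) \setminus e$, two distinct edges $f, f' \in N(e)$ both containing $v$ must meet $e$ at different vertices (else $|f \cap f'| \geq 2$ contradicts linearity), so at most $|e| = r$ edges of $N(e)$ contain $v$. Summing over $v \notin e$ gives
\begin{equation*}
    \sum_{f \in N(e)} (|f|-1) \;=\; \sum_{v \in V(\cH)\setminus e} |\{f \in N(e) : v \in f\}| \;\leq\; r(n-r).
\end{equation*}
Since $r > 1+\alpha_2$, the lower bounds $|f|-1 \geq (1+\alpha_1)r - 1$ on the $m_1$ edges and $|f|-1 \geq (r-1-\alpha_2)/(1+\alpha_2)$ on the $m_2$ edges are nonnegative, and combining with the previous display yields the master inequality
\begin{equation*}
    m_1\bigl((1+\alpha_1)r - 1\bigr) + m_2 \cdot \frac{r-1-\alpha_2}{1+\alpha_2} \;\leq\; r(n-r).
\end{equation*}

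\textbf{Step 2 (deriving~\eqref{fwddeg-inequality}).} The identities $(1+\alpha_1)r = ((1+\alpha_1)r - 1) + 1$ and $r/(1+\alpha_2) = (r-1-\alpha_2)/(1+\alpha_2) + 1$ show that the left-hand side of the master inequality equals $r\bigl[(1+\alpha_1)m_1 + m_2/(1+\alpha_2)\bigr] - (m_1+m_2)$. Applying the weaker bound $|f| \geq r/(1+\alpha_2)$ uniformly across all $m_1+m_2$ edges in the Step~1 count gives $(m_1+m_2)(r-1-\alpha_2)/(1+\alpha_2) \leq r(n-r)$. Combining these and using $r/(r-1-\alpha_2) = 1 + (1+\alpha_2)/(r-1-\alpha_2)$ produces
\begin{equation*}
    (1+\alpha_1)m_1 + \frac{m_2}{1+\alpha_2} \;\leq\; \frac{(n-r)\, r}{r-1-\alpha_2} \;\leq\; n + \frac{(1+\alpha_2)\, n}{r-1-\alpha_2},
\end{equation*}
which is~\eqref{fwddeg-inequality}.

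\textbf{Step 3 (deriving~\eqref{fwddeg-far-bound}).} Rearranging~\eqref{fwddeg-inequality} as $\alpha_1 m_1 \leq n + (1+\alpha_2)n/(r-1-\alpha_2) - m_1 - m_2/(1+\alpha_2)$ and substituting $m_2 \geq (1-\tau)n - m_1$ (valid since $m_1+m_2 \geq (1-\tau)n$), then collecting $m_1$-terms on the left, I obtain
\begin{equation*}
    m_1 \Bigl[\alpha_1 + \frac{\alpha_2}{1+\alpha_2}\Bigr] \;\leq\; \frac{(\alpha_2+\tau)\, n}{1+\alpha_2} + \frac{(1+\alpha_2)\, n}{r-1-\alpha_2}.
\end{equation*}
Dropping the nonnegative summand $\alpha_2/(1+\alpha_2)$ on the left and dividing by $\alpha_1 > 0$ yields $m_1 \leq \bigl[(\alpha_2+\tau)/(1+\alpha_2) + (1+\alpha_2)/(r-1-\alpha_2)\bigr]\, n/\alpha_1$; a short computation then shows that the right-hand side of~\eqref{fwddeg-far-bound} dominates this bound, since the extra factor $(1+\alpha_2 r)$ in the second summand of the target supplies slack $(1+\alpha_2)\alpha_2 r / (r-1-\alpha_2) \cdot n/\alpha_1$ that absorbs the first-summand loss $\alpha_2(1-\tau)/(1+\alpha_2) \cdot n/\alpha_1$, using $(1+\alpha_2)^2 r \geq (1-\tau)(r-1-\alpha_2)$. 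The borderline case $(1-\tau)n - m_1 < 0$ is handled by substituting $m_2 \geq 0$ directly in~\eqref{fwddeg-inequality}, which yields $m_1 \leq nr/\bigl((1+\alpha_1)(r-1-\alpha_2)\bigr)$, again below the target.

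The only hypergraph content is the linearity-based count in Step~1; Steps~2 and~3 are elementary algebra. The main potential pitfall is verifying at the end of Step~3 that the advertised form of~\eqref{fwddeg-far-bound} is indeed loose enough to absorb the term dropped from the left via $\alpha_1 + \alpha_2/(1+\alpha_2) \geq \alpha_1$, but this reduces to the one-line inequality highlighted above.
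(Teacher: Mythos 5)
Your argument is correct and follows essentially the same route as the paper's: the linearity-based double-counting $\sum_{f\in N(e)}(|f|-1)\le r(n-r)$, the specialization to the $m_1,m_2$ weights to get the master inequality, and then the algebra to extract both parts. The only substantive difference is in Step~3, where you substitute $m_2\ge(1-\tau)n-m_1$ and collect the $m_1$-terms, obtaining $m_1\bigl[\alpha_1+\tfrac{\alpha_2}{1+\alpha_2}\bigr]\le \tfrac{(\alpha_2+\tau)n}{1+\alpha_2}+\tfrac{(1+\alpha_2)n}{r-1-\alpha_2}$ and then verifying this implies~\eqref{fwddeg-far-bound} via $(1+\alpha_2)^2 r\ge(1-\tau)(r-1-\alpha_2)$; the paper instead keeps $m_2$, weakens the coefficient $\tfrac{\alpha_2}{1+\alpha_2}\le\alpha_2$, and then bounds $m_2\le(1+\alpha_2)rn/(r-1-\alpha_2)$ to land directly on the stated form. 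Both routes are valid and the difference is cosmetic.

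One small correction: your closing remark about a ``borderline case $(1-\tau)n-m_1<0$'' is spurious, and the claim inside it is false. The substitution $m_2\ge(1-\tau)n-m_1$ is valid regardless of the sign of the right-hand side (when that quantity is negative the inequality follows trivially from $m_2\ge 0$), so your main derivation already covers every case and no separate handling is needed. Moreover, the fallback bound you offer, $m_1\le nr/\bigl((1+\alpha_1)(r-1-\alpha_2)\bigr)$, is \emph{not} in general below the target: with $\alpha_1=1$, $\alpha_2=\tau=0$, $r=10$ it gives $m_1\le 5n/9$, whereas~\eqref{fwddeg-far-bound} asserts $m_1\le n/9$. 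Since the case never actually arises in your argument this does no damage, but the sentence should be deleted; the rest of the proof stands on its own.
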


If $\ordering$ is an ordering of the edges of an $n$-vertex linear hypergraph $\cH$ satisfying $e \ordering f$ if $|e| > |f|$, then Proposition~\ref{fwdnbr-prop}\eqref{fwddeg-inequality} with $\alpha_1, \alpha_2, \tau = 0$ implies that every $e\in\cH$ with $|e| \geq 2$ satisfies $\fwddeg(e) \leq (1 + 1/(|e| - 1))n$.  This well-known fact immediately implies that every linear $n$-vertex hypergraph $\cH$ satisfies $\chi'(\cH) \leq 2n + 1$\COMMENT{In fact, using the more precise bound $\fwddeg(e) \leq |e|(n - |e|) / (|e| - 1)$, it is straightforward to show that if $\cH$ is simple, then $\chi'(\cH) \leq 2(n - 2) + 1 = 2n - 3$.  If $\cH$ contains singleton edges, then we have the bound $\chi'(\cH) \leq 2n - 2$.}, and if all of its edges have size at least $r_1\geq 2$, then $\chi'(\cH) \leq (1 + 1/(r_1 - 1))n + 1$.

\begin{proof}[Proof of Proposition~\ref{fwdnbr-prop}]
  There are exactly $r (n - r)$ pairs of vertices $\{u, v\}$ of $\cH$ where $u\notin e$ and $v\in e$.  Thus, since $\cH$ is linear, $\sum_{f \in N(e)} (|f| - 1) \leq r (n - r)$.  In particular,
  \begin{equation*}
    \sum_{f \in F_1} \left((1 + \alpha_1)r - 1\right) + \sum_{f \in F_2} \left(r / (1 + \alpha_2) - 1\right) \leq r (n - r).
  \end{equation*}
  Dividing both sides of this inequality by $r$ and rearranging terms, we obtain
  \begin{equation}\label{fwddeg-inequality1}
    (1 + \alpha_1)m_1 + m_2 / (1 + \alpha_2) \leq n - r + (m_1 + m_2)/r.
  \end{equation}
  Similarly, we have
  \begin{equation}\label{fwddeg-inequality2}
    m_1 + m_2 \leq \frac{r(n - r)}{r/(1 + \alpha_2) - 1} \leq \frac{(1 + \alpha_2)rn}{r - 1 - \alpha_2}.
  \end{equation}
  Substituting this inequality in the right side of~\eqref{fwddeg-inequality1}, we obtain~\eqref{fwddeg-inequality}, as desired.

  Now suppose additionally $m_1 + m_2 \geq (1 - \tau)n$ and $\alpha_1 > 0$.  By combining the former inequality with~\eqref{fwddeg-inequality}\COMMENT{The left side of~\eqref{fwddeg-inequality} is $m_1 + m_2 + \alpha_1 m_1 + (1/(1 + \alpha_2) - 1)m_2$, so if $m_1 + m_2 \geq (1 - \tau)n$, then $(1 - \tau)n + \alpha_1 m_1 + (1/(1 + \alpha_2) - 1)m_2$ is at most the right side of~\eqref{fwddeg-inequality}.}, we obtain $\alpha_1 m_1  \leq \tau n + n(1 + \alpha_2) / (r - 1 - \alpha_2) + (1 - 1/(1 + \alpha_2))  m_2$.  Since $1 - 1/(1 + \alpha_2) \leq \alpha_2$, we have $\alpha_1 m_1  \leq \tau n + n(1 + \alpha_2) / (r - 1 - \alpha_2) + \alpha_2 m_2$, and by combining this inequality with the bound on $m_2$ from~\eqref{fwddeg-inequality2}, we obtain~\eqref{fwddeg-far-bound}, as desired.
\end{proof}

\begin{proof}[Proof of Lemma~\ref{reordering-lemma}]
  We consider an ordering $\ordering$ of the edges of $\cH$ satisfying~\ref{ordering-goodness} and~\ref{ordering-by-size} (without the ``in particular" part) for some $e^*\in\cH$ such that $e(\cH^{\ordering e^*})$ is minimum.  
  Note that such an ordering exists -- in particular, any ordering where $f \ordering e$ whenever $|f| > |e|$, that is, an ordering in which the edge sizes are monotonically decreasing, satisfies \ref{ordering-goodness} and \ref{ordering-by-size} for $e^*$, where $e^*$ is the last edge in the ordering. 

  If $e(\cH^{\ordering e^*}) = 1$, then $\fwddeg(e^*) = 0$ and \ref{ordering-goodness} implies that every edge $e \neq e^*$ in $\cH$ satisfies $\fwddeg(e) \leq (1 - \tau)n$, so \ref{reordering-good} holds, as desired.  Thus, we assume we do not have this case.  Now we have $\fwddeg(e^*) > (1 - \tau)n$, or else the predecessor of $e^*$ also satisfies~\ref{ordering-goodness} and~\ref{ordering-by-size}, contradicting the choice of $e(\cH^{\ordering e^*})$ to be minimum.

  Let $r \coloneqq |e^*|$, and let $W \coloneqq \{f \ordering e^* : |f| \leq (1 + 3\tau^{1/4}K^{4})r\}$. 
  It remains to show that $W$ satisfies~\ref{reordering-volume}.  By the choice of $\ordering$, every $e \in \fwdnbr(e^*)$ satisfies
  \begin{equation}\label{nbrs-ahead-of-e^*-bound}
    |N(e) \cap \cH^{\ordering e^*}| > (1 - \tau)n,
  \end{equation}
  or else we can make $e$ the successor of $e^*$.  Let $X \coloneqq \{e \in \fwdnbr(e^*) : |e| \leq (1 + K\sqrt\tau)r\}$.  By \eqref{nbrs-ahead-of-e^*-bound} and \ref{ordering-by-size}, we may apply Proposition~\ref{fwdnbr-prop}\eqref{fwddeg-far-bound} with $K \sqrt \tau$, $0$, and $\tau$ playing the roles of $\alpha_1$, $\alpha_2$, and $\tau$, respectively, and with $e^*$, $\fwdnbr(e^*) \setminus X$, and $X$ playing the roles of $e$, $F_1$, and $F_2$, respectively, to obtain
  \begin{equation*}
      |\fwdnbr(e^*)\setminus X| \leq \frac{\tau + 2/r}{K\sqrt\tau}n \leq \frac{2\sqrt\tau}{K}n,
  \end{equation*}
  where in the last inequality we used that $r \geq r_1$ and $1 / r_1 \ll \tau$.  Thus, again by \eqref{nbrs-ahead-of-e^*-bound}, we have
  \begin{equation}\label{nbrs-far-ahead-of-e^*-bound}
    |X| \geq (1 - \tau)n - |\fwdnbr(e^*)\setminus X| \geq \left(1 - \tau - \frac{2\sqrt\tau}{K}\right)n.
  \end{equation}

  Consider $e \in X$. We now aim to apply Proposition~\ref{fwdnbr-prop}\eqref{fwddeg-far-bound} to $e$ with $K^3\tau^{1/4}$ and $K\sqrt\tau$ playing the roles of $\alpha_1$ and $\alpha_2$, respectively, and with $\{f \in N(e) \cap \cH^{\ordering e^*} : |f| \geq (1 + K^3\tau^{1/4})|e|\}$ and $\{f \in N(e) \cap \cH^{\ordering e^*} : |f| < (1 + K^3\tau^{1/4})|e|\}$ playing the roles of $F_1$ and $F_2$, respectively.  
Since $1 / r_1 \ll \tau, 1/K$, we have $r_1 \geq 1 + K\sqrt \tau$, as required to apply Proposition~\ref{fwdnbr-prop} with this choice of $\alpha_2$.
  By \ref{ordering-by-size} and since $e\in X$, we have $|f| \geq r \geq |e| / (1 + K\sqrt \tau)$ for every $f \in N(e) \cap \cH^{\ordering e^*}$, as required to apply Proposition~\ref{fwdnbr-prop} with this choice of $\alpha_2$ and $F_2$.
  Thus, by \eqref{nbrs-ahead-of-e^*-bound} we can apply Proposition~\ref{fwdnbr-prop}\eqref{fwddeg-far-bound} to deduce that for every $e \in X$ we have
  \begin{align*}
    |\{f \in N(e) \cap \cH^{\ordering e^*} : |f| \geq (1 + K^3\tau^{1/4})|e|\}| &\leq \left(\tau + \frac{1 + K\sqrt\tau + K\sqrt\tau |e| + K^2\tau |e|}{|e|/2}\right)\frac{n}{K^3\tau^{1/4}}\\
    &\leq \frac{6K^2\sqrt\tau}{K^3\tau^{1/4}}n = \frac{6\tau^{1/4}}{K}n.
  \end{align*}
  (In the second inequality, we used that $|e| \geq r$ by~\ref{ordering-by-size}.)
  Since every $e \in X$ satisfies $(1 + K^3\tau^{1/4})|e| \leq (1 + K^3\tau^{1/4})(1 + K\sqrt \tau)r \leq (1 + 3K^4\tau^{1/4})r$, we have $\{f \in N(e) \cap \cH^{\ordering e^*} : |f| \geq (1 + K^3\tau^{1/4})|e|\}\supseteq (N(e) \cap \cH^{\ordering e^*}) \setminus W$, so the inequality above implies that every $e \in X$ satisfies
  \begin{equation}\label{nbrhood-outside-W-bound}
    |(N(e) \cap \cH^{\ordering e^*}) \setminus W| \leq \frac{6\tau^{1/4}}{K}n.
  \end{equation}

  Now we use these inequalities to lower bound the size of $W$.  First we claim that every $e\in X$ satisfies
  \begin{equation}\label{X-nbrs-in-W-bound}
    |N(e) \cap (W\setminus N(e^*))| \geq (1 - \tau - 7\tau^{1/4}/K)n - (1 + K\sqrt\tau)r^2.
  \end{equation}
  To that end, we bound $|N(e)\cap \fwdnbr(e^*)|$, as follows.  Let $N_1 \coloneqq \{f\in N(e) \cap \fwdnbr(e^*): f\cap e^* = e\cap e^*\}$ and $N_2 \coloneqq (N(e)\cap \fwdnbr(e^*))\setminus N_1$.  Since $\cH$ is linear and every edge in $\fwdnbr(e^*)$ has size at least $r$, we have $|N_1| \leq n / (r - 1)$ and $|N_2| \leq (|e| - 1)(|e^*| - 1) \leq (1 + K\sqrt \tau)r^2$, where the last inequality uses that $e \in X$.  Thus, 
  \begin{equation}\label{eqn:nbrhood-intersection}
      |N(e)\cap \fwdnbr(e^*)| \leq \frac{n}{r - 1} + (1 + K\sqrt \tau)r^2.
  \end{equation}
  
  On the other hand, since $W \subseteq \cH^{\ordering e^*}$, $|N(e) \cap (W\setminus N(e^*))| = |(N(e) \cap \cH^{\ordering e^*}) \cap (W\setminus N(e^*))|$. Moreover, we have\COMMENT{In fact, the equality holds.}
  \begin{equation*}
   |(N(e) \cap \cH^{\ordering e^*}) \cap (W\setminus N(e^*))|
  \geq |N(e) \cap \cH^{\ordering e^*}| - |(N(e) \cap \cH^{\ordering e^*}) \setminus W| - |N(e)\cap \fwdnbr(e^*)|.
  \end{equation*}
  Combining this inequality with~\eqref{nbrs-ahead-of-e^*-bound},~\eqref{nbrhood-outside-W-bound},~\eqref{eqn:nbrhood-intersection}, one can see that ~\eqref{X-nbrs-in-W-bound} follows, as claimed.

  For every $f \in W\setminus N(e^*)$, we also have
  \begin{equation}\label{W-nbrs-in-X-bound}
    |N(f) \cap X| \leq |N(f) \cap N(e^*)| \leq |f||e^*| \leq (1 + 3\tau^{1/4}K^4)r^2.
  \end{equation}
  Since $\sum_{e\in X}|N(e)\cap(W\setminus N(e^*))| = |\{(e, f) : e \in X,\ f \in W\setminus N(e^*),\ e \in N(f)\}| = 
  \sum_{f\in W\setminus N(e^*)} | N(f) \cap X|$, by combining~\eqref{X-nbrs-in-W-bound} and~\eqref{W-nbrs-in-X-bound}, we have
  \begin{equation*}
      |X|\left((1 - \tau - 7\tau^{1/4}/K)n - (1 + K\sqrt\tau)r^2\right) \leq |W \setminus N(e^*)|(1 + 3\tau^{1/4}K^4)r^2.
  \end{equation*}
  Thus, by rearranging terms,
  \begin{equation*}
    |W\setminus N(e^*)| \geq  |X|\left(\frac{(1 - \tau - 7\tau^{1/4}/K)n}{(1 + 3\tau^{1/4}K^4)r^2} - \frac{1 + K\sqrt\tau}{1 + 3\tau^{1/4}K^4}\right),
  \end{equation*}
  and since $X\subseteq N(e^*) \cap W$, this inequality implies
\begin{align*}
  |W| &\geq |X|\left(\frac{(1 - \tau - 7\tau^{1/4}/K)n}{(1 + 3\tau^{1/4}K^4)r^2} + 1 - \frac{1 + K\sqrt\tau}{1 + 3\tau^{1/4}K^4}\right)\\
  &\geq \left(1 - \tau - \frac{2\sqrt\tau}{K}\right)\left(\frac{1 - \tau - 7\tau^{1/4}/K}{1 + 3\tau^{1/4}K^4}\right)\frac{n^2}{r^2},
\end{align*}
where the second inequality follows from~\eqref{nbrs-far-ahead-of-e^*-bound} and the fact that $1 \geq \frac{1 + K\sqrt\tau}{1 + 3\tau^{1/4}K^4}$.  Thus, $\vol_{\cH}(W) \geq |W|\binom{r}{2} / \binom{n}{2} \geq |W|\frac{r^2}{n^2}(1 - 1/r) \geq \frac{(1 - \tau - 7\tau^{1/4}/K)^2}{1 + 3\tau^{1/4}K^4}$,
so $W$ satisfies \ref{W-volume}.  Moreover, $\ordering$ satisfies \ref{ordering-goodness} and~\ref{ordering-by-size}, and by the definition of $W$ and \ref{ordering-by-size}, $W$ also satisfies \ref{W-max-size}, so \ref{reordering-volume} holds, as desired.
\end{proof}

\subsection{Colouring locally sparse graphs}
To prove Theorem~\ref{large-edge-thm} we use the following theorem~\cite[Theorem~10.5]{MR02}, which has been improved in~\cite{BJ15, BPP18, HdVK20}.

\begin{theorem}[Molloy and Reed~\cite{MR02}]\label{local-sparsity-lemma}
  Let $0 < 1/\Delta \ll \zeta \leq 1$.  Let $G$ be a graph with $\Delta(G) \leq \Delta$.  If every $v\in V(G)$ satisfies $e(G[N(v)]) \leq (1 - \zeta)\binom{\Delta}{2}$, then $\chi(G) \leq (1 - \zeta/e^6)\Delta$.
\end{theorem}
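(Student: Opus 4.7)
The plan is to use the standard ``wasteful random colouring plus Lov\'asz Local Lemma'' approach tailored to local sparsity. Set $k \coloneqq \lceil(1-\zeta/e^6)\Delta\rceil$ and assign each vertex $v$ an independent uniformly random colour $\varphi(v) \in [k]$. Say that $v$ is \emph{retained} if $\varphi(u) \neq \varphi(v)$ for every $u \in N(v)$, and otherwise $v$ is \emph{uncoloured}. For each vertex $v$, define
\[
    L(v) \coloneqq [k] \setminus \{\varphi(u) : u \in N(v),\ u \text{ is retained}\},
\]
the list of colours still available at $v$ given the partial colouring of retained vertices. The strategy is to prove that $|L(v)|$ is large for every $v$ simultaneously, and then to complete the partial colouring by list-colouring the uncoloured vertices.

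The heart of the argument is to show $|L(v)| \ge \zeta\Delta/e^6$ with high probability at every $v$. The key observation is that local sparsity generates \emph{wasted colours}: for any non-adjacent pair $\{u,w\} \subseteq N(v)$ with $\varphi(u) = \varphi(w)$, neither $u$ nor $w$ is retained (they block each other), so the colour $\varphi(u)$ is \emph{not} forbidden at $v$ through $u$ or $w$. The hypothesis guarantees at least $\zeta\binom{\Delta}{2}$ such non-adjacent pairs in $N(v)$. Computing $\mathbb{P}[c \notin L(v)]$ for a fixed $c \in [k]$ via inclusion--exclusion (summing single-neighbour retention probabilities $\tfrac{1}{k}(1 - 1/k)^{d(u) - 1} \approx e^{-1}/k$, then correcting for pairwise collisions within $N(v)$) yields
\[
    \mathbb{E}[|L(v)|] \ge \frac{\zeta \Delta}{e^5},
\]
where the $e^{-1}$ factors arise from the survival probability $(1-1/k)^{\Delta-1} \approx e^{-1}$ of a neighbour.

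Next, one establishes concentration of $|L(v)|$ around its expectation using Talagrand's inequality: the random variable $|L(v)|$ depends only on the $\varphi$-choices of vertices within distance $2$ of $v$, each such choice alters $|L(v)|$ by $O(1)$, and $|L(v)| \ge s$ admits a certificate of size $O(s)$. This gives $\mathbb{P}[|L(v)| < \zeta\Delta/e^6] \le \exp(-\Omega(\Delta))$. Each such ``bad event'' depends on the colouring within a ball of radius $2$ around $v$, so it is mutually independent of all bad events at vertices beyond distance $4$ from $v$; there are at most $\Delta^{O(1)}$ such neighbouring bad events. The asymmetric Lov\'asz Local Lemma then yields a partial colouring in which every $v$ satisfies $|L(v)| \ge \zeta\Delta/e^6$. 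Finally, the residual graph $G'$ on uncoloured vertices has maximum degree at most $\Delta$ but, by a further concentration/LLL argument, the number of uncoloured neighbours of each $v$ is at most $(1 - \Omega(\zeta))\Delta \ll |L(v)|$, so a greedy list-colouring of $G'$ from the lists $L(\cdot)$ extends the partial colouring to a proper $k$-colouring of $G$.

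The main obstacle is pushing the constant down to $\zeta/e^6$: a single naive-colouring pass yields only a weaker constant, and the stated bound requires careful bookkeeping of which colours are truly ``wasted'' (blocked by a non-edge collision) versus merely ``double-used'' by adjacent neighbours, combined with a tight Talagrand step. In practice one usually sets aside a small random reservoir of vertices and uses it to absorb leftover conflicts, following Chapters~10--12 of Molloy--Reed; the hierarchy $1/\Delta \ll \zeta$ is what allows the asymptotic approximations, concentration, and LLL conditions to close.
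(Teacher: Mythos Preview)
The paper does not prove this theorem: it is quoted verbatim as \cite[Theorem~10.5]{MR02} and used as a black box (in the proof of Corollary~\ref{sparsity-corollary}). So there is no ``paper's own proof'' to compare against.

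That said, your sketch is in the spirit of the Molloy--Reed argument (naive random colouring, count repeated colours in each neighbourhood via local sparsity, concentrate with Talagrand, apply the Local Lemma). However, your final step does not close as written. You assert that the number of uncoloured neighbours of each $v$ is at most $(1-\Omega(\zeta))\Delta$ and that this is $\ll |L(v)|$; but you only established $|L(v)| \ge \zeta\Delta/e^6$, and for small $\zeta$ one has $(1-\Omega(\zeta))\Delta \gg \zeta\Delta/e^6$, so greedy list-colouring from these lists cannot succeed on those grounds. The actual argument in \cite{MR02} does not compare these two quantities in that naive way: instead one tracks, for each vertex, the \emph{gap} between the number of retained colours appearing in $N(v)$ and the number of retained neighbours, showing directly that the list size minus the residual degree stays positive (the ``repeated colours'' create slack on the list side without increasing residual degree). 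Your inclusion--exclusion paragraph gestures at this, but the conclusion you draw from it is the wrong inequality; you would need to redo that step to bound $|L(v)| - d_{G'}(v)$ rather than $|L(v)|$ alone.
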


\begin{corollary}\label{sparsity-corollary}
Let $0 < 1/n_0, 1/r \ll \alpha \ll \zeta < 1$, let $n \geq n_0$, and suppose $r \leq (1 - \zeta)\sqrt n$.  If $\cH$ is an $n$-vertex linear hypergraph such that every $e\in\cH$ satisfies $|e| \in [r, (1 + \alpha)r]$, then $\chi'(\cH) \leq (1 - \zeta/500)n$.
\end{corollary}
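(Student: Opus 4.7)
The plan is to apply Theorem~\ref{local-sparsity-lemma} to the line graph $G \coloneqq L(\cH)$, using that $\chi'(\cH) = \chi(G)$. The idea is that linearity of $\cH$ together with the near-uniform edge size forces neighborhoods in $G$ to carry a locally sparse structure, with a sparsity factor essentially $(1-\zeta)^2$ coming from $r^2 \leq (1-\zeta)^2 n$.

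First, I would bound $\Delta(G)$. For any $e \in \cH$ and $v \in e$, the $d_\cH(v)-1$ edges through $v$ other than $e$ have pairwise disjoint ``tails'' in $V(\cH)\setminus e$ of size at least $r-1$, so $d_\cH(v)-1 \leq (n-|e|)/(r-1)$. Hence
\[
d_G(e) = \sum_{v\in e}(d_\cH(v)-1) \leq \frac{|e|(n-|e|)}{r-1} \leq (1+2\alpha)(n-r),
\]
using $|e| \leq (1+\alpha)r$ and $1/r \ll \alpha$. I would set $\Delta \coloneqq \lceil(1+2\alpha)(n-r)\rceil$ as the upper bound on $\Delta(G)$ for Theorem~\ref{local-sparsity-lemma}.

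Next, I would bound $e(G[N_G(e)])$ for each $e\in\cH$. Two edges $f_1\neq f_2 \in N_G(e)$ are adjacent in $G$ iff they share a (by linearity, unique) vertex $w$; I split on whether $w\in e$. The contribution from pairs with $w\in e$ is $\sum_{v\in e}\binom{d_\cH(v)-1}{2} \leq |e|\binom{(n-r)/(r-1)}{2} = O(n^2/r)$, which is negligible since $1/r \ll \alpha \ll \zeta$. The contribution from pairs with $w\notin e$ equals $\sum_{w\notin e}\binom{d_e(w)}{2}$, where $d_e(w) \coloneqq |\{f\in N_\cH(e) : w\in f\}|$. Linearity forces $d_e(w) \leq |e| \leq (1+\alpha)r$ (each $v\in e$ lies in at most one edge through $w$) and $\sum_w d_e(w) \leq |e|(n-|e|)$; convexity of $\binom{\cdot}{2}$ then gives
\[
\sum_{w\notin e}\binom{d_e(w)}{2} \leq (n-|e|)\binom{(1+\alpha)r}{2} \leq (1+\alpha)^2(1-\zeta)^2\,\frac{n(n-r)}{2},
\]
where I used $r \leq (1-\zeta)\sqrt n$. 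Comparing with $\binom{\Delta}{2} \geq (1+2\alpha)^2(n-r)^2/2\cdot(1-o(1))$, and using $n/(n-r) = 1 + O(1/\sqrt n)$ together with $(1+\alpha)/(1+2\alpha) \leq 1$, I obtain $e(G[N_G(e)])/\binom{\Delta}{2} \leq (1-\zeta)^2 + o(1) \leq 1-\zeta$ for $\zeta \leq 1$ and $n$ large. Thus Theorem~\ref{local-sparsity-lemma} yields
\[
\chi(G) \leq (1 - \zeta/e^6)\Delta \leq (1 - \zeta/e^6)(1+2\alpha)n + 1.
\]

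The last step is constant tracking. Since $e^6 \approx 403 < 500$, the quantity $1/e^6 - 1/500$ is a strictly positive constant, and the hierarchy $\alpha \ll \zeta$ ensures $2\alpha(1 - \zeta/e^6) \leq \zeta(1/e^6 - 1/500)$, which is exactly equivalent to $(1 - \zeta/e^6)(1+2\alpha) \leq 1 - \zeta/500$. This gives $\chi'(\cH) = \chi(G) \leq (1-\zeta/500)n$, as required. The main subtlety is precisely this constant tracking: because the natural upper bound $\Delta$ on $\Delta(G)$ slightly exceeds $n$, applying Molloy--Reed directly only gives $(1-\zeta/e^6)\Delta$, and converting this to the stated $(1-\zeta/500)n$ relies on the small but strictly positive gap $1/e^6 > 1/500$. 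The sparsity saving $1-(1-\zeta)^2 \geq \zeta$ itself survives intact because the $\alpha$-dependence in the ratio $e(G[N_G(e)])/\binom{\Delta}{2}$ cancels to leading order via $(1+\alpha)^2/(1+2\alpha)^2 \leq 1$.
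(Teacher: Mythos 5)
Your proof is correct and follows essentially the same route as the paper: compute an upper bound $\Delta$ on $\Delta(L(\cH))$, show that $L(\cH)$ is $(1-\Theta(\zeta))$-locally sparse using linearity and the near-uniformity of edge sizes, apply Theorem~\ref{local-sparsity-lemma}, and absorb the small multiplicative slack (from $\Delta \approx (1+2\alpha)n$) into the gap $1/e^6 > 1/500$ using $\alpha\ll\zeta$. The only organizational difference is in the local-sparsity estimate: the paper bounds the codegree $|N_L(e)\cap N_L(f)| \leq n/(r-1) + (1+\alpha)^2r^2 \leq (1-5\zeta/6)n$ for any intersecting pair $e,f$ and then multiplies by $\Delta/2$, whereas you count $e(L[N(e)])$ directly by splitting pairs in $N(e)$ according to whether their common vertex lies in $e$ (contributing $\sum_{v\in e}\binom{d_\cH(v)-1}{2}$, which is $O(n^2/r)$ and negligible) or outside $e$ (contributing $\sum_{w\notin e}\binom{d_e(w)}{2}$, which you bound crudely via $d_e(w)\leq |e|\leq (1+\alpha)r$). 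Both are equivalent calculations; yours yields the slightly sharper sparsity factor $(1-\zeta)$ versus the paper's $(1-5\zeta/6)$, but either suffices since all that matters is that the saving constant exceeds $1/500$ after accounting for the $(1+2\alpha)$ blow-up. One small caveat: when you write ``$(1-\zeta)^2 + o(1) \leq 1-\zeta$ for $\zeta\leq 1$ and $n$ large,'' the $o(1)$ absorbs terms depending on $\alpha$ and $1/r$, not only on $n$, so the correct justification is the hierarchy $1/n_0, 1/r \ll \alpha \ll \zeta$ (the margin is $\zeta(1-\zeta)$, and all hidden parameters are $\ll\zeta$), not merely ``$n$ large.''
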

\begin{proof}
  Let $\Delta \coloneqq (1 + \alpha)r(n - r)/(r - 1)$, and let $L\coloneqq L(\cH)$.  For every edge $e\in\cH$, there are at most $(1 + \alpha)r(n - r)$ pairs of vertices $\{u, v\}$ of $\cH$ where $u\notin e$ and $v\in e$.  Thus, since $\cH$ is linear and every edge has size at least $r$, we have $\Delta(L) \leq \Delta$.  Similarly, if $e, f\in\cH$ share a vertex, then $|N_L(e)\cap N_L(f)| \leq n/(r - 1) + (1 + \alpha)^2r^2 \leq (1 - 5\zeta / 6)n$.  Thus, every $v\in V(L)$ satisfies $e(L[N(v)]) \leq \Delta (1 - 5\zeta / 6)n / 2 \leq (1 - 5\zeta / 6)\binom{\Delta}{2}$.  Therefore by Theorem~\ref{local-sparsity-lemma}, $\chi'(\cH) = \chi(L) \leq (1 - 5\zeta / (6e^6))\Delta \leq (1 - \zeta / 500)n$, as desired.
\end{proof}

In the proof of Theorem~\ref{weak-stability-thm}, we use the following theorem, which has been further improved in~\cite{AIS19, AKS99, DKPS20, Vu02}.
\begin{theorem}[Alon, Krivelevich, and Sudakov~\cite{AKS99}]\label{aks-local-sparsity}
Let $0 < \zeta, 1/K_{\ref{aks-local-sparsity}} \ll 1$.  Let $G$ be a graph with $\Delta(G) \leq \Delta$.  If every $v\in V(G)$ satisfies $e(G[N(v)]) \leq \zeta \Delta^2$, then $\chi(G) \leq K_{\ref{aks-local-sparsity}}\Delta / \log(1/\zeta)$.
\end{theorem}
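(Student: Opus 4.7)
The plan is to apply a semi-random (R\"odl nibble) argument of the same flavour as those used by Johansson~\cite{johansson1996} and Kim~\cite{kim1995} for triangle-free graphs, working in the list-colouring setting. I would set $k \coloneqq K_{\ref{aks-local-sparsity}} \Delta / \log(1/\zeta)$ for a sufficiently large absolute constant $K_{\ref{aks-local-sparsity}}$, initially assign every vertex the full palette $[k]$ as its list of available colours, and construct a proper list-colouring by iteratively colouring a positive fraction of the vertices over $O(\log(1/\zeta))$ rounds.

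In a single nibble round, every uncoloured vertex $v$ independently activates a uniformly random colour $\sigma(v) \in L(v)$ with some small probability $p$ and retains $\sigma(v)$ as its colour only if no neighbour activates the same colour. Writing $d(v)$ for the degree of $v$ in the uncoloured subgraph, in expectation both $d(v)$ and $|L(v)|$ shrink by a multiplicative factor close to $1 - p\,e^{-p d(v)/|L(v)|}$. The crucial gain from the hypothesis $e(G[N(v)]) \leq \zeta \Delta^2$ comes from inclusion-exclusion: when two non-adjacent neighbours $u_1, u_2$ of $v$ both activate the same colour $c \in L(v)$, they do not interfere with each other, so the events ``$u_1$ colours itself with $c$'' and ``$u_2$ colours itself with $c$'' are nearly independent. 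Since a $(1 - O(\zeta))$-fraction of the pairs of neighbours of $v$ are non-adjacent, these independence contributions cause $|L(v)|$ to shrink strictly slower than $d(v)$, by a multiplicative factor of the form $1 + \Omega(1/\log(1/\zeta))$. Talagrand's inequality (combined with a bounded-difference argument) yields concentration of both $d(v)$ and $|L(v)|$ around their expectations, up to a $\Delta^{-\Omega(1)}$ multiplicative error that suffices to make the nibble step self-sustaining.

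Iterating this nibble round, the ratio $|L(v)|/d(v)$ grows from $k/\Delta = K_{\ref{aks-local-sparsity}}/\log(1/\zeta)$ to a constant larger than $2$ after $O(\log(1/\zeta))$ rounds; at that stage the residual list-colouring can be completed greedily, because every uncoloured vertex then has more available colours than uncoloured neighbours. Summed over the iterations, the total number of colours used is at most $k$, which gives the desired bound on $\chi(G)$.

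The main obstacle is the concentration step: one must verify that $|L(v)|$ and $d(v)$ satisfy suitable bounded-difference and certifiability conditions so that Talagrand's inequality (or an appropriate martingale inequality) can be applied, and simultaneously track enough of the local structure of the neighbourhood of $v$ in the residual graph so that the key quantitative gain of a factor $1 + \Omega(1/\log(1/\zeta))$ can be re-applied at every step. A union bound over the $O(\log(1/\zeta))$ iterations and over the $n$ vertices then completes the argument.
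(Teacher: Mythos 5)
The paper does not prove Theorem~\ref{aks-local-sparsity}: it is stated as a citation to Alon, Krivelevich and Sudakov~\cite{AKS99} (and the later improvements~\cite{AIS19,DKPS20,Vu02}), and there is no proof in the manuscript to compare against. So your sketch should be judged against the source, whose overall philosophy (a Johansson/Kim-style semi-random colouring procedure) you do correctly identify.

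The gap is in the bookkeeping. You track the single quantity $|L(v)|/d(v)$ and claim it improves multiplicatively by $1+\Omega(1/\log(1/\zeta))$ each round; this is not what happens. In one round of the non-wasteful procedure with activation probability $p$, residual degree $d$ and list size $\ell$, write $x:=pd/\ell$: the list shrinks by roughly a $1-e^{-xe^{-x}}$ fraction while the degree drops by roughly a $pe^{-x}=(x\ell/d)e^{-x}$ fraction. In the regime $\ell\ll d$ — which is exactly where you start, since $\ell/d\approx K/\log(1/\zeta)$ — the degree barely moves compared to the list, and $|L(v)|/d(v)$ in fact \emph{decreases} round on round for any sensible choice of $p$. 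Indeed, a colour is removed from $L(v)$ precisely when some neighbour gets coloured with that colour, so the deficit $d(v)-|L(v)|$ is non-increasing, and the actual gain shows up only in the second-order ``doubling-up'' term and in the divergence of the lists $L(u)$ across the neighbours $u$ of $v$. The Johansson/AKS analysis therefore tracks a more refined quantity — essentially $\sum_{c\in L(v)}\prod_{u\sim v}\bigl(1-\mathbb{1}[c\in L(u)]/|L(u)|\bigr)$, the expected number of colours of $L(v)$ that would survive a random completion — rather than the raw ratio. Without isolating that quantity, the ``$1+\Omega(1/\log(1/\zeta))$ per round'' claim has no derivation, and even taken at face value, $(1+\Omega(1/\log(1/\zeta)))^{O(\log(1/\zeta))}$ is only a bounded factor, which by itself does not push $K/\log(1/\zeta)$ above $2$. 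The approach can be made to work, but the argument as sketched would not close.
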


We need the following corollary of Theorem~\ref{aks-local-sparsity}.  The proof is nearly identical to the proof of Corollary~\ref{sparsity-corollary}, with Theorem~\ref{local-sparsity-lemma} replaced by Theorem~\ref{aks-local-sparsity}. 

\begin{corollary}\label{aks-sparsity-corollary}
Let $0 < 1 / n_0 \ll \eta \ll \alpha, \eps < 1$, and let $n \geq n_0$.  If $\cH$ is an $n$-vertex linear hypergraph such that every $e\in \cH$ satisfies $1 / \eta \leq |e| \leq \eta\sqrt n$ and $\min_{e\in\cH}|e| \geq \alpha\max_{e\in\cH}|e|$, then $\chi'(\cH) \leq \eps n$.
\end{corollary}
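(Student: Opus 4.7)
The plan is to mirror the proof of Corollary~\ref{sparsity-corollary} step by step, replacing the use of Theorem~\ref{local-sparsity-lemma} with Theorem~\ref{aks-local-sparsity}. I will work in the line graph $L := L(\cH)$ and establish (a) a uniform bound on $\Delta(L)$, and (b) that neighbourhoods in $L$ have few edges, relative to $\Delta^2$, in the sense required by Theorem~\ref{aks-local-sparsity}.

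First I would set $r := \min_{e \in \cH}|e|$ and $R := \max_{e \in \cH}|e|$, so $1/\eta \leq r$, $R \leq \eta\sqrt n$, and $r \geq \alpha R$. For any $e \in \cH$, linearity and the fact that every edge has size at least $r$ give $d_L(e) \leq |e|(n-|e|)/(r-1) \leq Rn/(r-1)$, so take $\Delta := \lceil Rn/(r-1)\rceil$. For two adjacent edges $e,f \in \cH$ meeting at a vertex $w$, the usual argument (at most $n/(r-1)$ common neighbours through $w$, and at most $(|e|-1)(|f|-1) \leq R^2$ common neighbours through pairs in $(e \setminus \{w\}) \times (f \setminus \{w\})$) gives $|N_L(e) \cap N_L(f)| \leq n/(r-1) + R^2$. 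Summing over $f \in N_L(e)$, this yields
\[
e(L[N_L(e)]) \leq \tfrac{1}{2}\Delta\bigl(n/(r-1) + R^2\bigr).
\]

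I would then choose $\zeta := 2\eta$ and verify the hypothesis of Theorem~\ref{aks-local-sparsity}: using $R \leq \eta\sqrt n$ (so $R^2 \leq \eta^2 n$) and $r \geq 1/\eta$,
\[
\frac{e(L[N_L(e)])}{\Delta^2} \leq \frac{n/(r-1) + R^2}{2\Delta} \leq \frac{(r-1)(1/(r-1) + \eta^2)}{2R} \leq \frac{1 + r\eta^2}{2R} \leq \eta \leq \zeta.
\]
Theorem~\ref{aks-local-sparsity} then gives $\chi'(\cH) = \chi(L) \leq K_{\ref{aks-local-sparsity}} \Delta / \log(1/\zeta)$. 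Finally, using $r \geq \alpha R$ and $r \geq 1/\eta$ so that $\Delta \leq Rn/(r-1) \leq 2n/\alpha$, we obtain
\[
\chi'(\cH) \leq \frac{2 K_{\ref{aks-local-sparsity}}}{\alpha \log(1/(2\eta))}\, n \leq \eps n,
\]
where the last inequality follows from the hierarchy $\eta \ll \alpha, \eps$ (choosing $\eta$ small enough that $\log(1/(2\eta)) \geq 2K_{\ref{aks-local-sparsity}}/(\alpha \eps)$).

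There is really no obstacle here beyond bookkeeping: the only thing to be slightly careful about is that the lower bounds $r \geq 1/\eta$ and $r \geq \alpha R$ are used in different places (the former to make $R^2 = O(\eta^2 n)$ small compared to $n/r$, and the latter to keep $\Delta$ of order $n$), so both are genuinely needed. Everything else is a direct adaptation of Corollary~\ref{sparsity-corollary}.
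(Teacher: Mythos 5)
Your proof is correct and follows essentially the same route as the paper's (which is recorded only in a hidden source comment): pass to the line graph, bound $\Delta(L)$ using the lower bound on edge size, bound codegrees in $L$ by splitting common neighbours into those through the shared vertex and those through disjoint pairs, and then apply Theorem~\ref{aks-local-sparsity}. The only cosmetic differences are the exact normalisation of $\Delta$ ($\lceil Rn/(r-1)\rceil$ versus the paper's $(r/\alpha)n/(r-1)$) and the choice of $\zeta$ ($2\eta$ versus $\eta\alpha$), both of which lead to the same conclusion given $\eta \ll \alpha,\eps$.
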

\begin{proof}
Let $r\coloneqq \min_{e\in\cH}|e|$, let $\Delta \coloneqq (r / \alpha)n/(r - 1)$, and let $L\coloneqq L(\cH)$.  For every edge $e\in\cH$, there are $|e|(n - |e|) \leq (r / \alpha)n$ pairs of vertices $\{u, v\}$ of $\cH$ where $u\notin e$ and $v\in e$.  Thus, since $\cH$ is linear and every edge has size at least $r$, we have $\Delta(L) \leq \Delta$.
Let $e, e'\in\cH$ be adjacent in $L$, let $N_1\coloneqq \{f \in N(e)\cap N(e') : f\cap e = f \cap e'\}$, and let $N_2 \coloneqq (N(e) \cap N(e'))\setminus N_1$.  Since $\cH$ is linear and every $e\in\cH$ satisfies $1 / \eta \leq |e| \leq \eta\sqrt n$, we have $|N_1| \leq n / (1/\eta - 1) \leq 3\eta n / 2$ and $|N_2| \leq (|e| - 1)(|e'| - 1) \leq \eta^2 n$.  Therefore $|N(e) \cap N(e')| \leq 2\eta n$, so every $v \in V(L)$ satisfies $e(L[N(v)]) \leq \eta n \Delta \leq \eta \alpha \Delta^2$.  
Since $L$ satisfies $\Delta(L) \leq \Delta$ and $e(L[N(v)]) \leq \eta \alpha \Delta^2$, by Theorem~\ref{aks-local-sparsity} applied with $\eta \alpha$ playing the role of $\zeta$, we have $\chi'(\cH) = \chi(L) \leq K_{\ref{aks-local-sparsity}}\Delta / \log(1 / (\eta \alpha)) \leq \eps n$, as desired.
\end{proof}

We remark that the proof of Corollary~\ref{aks-sparsity-corollary} is also similar to that of~\cite[Theorem 1.1]{faber2010}, where a similar statement was proved for uniform regular linear hypergraphs (which implies that the EFL conjecture holds for all $r$-uniform regular linear $n$-vertex hypergraphs satisfying $c \leq r \leq \sqrt{n}/c$ for some constant $c>0$).

\subsection{Proof of Theorems~\ref{weak-stability-thm} and~\ref{large-edge-thm}}
Let $\phi$ be a proper edge-colouring of an $n$-vertex hypergraph $\cH$.  For $\alpha\in(0, 1)$, we say $\phi$ is \textit{$\alpha$-bounded} if every colour $c$ satisfies at least one of the following: $c$ is assigned to at most one $e\in\cH$, or $\phi^{-1}(c)$ covers at most $\alpha n$ vertices of $\cH$.

\begin{proposition}\label{few-large-colour-classes}
  Let $\alpha > 0$, and let $\cH$ be an $n$-vertex linear hypergraph where every $e\in \cH$ satisfies $|e| \geq r + 1$.  
  \begin{enumerate}[(i)]
    \item If $M_1, \dots, M_t$ are pairwise edge-disjoint matchings in $\cH$ that each cover at least $\alpha n$ vertices, then $t \leq n / (\alpha r)$.\label{few-large-colour-classes-bound}
    \item There is an $\alpha$-bounded proper edge-colouring of $\cH$ using at most $\chi'(\cH) + 2n/(\alpha^2 r)$ colours.\label{few-large-colour-classes-splitting}
  \end{enumerate}
\end{proposition}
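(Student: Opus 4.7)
The plan for part (i) is a short counting argument. Since $M_i$ covers at least $\alpha n$ vertices using edges of size at least $r+1$, I would deduce $|M_i|\geq \alpha n/(r+1)$, so edge-disjointness gives $t\cdot \alpha n/(r+1) \leq e(\cH)$. Linearity of $\cH$ together with the lower bound $|e|\geq r+1$ on edge sizes yields $e(\cH)\leq \binom{n}{2}/\binom{r+1}{2} = n(n-1)/((r+1)r)$, and rearranging gives $t\leq (n-1)/(\alpha r) \leq n/(\alpha r)$.

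For part (ii), I would start with any proper edge-colouring $\phi$ of $\cH$ using $\chi'(\cH)$ colours and then refine it. Call a colour class $M$ \emph{bad} if $|V(M)|>\alpha n$ and $|M|\geq 2$; by part (i) applied with threshold $\alpha n$, at most $n/(\alpha r)$ classes are bad. For each such $M$ I would split it greedily: process its edges in some order, maintain a current sub-matching $P$, and add the next edge $e$ to $P$ if $|V(P)|+|e|\leq \alpha n$, otherwise close $P$ and begin a new sub-matching with $e$. The resulting partition $M = P_1\sqcup\cdots\sqcup P_k$ has every $P_j$ either a singleton or covering at most $\alpha n$ vertices, so reusing the colour of $M$ on $P_1$ and assigning a fresh colour to each of $P_2,\dots,P_k$ preserves $\alpha$-boundedness and introduces $k-1$ new colours.

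To bound $k$, I would observe that for each $j<k$ the first edge of $P_{j+1}$ was precisely the one that caused the overflow of $P_j$, so $|V(P_j)| + |V(P_{j+1})| > \alpha n$. Pairing $(P_1,P_2),(P_3,P_4),\dots$ and using $\sum_j |V(P_j)| = |V(M)|\leq n$, this gives $\lfloor k/2\rfloor < 1/\alpha$, hence $k-1\leq 2/\alpha$. Multiplying by the number of bad classes yields a total increase of at most $(n/(\alpha r))\cdot 2/\alpha = 2n/(\alpha^2 r)$ colours, so the final $\alpha$-bounded edge-colouring uses at most $\chi'(\cH)+2n/(\alpha^2 r)$ colours.

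The only subtlety I expect is handling an edge $e$ with $|e|>\alpha n$: such an edge is forced to occupy its own sub-matching, which is allowed by the singleton clause in the definition of $\alpha$-bounded, and the pairing inequality $|V(P_j)|+|V(P_{j+1})|>\alpha n$ continues to hold (since the ``first edge of $P_{j+1}$'' still witnesses the overflow). Beyond this, the argument is essentially bookkeeping: part (i) controls how many colour classes need to be split, and the greedy/pairing bound controls the per-class cost.
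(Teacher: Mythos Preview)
Your argument for part (ii) is correct and essentially matches the paper's: start from an optimal colouring, use part (i) to bound the number of offending colour classes by $n/(\alpha r)$, and split each into at most $2/\alpha + 1$ pieces. The paper phrases the splitting slightly differently---it arranges that every piece covers at least $\alpha n/2$ vertices, whence the bound $|\cM_c|\le 2/\alpha$ is immediate---but your pairing argument gives the same bound.

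Part (i), however, has a genuine error. From ``$M_i$ covers at least $\alpha n$ vertices using edges of size at least $r+1$'' you cannot deduce $|M_i|\geq \alpha n/(r+1)$; the inequality goes the wrong way. Since each edge has size at least $r+1$ we have $|V(M_i)|\geq |M_i|(r+1)$, which only bounds $|M_i|$ from \emph{above}. A single edge of size $\lceil\alpha n\rceil$ is already a matching covering $\alpha n$ vertices with $|M_i|=1$, so your edge-count step fails.

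The fix is to count pairs rather than edges, which is exactly what the paper does (phrased there in terms of normalised volume). For every edge $e$ with $|e|\geq r+1$ one has $\binom{|e|}{2}\geq \tfrac{r}{2}|e|$, so
\[
\sum_{e\in M_i}\binom{|e|}{2}\;\geq\;\frac{r}{2}\sum_{e\in M_i}|e|\;\geq\;\frac{r\alpha n}{2}.
\]
Since $\cH$ is linear and the $M_i$ are edge-disjoint, the pair-sets they cover are disjoint, giving $t\cdot \tfrac{r\alpha n}{2}\leq \binom{n}{2}$ and hence $t\leq (n-1)/(\alpha r)\leq n/(\alpha r)$. This is precisely the double-count you set up for $e(\cH)$, just applied to pairs instead of edges.
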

\begin{proof}
  Since $\cH$ is linear, we have $1 \geq \vol_{\cH}\left(\bigcup_{i=1}^tM_i\right) = \sum_{i=1}^t\vol_{\cH}(M_i).$ Moreover,\COMMENT{For each $i\in [t]$, the volume of $M_i$ is at least as large as that of a matching covering exactly $\lceil \alpha n\rceil$ vertices, consisting of edges of size at most $r + 2$, with as many edges of size $r + 1$ as possible.}~\COMMENT{Jensen's Inequality implies that if $f : \mathbb R\rightarrow \mathbb R$ is a convex function, then for any $x_1, \dots, x_k \in \mathbb R$ and positive weights $a_1, \dots, a_k$, we have
  \begin{equation*}
      f\left(\frac{\sum_{i=1}^k a_i x_i}{\sum_{i=1}^k a_i}\right) \leq \frac{\sum_{i=1}^ka_i f(x_i)}{\sum_{i=1}^k a_i}.
  \end{equation*}
  Thus, since $x\mapsto \binom{x}{2}$ is convex, applying Jensen's Inequality with $|M_i|$ as $k$ and $a_1, \dots, a_k  = 1$, we have
  \begin{equation*}
      \binom{n}{2}\vol_{\cH}(M_i) \geq \binom{\sum_{e\in M_i}|e| / |M_i|}{2}|M_i| \geq \binom{r + 1}{2}\frac{\alpha n}{r + 1}.
  \end{equation*}} for each $i\in[t]$,  $\vol_{\cH}(M_i) \geq \left.\frac{\alpha n}{r+1}\binom{r + 1}{2}\middle/ \binom{n}{2}\right. \geq \alpha r / n$.  Combining these inequalities, we have $t \leq n / (\alpha r)$, as desired for~\ref{few-large-colour-classes-bound}.
  
  Now let $\phi$ be a proper edge-colouring of $\cH$ using a set $C$ of $\chi'(\cH)$ colours.  For each $c\in C$, let $M_c \coloneqq \phi^{-1}(c)$, and let $C' \coloneqq \{c \in C : |V(M_c)| > \alpha n\}$.  By~\ref{few-large-colour-classes-bound}, we have $|C'| \leq n / (\alpha r)$.  For each $c\in C'$, there is a partition of $M_c$ into a set $\cM_c$ of pairwise disjoint matchings such that every $M\in \cM_c$ covers at least $\alpha n / 2$ vertices and satisfies at least one of the following: $|M| = 1$, or $M$ covers at most $\alpha n$ vertices of $\cH$.\COMMENT{First, let $\cM'_c = \{\{e\} : e\in M_c \text{ and } |e| \geq \alpha n / 2\}$, and let $M'_c \coloneqq M_c \setminus \bigcup_{M\in\cM'_c}M$.  Now choose matchings consisting of edges in $M'_c$ greedily.}
  Note that $|\cM_c| \leq 2/\alpha$.  Now for each $c\in C'$, we choose a distinct set of $|\cM_c|$ colours $C_c$ disjoint from $C$, and we define a proper edge-colouring $\phi'$ of $\cH$ as follows.  For each $c\in C\setminus C'$ and $e\in M_c$, we let $\phi'(e) \coloneqq \phi(e)$.  For each $c\in C'$ and $e\in M_c$, we let $\phi'(e) \in C_c$ such that for every $M\in \cM_c$, every edge of $M$ is assigned the same colour.  By the choice of $\cM_c$, every colour is either assigned to at most one $e\in\cH$ by $\phi'$, or there are at most $\alpha n$ vertices of $\cH$ that are incident to an edge assigned that colour, so $\phi'$ is $\alpha$-bounded, as desired.  Moreover, by the bounds on $|\cM_c|$ and $|C'|$, the colouring $\phi'$ uses at most $|C| + 2|C'|/\alpha \leq \chi'(\cH) + 2n / (\alpha^2 r)$ colours, as desired for~\ref{few-large-colour-classes-splitting}.
\end{proof}
\begin{proposition}\label{greedy-colouring-prop}
  Let $0 < 1/n_0 \ll 1/r \ll \alpha_1, \alpha_2 < 1$, and let $n \geq n_0$. Let $\ordering$ be a linear ordering of the edges of an $n$-vertex linear hypergraph $\cH$ where every $e\in \cH$ satisfies $|e| \geq r$.  If $C$ is a list-assignment for the line graph of $\cH$ such that every $e\in \cH$ satisfies $|C(e)| \geq \fwddeg(e) + \alpha_1 n$, then there is an $\alpha_2$-bounded proper edge-colouring $\phi$ of $\cH$ such that $\phi(e) \in C(e)$ for every $e\in\cH$.
\end{proposition}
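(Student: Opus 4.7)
The plan is to colour the edges greedily in the order $\ordering$, picking at each step a colour from $C(e)$ that avoids both conflicts with previously coloured adjacent edges and colour classes that are already too full. The wrinkle is that $\alpha_2$-boundedness forbids two edges of size exceeding $\alpha_2 n / 2$ from ever sharing a colour (their combined coverage would exceed $\alpha_2 n$), so before the greedy phase I would reserve distinct colours for such ``huge'' edges. Let $h \coloneqq \alpha_2 n / 2$, $H \coloneqq \{e \in \cH : |e| \geq h\}$, and $N \coloneqq 4/\alpha_2$. By~\eqref{eqn:huge-edge-bound}, $|H| \leq N$; iterating over $e \in H$ in any order, pick a colour $c_e \in C(e) \setminus \{c_{e'} : e' \in H\text{ already processed}\}$, which is possible since $|C(e)| \geq \alpha_1 n > N$ for $n_0$ large enough.

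Next, process the edges of $\cH$ in the order $\ordering$. For $e \in H$, set $\phi(e) \coloneqq c_e$. For $e \notin H$, choose any $\phi(e) \in C(e) \setminus F(e)$, where
\begin{equation*}
F(e) \coloneqq \{c_{e'} : e' \in H\} \cup \{\phi(f) : f \in \fwdnbr(e)\text{ and }f\text{ is already coloured}\} \cup S(e),
\end{equation*}
and $S(e)$ is the set of colours $c$ with $|V(\phi^{-1}(c))| > h$ at the moment $e$ is about to be coloured. By Proposition~\ref{few-large-colour-classes}\eqref{few-large-colour-classes-bound} applied with $\alpha \coloneqq \alpha_2 / 2$ and with $r - 1$ playing the role of the $r$ there (justified because $|e| \geq r$ for all $e \in \cH$), we have $|S(e)| \leq 2n/(\alpha_2(r-1))$. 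Combined with $|\{c_{e'} : e' \in H\}| \leq N$ and the hierarchy $1/r \ll \alpha_1, \alpha_2$, this yields $|F(e)| \leq \fwddeg(e) + N + 2n/(\alpha_2(r-1)) < \fwddeg(e) + \alpha_1 n \leq |C(e)|$ for $n_0$ sufficiently large, so a valid $\phi(e)$ exists.

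Properness follows easily: if $e \in H$ and $e' \in N(e) \cap H$ then $c_e \neq c_{e'}$ by the reservation, if $f \in N(e) \setminus H$ then $\phi(f)$ was chosen outside $\{c_{e'} : e' \in H\}$ so $\phi(f) \neq c_e$, and conflicts among non-huge edges are avoided by the middle set in $F(e)$. For $\alpha_2$-boundedness, $\phi^{-1}(c_e) = \{e\}$ for every $e \in H$ since no non-huge edge uses a reserved colour, and for any unreserved $c$, each edge added to $\phi^{-1}(c)$ is non-huge (so has size at most $h$) and was added only when $|V(\phi^{-1}(c))| \leq h$ beforehand, so the final coverage is at most $2h = \alpha_2 n$. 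The main obstacle is bounding the saturated set $S(e)$ so that the greedy step always succeeds; this is precisely the ``few large colour classes'' bound of Proposition~\ref{few-large-colour-classes}\eqref{few-large-colour-classes-bound}.
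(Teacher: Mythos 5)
Your proof is correct and follows essentially the same approach as the paper: a greedy colouring along $\ordering$ in which the huge edges (of size $\geq \alpha_2 n/2$) are isolated so that they occupy singleton colour classes, and Proposition~\ref{few-large-colour-classes}\ref{few-large-colour-classes-bound} is used to bound the set of saturated colours that must be avoided. The only difference is presentational — the paper moves $\cH_{\mathrm{big}}$ to the front of $\ordering$ (absorbing the at most $4/\alpha_2$ extra forward neighbours into $\alpha_1/2$), whereas you pre-reserve a distinct colour for each huge edge, but both mechanisms serve the same purpose.
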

\begin{proof}
    Let $\cH_{\mathrm{big}} \coloneqq \{e \in \cH : |e| \geq \alpha_2 n / 2\}$, and note that $e(\cH_{\mathrm{big}}) \leq 4 / \alpha_2$ by~\eqref{eqn:huge-edge-bound}.  By possibly reordering $\ordering$ and replacing $\alpha_1$ with $\alpha_1 / 2$, we may assume without loss of generality that every $e\in \cH_{\mathrm{big}}$ satisfies $e\ordering f$ for $f\in \cH\setminus \cH_{\mathrm{big}}$.  
    
  Choose an edge $e^*\in\cH$ and an $\alpha_2$-bounded proper edge-colouring $\phi$ of $\cH^{\ordering e^*}$ satisfying $\phi(e) \in C(e)$ for every $e\in \cH^{\ordering e^*}$ such that $e(\cH^{\ordering e^*})$ is maximum.  Note that such a choice indeed exists, for example when $e^*$ is the first edge in $\ordering$.  We claim that $e^*$ is the last edge of $\cH$ in $\ordering$, in which case $\phi$ is the desired colouring.  Suppose to the contrary, and let $f$ be the successor of $e^*$.  We have $f\notin\cH_{\mathrm{big}}$, or else assigning $f$ a colour in $C(f)\setminus\{\phi(e) : e\ordering f\}$ would yield an $\alpha_2$-bounded colouring of $\cH^{\ordering f}$, contradicting the choice of $e^*$.

  Now let $C_1 \coloneqq \bigcup_{e\in\fwdnbr(f)}\phi(e)$, and let $C_2$ be the set of colours $c$ for which there are at least $\alpha_2 n / 2$ vertices of $\cH$ incident to an edge assigned the colour $c$.  If there is a colour $c\in C(f)\setminus (C_1\cup C_2)$, then assigning $\phi(f) \coloneqq c$ would yield an $\alpha_2$-bounded colouring of $\cH^{\ordering f}$, contradicting the choice of $e^*$.  Therefore $|C_1| + |C_2| \geq |C(f)|$, and since $|C_1| \leq \fwddeg(f)$, we have $|C_2| \geq \alpha_1 n / 2$.  However, by Proposition~\ref{few-large-colour-classes}\ref{few-large-colour-classes-bound}, $|C_2| \leq 2n / (\alpha_2(r - 1)) < \alpha_1 n / 2$, a contradiction.
\end{proof}

Before we prove Theorem~\ref{large-edge-thm}, we prove Theorem~\ref{weak-stability-thm} using Theorem~\ref{thm:kahn}, Lemma~\ref{reordering-lemma}, Corollary~\ref{aks-sparsity-corollary}, and Proposition~\ref{greedy-colouring-prop}.  The proof of Theorem~\ref{large-edge-thm} uses similar ideas, with Corollary~\ref{sparsity-corollary} instead of Corollary~\ref{aks-sparsity-corollary}.

\begin{proof}[Proof of Theorem~\ref{weak-stability-thm}]
We may assume that $\eps \ll 1$. Choose $n_0 , \eta$ to satisfy $0 < 1/n_0 , \eta \ll \eps$. First we decompose $\cH$ into three spanning subhypergraphs, as follows.  Let $\cH_1 \coloneqq \{e\in\cH : 1 / \eta < |e| < \eta \sqrt n\}$, let $\cH_2 \coloneqq \{e\in \cH : |e| \leq 1 / \eta\}$, and let $\cH_3 \coloneqq \{e\in \cH : |e| \geq \sqrt n / \eta\}$.  Since $\Delta(\cH) \leq \eta n$, by Theorem~\ref{thm:kahn} applied to $\cH_2$ with $\eta n$, $1/2$, and $1 / \eta$ playing the roles of $D$, $\alpha$, and $r$, respectively, we have $\chi'(\cH_2) \leq 3 \eta n / 2 \leq \eps n / 4$.  Since $\cH$ is linear and $1 \geq \vol_{\cH}(\cH_3) \geq e(\cH_3)\binom{\sqrt n / \eta}{2} / \binom{n}{2}$, we have $e(\cH_3) \leq 2 \eta n$, and thus, $\chi'(\cH_3) \leq 2 \eta n \leq \eps n / 4$.  Therefore it suffices to show that $\chi'(\cH_1) \leq \eps n / 2$.

Without loss of generality, let us assume $\cH_1 \ne \varnothing$. Let $\cH_0^{\mathrm{left}} \coloneqq \cH_1$, and for every positive integer $i$, define spanning subhypergraphs $\cH_i^{\mathrm{left}}, \cH_i^{\mathrm{good}}, W_i$ of $\cH_{i - 1}^{\mathrm{left}}$ as follows.  If $\cH_{i - 1}^{\mathrm{left}} = \varnothing$, then let $\cH_i^{\mathrm{left}}, \cH_i^{\mathrm{good}}, W_i\coloneqq \varnothing$. Otherwise, apply Lemma~\ref{reordering-lemma} to $\cH_{i - 1}^{\mathrm{left}}$ with $1 - \eps/6$ and $\eps^{-2}$ playing the roles of $\tau$ and $K$, respectively, to obtain an ordering $\ordering_i$.  If $\ordering_i$ satisfies~\ref{reordering-good}, then let $\cH_i^{\mathrm{good}} \coloneqq \cH_{i-1}^{\mathrm{left}}$, and let $\cH_i^{\mathrm{left}}, W_i \coloneqq \varnothing$.  Otherwise, let $W_i$ be the set $W$ obtained from~\ref{reordering-volume}, let $e^*_i$ be the edge of $W_i$ which comes last in $\ordering_i$, let $\cH_i^{\mathrm{good}} \coloneqq \cH_{i - 1}^{\mathrm{left}}\setminus (\cH_{i - 1}^{\mathrm{left}})^{\ordering e^*_i}$, let $f^*_i$ be the edge of $W_i$ which comes first in $\ordering_i$, and let $\cH_i^{\mathrm{left}}\coloneqq \cH_{i - 1}^{\mathrm{left}}\setminus \{e \in \cH_{i - 1}^{\mathrm{left}} : f^*_i \ordering_i e\}$.  In this case, by~\ref{ordering-by-size}, $|e_i^*| = \min_{e\in W}|e|$.  Also by~\ref{ordering-by-size}, we may assume without loss of generality that every $e\in \cH_{i-1}^{\rm left}$ satisfying $f^*_i \ordering_i e \ordering_i e^*_i$ is in $W_i$.  By the choices of $\tau$ and $K$, if $\ordering_i$ and $W_i$ satisfy~\ref{reordering-volume}, then\COMMENT{Here we used $\eps \ll 1$.}
\begin{enumerate}[label = {($\mathrm{W}_i\arabic*)$}]
  \item\label{Wi-max-size} $\max_{e\in W_i}|e| \leq \eps^{-10}|e_i^*|$ and
  \item\label{Wi-volume} $\vol_{\cH}(W_i) \geq \eps^{20}$.
\end{enumerate}

For any $i \geq 1$, note that the sets $W_1 , \dots , W_i$ are pairwise disjoint. Moreover, if $W_i = \varnothing$ then $\cH_i^{\rm left} = \varnothing$, and if $W_i \ne \varnothing$ then $W_1 , \dots , W_{i-1}$ are also nonempty. Thus, we have $\sum_i\vol_\cH(W_i) \leq 1$, and if $W_i \neq \varnothing$ then $i \leq \eps^{-20}$ by~\ref{Wi-volume}, so there is some integer $k \geq 1$ such that $W_k = \varnothing$, and $k \leq \eps^{-20} + 1$. Hence,
\begin{equation}\label{eq:weak-stability-partition}
      \textrm{$\cH_1$ is partitioned into $W_1, \dots, W_{k-1}$ and $\cH_1^{\mathrm{good}}, \dots, \cH_k^{\mathrm{good}}$,}
\end{equation}
where $W_1 , \dots,  W_{k-1}$ are nonempty, and $\cH_1^{\rm good} , \dots , \cH_k^{\rm good}$ could be empty.\COMMENT{We keep applying Lemma~\ref{reordering-lemma} until $W_k = \varnothing$. Then either the procedure ends with~\ref{reordering-good} or $\cH_{k-1}^{\rm left} = \varnothing$ (thus $\cH_k^{\rm left} = \cH_k^{\rm good} = W_k = \varnothing$).}

Combine $\ordering_1, \dots, \ordering_k$ to obtain an ordering $\ordering$ of $\cH_1$ where if $f \in \cH_i^{\mathrm{good}} \cup W_i$, then $e\ordering f$ for every $e \in (\cH_{i-1}^{\mathrm{left}})^{\ordering_i f}$.  Let $\cH^{\mathrm{good}} \coloneqq \bigcup_{i=1}^k \cH_i^{\mathrm{good}}$, and note that by~\ref{reordering-good} and~\ref{ordering-goodness},
\begin{equation}
    \label{reordering-goodnessi}
    \text{if $e \in \cH^{\mathrm{good}}$, then $\fwddeg_{\cH_1}(e) \leq \eps n / 6$.}
\end{equation}

By~\ref{Wi-max-size}, and since every $e\in \cH_1$ satisfies $1 / \eta < |e| < \eta \sqrt n$, for each $i \in [k-1]$ we can apply Corollary~\ref{aks-sparsity-corollary} to $W_i$ with $\eta$, $\eps^{10}$, and $\eps / (6k)$ playing the roles of $\eta$, $\alpha$, and $\eps$, respectively, to obtain a proper edge-colouring $\phi_i : W_i \rightarrow C_i$, where $|C_i| \leq \eps n /(6k)$.  
By~\eqref{reordering-goodnessi}, we can apply Proposition~\ref{greedy-colouring-prop} to $\cH^{\mathrm{good}}$ with $1/\eta$, $\eps / 6$, and $1/2$ playing the roles of $r$, $\alpha_1$, and $\alpha_2$, respectively, to obtain a proper edge-colouring $\phi' : \cH^{\mathrm{good}} \rightarrow C'$, where $|C'| \leq \eps n / 3$.  
We may assume without loss of generality that $C'$, $C_1, \dots, C_{k-1}$ are pairwise disjoint.  Therefore by~\eqref{eq:weak-stability-partition}, we can combine $\phi'$, $\phi_1, \dots, \phi_{k-1}$ to obtain a proper edge-colouring $\phi : \cH_1 \rightarrow C' \cup \bigcup_{i=1}^{k-1} C_i$.  Since $|C'| + \sum_{i=1}^{k-1} |C_i'| \leq \eps n / 2$, we have $\chi'(\cH_1) \leq \eps n / 2$, as desired.
\end{proof}

\begin{proposition}\label{prop:med-edges}
    Let $0 < 1 / n_0 \ll 1 / r_0 \ll 1 / r_1 \ll \gamma < 1$, and let $n \geq n_0$.  If $\cH$ is an $n$-vertex linear hypergraph where every $e\in\cH$ satisfies $r_1 \leq |e| \leq r_0$, then there is a $\gamma$-bounded proper edge-colouring of $\cH$ using at most $\gamma n$ colours.
\end{proposition}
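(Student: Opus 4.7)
The plan is to first obtain a proper edge-colouring with a rather small number of colours via Kahn's theorem (Theorem~\ref{thm:kahn}), and then to ``split'' each colour class that covers many vertices into a bounded number of smaller classes using Proposition~\ref{few-large-colour-classes}\ref{few-large-colour-classes-splitting}. The hierarchy $1/r_0 \ll 1/r_1 \ll \gamma$ gives plenty of room: Kahn's bound will produce on the order of $n/r_1$ colours, which is already much smaller than $\gamma n$, and splitting will only add an $O(n/(\gamma^2 r_1))$ correction, again much smaller than $\gamma n$.

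The key preliminary observation is a degree bound. Since $\cH$ is linear and every edge has size at least $r_1$, any vertex $v$ satisfies
\[
d_\cH(v)(r_1-1) \leq \sum_{e \ni v}(|e|-1) \leq n-1,
\]
so $\Delta(\cH) \leq (n-1)/(r_1-1) \leq 2n/r_1$. Setting $D \coloneqq \lceil 2n/r_1 \rceil$, we have $\Delta(\cH) \leq D$, every edge has size at most $r_0$, and $D \geq D_0$ for the threshold $D_0 = D_0(1/2, r_0)$ in Theorem~\ref{thm:kahn} (as $n \geq n_0$ is sufficiently large relative to $r_0$). Applying Theorem~\ref{thm:kahn} with $\alpha \coloneqq 1/2$, $r \coloneqq r_0$, $C \coloneqq [\lceil (3/2)D \rceil]$, and $C(e) \coloneqq \varnothing$ for every $e \in \cH$, we obtain a proper edge-colouring of $\cH$ using at most $\lceil 3D/2 \rceil \leq 4n/r_1$ colours. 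Hence $\chi'(\cH) \leq 4n/r_1$.

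Now apply Proposition~\ref{few-large-colour-classes}\ref{few-large-colour-classes-splitting} with $\alpha \coloneqq \gamma$ and $r \coloneqq r_1-1$ (using that every $e\in\cH$ satisfies $|e| \geq r_1 = r+1$) to obtain a $\gamma$-bounded proper edge-colouring of $\cH$ using at most
\[
\chi'(\cH) + \frac{2n}{\gamma^2 (r_1-1)} \;\leq\; \frac{4n}{r_1} + \frac{4n}{\gamma^2 r_1}
\]
colours. Since $1/r_1 \ll \gamma$, this quantity is at most $\gamma n$, which completes the proof.

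No step is really a main obstacle: the only thing to verify is that the hierarchy lets both $\chi'(\cH)$ (bounded by Kahn) and the splitting overhead $2n/(\gamma^2(r_1-1))$ fit comfortably inside $\gamma n$, which is immediate from $1/r_0 \ll 1/r_1 \ll \gamma$.
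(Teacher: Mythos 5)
Your proposal is correct and follows essentially the same route as the paper: bound $\Delta(\cH)$ using linearity, colour with Theorem~\ref{thm:kahn}, then make the colouring $\gamma$-bounded via Proposition~\ref{few-large-colour-classes}\ref{few-large-colour-classes-splitting}. The only cosmetic differences are the order of the two steps and slightly looser constants (you use $\Delta(\cH)\leq 2n/r_1$ and $\alpha=1/2$ where the paper uses $\Delta(\cH)\leq n/(r_1-1)$ and a smaller $\alpha$), which do not affect the conclusion given the hierarchy $1/r_1 \ll \gamma$.
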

\begin{proof}
  By Proposition~\ref{few-large-colour-classes}\ref{few-large-colour-classes-splitting} applied with $\gamma$ and $r_1 - 1$ playing the roles of $\alpha$ and $r$, respectively, there is a $\gamma$-bounded proper edge-colouring $\phi$ of $\cH$ using at most $\chi'(\cH) + 2n / (\gamma^2 (r_1 - 1)) \leq \chi'(\cH) + \gamma n / 2$ colours. 
  Since $\cH$ is linear and every $e\in\cH$ satisfies $|e| \geq r_1$, we have $\Delta(\cH) \leq n / (r_1 - 1)$.  Thus, by Theorem~\ref{thm:kahn}, $\chi'(\cH) \leq 2n / r_1 \leq \gamma n / 2$, so $\phi$ uses at most $\gamma n$ colours, as desired.
\end{proof}

\begin{proof}[Proof of Theorem~\ref{large-edge-thm}]
  Recall that $\cH_{\rm med} = \{ e \in \cH \: : \: r_1 < |e| \leq r_0 \}$, $\cH_{\rm large} = \{ e \in \cH \: : \: |e| > r_0 \}$, and $\cH_{\rm huge} = \{ e \in \cH \: : \: |e| \geq \beta n / 4 \} $.  Let $\cH' \coloneqq \cH\setminus \cH_{\mathrm{huge}}$.  By Proposition~\ref{prop:med-edges}, there is a $\gamma_1$-bounded proper edge-colouring $\phi_{\mathrm{med}}$ of $\cH_{\mathrm{med}}$ using a set $C_{\mathrm{med}}$ of at most $\gamma_1 n$ colours.  We use $\phi_{\mathrm{med}}$ in all cases when we prove~\ref{non-extremal-colouring}, and we define $C_{\mathrm{med}}$ differently when we prove~\ref{extremal-colouring} (see Case 2.2 below).   

    Now we apply Lemma~\ref{reordering-lemma} several times and combine the resulting orderings to obtain an ordering $\ordering$ of $\cH$.  In every application of Lemma~\ref{reordering-lemma}, $r_1$ in Lemma~\ref{reordering-lemma} is simply the present value of $r_1$.  We also define several subhypergraphs of $\cH$, which we assume are all spanning.  See Figure~\ref{fig:reordering}.  
  First, apply Lemma~\ref{reordering-lemma} to $\cH'$ with $1 - \gamma_2/3$ and $\gamma_2^{-2}$ playing the roles of $\tau$ and $K$, respectively, to obtain an ordering $\ordering_1$.  We define $e^*_1$, $W_1$, and $\cH_1^{\mathrm{good}}$, as follows.  If $\ordering_1$ satisfies~\ref{reordering-good}, then let $e^*_1$ be the first edge of $\cH'$, let $W_1 \coloneqq \varnothing$, and let $\cH_1^{\mathrm{good}} \coloneqq \cH'$.  Otherwise, let $W_1$ be the set $W$ obtained from~\ref{reordering-volume}, let $e^*_1$ be the last edge of $W_1$, and let $\cH_1^{\mathrm{good}} \coloneqq \cH'\setminus (\cH')^{\ordering_1e^*_1}$.  In this case, by \ref{ordering-by-size}, $|e^*_1| = \min_{e\in W_1}|e|$.  In both cases, let $\cH_1^{\mathrm{left}} \coloneqq \cH'\setminus \cH_1^{\mathrm{good}}$, and let $r_2\coloneqq |e^*_1|$.
  By the choices of $\tau$ and $K$, and since $\delta \ll \gamma_2 \ll 1$, if $\ordering_1$ and $W_1$ satisfy~\ref{reordering-volume}, then
  \begin{enumerate}[label = {($\mathrm{W}_1\arabic*)$}]
  \item\label{W1-max-size} $\max_{e\in W_1}|e| \leq (1 + 3 \gamma_2^{-8})|e_1^*| \leq r_2 / \gamma_2^{10}$ and
  \item\label{W1-volume} $\vol_\cH(W_1) \geq (\gamma_2 / 3 - 7\gamma_2^2)^2 / \gamma_2^{-10} \geq \gamma_2^{20} > \delta$.
  \end{enumerate}
  
  If $\cH_1^{\mathrm{left}} = \varnothing$, then let $e^*_2\coloneqq e^*_1$ and $W_2, \cH_2^{\mathrm{good}} \coloneqq \varnothing$.  If $\cH_1^{\mathrm{left}} \neq \varnothing$, we apply Lemma~\ref{reordering-lemma} to $\cH_1^{\mathrm{left}}$ with $3\sigma$ and $1$ playing the roles of $\tau$ and $K$, respectively, to obtain an ordering $\ordering_2$, and we define $e^*_2$, $W_2$, and $\cH_2^{\mathrm{good}}$, as follows.  
  If $\ordering_2$ satisfies~\ref{reordering-good}, then let $e^*_2$ be the first edge of $\cH_1^{\mathrm{left}}$ in $\ordering_2$, let $W_2 \coloneqq \varnothing$, and let $\cH_2^{\mathrm{good}} \coloneqq \cH_1^{\mathrm{left}}$.   Otherwise, \ref{reordering-volume} holds for some $W_2 \subseteq \cH_1^{\mathrm{left}}$ playing the role of $W$, so we choose such a $W_2$ that is  maximal, let $e^*_2$ be the last edge of $W_2$ in $\ordering_2$, and let $\cH_2^{\mathrm{good}} \coloneqq \cH_1^{\mathrm{left}}\setminus (\cH_1^{\mathrm{left}})^{\ordering_2 e^*_2}$. In this case, by \ref{ordering-by-size}, $|e^*_2| = \min_{e\in W_2}|e|$. In all cases, let $\cH_2^{\mathrm{left}} \coloneqq (\cH_1^{\mathrm{left}}\cup\cH_{\mathrm{huge}})\setminus\cH_2^{\mathrm{good}}$, and let $r_3 \coloneqq |e^*_2|$.  By the choices of $\tau$ and $K$, and since $\sigma \ll \delta\ll 1$, if $\ordering_2$ and $W_2$ satisfy~\ref{reordering-volume}, then
  \begin{enumerate}[label = {($\mathrm{W}_2\arabic*)$}]
  \item\label{W2-max-size} $\max_{e\in W_2}|e| \leq (1 + 3(3\sigma)^{1/4})|e_2^*| \leq (1 + 4\sigma^{1/4})r_3$ and
  \item\label{W2-volume} $\vol_\cH(W_2) \geq (1 - 3\sigma - 7(3\sigma)^{1/4})^2 / (1 + 4\sigma^{1/4}) \geq (1 - \sigma^{1/5})^3 \geq 1 - \delta^3$.
  \end{enumerate}

  Finally, if $W_2\neq\varnothing$, then let $f^*$ be the edge of $W_2$ which comes first in $\ordering_2$, and let $\cH_3 \coloneqq \cH_2^{\mathrm{left}} \setminus \{e \in \cH_1^{\mathrm{left}} : f^*\ordering_2 e\}$.  Otherwise, let $\cH_3\coloneqq\cH_{\mathrm{huge}}$. Thus in both cases, since $\cH_1^{\mathrm{left}} \subseteq \cH' = \cH \setminus \cH_{\rm huge}$ and $\cH_{\rm huge} \subseteq \cH_{2}^{\mathrm{left}}$, we have $\cH_{\rm huge} \subseteq \cH_3$. Apply Lemma~\ref{reordering-lemma} with $\cH_3$, $1 - 1/2000$, and $2000^2$ playing the roles of $\cH$, $\tau$, and $K$, respectively, to obtain an ordering $\ordering_3$.  
  By the choice of $f^*$, we have $W_2 \subseteq \{e \in \cH_1^{\mathrm{left}} : f^* \ordering_2 e\}$, so by the construction of $\cH_3$, we have $W_2\cap \cH_3 = \varnothing$.  Thus, $\vol_\cH(W_2) + \vol_\cH(\cH_3) \leq 1$, and so $\ordering_3$ satisfies~\ref{reordering-good}, because otherwise \ref{reordering-volume} would imply there is a set $W'\subseteq \cH_3$ disjoint from $W_2$ with $\vol_{\cH}(W')\geq \delta$, contradicting~\ref{W2-volume}.  
  
  By the choice of $W_2$ to be maximal and by \ref{ordering-by-size} of Lemma~\ref{reordering-lemma}, every $e\in \cH_1^{\rm left}$ satisfying $f^* \ordering_2 e \ordering_2 e^*_2$ is in $W_2$.  In particular, $\{e \in \cH_1^{\mathrm{left}} : f^*\ordering_2 e\}$ is partitioned into $W_2$ and $\cH_2^{\mathrm{good}}$, and thus, 
  \begin{equation}\label{eq:large-edge-thm-partition}
      \textrm{$\cH$ is partitioned into $\cH_2^{\mathrm{left}}$, $\cH_2^{\mathrm{good}}$, and $\cH_1^{\mathrm{good}}$, and $\cH_2^{\mathrm{left}}$ is partitioned into $\cH_3$ and $W_2$}.
  \end{equation}

  By \eqref{eq:large-edge-thm-partition}, and since $\cH_{\mathrm{huge}}\subseteq \cH_3$, we can combine $\ordering_1$, $\ordering_2$, and $\ordering_3$ to obtain an ordering $\ordering$ of $\cH$ where 
  \begin{itemize}
  \item if $f\in\cH_1^{\mathrm{good}}$, then $e\ordering f$ for every $e\in \cH_{\mathrm{huge}}\cup (\cH')^{\ordering_1 f}$, 
  \item if $f\in\cH_2^{\mathrm{good}} \cup W_2$, then $e \ordering f$ for every $e\in\cH_{\mathrm{huge}}\cup (\cH_1^{\mathrm{left}})^{\ordering_2 f}$, and
  \item if $f\in \cH_3$, then $e\ordering f$ for every $e\in \cH_3^{\ordering_3 f}$.
  \end{itemize}

  \begin{figure}
\centering
\begin{tikzpicture}
\draw(0,0)--(15,0);

\foreach \x/\xtext/\ytext in {4/{$\leq(1 + 4\sigma^{1/4})r_3$}/$f^*$/,7/$r_3$/$e^*_2$}
\draw(\x,11pt) node[above] {\ytext} --(\x,-11pt) node[below] {\xtext} ;

\foreach \x/\xtext/\ytext in {0/{}/{}, 11/$r_2$/$e^*_1$, 15/$r_1$/{}}
\draw(\x,2pt) node[above] {\ytext} --(\x,-2pt) node[below] {\xtext} ;

\draw[decorate, decoration={brace, mirror}, yshift=-1ex]  (0.2,0) -- node[below=0.5ex] {$\cH_3$}  (3.8,0);

\draw[decorate, decoration={brace, mirror}, yshift=-1ex]  (11.2,0) -- node[below=0.5ex] {$\cH_1^{\mathrm{good}}$}  (14.8,0);

\draw[decorate, decoration={brace}, yshift=6ex]  (3.9,0) -- node[above=0.5ex] {$W_2$}  (7.1,0);

\draw[decorate, decoration={brace}, yshift=1ex]  (.2,0) -- node[above=0.5ex] {$\cH_{\mathrm{huge}}$}  (.8,0);

\draw[decorate, decoration={brace, mirror}, yshift=-6ex]  (0,0) -- node[below=0.5ex] {$\cH_2^{\mathrm{left}}$}  (7.1,0);

\draw[decorate, decoration={brace, mirror}, yshift=-3.5ex]  (7.2,0) -- node[below=0.5ex] {$\cH_2^{\mathrm{good}}$}  (11.2,0);

\end{tikzpicture}
\caption{Combining three applications of the Reordering Lemma in the proof of Theorem~\ref{large-edge-thm}: the ordering $\ordering$ is increasing from left to right.  ($\cH_{\mathrm{huge}}\subseteq \cH_3$, but $\cH_{\mathrm{huge}}$ need not form an initial segment.)}
\label{fig:reordering}
\end{figure}
   
  See Figure~\ref{fig:reordering}.  Note the following.
  \begin{enumerate}[(a)]
  \item\label{huge-in-H3} $\cH_{\mathrm{huge}}\subseteq \cH_3$, and $e(\cH_{\mathrm{huge}}) \leq 8 / \beta$ by~\eqref{eqn:huge-edge-bound}.
  \item\label{reordering-goodness1} If $e\in\cH_1^{\mathrm{good}}$, then $\fwddeg_\cH(e) \leq d_{\cH'}^{\ordering_1}(e) + e(\cH_{\mathrm{huge}}) \leq \gamma_2 n / 2$, since $\ordering_1$ satisfies either \ref{reordering-good} or \ref{ordering-goodness} with $\tau = 1 - \gamma_2 / 3$, and by~\ref{huge-in-H3}.
  \item\label{reordering-goodness2} If $e\in \cH_2^{\mathrm{good}}$, then $d^{\ordering_2}_{\cH_1^{\rm left}}(e) \leq (1 - 3\sigma)n$ and $\fwddeg_\cH(e) \leq d_{\cH_1^{\mathrm{left}}}^{\ordering_2}(e) + e(\cH_{\mathrm{huge}}) \leq (1 - 2\sigma)n$, since $\ordering_2$ satisfies either~\ref{reordering-good} or~\ref{ordering-goodness} with $\tau = 3\sigma$, and by~\ref{huge-in-H3}.
  \item\label{reordering-goodness3} If $e\in \cH_3$, then $\fwddeg_\cH(e) \leq n/2000$ since $\ordering_3$ satisfies~\ref{reordering-good}.
  \item\label{d2-bound} If $e\in\cH_1^{\mathrm{left}}$, then $e \ordering_1 e^*_1$ and thus $|e| \geq r_2 = |e^*_1|$ (because if $\ordering_1$ satisfies~\ref{reordering-good}, then it's vacuously true, and otherwise it follows from \ref{ordering-by-size}).
  \item\label{d3-bound} If $e\in \cH_2^{\mathrm{left}}$, then $e \ordering_2 e^*_2$ and thus $|e| \geq r_3 = |e^*_2|$ (because if $\ordering_2$ satisfies~\ref{reordering-good}, then it's vacuously true, and otherwise it follows from \ref{ordering-by-size}).
  \end{enumerate}

  We consider two cases: $\ordering_2$ satisfies~\ref{reordering-good}, or both $\ordering_1$ and $\ordering_2$ satisfy~\ref{reordering-volume}.  Note that if $\ordering_1$ satisfies~\ref{reordering-good}, then $\ordering_2$ vacuously satisfies~\ref{reordering-good} (that is, the former case applies).

  \noindent {\bf Case 1:} $\ordering_2$ satisfies~\ref{reordering-good}.
    
    In this case, we prove~\ref{non-extremal-colouring}.  If $\ordering_2$ satisfies~\ref{reordering-good}, then $\cH_3\cup \cH_2^{\mathrm{good}}\cup \cH_1^{\mathrm{good}} = \cH$ by~\eqref{eq:large-edge-thm-partition}, so every $e\in\cH$ satisfies $\fwddeg_\cH(e) \leq (1 - 2\sigma)n$ by~\ref{reordering-goodness1},~\ref{reordering-goodness2}, and~\ref{reordering-goodness3}.  Thus, by Proposition~\ref{greedy-colouring-prop} applied to $\cH_{\mathrm{large}}$ with $r_0$, $\sigma / 2$, and $\beta/5$ playing the roles of $r$, $\alpha_1$, and $\alpha_2$, respectively, we obtain a $\beta/5$-bounded proper edge-colouring $\phi_{\mathrm{large}}$ of $\cH_{\mathrm{large}}$ using colours from a set $C_{\mathrm{large}}$ of size at most $(1 - 3\sigma / 2)n$ disjoint from $C_{\mathrm{med}}$. We combine $\phi_{\mathrm{large}}$ and $\phi_{\mathrm{med}}$ to obtain a proper edge-colouring $\phi$ of $\cH$ satisfying~\ref{non-extremal-colouring}, as follows.  For each $e\in \cH_{\mathrm{large}}$, let $\phi(e) \coloneqq \phi_{\mathrm{large}}(e)$, and for each $e\in\cH_{\mathrm{med}}$, let $\phi(e) \coloneqq \phi_{\mathrm{med}}(e)$.  Since $C_{\mathrm{large}} \cap C_{\mathrm{med}} = \varnothing$ and $|C_{\mathrm{large}} \cup C_{\mathrm{med}}| \leq (1 - \sigma)n$, the colouring $\phi$ is proper and uses at most $(1 - \sigma)n$ colours, as required.  Since $\phi_{\mathrm{large}}$ is $\beta/5$-bounded, $\phi$ satisfies~\ref{non-extremal-huge-edge} and ~\ref{non-extremal-colour-class-bound}, and since $\phi_{\mathrm{med}}$ is $\gamma_1$-bounded, $\phi$ satisfies~\ref{non-extremal-medium-bound}, as desired.
  
  \noindent {\bf Case 2:} Both $\ordering_1$ and $\ordering_2$ satisfy~\ref{reordering-volume}.
  
  By \ref{d3-bound}, $\vol_{\cH}(\cH_2^{\mathrm{left}}) \geq \left.e(\cH_2^{\mathrm{left}})\binom{r_3}{2}\middle/\binom{n}{2}\right.$, and since $\vol_{\cH}(\cH_2^{\mathrm{left}}) \leq 1$, we have $e(\cH_{2}^{\mathrm{left}}) \leq \left.\binom{n}{2}\middle/ \binom{r_3}{2}\right.$. Thus, we assume
  \begin{equation}\label{eqn:d3-upper-bound}
    r_3 \leq \sqrt{n / (1 - 4\sigma)},
  \end{equation}
   as otherwise we would have $e(\cH_2^{\mathrm{left}}) \leq (1 - 3\sigma)n$, and together with~\ref{reordering-goodness2}, this fact would imply that $\ordering_2$ satisfies~\ref{reordering-good}.
  
  We now consider two additional cases: $r_3 < (1 - \delta)\sqrt n$, and $r_3 \geq (1 - \delta)\sqrt n$.  In the former case, we prove~\ref{non-extremal-colouring}, and in the latter case we prove~\ref{extremal-colouring}.
  
  \noindent {\bf Case 2.1:} $r_3 < (1 - \delta)\sqrt n$.
    
Let $\zeta \coloneqq 1 - r_3 / \sqrt n$.  Since $r_3 < (1 - \delta)\sqrt n$, we have $\zeta > \delta$.  
  First we show how to colour $W_2\setminus\cH_{\mathrm{med}}$ in the following claim.
  \begin{claim}\label{claim:sparse-colouring}
      There is a $\beta/5$-bounded proper edge-colouring $\phi'$ of $W_2\setminus\cH_{\mathrm{med}}$ using at most $(1 - \zeta / 1000)n$ colours.
  \end{claim}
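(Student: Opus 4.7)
The plan is to first colour $W_2 \setminus \cH_{\mathrm{med}}$ using close to $(1-\zeta/500)n$ colours via Corollary~\ref{sparsity-corollary}, and then apply Proposition~\ref{few-large-colour-classes}\ref{few-large-colour-classes-splitting} to convert this into a $\beta/5$-bounded colouring at the cost of only $O(n/(\beta^2 r_0))$ additional colours.

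First I would pin down the range of edge sizes in $W_2 \setminus \cH_{\mathrm{med}}$. By~\ref{W2-max-size} and~\ref{d3-bound}, every $e \in W_2$ satisfies $r_3 \leq |e| \leq (1+4\sigma^{1/4})r_3$, and since $r_3 \geq r_2 > r_1$, the definition of $\cH_{\mathrm{med}}$ forces every $e \in W_2 \setminus \cH_{\mathrm{med}}$ to have $|e| > r_0$. Setting $r^* \coloneqq \max(r_3, r_0+1)$ and $\zeta^* \coloneqq 1 - r^*/\sqrt n$, every such edge has size in $[r^*, (1+4\sigma^{1/4})r^*]$; when $r_3 \leq r_0$ the upper bound uses $(1+4\sigma^{1/4})r_3 \leq (1+4\sigma^{1/4})r^*$. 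Moreover $\zeta^* \geq \zeta/2$: if $r^* = r_3$ then $\zeta^* = \zeta$, and if $r^* = r_0 + 1$ then $\zeta^* \geq 1/2 \geq \zeta/2$ for $n$ large, since $r_0$ is chosen earlier than $\delta$ in the hierarchy.

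Now I would apply Corollary~\ref{sparsity-corollary} to the $n$-vertex linear hypergraph $W_2 \setminus \cH_{\mathrm{med}}$ with $r^*$, $4\sigma^{1/4}$, and $\zeta^*$ playing the roles of $r$, $\alpha$, and $\zeta$; the hypothesis $1/r^* \ll 4\sigma^{1/4} \ll \zeta^* < 1$ follows from $1/r_0 \ll \sigma$ and $\sigma^{1/4} \ll \delta \leq 2\zeta^*$, while $r^* \leq (1-\zeta^*)\sqrt n$ holds by definition of $\zeta^*$. This yields $\chi'(W_2 \setminus \cH_{\mathrm{med}}) \leq (1-\zeta^*/500)n$. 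Since every edge of $W_2 \setminus \cH_{\mathrm{med}}$ has size at least $r_0 + 1$, I would then apply Proposition~\ref{few-large-colour-classes}\ref{few-large-colour-classes-splitting} with $\alpha = \beta/5$ and $r = r_0$ to obtain a $\beta/5$-bounded proper edge-colouring using at most $(1-\zeta^*/500)n + 50n/(\beta^2 r_0)$ colours. Choosing $r_0$ large enough in the hierarchy (permitted since $1/r_0$ sits to the left of both $\beta$ and $\delta$) ensures $50/(\beta^2 r_0) \leq \zeta/2000$, so the total is at most $(1-\zeta/1000)n$, as required. The main mildly technical point is verifying $\zeta^*/500 - 50/(\beta^2 r_0) \geq \zeta/1000$; this works precisely because $\zeta > \delta$ is a fixed positive constant from the hierarchy, leaving ample slack for absorbing the splitting overhead into the $\zeta/1000$ budget.
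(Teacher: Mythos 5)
Your approach is the same as the paper's: apply Corollary~\ref{sparsity-corollary} to $W_2\setminus\cH_{\mathrm{med}}$ and then pass to a $\beta/5$-bounded colouring via Proposition~\ref{few-large-colour-classes}\ref{few-large-colour-classes-splitting}, absorbing the splitting overhead because $\zeta>\delta$. The detour through $r^*=\max(r_3,r_0+1)$ and $\zeta^*$ is unnecessary (and makes the budget check more delicate, since the loose bound $\zeta^*\geq\zeta/2$ alone does not give $\zeta^*/500-50/(\beta^2r_0)\geq\zeta/1000$ and one has to argue $\zeta^*$ is actually close to $\zeta$ or close to $1$): every edge of $W_2$ already has size at least $r_3$ by~\ref{d3-bound}, so Corollary~\ref{sparsity-corollary} applies directly with $r=r_3$ to give $\chi'(W_2\setminus\cH_{\mathrm{med}})\leq(1-\zeta/500)n$, after which the splitting cost $50n/(\beta^2 r_0)\leq\delta^2 n\leq\zeta n/1000$ finishes cleanly, which is what the paper does.
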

  \begin{claimproof}
    By \ref{W2-max-size} and \ref{ordering-by-size}, we can apply Corollary~\ref{sparsity-corollary} with $r_3$ and $4\sigma^{1/4}$ playing the roles of $r$ and $\alpha$, respectively, so $\chi'(W_2\setminus\cH_{\mathrm{med}}) \leq (1 - \zeta / 500)n$.  Thus, the claim follows from Proposition~\ref{few-large-colour-classes}\ref{few-large-colour-classes-splitting} with $\beta/5$ and $r_0$ playing the roles of $\alpha$ and $r$, respectively, since every edge in $W_2\setminus\cH_{\mathrm{med}}$ has size at least $r_0 + 1$, $\zeta > \delta$, and $2n / ((\beta/5)^2r_0) \leq \delta^2 n \leq \zeta n / 1000$.
  \end{claimproof}

  We will colour $\cH_3\setminus\cH_{\mathrm{med}}$ with a set of colours disjoint from those that we assign to $W_2 \setminus\cH_{\mathrm{med}}$ using the following claim.
  
  \begin{claim}\label{claim:before-W-colouring}
    There is a $\beta/5$-bounded proper edge-colouring $\phi''$ of $\cH_3\setminus\cH_{\mathrm{med}}$ using at most $(\zeta / 1000 - 2\sigma)n$ colours.
  \end{claim}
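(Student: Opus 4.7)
My plan is to iteratively apply Lemma~\ref{reordering-lemma} to $\cH_3 \setminus \cH_{\mathrm{med}}$, following the blueprint of the proof of Theorem~\ref{weak-stability-thm}. The key input is the volume bound $\vol_\cH(\cH_3 \setminus \cH_{\mathrm{med}}) \leq \vol_\cH(\cH_3) \leq \delta^3$: indeed, $\cH_3 \cap W_2 = \varnothing$ by the definition of $\cH_3$, while $\vol_\cH(W_2) \geq 1 - \delta^3$ by~\ref{W2-volume}, and $\cH$ is linear. This bound will keep the number of iterations to a (large) constant.

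Setting $\mathcal{K}_0 \coloneqq \cH_3 \setminus \cH_{\mathrm{med}}$, at each iteration $i \geq 1$ with $\mathcal{K}_{i-1}$ non-empty I would apply Lemma~\ref{reordering-lemma} to $\mathcal{K}_{i-1}$ with $\tau \coloneqq 1 - \zeta/3000$ and a sufficiently large constant $K = K(\zeta)$ so that $1 - \tau - 7\tau^{1/4}/K > 0$. If the resulting ordering satisfies~\ref{reordering-good} I stop; otherwise I obtain $W_i \subseteq \mathcal{K}_{i-1}$ with $\vol_\cH(W_i)$ bounded below by a positive constant $\vol_0 = \vol_0(\zeta)$, and I set $\mathcal{K}_i^{\mathrm{good}}$ and $\mathcal{K}_i^{\mathrm{left}}$ to be the edges of $\mathcal{K}_{i-1}$ coming after $W_i$ and strictly before $W_i$, respectively, in this ordering. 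Since the $W_i$'s are pairwise disjoint with $\sum_i \vol_\cH(W_i) \leq \delta^3$, the procedure terminates after at most $k \leq \lceil \delta^3/\vol_0 \rceil + 1$ iterations.

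Next, as in the proof of Theorem~\ref{weak-stability-thm}, I concatenate the orderings (placing edges from later iterations earlier) into a combined ordering on $\cH_3 \setminus \cH_{\mathrm{med}}$ in which every $e \in \mathcal{K}^{\mathrm{good}} \coloneqq \bigcup_i \mathcal{K}_i^{\mathrm{good}}$ satisfies $\fwddeg_{\cH_3 \setminus \cH_{\mathrm{med}}}(e) \leq (1-\tau)n = \zeta n/3000$. I apply Proposition~\ref{greedy-colouring-prop} with $r_0$, $\sigma$, and $\beta/5$ in the roles of $r$, $\alpha_1$, and $\alpha_2$ to produce a $\beta/5$-bounded proper edge-colouring of $\mathcal{K}^{\mathrm{good}}$ using at most $(\zeta/3000 + \sigma)n$ colours. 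For each $W_i$ I partition the edges by size: the at most $8/\beta$ huge edges (by~\eqref{eqn:huge-edge-bound}) each receive their own colour; the non-huge edges of size at least $c_1 \sqrt n$, for an appropriate constant $c_1 = c_1(K)$, are $O(1)$ many by linearity of $\cH$ and each receive their own colour; and the remaining edges (of size in $[r_0, c_1\sqrt n]$) are coloured by Theorem~\ref{thm:kahn} using at most $2n/r_0 \leq 2\beta n$ colours, made $\beta/5$-bounded via Proposition~\ref{few-large-colour-classes}\ref{few-large-colour-classes-splitting}. Using disjoint colour sets across $\mathcal{K}^{\mathrm{good}}$ and the $W_i$'s, the total count is at most $(\zeta/3000 + \sigma)n + k \cdot 3\beta n \leq (\zeta/1000 - 2\sigma)n$, because the hierarchy $\beta \ll \sigma \ll \delta \leq \zeta$ makes $k\beta$ small.

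The hard part will be the joint choice of $\tau$, $K$, and the threshold $c_1$: $K$ must be large enough that $1 - \tau - 7\tau^{1/4}/K > 0$ and that $\vol_0$ is not too small (so the iteration count $k$ stays manageable), but not so large that the size ratio $1 + 3\tau^{1/4}K^4$ in each $W_i$ forces $c_1$ too small for Theorem~\ref{thm:kahn} to handle the remaining edges with $O(\beta n)$ colours. This delicate balance is conceptually analogous to, but technically more involved than, the one already resolved in the proof of Theorem~\ref{weak-stability-thm}.
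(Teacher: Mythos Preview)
Your approach has two concrete gaps in the handling of the sets $W_i$.

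First, the claim that the non-huge edges of size at least $c_1\sqrt{n}$ are ``$O(1)$ many by linearity'' is false. A linear $n$-vertex hypergraph can contain $\Theta(n/c_1^2)$ edges of size at least $c_1\sqrt{n}$ (think of a near-projective-plane); even restricted to $\cH_3$ with $\vol_\cH(\cH_3)\le\delta^3$, there can be up to roughly $\delta^3 n/c_1^2$ such edges, which is linear in $n$. Second, Theorem~\ref{thm:kahn} requires every edge to have size at most a fixed constant $r$ sitting in the hierarchy $1/D_0\ll 1/r$; it cannot be invoked when edges have size up to $c_1\sqrt{n}$. Together these errors break your colouring of the $W_i$'s, so the final count $(\zeta/3000+\sigma)n + k\cdot 3\beta n$ is not justified.

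More importantly, you are working much harder than necessary. Before this claim the proof has already established (fact~\ref{reordering-goodness3}) that the ordering $\ordering_3$ of $\cH_3$ satisfies~\ref{reordering-good} with $\tau=1-1/2000$, i.e.\ every $e\in\cH_3$ has $\fwddeg_{\cH_3}(e)\le n/2000$. Hence there is no need to reapply Lemma~\ref{reordering-lemma} at all. The paper's argument is a short dichotomy using only this and the volume bound you correctly identified. If $e(\cH_3\setminus\cH_{\mathrm{med}})\le(\zeta/1000-2\sigma)n$, distinct colours suffice. Otherwise $e(\cH_3\setminus\cH_{\mathrm{med}})>2\delta^2 n$; since every edge of $\cH_3$ has size at least $r_3$ (fact~\ref{d3-bound}) while $\vol_\cH(\cH_3)\le\delta^3$, this forces $r_3<\delta^{1/4}\sqrt{n}$ and hence $\zeta>1000/1001$. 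Then $(\zeta/1000-2\sigma)n\ge n/1500$, and Proposition~\ref{greedy-colouring-prop} applied directly with $\ordering_3$ (and $\alpha_1=1/6000$, $\alpha_2=\beta/5$) gives a $\beta/5$-bounded colouring using at most $n/2000+n/6000=n/1500$ colours.
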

  \begin{claimproof}
   Let $k \coloneqq e(\cH_3\setminus\cH_{\mathrm{med}})$.  If $k \leq (\zeta / 1000 - 2\sigma)n$, then we can simply assign each edge of $\cH_3\setminus\cH_{\mathrm{med}}$ a distinct colour and the claim holds, so we assume $k > (\zeta / 1000 - 2\sigma)n$.  Since $\zeta > \delta$, we have $k > 2\delta^2 n$.  By \ref{d3-bound}, every edge of $\cH_3$ has size at least $r_3$, so we have $\vol_{\cH}(\cH_3\setminus \cH_{\mathrm{med}}) \geq k \left.\binom{r_3}{2} \middle/ \binom{n}{2}\right. \geq k(r_3 - 1)^2 / n^2$.  On the other hand, by~\ref{W2-volume}, and since $\cH_3 \cap W_2 = \varnothing$ by~\eqref{eq:large-edge-thm-partition}, we have $\vol_{\cH}(\cH_3) \leq \delta^3$.  Thus, $2\delta^2 n < k \leq \delta^3 n^2 / (r_3 - 1)^2$, so $r_3 < \delta^{1/4} \sqrt{n}$.
   
    Therefore, $\zeta = 1 - r_3 / \sqrt n > 1 - \delta^{1/4} > 1000/1001$ since $\delta \ll 1$.  Now by~\ref{reordering-goodness3} and Proposition~\ref{greedy-colouring-prop} applied with $\cH_3\setminus\cH_{\mathrm{med}}$, $\ordering_3$, $r_0$, $1/6000$, and $\beta/5$ playing the roles of $\cH$, $\ordering$, $r$, $\alpha_1$, and $\alpha_2$, respectively, with a list-assignment for the line graph of $\cH_3\setminus \cH_{\mathrm{med}}$ in which each list is the same, we obtain a $\beta/5$-bounded proper edge-colouring of $\cH_3\setminus\cH_{\mathrm{med}}$ using a set of at most $n/1500 \leq (\zeta / 1000 - 2\sigma)n$ colours, as desired.
    \end{claimproof}
    
    We may assume that $\phi'$ and $\phi''$ use disjoint sets of colours. By Claims~\ref{claim:sparse-colouring} and~\ref{claim:before-W-colouring} and \eqref{eq:large-edge-thm-partition}, we can combine $\phi'$ and $\phi''$ to obtain a $\beta/5$-bounded proper edge-colouring $\phi_1$ of $\cH_2^{\mathrm{left}}\setminus \cH_{\mathrm{med}}$ using a set $C_1$ of at most $(1 - 2\sigma)n$ colours.  We may assume that $C_1\cap C_{\mathrm{med}} = \varnothing$.  Let $C_2$ be a set of $\lfloor (1 - 3\sigma / 2)n \rfloor - |C_1|$ colours disjoint from $C_1$ and $C_{\mathrm{med}}$, and let $C_{\mathrm{huge}}\subseteq C_1$ where $c\in C_{\mathrm{huge}}$ if $\phi_1(f) = c$ for some $f\in \cH_{\mathrm{huge}}$.  By~\ref{huge-in-H3}, $|(C_1 \cup C_2)\setminus C_{\mathrm{huge}}| \geq (1 - 7\sigma / 4)n$.  We extend $\phi_1$ to a $\beta/5$-bounded proper edge-colouring of $\cH_{\mathrm{large}}$ using the following claim.
    \begin{claim}\label{claim:bounded-colouring-extension}
        There is a $\beta/5$-bounded proper edge-colouring $\phi_2$ of $(\cH_2^{\mathrm{good}}\cup\cH_1^{\mathrm{good}})\setminus \cH_{\mathrm{med}}$ such that every $e\in (\cH_2^{\mathrm{good}}\cup\cH_1^{\mathrm{good}})\setminus \cH_{\mathrm{med}}$ satisfies
        \begin{itemize}
            \item $\phi_2(e) \in (C_1\cup C_2)\setminus C_{\mathrm{huge}}$  and
            \item $\phi_2(e)\neq\phi_1(f)$ for every $f\in N_{\cH_2^{\mathrm{left}}\setminus\cH_{\mathrm{med}}}(e)$.
        \end{itemize}
    \end{claim}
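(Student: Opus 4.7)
The plan is to apply Proposition~\ref{greedy-colouring-prop} to the subhypergraph $\cH_4 \coloneqq (\cH_2^{\mathrm{good}} \cup \cH_1^{\mathrm{good}}) \setminus \cH_{\mathrm{med}}$, equipped with the restriction of $\ordering$ to $\cH_4$ and the list assignment
\begin{equation*}
C(e) \coloneqq \bigl((C_1 \cup C_2) \setminus C_{\mathrm{huge}}\bigr) \setminus \{\phi_1(f) : f \in N_\cH(e) \cap (\cH_2^{\mathrm{left}}\setminus\cH_{\mathrm{med}})\}.
\end{equation*}
Any proper edge-colouring $\phi_2$ of $\cH_4$ with $\phi_2(e) \in C(e)$ automatically satisfies both bullet points of the claim, and the $\alpha_2$-boundedness delivered by the proposition yields the $\beta/5$-boundedness when we set $\alpha_2 = \beta/5$.

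The first step is to verify that for every $f \in \cH_2^{\mathrm{good}} \cup \cH_1^{\mathrm{good}}$, every edge of $\cH_2^{\mathrm{left}}$ precedes $f$ in $\ordering$. For $f \in \cH_1^{\mathrm{good}}$ this is immediate from the definition of $\ordering$ since $\cH_2^{\mathrm{left}} \subseteq \cH_{\mathrm{huge}} \cup \cH_1^{\mathrm{left}}$ and both pieces precede $f$. For $f \in \cH_2^{\mathrm{good}}$, one uses $W_2 \subseteq (\cH_1^{\mathrm{left}})^{\ordering_2 f}$ (since $f^* \ordering_2 e^*_2 \ordering_2 f$) together with $\cH_3 \setminus \cH_{\mathrm{huge}} \subseteq (\cH_1^{\mathrm{left}})^{\ordering_2 f^*} \subseteq (\cH_1^{\mathrm{left}})^{\ordering_2 f}$, plus the fact that $\cH_{\mathrm{huge}}$ precedes $f$ in $\ordering$. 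This precedence implies the inequality $\fwddeg_{\cH_4}(e) \leq \fwddeg_\cH(e) - |N_\cH(e) \cap (\cH_2^{\mathrm{left}}\setminus\cH_{\mathrm{med}})|$ for every $e \in \cH_4$.

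The second step is to verify the list-size hypothesis $|C(e)| \geq \fwddeg_{\cH_4}(e) + \alpha_1 n$ of Proposition~\ref{greedy-colouring-prop}. Since $|(C_1\cup C_2)\setminus C_{\mathrm{huge}}| \geq (1 - 7\sigma/4)n$, inclusion--exclusion gives $|C(e)| \geq (1 - 7\sigma/4)n - |N_\cH(e) \cap (\cH_2^{\mathrm{left}}\setminus\cH_{\mathrm{med}})|$, and combining this with the inequality from the previous step yields
\begin{equation*}
|C(e)| - \fwddeg_{\cH_4}(e) \geq (1 - 7\sigma/4)n - \fwddeg_\cH(e).
\end{equation*}
Applying (b) for $e \in \cH_2^{\mathrm{good}}$ and (a) for $e \in \cH_1^{\mathrm{good}}$ (in the labelling~\ref{reordering-goodness1}--\ref{reordering-goodness2}), together with $\gamma_2 \ll 1$, the right-hand side is at least $\sigma n/4$ in both cases (the second case gives far more slack). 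We then invoke Proposition~\ref{greedy-colouring-prop} with $r_0$, $\sigma/5$, $\beta/5$ in the roles of $r$, $\alpha_1$, $\alpha_2$; this is valid because $\cH_4 \subseteq \cH_{\mathrm{large}}$ (since $\cH_{\mathrm{med}}$ is excluded), so every edge of $\cH_4$ has size exceeding $r_0$, and the hierarchy $1/r_0 \ll \sigma, \beta$ is satisfied.

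I do not expect a serious obstacle; the argument reduces the claim to one application of Proposition~\ref{greedy-colouring-prop}, and the hard work has been done in constructing $\ordering$ so that $\cH_2^{\mathrm{left}}$ sits at the front and the forward-degrees in $\cH_2^{\mathrm{good}}$ and $\cH_1^{\mathrm{good}}$ are already controlled. The only subtlety is the bookkeeping in the first step, ensuring the ordering-precedence property so that forward-degrees in the restricted hypergraph inherit the slack $\sigma n/4$ cleanly from the global bounds.
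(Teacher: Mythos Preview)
Your proposal is correct and follows essentially the same approach as the paper: both apply Proposition~\ref{greedy-colouring-prop} to $(\cH_2^{\mathrm{good}}\cup\cH_1^{\mathrm{good}})\setminus \cH_{\mathrm{med}}$ with the same list assignment, using~\ref{reordering-goodness1} and~\ref{reordering-goodness2} together with $|(C_1\cup C_2)\setminus C_{\mathrm{huge}}|\geq(1-7\sigma/4)n$ to verify the list-size hypothesis. The only cosmetic differences are that the paper uses $\alpha_1=\sigma/4$ rather than $\sigma/5$ and leaves the precedence of $\cH_2^{\mathrm{left}}$ in $\ordering$ implicit (absorbed into the equality $|N_{\cH_2^{\mathrm{left}}}(e)|+\fwddeg_{\cH_2^{\mathrm{good}}\cup\cH_1^{\mathrm{good}}}(e)=\fwddeg_\cH(e)$), whereas you spell this out explicitly.
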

    \begin{claimproof}
        We apply Proposition~\ref{greedy-colouring-prop} to $(\cH_2^{\mathrm{good}}\cup\cH_1^{\mathrm{good}})\setminus \cH_{\mathrm{med}}$, as follows.  We let $r_0$, $\sigma/4$ and $\beta/5$ play the roles of $r$, $\alpha_1$ and $\alpha_2$, respectively, and we define the list-assignment for $L((\cH_2^{\mathrm{good}}\cup\cH_1^{\mathrm{good}})\setminus \cH_{\mathrm{med}})$ as $C(e) \coloneqq ((C_1\cup C_2)\setminus C_{\mathrm{huge}})\setminus \bigcup_{f\in N_{\cH_2^{\mathrm{left}}\setminus\cH_{\mathrm{med}}}(e)}\phi_1(f)$.  Since every $e\in \cH_2^{\mathrm{good}}\cup\cH_1^{\mathrm{good}}$ satisfies $|N_{\cH_2^{\mathrm{left}}}(e)| + \fwddeg_{\cH^{\mathrm{good}}_2\cup\cH_1^{\mathrm{good}}}(e) = \fwddeg_{\cH}(e) \leq (1 - 2\sigma)n$ by~\ref{reordering-goodness1} and~\ref{reordering-goodness2}, we have $|C(e)| \geq (1 - 7\sigma / 4)n - |N_{\cH_2^{\mathrm{left}}}| \geq \fwddeg_{\cH^{\mathrm{good}}_2\cup\cH_1^{\mathrm{good}}}(e) + \sigma n/4$ as required.  Therefore by Proposition~\ref{greedy-colouring-prop}, there is a $\beta/5$-bounded proper edge-colouring $\phi_2$ of $(\cH_2^{\mathrm{good}}\cup\cH_1^{\mathrm{good}})\setminus \cH_{\mathrm{med}}$ such that $\phi_2(e) \in C(e)$ for every $e\in(\cH_2^{\mathrm{good}}\cup\cH_1^{\mathrm{good}})\setminus \cH_{\mathrm{med}}$, and the choice of $C(e)$ ensures that $\phi_2$ satisfies the claim.
    \end{claimproof}
    
    Now we combine $\phi_1$, $\phi_2$, and $\phi_{\mathrm{med}}$ to obtain a proper edge-colouring $\phi$, and we show that $\phi$ satisfies~\ref{non-extremal-colouring}.  Indeed, by~\eqref{eq:large-edge-thm-partition}, every edge of $\cH$ is assigned a colour by $\phi$, and since $|C_1| + |C_2| + |C_\mathrm{med}| \leq (1 - 3\sigma / 2) + \gamma_1 n \leq (1 - \sigma) n$, the colouring $\phi$ uses at most $(1 - \sigma)n$ colours, as required.  Since $\phi_2(e) \notin C_{\mathrm{huge}}$ for each $e\in (\cH_2^{\mathrm{good}}\cup\cH_1^{\mathrm{good}})\setminus \cH_{\mathrm{med}}$, since~\ref{huge-in-H3} holds, and since $\phi''$ is $\beta / 5$-bounded, $\phi$ satisfies~\ref{non-extremal-huge-edge}.  Since $\phi_1$ and $\phi_2$ are $\beta/5$-bounded, $\phi$ satisfies~\ref{non-extremal-colour-class-bound}, and since $\phi_{\mathrm{med}}$ is $\gamma_1$-bounded, $\phi$ satisfies~\ref{non-extremal-medium-bound}, as desired.

  \noindent {\bf Case 2.2:} $r_3 \geq (1 - \delta)\sqrt n$.
  
  By \ref{W2-max-size} and \eqref{eqn:d3-upper-bound}, any $e\in\cH$ such that $|e| \geq (1 + \delta)\sqrt n$ is not in $W_2$, so \ref{W2-volume} implies that $\vol_\cH(e) \leq \delta^3$, which in turn implies that $|e| < \delta n / 2$.  By~\ref{d3-bound} and Lemma~\ref{extremal-case-lemma}, there is a proper edge-colouring $\phi_1$ of $\cH_2^{\mathrm{left}}$ using a set $C$ of at most $n$ colours such that every colour is assigned to at most two edges. Hence, $\phi_1 |_{\cH_2^{\mathrm{left}}\setminus\cH_{\mathrm{huge}}}$ is $\beta/2$-bounded. 
  Moreover, since there are no edges of size at least $\delta n/2$ in $\cH$, $\phi_1$ satisfies~\ref{extremal-huge-edge} of~\ref{extremal-colouring}.  Let $C_{\mathrm{huge}}\subseteq C$ where $c\in C_{\mathrm{huge}}$ if $\phi_1(f) \in\cH$ for some $f\in\cH_{\mathrm{huge}}$, and let $C_{\mathrm{med}}\subseteq C\setminus C_{\mathrm{huge}}$ have size $\lfloor \gamma_2 n \rfloor$ (such a set exists by~\ref{huge-in-H3}).

  By~\ref{W1-volume} and~\ref{W2-volume}, $W_1\cap W_2 \neq \varnothing$, so by~\ref{d3-bound}, there is an edge $e\in W_1$ such that $|e| \geq r_3$.  Therefore by~\ref{W1-max-size}, $r_2 \geq \gamma_2^{10}r_3$.  Also, $r_3 \geq (1 - \delta)\sqrt n \geq 2r_0 / \gamma_2^{10}$, so $r_2 \geq 2r_0$.  Thus, by~\ref{d2-bound}, we have $\cH_{\mathrm{med}} \cap \cH_1^{\mathrm{left}} = \varnothing$, so $\cH_{\mathrm{med}}\subseteq \cH_1^{\mathrm{good}}$.

  We use the following two claims to colour $\cH_2^{\mathrm{good}}$ and $\cH_1^{\mathrm{good}}$.  The proofs are similar to the proof of Claim~\ref{claim:bounded-colouring-extension}.

    \begin{claim}\label{claim:bounded-colouring-extension2}
    There is a $\beta/5$-bounded proper edge-colouring $\phi_2$ of $\cH_2^{\mathrm{good}}$ such that every $e\in\cH_2^{\mathrm{good}}$ satisfies
    \begin{itemize}
    \item $\phi_2(e) \in C\setminus C_{\mathrm{huge}}$ and 
    \item $\phi_2(e) \neq \phi_1(f)$ for every $f\in N_{\cH_2^{\mathrm{left}}}(e)$.\end{itemize}
  \end{claim}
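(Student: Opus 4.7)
The plan is to mirror the proof of Claim~\ref{claim:bounded-colouring-extension} very closely: we construct $\phi_2$ as a list edge-colouring produced by Proposition~\ref{greedy-colouring-prop}, applied to $\cH_2^{\mathrm{good}}$ with the linear ordering inherited from $\ordering$. For every edge $e\in\cH_2^{\mathrm{good}}$ we take as its list
\[
    C(e) \coloneqq (C\setminus C_{\mathrm{huge}})\setminus \bigl\{\phi_1(f) : f\in N_{\cH_2^{\mathrm{left}}}(e)\bigr\},
\]
so that any proper colouring consistent with $C$ will, by construction, satisfy both bullet-points in the claim.

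The two quantitative ingredients we must verify in order to apply Proposition~\ref{greedy-colouring-prop} with $r\coloneqq r_0$, $\alpha_1\coloneqq \sigma/2$ and $\alpha_2\coloneqq \beta/5$ are (i) $|e|\ge r_0$ for every $e\in\cH_2^{\mathrm{good}}$ and (ii) $|C(e)|\ge \fwddeg_{\cH_2^{\mathrm{good}}}(e) + (\sigma/2)n$. For (i), note $\cH_2^{\mathrm{good}}\subseteq\cH_1^{\mathrm{left}}$, so by~\ref{d2-bound} every such $e$ has $|e|\ge r_2$; and we already observed in Case~2.2 that $r_2\ge 2r_0$. For (ii), the construction of $\ordering$ via $\ordering_1,\ordering_2,\ordering_3$ places the block $\cH_2^{\mathrm{good}}$ entirely after $\cH_2^{\mathrm{left}} = \cH_3\cup W_2$ and entirely before $\cH_1^{\mathrm{good}}$, so
\[
    \fwddeg_\cH(e) = |N_{\cH_2^{\mathrm{left}}}(e)| + \fwddeg_{\cH_2^{\mathrm{good}}}(e),
\]
and~\ref{reordering-goodness2} gives $\fwddeg_\cH(e)\le (1-2\sigma)n$. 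Since $|C|\le n$ and $|C_{\mathrm{huge}}|\le e(\cH_{\mathrm{huge}})\le 8/\beta$ by~\ref{huge-in-H3}, we obtain
\[
    |C(e)| \ge n - 8/\beta - |N_{\cH_2^{\mathrm{left}}}(e)|,
\]
and combining with the bound on $\fwddeg_\cH(e)$ yields $|C(e)| - \fwddeg_{\cH_2^{\mathrm{good}}}(e) \ge 2\sigma n - 8/\beta \ge (\sigma/2)n$ for $n$ sufficiently large, as needed.

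Proposition~\ref{greedy-colouring-prop} then delivers a $\beta/5$-bounded proper edge-colouring $\phi_2$ of $\cH_2^{\mathrm{good}}$ with $\phi_2(e)\in C(e)$ for every edge, which by definition of $C(e)$ avoids $C_{\mathrm{huge}}$ and avoids the colour used by $\phi_1$ on any $f\in N_{\cH_2^{\mathrm{left}}}(e)$; this yields both claim bullets. The only potential obstacle is a bookkeeping one---confirming that in $\ordering$ the block $\cH_2^{\mathrm{good}}$ really sits strictly between $\cH_2^{\mathrm{left}}$ and $\cH_1^{\mathrm{good}}$ so that the identity for $\fwddeg_\cH(e)$ above is valid---but this is immediate from how $\ordering$ was assembled from $\ordering_1,\ordering_2,\ordering_3$ and the fact that $\cH_{\mathrm{huge}}\subseteq \cH_3$ (so it does not interleave with $\cH_2^{\mathrm{good}}\cup W_2$).
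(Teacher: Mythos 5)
Your proof is correct and follows essentially the same route as the paper's own argument: apply Proposition~\ref{greedy-colouring-prop} to $\cH_2^{\mathrm{good}}$ with the inherited ordering and list-assignment $C(e)=(C\setminus C_{\mathrm{huge}})\setminus\{\phi_1(f):f\in N_{\cH_2^{\mathrm{left}}}(e)\}$, using~\ref{reordering-goodness2} to get the list-size lower bound. One small slip: you cite $|C|\le n$ to conclude $|C(e)|\ge n-8/\beta-|N_{\cH_2^{\mathrm{left}}}(e)|$, whereas this step needs $|C|\ge n$; it is harmless since one may take $C=[n]$ (pad with unused colours), but the inequality should be stated the right way around.
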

  \begin{claimproof}
      We apply Proposition~\ref{greedy-colouring-prop} to $\cH_2^{\mathrm{good}}$, as follows.  We let $r_0$, $\sigma$, and $\beta/5$ play the roles of $r$, $\alpha_1$, and $\alpha_2$, respectively, and we define the list-assignment for $L(\cH_2^{\mathrm{good}})$ as $C(e) \coloneqq (C\setminus C_{\mathrm{huge}})\setminus \bigcup_{f\in N_{\cH_2^{\mathrm{left}}}(e)}\phi_1(f)$.  By \ref{huge-in-H3}, $|C\setminus C_{\mathrm{huge}}| \geq (1 - \sigma)n$, and since every $e\in \cH_2^{\mathrm{good}}$ satisfies $|N_{\cH_2^{\mathrm{left}}}(e)| + \fwddeg_{\cH_2^{\mathrm{good}}}(e) = \fwddeg(e) \leq (1 - 2\sigma)n$ by~\ref{reordering-goodness2}, we have $|C(e)| \geq (1 - \sigma)n - |N_{\cH_2^{\mathrm{left}}}(e)| \geq \fwddeg_{\cH_2^{\mathrm{good}}}(e) + \sigma n $ as required.  Therefore by Proposition~\ref{greedy-colouring-prop}, there is a $\beta / 5$-bounded proper edge-colouring $\phi_2$ of $\cH_2^{\mathrm{good}}$ such that $\phi_2(e) \in C(e)$ for every $e\in\cH_2^{\mathrm{good}}$, and the choice of $C(e)$ ensures that $\phi_2$ satisfies the claim.
 \end{claimproof}

  \begin{claim}\label{claim:bounded-colouring-extension3}
    There is a $\gamma_1/2$-bounded proper edge-colouring $\phi_3$ of $\cH_1^{\mathrm{good}}$ such that every $e\in\cH_1^{\mathrm{good}}$ satisfies
    \begin{itemize}
    \item $\phi_3(e) \in C_{\mathrm{med}}$ and 
    \item $\phi_3(e) \neq \phi_1(f)$ for every $f\in N_{\cH_2^{\mathrm{left}}}(e)$ and $\phi_3(e) \neq \phi_2(f)$ for every $f\in N_{\cH_2^{\mathrm{good}}}(e)$.\end{itemize}
  \end{claim}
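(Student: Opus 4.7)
The plan is to apply Proposition~\ref{greedy-colouring-prop} to $\cH_1^{\mathrm{good}}$ in close analogy with Claim~\ref{claim:bounded-colouring-extension2}, letting $r_1 + 1$, $\gamma_2/4$, and $\gamma_1/2$ play the roles of $r$, $\alpha_1$, and $\alpha_2$, respectively. The hierarchy $1/r_1 \ll \gamma_1 \ll \gamma_2$ ensures $1/(r_1 + 1) \ll \gamma_2/4, \gamma_1/2$, as the proposition requires; recall also that every $e \in \cH_1^{\mathrm{good}} \subseteq \cH$ satisfies $|e| > r_1$ by the hypothesis of Theorem~\ref{large-edge-thm}.

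The list-assignment for $L(\cH_1^{\mathrm{good}})$ that I would use is the natural one,
$$C(e) \coloneqq C_{\mathrm{med}} \setminus \Bigl(\{\phi_1(f) : f \in N_{\cH_2^{\mathrm{left}}}(e)\} \cup \{\phi_2(f) : f \in N_{\cH_2^{\mathrm{good}}}(e)\}\Bigr),$$
which directly encodes the two compatibility conditions demanded by the claim and ensures the inclusion $\phi_3(e) \in C_{\mathrm{med}}$. To verify the size hypothesis of Proposition~\ref{greedy-colouring-prop}, observe that since all edges of $\cH_2^{\mathrm{left}} \cup \cH_2^{\mathrm{good}}$ precede every edge of $\cH_1^{\mathrm{good}}$ in $\ordering$, for every $e \in \cH_1^{\mathrm{good}}$ we have $|N_{\cH_2^{\mathrm{left}}}(e)| + |N_{\cH_2^{\mathrm{good}}}(e)| + \fwddeg_{\cH_1^{\mathrm{good}}}(e) = \fwddeg_\cH(e) \leq \gamma_2 n / 2$ by~\ref{reordering-goodness1}. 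Since $|C_{\mathrm{med}}| = \gamma_2 n$, this yields $|C(e)| \geq \fwddeg_{\cH_1^{\mathrm{good}}}(e) + \gamma_2 n/2 \geq \fwddeg_{\cH_1^{\mathrm{good}}}(e) + \gamma_2 n/4$, and Proposition~\ref{greedy-colouring-prop} supplies a $\gamma_1/2$-bounded proper edge-colouring $\phi_3$ with $\phi_3(e) \in C(e)$ for every $e\in \cH_1^{\mathrm{good}}$, which satisfies the claim by construction of $C(e)$.

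I do not expect a serious obstacle here: this is essentially the direct analogue of Claim~\ref{claim:bounded-colouring-extension2}, differing only in the palette used ($C_{\mathrm{med}}$ rather than $(C_1 \cup C_2)\setminus C_{\mathrm{huge}}$) and in the boundedness parameter ($\gamma_1/2$ rather than $\beta/5$). The key numerical ingredient is the strong bound $\fwddeg_\cH(e) \leq \gamma_2 n/2$ from~\ref{reordering-goodness1}, which leaves a comfortable slack of $\gamma_2 n/2$ within the palette $C_{\mathrm{med}}$ of size $\gamma_2 n$ after removing conflicts with previously coloured neighbours, so that greedy extension via Proposition~\ref{greedy-colouring-prop} applies smoothly and automatically respects the $\gamma_1/2$-boundedness requirement.
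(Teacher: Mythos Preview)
Your proposal is correct and takes essentially the same approach as the paper: apply Proposition~\ref{greedy-colouring-prop} to $\cH_1^{\mathrm{good}}$ with the list-assignment $C(e) = C_{\mathrm{med}}$ minus the colours already used on neighbours in $\cH_2^{\mathrm{left}}$ and $\cH_2^{\mathrm{good}}$, using~\ref{reordering-goodness1} for the degree bound. The only differences are cosmetic parameter choices (the paper uses $r_1$ and $\gamma_2/2$ rather than $r_1+1$ and $\gamma_2/4$), which are immaterial.
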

    \begin{claimproof}
      We apply Proposition~\ref{greedy-colouring-prop} to $\cH_1^{\mathrm{good}}$, as follows.  We let $r_1$, $\gamma_2/3$, and $\gamma_1/2$ play the roles of $r$, $\alpha_1$, and $\alpha_2$, respectively, and we define the list-assignment for $L(\cH_1^{\mathrm{good}})$ as $C(e) \coloneqq C_{\mathrm{med}}\setminus ((\bigcup_{f\in N_{\cH_2^{\mathrm{left}}}(e)}\phi_1(f))\cup (\bigcup_{f\in N_{\cH_2^{\mathrm{good}}}(e)}\phi_2(f))$.  
      Since every $e\in \cH_1^{\mathrm{good}}$ satisfies $|N_{\cH_2^{\mathrm{left}}}(e)| + |N_{\cH_2^{\mathrm{good}}}(e)| +  \fwddeg_{\cH_1^{\mathrm{good}}}(e) = \fwddeg(e) \leq \gamma_2 n / 2$ by~\ref{reordering-goodness1} and since $|C_{\mathrm{med}}| = \lfloor \gamma_2 n \rfloor$, we have $|C(e)| \geq \gamma_2 n - 1 - |N_{\cH_2^{\mathrm{left}}}(e)| - |N_{\cH_2^{\mathrm{good}}}(e)| \geq \fwddeg_{\cH_1^{\mathrm{good}}}(e) + \gamma_2 n / 3$ as required.  
      Therefore by Proposition~\ref{greedy-colouring-prop}, there is a $\gamma_1/2$-bounded proper edge-colouring $\phi_3$ of $\cH_1^{\mathrm{good}}$ such that $\phi_3(e) \in C(e)$ for every $e\in\cH_1^{\mathrm{good}}$, and the choice of $C(e)$ ensures that $\phi_3$ satisfies the claim.
  \end{claimproof}

    Now we combine $\phi_1$, $\phi_2$, and $\phi_3$ to obtain a proper edge-colouring $\phi$, and we show that \ref{extremal-colouring} holds.  First, since $r_3 \geq (1 - \delta)\sqrt n$, by~\ref{d3-bound},~\ref{W2-max-size}, and~\eqref{eqn:d3-upper-bound}, the edges in $W_2$ are FPP-extremal, and by~\ref{W2-volume}, $\vol_{\cH}(W_2) \geq 1 - \delta$, so there indeed exists a set of FPP-extremal edges of volume at least $1 - \delta$ (namely, $W_2$), as required.  In addition, since $|C| \leq n$, the colouring $\phi$ uses at most $n$ colours, as required, so it remains to check that $\phi$ satisfies \ref{extremal-huge-edge}--\ref{extremal-colour-class-bound} of \ref{extremal-colouring}.  Since $\phi_i(e)\notin C_{\mathrm{huge}}$ for each $e\in\cH_{4 - i}^{\mathrm{good}}$ for $i\in\{2, 3\}$, by \eqref{eq:large-edge-thm-partition} and \ref{huge-in-H3}, every colour assigned to a huge edge by $\phi$ is in $C_{\mathrm{huge}}$.  Hence, since $\phi_1$ satisfies \ref{extremal-huge-edge} of \ref{extremal-colouring} (as shown earlier), $\phi$ satisfies \ref{extremal-huge-edge}, as desired.  Since $\phi_1 |_{\cH_2^{\mathrm{left}}\setminus\cH_{\mathrm{huge}}}$ is $\beta/2$-bounded and $\phi_2$ is $\beta/5$-bounded, $\phi$ satisfies~\ref{extremal-colour-class-bound}, and since moreover, $C_{\mathrm{med}} \cap C_{\mathrm{huge}} =\varnothing$, $\cH_{\mathrm{med}}\subseteq \cH_1^{\mathrm{good}}$, and $\phi_3$ is $\gamma_1/2$-bounded, $\phi$ satisfies \ref{extremal-medium-bound}, as desired.
\end{proof}

\section{Vertex absorption to extend colour classes}\label{absorption-section}

In this section, we will define properties that that we need our absorbers to satisfy in order to carry out the vertex absorption step outlined in Section~\ref{subsection:small-edge-overview}. (These absorbers will form part of the reservoirs $R$, which will be constructed in Section~\ref{reservoir-section}.) 
We also formalise various properties that allow a matching to be extended using vertex absorption.
In particular, given a set of edge-disjoint matchings having these properties, we can extend each matching using an absorber so that the resulting set of matchings has perfect or nearly-perfect coverage of $U \coloneqq V^{(1 - \eps)}_+(G)$ where $G \coloneqq \cH^{(2)}$ (as defined in Definition~\ref{def:coverage}).

\subsection{Quasirandom properties for absorption}\label{subsection:absorbable-properties}

\begin{definition}[Typicality and upper regularity]\label{def:typicality}
 Let $\gamma, \rho, \xi \in (0,1)$, let $G$ be an $n$-vertex graph, let $\cV$ be a set of subsets of $V(G)$, and let $R\subseteq E(G)$.  We say $R$ is 
 
 \begin{itemize}
    \item \textit{$(\rho, \gamma, G)$-typical with respect to $\cV$} if for every $X\in \cV$, every vertex $v \in V(G)$ satisfies $|N_R(v) \cap X| = \rho|N_G(v)\cap X| \pm \gamma n$ and $|N_R(v) \setminus X| = \rho|N_G(v)\setminus X| \pm \gamma n$, and
  
    \item \textit{upper $(\rho, \xi, G)$-regular} if for every pair of disjoint sets $S, T\subseteq V(G)$ with $|S|, |T| \geq \xi n$, we have $|E_G(S, T)\cap R| \leq \rho e_G(S, T) + \xi |S||T|$.
 \end{itemize}
 
 We also say a graph $H$ is upper $(\rho, \xi)$-regular if $E(H)$ is upper $(\rho, \xi, K_{v(H)})$-regular. In other words, $H$ is upper $(\rho, \xi)$-regular if for every pair of disjoint sets $S, T \subseteq V(H)$ with $|S|, |T| \ge \xi v(H)$, we have $e_H(S,T) \le (\rho + \xi)|S||T|$.
\end{definition}

The following simple observation follows immediately from Definition~\ref{def:typicality}.

\begin{obs}
\label{upper_reg_subgraphs}
If $H$ is a subgraph of $G$ such that $\xi v(G) < v(H)$, and $E(H)$ is upper $(\rho, \xi, G)$-regular, then $H$ is upper $(\rho, \xi v(G)/v(H))$-regular. \qed
\end{obs}

\begin{definition}[Absorbers]\label{def:absorber}
  Let $\xi , \gamma, \rho , \eps \in (0,1)$. Let $\cH$ be a linear multi-hypergraph, let $G\coloneqq \cH^{(2)}$, let $G'$ be the spanning subgraph of $G$ consisting of those edges with at least one vertex in $V^{(1 - \eps)}_+(G)$, and let $\cV$ be a set of subsets of $V(\cH)$.
 We say $R_{\mathrm{abs}}$ is a \textit{$(\rho, \gamma, \xi, \eps)$-absorber for $\cV$} if it satisfies the following properties: 
  \begin{enumerate}[(i)]
  \item\label{cond:absorber1} $R_{\mathrm{abs}} \subseteq E(G')$,
  
  \item $R_{\mathrm{abs}}$ is $(\rho, \gamma, G')$-typical with respect to $\cV$, and
  
  \item $R_{\mathrm{abs}}$ is upper $(\rho, \xi, G')$-regular.
  \end{enumerate}
\end{definition}

\begin{obs}[Robustness of absorbers]\label{obs:del_absorber}
    Let $\xi , \gamma, \rho , \eps \in (0,1)$.
    Let $\cH$ be a linear multi-hypergraph, and let $\cV$ be a set of subsets of $V(\cH)$.
    The following hold.
    \begin{enumerate}[{\rm (i)}]
        \item A $(\rho,\gamma,\xi,\eps)$-absorber for $\cV$ is also a $(\rho,\gamma' , \xi , \eps)$-absorber for $\cV$, if $\gamma < \gamma'< 1$.
        
        \item For any $(\rho,\gamma,\xi,\eps)$-absorber $R$ for $\cV$, if $R' \subseteq R$ and $\Delta(R-R')\leq \alpha n$, then $R'$ is also a $(\rho,\gamma + \alpha , \xi , \eps)$-absorber for $\cV$. \qed
    \end{enumerate} 
\end{obs}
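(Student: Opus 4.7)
The plan is a direct verification of the three conditions in Definition~\ref{def:absorber}. There is no substantial obstacle here; both claims are monotonicity statements whose proofs reduce to bookkeeping of the error terms.

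For the first claim, I would observe that condition~\eqref{cond:absorber1} and upper $(\rho, \xi, G')$-regularity make no reference to $\gamma$, so only the typicality condition could be affected. But the bound $|N_{R_{\mathrm{abs}}}(v) \cap X| = \rho|N_{G'}(v) \cap X| \pm \gamma n$ becomes weaker as $\gamma$ increases, so enlarging $\gamma$ to $\gamma'$ preserves the constraint, and the analogous bound for $N_{R_{\mathrm{abs}}}(v) \setminus X$ is handled identically.

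For the second claim, I would check that $R'$ inherits each of the three defining properties from $R$. Condition~\eqref{cond:absorber1} is immediate since $R' \subseteq R \subseteq E(G')$. Upper regularity is also inherited with the same parameter $\xi$, because for any disjoint $S, T \subseteq V(G')$ with $|S|, |T| \geq \xi v(G')$ we have $|E_{G'}(S,T) \cap R'| \leq |E_{G'}(S,T) \cap R| \leq \rho e_{G'}(S,T) + \xi |S||T|$.

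The only step requiring a brief computation is typicality. For $v \in V(G')$ and $X \in \cV$, I would write
\[
|N_{R'}(v) \cap X| = |N_R(v) \cap X| - |N_{R \setminus R'}(v) \cap X|,
\]
where the first term equals $\rho|N_{G'}(v) \cap X| \pm \gamma n$ by hypothesis and the second is bounded by $d_{R \setminus R'}(v) \leq \alpha n$. Combining yields $|N_{R'}(v) \cap X| = \rho|N_{G'}(v) \cap X| \pm (\gamma + \alpha) n$, with the analogous bound for $N_{R'}(v) \setminus X$ following by the same reasoning. Hence $R'$ is $(\rho, \gamma + \alpha, G')$-typical with respect to $\cV$ and is therefore a $(\rho, \gamma + \alpha, \xi, \eps)$-absorber.
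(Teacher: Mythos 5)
Your proof is correct and is exactly the routine verification the authors intend; the paper leaves this Observation without proof, treating it as immediate from the definitions. The only thing worth making explicit is that the bound $|N_{R\setminus R'}(v)\cap X|\leq d_{R\setminus R'}(v)\leq \alpha n$ uses $\Delta(R - R')\leq \alpha n$ applied at the single vertex $v$, which you do correctly, and that upper regularity transfers to any subset of $R$ precisely because it is a one-sided upper bound.
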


\begin{definition}[Pseudorandom matchings]\label{pseudorandom-matching-def}
  Let $n \in \mathbb{N}$, $\gamma , \kappa \in (0,1)$, and let $\cH$ be an $n$-vertex multi-hypergraph. For a family $\cF$ of subsets of $V(\cH)$, a matching $M$ in $\cH$ is \textit{$(\gamma, \kappa)$-pseudorandom} with respect to $\cF$ if every $S\in\cF$ satisfies $|S \setminus V(M)| = \gamma|S| \pm \kappa n$.
\end{definition}

\begin{definition}[Absorbable matchings]
\label{def:absorbable}
Let $\xi , \kappa, \gamma, \rho , \eps \in (0,1)$. Let $\cH$ be an $n$-vertex linear multi-hypergraph, let $G\coloneqq \cH^{(2)}$, let $U\coloneqq V^{(1 - \eps)}_+(G)$, and let $S\subseteq U$.
Let $R\subseteq E(G)$, and let $M$ be a matching in $\cH$.   We say $(\cH , M , R , S)$ is \emph{$(\rho, \eps, \gamma, \kappa, \xi)$-absorbable} if
\begin{enumerate}
    \item[\mylabel{AB1}{(\rm AB1)}] $R$ is a $(\rho,10\gamma,\xi,\eps)$-absorber for some $\cV$ such that $U,V(\cH) \in \cV$, 
    
    \item[\mylabel{AB2}{(\rm AB2)}] $M \subseteq \cH \setminus R$, and
    
    \item[\mylabel{AB3}{(\rm AB3)}] at least one of the following holds:
    \begin{enumerate}[(i)]
    \item\label{AB-pseudo} $M$ is  $(\gamma,\kappa)$-pseudorandom with respect to $\cF(R) \cup \{U,S\}$, where $\cF(R) \coloneqq \{N_R(u) \cap U \: : \: u \in U \} \cup \{N_R(u) \setminus U \: : \: u \in U \}$, 
    \item\label{AB-med} $v(M) \leq \gamma n$, or
    \item\label{AB-huge} $|V(M) \cap U| \leq \eps n$ and $U\cup V(M), U\setminus V(M) \in \cV$.
    \end{enumerate}
\end{enumerate}

We say $(\cH , M , R , S)$ is
\begin{itemize}
    \item $(\rho, \eps, \gamma, \kappa, \xi)$-absorbable  \textit{by pseudorandomness of $M$} if~\ref{AB-pseudo} holds,
    \item $(\rho, \eps, \gamma, \kappa, \xi)$-absorbable  \textit{by smallness of $M$} if~\ref{AB-med} holds, and
    \item $(\rho, \eps, \gamma, \kappa, \xi)$-absorbable  \textit{by typicality of $R$} if~\ref{AB-huge} holds.
\end{itemize}

We simply say $(\cH , M , R , S)$ is absorbable if it is $(\rho, \eps, \gamma, \kappa, \xi)$-absorbable and $\rho$, $\eps$, $\gamma$, $\kappa$, and $\xi$ are clear from the context.
\end{definition}
\COMMENT{It may seem unnatural to require $10\gamma$ instead of $\gamma$ as the typicality parameter for the absorber $R$ in ~\ref{AB1}. This is because we need this parameter to be strictly larger than $\gamma$ for our applications. Our choice of $10\gamma$ is just for convenience; one could, in fact, introduce a new parameter $\gamma'$ instead of $10\gamma$, which is different from the parameter $\gamma$ in ~\ref{AB3}\ref{AB-pseudo} since we only rely on the relation $\gamma , \gamma' \ll \rho,\eps$ in the proofs of the lemmas in this section.}

If $\cH$ is an $n$-vertex linear hypergraph and $\phi : \cH_{\mathrm{med}}\cup \cH_{\mathrm{large}} \rightarrow C$ is obtained from Theorem~\ref{large-edge-thm}, then we will choose an absorber $R$ such that for each $c\in C$, $(\cH, \phi^{-1}(c), R, S)$ is absorbable by smallness of $\phi^{-1}(c)$ if $c\in C_{\mathrm{med}}$ and by typicality of $R$ if $\phi$ assigns $c$ to a huge edge.  For essentially every other $c \in C$, we will find a matching $M_c \supseteq \phi^{-1}(c)$ in Section~\ref{small-non-graph-section} such that $(\cH , M_c , R , S)$ is absorbable by pseudorandomness of $M_c$.

In Sections~\ref{subsection:pseudorandom-absorption} and~\ref{subsection-huge-and-med-absorption}, we will show that if $(\cH , M , R , S)$ is absorbable, then we can extend $M$ to cover all but at most one vertex of $U$ using the edges of $R$ (see Definition~\ref{def:coverage}).  Note that if $U = \varnothing$, then there is no need to extend the matchings, and in this case, the lemmas below are vacuously true.

\subsection{Absorption for pseudorandom matchings}\label{subsection:pseudorandom-absorption}
We will use the following two lemmas (depending on whether $|U|$ is small or not) to extend matchings $N$ for which $(\cH , N , R , S)$ is absorbable by pseudorandomness of $N$.

\begin{lemma}\label{crossing-absorption-lemma}
    Let $0 < 1/n_0 \ll \xi \ll \kappa \ll \gamma \ll \rho, \eps \ll 1$, let $n \geq n_0$ and let $k \leq \kappa n$.  Let $\cH$ be an $n$-vertex linear multi-hypergraph, let $G\coloneqq \cH^{(2)}$, let $R\subseteq E(G)$, and let $U\coloneqq V^{(1 - \eps)}_+(G)$. Let $\cN := \{N_1,\dots, N_{k}\}$ be a set of edge-disjoint matchings in $\cH$ such that $(\cH , N_i, R , \varnothing)$ is absorbable by pseudorandomness of $N_i$ for each $i\in[k]$.
    
    If $|U| \leq n / 100$, then there is a set of edge-disjoint matchings $\cN' \coloneqq \{N'_1, \dots, N'_k\}$ in $\cH$ such that for all $i \in [k]$,
    \begin{itemize}
        \item $N_i' \supseteq N_i$ and $N_i' \setminus N_i \subseteq R$, and 
        \item $\cN'$ has perfect coverage of $U$.
    \end{itemize}
\end{lemma}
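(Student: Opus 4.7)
The plan is to construct the matchings $M_i := N_i'\setminus N_i$ for $i=1,\dots,k$ one at a time by an inductive Hall's theorem argument, so that each $M_i$ is a matching in $R$ disjoint from $V(N_i)$, covering $U_i := U\setminus V(N_i)$, and edge-disjoint from $M_1,\dots,M_{i-1}$. To keep the bipartite Hall setup clean, I would arrange for each $M_i$ to match $U_i$ into $V(\cH)\setminus U\setminus V(N_i)$.

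First I would derive the quantitative bounds that make the argument go through. By pseudorandomness of $N_i$ applied to $U\in\cF(R)\cup\{U,\varnothing\}$, we have $|U_i| = \gamma|U|\pm\kappa n$, so $|U_i|\leq \gamma n/100 + \kappa n$ using $|U|\leq n/100$. Next, by the typicality clause of \ref{AB1} with $X=U\in\cV$ and $d_{G'}(u) = d_G(u)\geq (1-\eps)n$ for $u\in U$, every $u\in U$ satisfies
\[
|N_R(u)\setminus U| \geq \rho\bigl((1-\eps)n-|U|\bigr)-10\gamma n \geq \rho n/2
\]
by the hierarchy $\gamma\ll\rho,\eps$. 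Applying pseudorandomness of $N_i$ to the set $N_R(u)\setminus U\in \cF(R)$ then yields $|N_R(u)\setminus U\setminus V(N_i)| \geq \gamma|N_R(u)\setminus U| - \kappa n \geq \gamma\rho n/3$.

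Next I would carry out the inductive step. Suppose $M_1,\dots,M_{i-1}$ have already been built with the desired properties, and set $R_i := R\setminus\bigcup_{j<i} M_j$. Consider the bipartite graph $H_i$ with parts $U_i$ and $V(\cH)\setminus U\setminus V(N_i)$, whose edges are those $R_i$-edges between the two parts. Since at most $i-1\leq\kappa n$ edges at $u$ have been consumed by earlier $M_j$'s,
\[
d_{H_i}(u) \geq |N_R(u)\setminus U\setminus V(N_i)| - (i-1) \geq \gamma\rho n/3 - \kappa n \geq \gamma\rho n/4
\]
for every $u\in U_i$. The hierarchy together with $|U|\leq n/100$ and $\kappa\ll\gamma$ forces $|U_i|\leq \gamma\rho n/4$, so Hall's condition holds trivially: for every nonempty $A\subseteq U_i$ and any $u\in A$, $|N_{H_i}(A)| \geq d_{H_i}(u) \geq |U_i| \geq |A|$. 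Hall's theorem then yields a matching $M_i\subseteq H_i$ saturating $U_i$; I would set $N_i' := N_i\cup M_i$.

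Finally, I would check the conclusion. By construction, $M_i\subseteq R\subseteq\cH$ and $V(M_i)\cap V(N_i) = U_i$, so $N_i'$ is a matching in $\cH$ with $N_i\subseteq N_i'$, $N_i'\setminus N_i = M_i\subseteq R$, and $U\subseteq V(N_i')$. Since $M_i\subseteq R_i$, the $M_i$'s are pairwise edge-disjoint; combined with the edge-disjointness of $N_1,\dots,N_k$ (which lie in $\cH\setminus R$ by \ref{AB2}), this gives that $N_1',\dots,N_k'$ are pairwise edge-disjoint, so $\cN'$ has perfect coverage of $U$. The main subtlety is the Hall check, where one must use the interaction of $|U|\leq n/100$ with the hierarchy $\gamma\ll\rho$ to conclude $|U_i|\leq d_{H_i}(u)$; apart from this, the argument is a straightforward induction consuming at most one $R$-edge per $u\in U_i$ per step.
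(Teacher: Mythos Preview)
There is a genuine gap in your Hall argument. You claim that the hierarchy, together with $|U|\le n/100$ and $\kappa\ll\gamma$, forces $|U_i|\le \gamma\rho n/4$, and then conclude Hall's condition trivially via $|N_{H_i}(A)|\ge d_{H_i}(u)\ge |U_i|\ge |A|$. But this inequality is not justified. From pseudorandomness you only get $|U_i|\le \gamma|U|+\kappa n\le \gamma n/100+\kappa n$, while your degree bound is $d_{H_i}(u)\ge \gamma\rho n/4$. Both sides carry a factor of $\gamma$, so after cancellation you would need $\rho/4\ge 1/100$, i.e.\ $\rho\ge 1/25$. The hierarchy only says $\rho\ll 1$; nothing prevents $\rho$ from being, say, $10^{-6}$, in which case $|U_i|$ can be much larger than the minimum degree in $H_i$ and your ``trivial'' Hall check fails.

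The paper closes this gap by using a piece of structure you have not touched: the upper $(\rho,\xi,G')$-regularity of $R$ that comes from \ref{AB1}. One only needs the weaker condition $d_{H'_\ell}(u)\ge 2\rho|A_\ell|$ for all $u\in A_\ell$ (this follows from essentially the same estimates you derived, since $|A_\ell|\le \gamma n/50$ makes $2\rho|A_\ell|\le \gamma\rho n/25$). Then for any $S\subseteq A_\ell$ with $|S|\ge\xi n$, upper-regularity gives $e(S,N(S))\le(\rho+\xi)|S||N(S)|$, while the degree bound gives $e(S,N(S))\ge 2\rho|S||A_\ell|$, forcing $|N(S)|\ge |A_\ell|\ge |S|$; small $S$ are handled by the minimum-degree bound directly. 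This is packaged as Observation~\ref{crossing-matching-prop} in the paper and is the key ingredient you are missing. Your greedy argument does survive, but only in the residual case $|A_\ell|<\xi n/\rho$, which the paper also handles separately. (Minor point: your line ``$V(M_i)\cap V(N_i)=U_i$'' should read $V(M_i)\cap V(N_i)=\varnothing$, since $M_i$ lives in the complement of $V(N_i)$.)
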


\begin{lemma}\label{internal-absorption-lemma}
    Let $0 < 1/n_0 \ll \xi \ll \kappa \ll \gamma \ll \rho, \eps \ll 1$, let $n \geq n_0$ and let $k \leq \kappa n$. Let $\cH$ be an $n$-vertex linear multi-hypergraph, let $G\coloneqq \cH^{(2)}$, let $R\subseteq E(G)$, let $U\coloneqq V^{(1 - \eps)}_+(G)$, and let $S \subseteq U$ satisfy $|S| \geq \gamma n$ if $|U| > (1-2\eps)n$.  Let $\cN := \{N_1,\dots, N_{k}\}$ be a set of edge-disjoint matchings in $\cH$ such that $(\cH , N_i , R , S)$ is absorbable by pseudorandomness of $N_i$ for each $i\in[k]$.
    
    If $|U| \geq n/100$, then there is a set of edge-disjoint matchings $\cN' \coloneqq \{N'_1, \dots, N'_k\}$ in $\cH$ such that for all $i \in [k]$,
    \begin{itemize}
        \item $N_i' \supseteq N_i$ and $N_i' \setminus N_i \subseteq R$, and 
        \item if $|U| \leq (1-2\eps)n$, then $\cN'$ has perfect coverage of $U$. Otherwise, $\cN'$ has nearly-perfect coverage of $U$ with defects in $S$.
    \end{itemize}
\end{lemma}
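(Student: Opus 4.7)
My plan is to construct the extension matchings $A_i := N_i' \setminus N_i \subseteq R$ sequentially for $i = 1, \dots, k$, each edge-disjoint from $A_1, \dots, A_{i-1}$ and satisfying $V(A_i) \cap V(N_i) = \varnothing$, so that $N_i \cup A_i$ has the required coverage of $U$. Let $U_i := U \setminus V(N_i)$. Combining the pseudorandomness of $N_i$ (\ref{AB-pseudo}) with respect to $U$ and to the neighbourhoods in $\cF(R)$ with the typicality of $R$ (\ref{AB1}), I have $|U_i| = \gamma|U| \pm \kappa n$ and, for every $u \in U_i$,
\begin{equation*}
|N_R(u)\cap U_i| = \gamma\rho\,|N_G(u)\cap U| \pm (10\gamma^2+\kappa)n, \qquad |(N_R(u)\setminus U)\setminus V(N_i)| = \gamma\rho\,|N_G(u)\setminus U| \pm (10\gamma^2+\kappa)n.
\end{equation*}
Since $u \in U$ satisfies $|N_G(u)| \ge (1-\eps)n - 1$, at least one of the two expressions above is of order $\rho\eps n$, and the at most $i-1 \le \kappa n \ll \gamma\rho n$ $R$-edges used at $u$ in prior steps are negligible, so the relevant pieces of $R$ retain their structure throughout the process.

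Next I reduce to an even-sized target subset $U_i^* \subseteq U_i$. If $|U_i|$ is odd and $|U|\le (1-2\eps)n$, then $|N_G(u)\setminus U|\ge \eps n$ for $u\in U_i$, so the second estimate above guarantees an unused crossing $R$-edge $\{u,v\}$ with $v\in V(\cH)\setminus(U\cup V(N_i))$, which I add to $A_i$ and set $U_i^* := U_i\setminus\{u\}$. If $|U_i|$ is odd and $|U|>(1-2\eps)n$, I designate a defect vertex $v^*\in S\cap U_i$ for step $i$: the pseudorandomness of $N_i$ with respect to $S$ gives $|S\cap U_i|\ge \gamma|S|/2 \ge \gamma^2 n/2 \gg k$, so I may greedily pick $v^*$ distinct from all defects used in prior steps, ensuring that each $u\in U$ is a defect in at most one $A_i$; this delivers the required nearly-perfect coverage with defects in $S$, and I set $U_i^* := U_i\setminus\{v^*\}$.

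The core task is then to find a perfect matching in the quasirandom subgraph $R_i[U_i^*]$, where $R_i := R\setminus\bigcup_{j<i}A_j$, or, in the regime where $|U|$ is comparable to $\eps n$, a matching covering $U_i^*$ built from crossing edges into $V(\cH)\setminus(U\cup V(N_i))$. In each case the relevant matching graph has minimum degree at least $\tfrac{\rho}{3}|U_i^*|$ and inherits upper $(\rho,\xi)$-regularity from $R$, which together suffice to produce a perfect matching by a Hall/Tutte-type argument: for $X\subseteq U_i^*$ with $|X|\le \tfrac{\rho}{4}|U_i^*|$ the neighbourhood condition $|N(X)|\ge |X|$ follows from min-degree alone; for larger $X$, combining $\sum_{x\in X}|N(x)|\ge \tfrac{\rho}{2}|X||U_i^*|$ with the upper-regularity bound $e_R(X)\le (\rho+\xi)|X|^2/2$ gives a positive lower bound on $e_R(X,U_i^*\setminus X)$, and a Hall-type deficit would force a bipartite independent pair $(X,Y)$ in $R$ with both sides of size $\ge \xi n$, contradicting the typicality-based edge count. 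Edge-disjointness across $i$ is automatic, since the total number of $R$-edges consumed across all steps is at most $k\cdot|U_i|\le \kappa\gamma n^2$, dwarfed by the $\Omega(\rho n^2)$ edges available in $R$.

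The main obstacle is the verification of the Hall/Tutte condition in the ``large $X$'' regime, together with the need to navigate the two parameter regimes ($|U|$ of order $\eps n$ versus $|U|$ much larger), in which the matching uses predominantly crossing versus internal $R$-edges; the hierarchy $\xi\ll\kappa\ll\gamma\ll\rho,\eps$ is precisely tailored to absorb all the error terms introduced by the $\kappa n$ previously used edges, the $\xi n$ slack in upper regularity, and the $\gamma$-fraction of $N_i$-free vertices, throughout the sequential construction.
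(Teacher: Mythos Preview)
Your approach matches the paper's: handle parity via one crossing $R$-edge (when $|U|\le(1-2\eps)n$) or a distinct defect vertex from $S$ (otherwise), then find a perfect matching in $R_i[U_i^*]$ using minimum degree $\gtrsim\rho|U_i^*|$ together with upper-regularity, iterating over $i$ and absorbing the $\le\kappa n$ previously-used edges into the error terms.

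Two clarifications. First, the ``two parameter regimes'' worry is unnecessary: under the hypothesis $|U|\ge n/100$ (with $\eps\ll 1$) the internal matching in $U_i^*$ always works, and crossing edges are used only for the single parity fix---there is no regime in this lemma where the bulk of the absorption uses crossing edges. Second, your Hall-type verification of the perfect matching in the non-bipartite graph $R_i[U_i^*]$ is not quite rigorous as stated: checking $|N(X)|\ge|X|$ for all $X$ does not by itself yield a perfect matching in a general graph, and ``typicality-based edge count'' does not furnish a \emph{lower} bound on $e_R(X,Y)$ between arbitrary sets (typicality controls degrees, and the absorber is only assumed upper-regular). The paper resolves this via Observation~\ref{internal-matching-prop}: pass to a random balanced bipartition of $U_i^*$ (Chernoff preserves the min-degree bound), then verify Hall's condition in the bipartite subgraph---min-degree gives $e(S,N(S))\ge |S|\cdot\rho|U_i^*|/3$ while upper-regularity gives $e(S,N(S))\le(\rho+\xi')|S||N(S)|$, forcing $|N(S)|\gtrsim|U_i^*|/3$, and a symmetric argument applied to $B\setminus N(S)$ then bounds $|S|$ from above.
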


To prove the above lemmas we will need the following simple propositions, which follow easily from Hall's theorem.

\begin{proposition}
\label{crossing-matching-prop}
  Let $0 < \xi \ll \rho \leq 1$.  If $H$ is an upper $(\rho, \xi)$-regular bipartite graph with bipartition $(A, B)$ such that $v(H) \leq \rho|A|/\xi$ and every $v\in A$ satisfies $d_H(v) \geq 2\rho |A|$, then $H$ has a matching covering $A$.
\end{proposition}
\begin{proof}
  By Hall's Theorem, it suffices to show that every non-empty $S\subseteq A$ satisfies $|N(S)| \geq |S|$.  Since $|N(S)| \geq 2\rho |A| \geq \xi v(H)$, we can assume $|S| \geq \xi v(H)$.  Since every $v\in A$ satisfies $d_H(v) \geq 2\rho |A|$, every non-empty $S\subseteq A$ satisfies $e_{H}(S, N(S)) \geq 2\rho|S||A|.$
  
  However, since $H$ is upper $(\rho, \xi)$-regular and $|S|,|N(S)| \geq \xi v(H)$, we also have
 $e_{H}(S, N(S)) \leq (\rho + \xi)|S| |N(S)|.$
  Combining these two inequalities, we have $|N(S)| \geq |A| \geq |S|$, as desired.
\end{proof}

\begin{proposition}\label{internal-matching-prop}
  Let $0 < 1/m \ll \xi \ll \rho \leq 1$. If $G$ is an $m$-vertex, upper $(\rho, \xi)$-regular graph such that every $v\in V(G)$ satisfies $d_G(v) \geq 3\rho m / 4$, and $m$ is even, then $G$ has a perfect matching.
\end{proposition}
\begin{proof}
By a standard probabilistic argument, $G$ has a spanning bipartite subgraph $H$ with bipartition $(A, B)$ such that $|A| = |B| = m / 2$ and every $v\in V(G)$ satisfies $d_{H}(v) \geq \rho m / 3$.\COMMENT{Consider such a bipartition chosen uniformly at random.  The distribution of the random subset $A$ coincides with the distribution on subsets of $V(G)$ obtained by choosing each vertex to be in $A$ with probability $1/2$, conditioned on the event that $|A| =  m / 2 $.  The probability of this event is $\Theta(1/\sqrt m)$, but by Chernoff's bound, if each vertex is chosen in $A$ independently with probability $1/2$, then the probability $v$ has fewer than $\rho m / 3$ neighbors in $A$ is $\exp(-\Omega(m))$.}  By Hall's Theorem, it suffices to show that every $S\subseteq A$ satisfies $|N(S)| \geq |S|$.  If $S\neq\varnothing$, then $|N(S)| \geq \rho m / 3 \geq \xi m$, so we assume $|S| \geq \xi m$.
  Since every $v\in A$ satisfies $d_H(v) \geq \rho m / 3$, every $S\subseteq A$ satisfies $e_{H}(S, N(S)) \geq |S| \rho m / 3.$
   However, since $G$ is upper $(\rho, \xi)$-regular, we also have $e_{H}(S, N(S)) \leq (\rho + \xi)|S||N(S)|.$
  Combining these two inequalities, we have $|N(S)| \geq 3 m / 10$.  Let $T\coloneqq B\setminus N(S)$.  If $T = \varnothing$, then $S$ satisfies the Hall condition, as desired, so we assume $T \neq \varnothing$.  By the same argument applied to $T$, we have $|N(T)| \geq 3m / 10$, so $|S| \leq |A| - |N(T)| \leq m / 5$.  Now $|S| \leq m / 5 \leq 3m / 10 \leq |N(S)|$, as desired.
\end{proof}

\begin{proof}[Proof of Lemma~\ref{crossing-absorption-lemma}]
  For each $i\in [k]$, let $H_i$ be the bipartite graph consisting of edges in $R$ with the bipartition $(A_i, B_i)$, where $A_i\coloneqq U\setminus V(N_i)$ and $B_i \coloneqq (V(\cH)\setminus U)\setminus V(N_i)$.  

  We claim that there exist pairwise edge-disjoint matchings $N^{\mathrm{abs}}_i$ in $H_i$ covering $A_i$ for each $i\in[k]$. We find these matchings one-by-one using Proposition~\ref{crossing-matching-prop}, if $|A_i| \geq \xi n / \rho$. Otherwise, we find them greedily. To this end, we assume that for some $\ell \leq k$, we have found such matchings $N^{\mathrm{abs}}_i$ for $i\in[\ell - 1]$, and we show that there exists such a matching $N^{\mathrm{abs}}_{\ell}$, which proves the claim.  Let $H'_\ell := H_\ell \setminus \bigcup_{i\in[\ell-1]}N^{\mathrm{abs}}_i$.  
  
  We first show that every vertex $u\in A_{\ell}$ satisfies $d_{H'_\ell}(u) \geq 2\rho |A_\ell|$. Since $|U| \leq n / 100$ and $R$ is a $(\rho , 10\gamma , \xi , \eps)$-absorber for $\{V(\cH),U \}$ by~\ref{AB1}, every $u\in A_{\ell}$ satisfies\COMMENT{A vertex of $U$, by definition, has degree at least $(1-\eps)n$ in $G'$, so it sends at least $99n/100 - \eps n$ edges to $V(G') \setminus U$ if $|U| \leq n / 100$. Since $R$ is $(\rho , 10\gamma ,G')$-typical with respect to $U$, $|N_R(u) \setminus U| \geq (\rho(99/100 - \eps) - 10\gamma)n$.}
  \begin{equation}\label{absorption-lemma-degree-outside-tmp}
    |N_R(u) \setminus U| \geq (\rho(99/100 - \eps) - 10\gamma)n \geq 98\rho n / 100.
  \end{equation}
  Note that each $N_i$ is $(\gamma , \kappa)$-pseudorandom with respect to $\cF\coloneqq \cF(R) \cup \{U\}$ by~\ref{AB3}\ref{AB-pseudo}. Together with ~\eqref{absorption-lemma-degree-outside-tmp}, this implies that every $u\in A_{\ell}$ satisfies $d_{H_{\ell}}(u) \geq \gamma |N_R(u)\setminus U| - \kappa n \geq  97 \gamma \rho n / 100$.  Since $\ell \leq k \leq \kappa n$, we have
  \begin{equation}\label{absorption-lemma-degree-outside2-tmp}
    d_{H'_{\ell}}(u) \geq d_{H_\ell}(u) - \kappa n \geq 96 \gamma \rho n / 100.
  \end{equation}
  Since $N_i$ is $(\gamma, \kappa)$-pseudorandom with respect to $\cF \ni U$ and $|U| \leq n / 100$, we also have
  \begin{equation}\label{crossing-absorption-lemma-part-size-bound-tmp}
    |A_\ell| \leq \gamma |U| + \kappa n \leq \gamma n / 50.
  \end{equation}
  Combining~\eqref{absorption-lemma-degree-outside2-tmp} and~\eqref{crossing-absorption-lemma-part-size-bound-tmp}, we have $d_{H'_\ell}(u) \geq 2\rho |A_\ell|$, as desired.  
  
  Note that the graph $H^*_\ell := (V(\cH), E(H'_\ell))$ is upper $(\rho, \xi)$-regular\COMMENT{$E(H^*_\ell)$ is, in fact, upper $(\rho, \xi, G')$-regular where $G'$ is the spanning subgraph of $G$ consisting of those edges with at least one vertex in $U$, which implies that $H^*_\ell$ is upper $(\rho, \xi)$-regular} since $H^*_\ell \subseteq R$, and $H^*_\ell$ is bipartite with the bipartition $(A_\ell, V(\cH) \setminus A_\ell)$ where $d_{H^*_\ell}(u) \geq 2\rho |A_\ell|$, for every $u\in A_{\ell}$.
Therefore, if $|A_\ell| \geq \xi n / \rho$, then Proposition~\ref{crossing-matching-prop} implies that $H^*_\ell$ has a matching $N^{\mathrm{abs}}_{\ell}$ covering $A_\ell$, so $N^{\mathrm{abs}}_{\ell}$ is also a matching in $H_\ell'$ covering $A_\ell$, as claimed. Otherwise, for any $u \in A_{\ell}$,
  \begin{equation}\label{eqn:bounda_ell}
      |A_\ell| < \xi n / \rho \leq 96 \gamma \rho n / 100 \overset{\eqref{absorption-lemma-degree-outside2-tmp}}{\leq} d_{H_\ell'}(u),
  \end{equation}
  so we can find such a matching greedily.

  Therefore we have pairwise edge-disjoint matchings $N_i^{\mathrm{abs}}$ in $H_i$ covering $A_i$ for $i\in[k]$, as claimed, and $N_1^{\rm abs} , \dots , N_k^{\rm abs}$ are edge-disjoint from $N_1 , \dots , N_k$ by~\ref{AB2}. For each $i\in[k]$, let ${N'_i} \coloneqq N_i\cup N_i^{\mathrm{abs}}$, and let $\cN' = \{N'_1, \dots, N'_k\}$.  Hence each matching in $\cN'$ covers $U$, so $\cN'$ has perfect coverage of $U$, as desired.
\end{proof}

\begin{proof}[Proof of Lemma~\ref{internal-absorption-lemma}]
 Let $\cF := \cF(R) \cup \{U, S\}$, where $\cF(R) \coloneqq \{N_R(u) \cap U \: : \: u \in U \} \cup \{N_R(u) \setminus U \: : \: u \in U \}$. For each $i\in [k]$, let $G_i$ be the graph with $V(G_i) := V(\cH) \setminus V(N_i)$ and $E(G_i) \coloneqq \{ e \in R \: : \: e \subseteq V(\cH)\setminus V(N_i)\}$, and let $U_i \coloneqq U\setminus V(N_i)$. Since $(\cH , N_i , R , S)$ is absorbable by pseudorandomness of $N_i$, $N_i$ is $(\gamma,\kappa)$-pseudorandom with respect to $\cF \ni U$ (see \ref{AB3}). Thus, we have
  \begin{equation}\label{eqn:size_ui}
      \textrm{$|U_i| = \gamma |U| \pm \kappa n$, so $|U_i| \geq \gamma n / 200$}.
  \end{equation}

We claim the following.
\begin{claim}
\label{Ni_abs_claim}
 For each $i\in[k]$ there exists $u_i \in U_i$ and a matching $N^{\mathrm{abs}}_i$ in $G_i$ such that the following holds.  The vertices $u_1, \dots, u_k$ are distinct, the matchings $N^{\mathrm{abs}}_1, N^{\mathrm{abs}}_2, \ldots, N^{\mathrm{abs}}_k$ are pairwise edge-disjoint, and $N^{\mathrm{abs}}_i$ covers every vertex of $U_i\setminus\{u_i\}$ for each $i\in[k]$.  Moreover, if $|U| \leq (1 - 2\eps)n$, then $N^{\mathrm{abs}}_i$ covers every vertex of $U_i$ for each $i\in [k]$, and otherwise $u_i \in S$.
 \end{claim}

\begin{claimproof}
  We first choose distinct $u_i \in U_i$ for each  $i\in[k]$, as follows.

  \begin{itemize}
    \item If $|U| \leq (1 - 2\eps)n$, then since $R$ is a $(\rho,10\gamma,\xi,\eps)$-absorber for $\{V(\cH),U \}$ by~\ref{AB1}, every $u\in U_i$ satisfies $|N_R(u)\setminus U| \geq (\rho\eps - 10\gamma) n$. Indeed, if $G'$ denotes the spanning subgraph of $G$ consisting of those edges with at least one vertex in $U$, then $|U| \leq (1 - 2\eps)n$ implies that $|N_{G'}(u)\setminus U| \ge \eps n$ for every $u\in U_i$, and since $R$ is $(\rho, 10\gamma, G')$-typical with respect to $U \in \mathcal V$ (for some $\mathcal V$), we have $|N_R(u)\setminus U| \ge  \rho |N_{G'}(u)\setminus U| - 10 \gamma n \ge (\rho \eps - 10\gamma) n$.   By~\ref{AB3}\ref{AB-pseudo}, since $N_i$ is $(\gamma, \kappa)$-pseudorandom with respect to $\cF \supseteq \cF(R)$ for each $i\in[k]$, this inequality implies\COMMENT{If $N_i$ is $(\gamma, \kappa)$-pseudorandom with respect to $\cF \supseteq \cF(R)$, then $|N_{G_i}(u)\setminus U| \ge \gamma(\rho\eps - 10\gamma) n - \kappa n \ge \gamma \rho\eps n / 2$ since $\kappa \ll \gamma \ll \rho, \eps$. 
} that every $u \in U_i$ satisfies $|N_{G_i}(u)\setminus U| \geq \gamma|N_R(u)\setminus U|- \kappa n \geq \gamma(\rho\eps - 10\gamma) n - \kappa n \ge \gamma \rho\eps n / 2$ since $\kappa \ll \gamma \ll \rho, \eps$.  Since $k \le \kappa n$ and $\kappa \ll \gamma, \rho, \eps$, and \eqref{eqn:size_ui} holds, we can choose $u_i \in U_i$ one-by-one such that there is a matching $\{u_iv_i : i\in[k]\}$ where $v_i \in N_{G_i}(u_i)\setminus U$ for each $i\in[k]$.  
  
    \item Otherwise, $|S| \geq \gamma n$, and since $N_i$ is $(\gamma, \kappa)$-pseudorandom with respect to $\mathcal F \ni S$, by~\ref{AB3}\ref{AB-pseudo}, we have $|S \setminus V(N_i)| \geq \gamma |S| - \kappa n \ge \gamma^2 n /2 > \kappa n$ for each $i \in [k]$, so we can choose $u_i \in U_i \cap S = S \setminus V(N_i)$ one-by-one such that they are distinct, as required.
  \end{itemize}
  
  Now let $U'_i \coloneqq U_i\setminus\{u_i\}$ if $|U_i|$ is odd. Otherwise, let $U'_i\coloneqq U_i$.
  By the choice of the vertices $u_i$, it suffices to find pairwise edge-disjoint perfect matchings ${N'}_i^{\mathrm{abs}}$ in $G_i[U'_i]$ for each $i\in [k]$. Indeed if $|U| \leq (1-2\eps)n$ and $|U_i|$ is odd, then $N_i^{\rm abs} \coloneqq {N'}_i^{\mathrm{abs}} \cup \{u_i v_i \}$ satisfies the claim, and otherwise $N_i^{\rm abs} \coloneqq {N'}_i^{\mathrm{abs}}$ satisfies the claim.
  
  We find these matchings one-by-one using Proposition~\ref{internal-matching-prop}.  To this end, we assume that for some $\ell \leq k$, we have found such matchings ${N'}^{\mathrm{abs}}_i$ for $i\in[\ell - 1]$, and we show that there exists such a matching ${N'}^{\mathrm{abs}}_{\ell}$, which proves the claim.  Let $G'_\ell \coloneqq G_\ell[U'_\ell] \setminus \bigcup_{i\in[\ell-1]}{N'}^{\mathrm{abs}}_i$.
  Since $|U| \geq n / 100$ and $R$ is a $(\rho,10\gamma,\xi,\eps)$-absorber for some $\mathcal V$ such that $V(\cH),U \in \mathcal V$ by~\ref{AB1}, every $u\in U$ satisfies
  \begin{equation}\label{internal-absorption-degree-outside-tmp}
    |N_R(u) \cap U| \geq \rho(|U| - \eps n) - 10\gamma n \geq 99 \rho |U| / 100.
  \end{equation}
  
  Note that $N_{\ell}$ is $(\gamma , \kappa)$-pseudorandom with respect to $\cF \supseteq \cF(R) \cup \{U\}$ by~\ref{AB3}\ref{AB-pseudo}. Together with ~\eqref{internal-absorption-degree-outside-tmp}, this implies that every $u\in U'_{\ell}$ satisfies $d_{G_{\ell}[U'_\ell]}(u) \geq \gamma |N_R(u)\cap U| - \kappa n - 1 \geq  98 \gamma\rho |U| / 100$.  Since $\ell \leq k \leq \kappa n$, we have
  \begin{equation}\label{internal-absorption-degree-outside2-tmp}
    d_{G'_{\ell}}(u) \geq d_{G_\ell[U_i']}(u) - \kappa n \geq 97\gamma  \rho |U| / 100.
  \end{equation}
  
  We also have
  \begin{equation}\label{internal-absorption-lemma-part-size-bound-tmp}
    |U'_\ell| \pm 1 = |U_\ell| \overset{\eqref{eqn:size_ui}}{=} \gamma |U| \pm \kappa n, \text{ so } |U'_\ell| \leq 5\gamma |U| / 4.
  \end{equation}
  Since $R$ is upper $(\rho, \xi, G')$-regular and $|U'_\ell| \overset{\eqref{internal-absorption-lemma-part-size-bound-tmp}}{\ge} \gamma n/100 - \kappa n -1 \ge \gamma n/200$, $G'_{\ell}$ is upper $(\rho, 200\xi/\gamma)$-regular by Observation~\ref{upper_reg_subgraphs}.
  Moreover, combining~\eqref{internal-absorption-degree-outside2-tmp} and~\eqref{internal-absorption-lemma-part-size-bound-tmp}, we have $d_{G'_\ell}(u) \geq 3\rho|U'_\ell| / 4 = 3\rho|V(G'_\ell)| / 4$. So by\COMMENT{
  We used $\rho \gg 200 \xi / \gamma$ which holds since $\xi \ll \rho, \gamma$.
} Proposition~\ref{internal-matching-prop} and the fact that $|V(G'_\ell)| =|U'_\ell|$ is even (by the definition of $U'_\ell$), $G'_\ell$ has a perfect matching ${N'}_\ell^{\mathrm{abs}}$, as desired.  
  \end{claimproof}

 Therefore, by Claim~\ref{Ni_abs_claim}, we have pairwise edge-disjoint matchings $N_i^{\rm abs}$ in $G_i$, which by~\ref{AB2} are edge-disjoint from $N_1 , \dots , N_k$. For each $i \in [k]$, let $N'_i \coloneqq N_i\cup N_i^{\mathrm{abs}}$, and let $\cN' = \{N'_1, \dots, N'_k\}$. Then, since $N_i^{\mathrm{abs}}$ is a matching in $G_i$ and $V(G_i) \cap V(N_i) = \varnothing$, we have $V(N_i^{\mathrm{abs}}) \cap V(N_i) = \varnothing$, so $N_i\cup N_i^{\mathrm{abs}}$ is a matching for each $i \in [k]$. Note that $N'_i \supseteq N_i$ and $N'_i\setminus N_i\subseteq R$ for each $i\in[k]$. Moreover,  if $|U| \leq (1 - 2\eps)n$, then by Claim~\ref{Ni_abs_claim} (and the definition of $U_i$), $N'_i \in \cN'$ covers every vertex of $U$ for each $i \in [k]$, so $\cN'$ has perfect coverage of $U$; otherwise, $N'_i \in \cN'$ covers every vertex of $U \setminus \{u_i\}$  where $u_i \in S$ for each $i \in [k]$ and the vertices $u_1, \ldots, u_k$ are distinct, so $\cN'$ has nearly-perfect coverage of $U$ with defects in $S$, as desired.  \end{proof}

\subsection{Absorption for matchings having huge edges or having few vertices}\label{subsection-huge-and-med-absorption}
\begin{definition}[Difficult matching]\label{defn:difficult}
    Let $\cH$ be an $n$-vertex hypergraph, let $G\coloneqq \cH^{(2)}$, and let $U\coloneqq V^{(1 - \eps)}_+(G)$.  A matching $M$ in $\cH$ is \textit{difficult} if it covers at least $3|V(\cH)\setminus U| / 4$ of the vertices in $V(\cH)\setminus U$ and $|V(\cH)\setminus U| \geq 2$.  If the matching $M$ is difficult and consists of a single edge $e$, then we also say that $e$ is \textit{difficult}.
\end{definition}

We will use the following lemma to extend matchings $N$ for which $(\cH , N , R , S)$ is either absorbable by smallness of $N$ or by typicality of $R$ (provided $N$ is not difficult). 

\begin{lemma}\label{huge-edge-absorption-lemma}
  Let $0 < 1/n_0 \ll \xi \ll \kappa \ll \gamma \ll \rho, \eps \ll 1$, and let $n \geq n_0$. Suppose $k \leq \gamma n$. Let $\cH$ be an $n$-vertex linear hypergraph, let $G\coloneqq \cH^{(2)}$, let $R\subseteq E(G)$, let $U\coloneqq V^{(1 - \eps)}_+(G)$, and let $S \subseteq U$ satisfy $|S| > (\gamma + \eps)n$ if $|U| > (1 - 10\eps)n$.
    Let $\cN \coloneqq \{ N_1 , \dots , N_k \}$ be a set of edge-disjoint matchings in $\cH$ such that for each $i\in[k]$, either
    \begin{itemize}
        \item[\rm (a)] $(\cH, N_i, R, S)$ is $(\rho, \eps, \gamma, \kappa, \xi)$-absorbable by smallness of $N_i$, or
        \item[\rm (b)] $(\cH, N_i, R, S)$ is $(\rho, \eps, \gamma, \kappa, \xi)$-absorbable by typicality of $R$ and $N_i$ is not difficult.\COMMENT{When applying this lemma, the condition $|V(M) \cap U| \leq \eps n$ in ~\ref{AB3}\ref{AB-huge} presents no problem because in the extremal case, $|U| \leq \eps n$ and in the non-extremal case we will apply this lemma only when $M$ is the colour class consisting of exactly one huge edge $e$, in which case $|e \cap U| \leq \eps n$ by the definition of $U$ and the linearity of $\cH$.}
    \end{itemize}
  Then there is a set $\cN' := \{ N_1' , \dots , N_k' \}$ of pairwise edge-disjoint matchings such that for $i \in [k]$,
  \begin{itemize}
      \item $N_i' \supseteq N_i$ and $N_i' \setminus N_i \subseteq R$, and  
      
      \item  if $|U| \leq (1-10\eps)n$, then $\cN'$ has perfect coverage of $U$. Otherwise, $\cN'$ has nearly-perfect coverage of $U$ with defects in $S$.
  \end{itemize}
\end{lemma}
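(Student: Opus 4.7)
The plan is to adapt the iterative construction of Lemmas~\ref{crossing-absorption-lemma} and \ref{internal-absorption-lemma}, handling the two types of matchings uniformly. For each $i \in [k]$, I will construct a matching $N_i^{\mathrm{abs}} \subseteq R$, pairwise edge-disjoint across $i$, so that $N_i' := N_i \cup N_i^{\mathrm{abs}}$ has the desired coverage of $U$. Let $U_i := U \setminus V(N_i)$; in both cases (a) and (b) we have $|V(N_i) \cap U| \leq \gamma n$, so $|U_i| \geq |U| - \gamma n$. I will split into two regimes according to $|U|$.

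\emph{Regime I: $|U| \leq (1-10\eps)n$.} The goal is perfect coverage via a crossing matching from $U_i$ to $V(\cH) \setminus (U \cup V(N_i))$. The key estimate is a lower bound on $|N_R(u) \setminus (U \cup V(N_i))|$ for $u \in U_i$. In case (a), $|V(N_i)| \leq \gamma n$, so typicality of $R$ with respect to $U$ (from~\ref{AB1}) together with $|N_{G'}(u) \setminus U| \geq d_{G'}(u) - |U| \geq 9\eps n$ gives at least $9\rho\eps n - 11\gamma n$ available neighbours. In case (b),~\ref{AB3}\ref{AB-huge} guarantees $U \cup V(N_i) \in \cV$, so typicality of $R$ controls $|N_R(u) \setminus (U \cup V(N_i))|$ directly; the hypothesis that $N_i$ is not difficult forces $|V(\cH)\setminus(U\cup V(N_i))| \geq |V(\cH)\setminus U|/4 \geq 10\eps n/4$, and combined with $d_{G'}(u) \geq (1-\eps)n$ this yields $|N_{G'}(u)\setminus(U\cup V(N_i))| \geq 3\eps n/2$ and hence enough $R$-neighbours. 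After subtracting at most $k \leq \gamma n$ edges per vertex used by previously constructed absorbers, each $u$ still has $\geq \rho\eps n$ available edges. Then, exactly as in the proof of Lemma~\ref{crossing-absorption-lemma}, upper $(\rho,\xi,G')$-regularity of $R$ and Observation~\ref{crossing-matching-prop} produce $N_\ell^{\mathrm{abs}}$ covering $U_\ell$ when $|U_\ell| \geq \xi n/\rho$; otherwise, a greedy choice suffices.

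\emph{Regime II: $|U| > (1-10\eps)n$.} Here crossing matchings are not feasible, so we use internal matchings in $U$. Since $|S| > (\gamma+\eps)n$ and $k \leq \gamma n < \eps n$, we may pick distinct vertices $u_1,\dots,u_k$ with $u_i \in S \cap U_i$ one by one (using $|S \cap U_i| \geq |S| - |V(N_i)\cap U| \geq \eps n$). Set $U_i' := U_i \setminus \{u_i\}$ if $|U_i|$ is odd and $U_i' := U_i$ otherwise; we seek a perfect matching in $R[U_i']$ minus previously used edges. For $u \in U_i'$, typicality of $R$ with respect to $U$ (case (a)) or $U \setminus V(N_i)$ (case (b), available because $U\setminus V(N_i)\in\cV$ by~\ref{AB3}\ref{AB-huge}) gives $|N_R(u) \cap U_i'| \geq \rho(|U_i'|-\eps n) - 11\gamma n - k \geq (3/4)\rho|U_i'|$, provided $|U_i'| \geq \gamma n/200$, which holds since $|U_i'| \geq (1-10\eps)n - \gamma n - 1$. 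Upper $(\rho,\xi,G')$-regularity of $R$ gives upper $(\rho,O(\xi))$-regularity of the relevant subgraph on $U_i'$, and Observation~\ref{internal-matching-prop} yields the perfect matching $N_\ell^{\mathrm{abs}}$. Since the $u_i$ are distinct and lie in $S$, the resulting family $\cN'$ has nearly-perfect coverage of $U$ with defects in $S$.

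The main obstacle is handling case (b) correctly: $V(N_i)$ can be nearly all of $V(\cH)$ (since $N_i$ may contain a huge edge), so naive counting fails. The trick is to exploit precisely the additional typicality granted by~\ref{AB3}\ref{AB-huge}, namely that $R$ is typical with respect to both $U \cup V(N_i)$ and $U \setminus V(N_i)$, which converts crude size bounds into usable degree bounds; the ``not difficult'' hypothesis plays the complementary role in Regime I of guaranteeing that $V(\cH)\setminus(U\cup V(N_i))$ has linear size so that these neighbourhood estimates are non-vacuous.
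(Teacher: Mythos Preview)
Your split into Regimes I/II based on $|U|$ rather than on $|U_i|=|U\setminus V(N_i)|$ leaves a genuine gap. In Regime~I you aim for a crossing matching from $U_i$ into $V(\cH)\setminus(U\cup V(N_i))$, but for $|U|$ in the middle range this is impossible on cardinality grounds alone. For instance take $|U|=0.6n$ with $\eps$ small (so $|U|\leq(1-10\eps)n$ and you are in Regime~I) and suppose (a) holds; then $|U_i|\geq 0.6n-\gamma n$ while $|V(\cH)\setminus(U\cup V(N_i))|\leq|V(\cH)\setminus U|=0.4n$, so no matching from the former into the latter can cover $U_i$. Your degree estimates are fine but do not help: Observation~\ref{crossing-matching-prop} requires $d_{H'_\ell}(u)\geq 2\rho|U_\ell|$, which you cannot get once $|U_\ell|$ is of linear size, and indeed pigeonhole already kills the crossing approach here.

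The paper's fix is to split on $|U_i|$ instead: use the crossing argument only when $|U_i|\leq n/100$, and use an internal matching in $R[U_i']$ whenever $|U_i|>n/100$, regardless of whether $|U|$ exceeds $(1-10\eps)n$. In the internal case with $|U|\leq(1-10\eps)n$ one still needs perfect coverage, so the leftover parity vertex $u_i$ is absorbed by an extra edge $u_iv_i\in R$ with $v_i\notin U\cup V(N_i)$; it is precisely here (not only in the crossing case) that the ``not difficult'' hypothesis is needed in case (b), to guarantee $|V(\cH)\setminus(U\cup V(N_i))|\geq|V(\cH)\setminus U|/4\geq 5\eps n/2$ so that such a $v_i$ exists after typicality. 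Your Regime~II is essentially the paper's internal case for $|U|>(1-10\eps)n$, and that part is correct.
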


\begin{proof}
Let $G'$ be the spanning subgraph of $G$ consisting only of the edges incident to a vertex in $U$. Without loss of generality, we may assume that there is an integer $s$ such that for each $i \in [s]$, we have $|U \setminus V(N_i)| \leq n/100$, and for each $i \in \{s+1, \dots, k \}$, we have $|U \setminus V(N_i)| > n/100$.

For each $i \in [k]$, let $U_i := U \setminus V(N_i)$, let $V_i \coloneqq (V(\cH)\setminus U)\setminus V(N_i)$, let $H_i$ be the bipartite graph with the bipartition $(U_i, V_i)$ and $E(H_i) \coloneqq \{ e \in R \: : \: |e \cap U_i| = |e\cap V_i| = 1\}$, and let $G_i$ be the graph with $V(G_i) := V(\cH) \setminus V(N_i)$ and $E(G_i) \coloneqq \{ e \in R \: : \: e \subseteq V(\cH)\setminus V(N_i)\}$. 
  
 We first claim that there exist pairwise edge-disjoint matchings $N_1^{\rm abs}, \dots, N_s^{\rm abs}$ such that for each $i \in [s]$,  $N_i^{\rm abs}$ is a matching in $H_i$ covering all of the vertices in $U_i$. 
We find these matchings one-by-one using Proposition~\ref{crossing-matching-prop}. To this end, we assume that for some $\ell \leq s$, we have found such matchings $N^{\mathrm{abs}}_i$ for $i\in[\ell - 1]$, we let $H_{\ell}' := H_{\ell} \setminus \bigcup_{t=1}^{\ell-1} N_t^{\rm abs}$, and we show that there exists such a matching $N^{\mathrm{abs}}_{\ell}$ in $H_{\ell}'$, which proves the claim.
    It suffices to show that every $u\in U_\ell$ satisfies
  \begin{equation}\label{huge-absorption-crossing-degree-bound}
    d_{H_{\ell}'}(u) \geq \rho n / 7 \geq 2\rho |U_{\ell}|.
  \end{equation}
  
  Indeed, the graph $H^*_\ell := (V(\cH), E(H'_\ell))$ is upper $(\rho, \xi)$-regular since $E(H^*_\ell) \subseteq R$, and $H^*_\ell$ is bipartite with the bipartition $(U_\ell, V(\cH) \setminus U_\ell)$ where $d_{H^*_\ell}(u) = d_{H'_\ell}(u) \geq 2\rho |U_\ell|$ for every $u\in U_{\ell}$. Therefore, if $|U_{\ell}| \geq \xi n / \rho$, then we have a matching $N_{\ell}^{\mathrm{abs}}$ in $H^*_\ell$ (and so in $H_{\ell}'$) covering $U_{\ell}$ by Proposition~\ref{crossing-matching-prop}, as desired. Otherwise, by \eqref{huge-absorption-crossing-degree-bound},  $|U_{\ell}| < \xi n / \rho < \rho n / 7 \leq d_{H'_{\ell}}(u)$ for every $u \in U_{\ell}$, so we can find the desired matching $N_{\ell}^{\mathrm{abs}}$ covering $U_{\ell}$ greedily.
  
  To prove \eqref{huge-absorption-crossing-degree-bound}, first suppose (b) holds. Since~\ref{AB3}\ref{AB-huge} holds and $N_{\ell}$ is not difficult, we have\COMMENT{Note that $v(N_{\ell}) \le (3/4 + \eps) n$ also holds when $|U| \ge n-1$.} $v(N_{\ell}) \leq (3/4 + \eps)n \leq 4n/5$.  Thus,  $|V_{\ell}| \geq n - v(N_{\ell}) - |U\setminus V(N_{\ell})| \geq n/5 - n/100 \geq n/6$, and since $R$ is $(\rho, 10\gamma , G')$-typical with respect to $\cV \ni U\cup V(N_{\ell})$ by~\ref{AB3}\ref{AB-huge}, every $u \in U_{\ell}$ satisfies $d_{H_{\ell}}(u) = |N_R(u) \cap V_{\ell}| \ge \rho |N_{G'}(u) \cap V_{\ell}| - 10 \gamma n \ge \rho (n/6-\eps n) - 10 \gamma n$, so $d_{H_{\ell}'}(u) \geq (\rho(1/6 - \eps) - 10\gamma - \gamma)n \geq \rho n / 7 \geq 2\rho |U_{\ell}|$ (since $k \leq \gamma n$), as desired.
Now we assume $(\cH, N_\ell, R, S)$ is absorbable by smallness of $N_\ell$, so by~\ref{AB3}\ref{AB-med}, we have $v(N_\ell) \leq \gamma n$.  Thus, $|U| \leq |U \setminus V(N_{\ell})|+v(N_{\ell}) \leq n/100 + \gamma n \leq n/50$, and since $R$ is $(\rho, 10\gamma, G')$-typical with respect to $\cV \ni U$ by~\ref{AB1}, every $u\in U_\ell$ satisfies\COMMENT{Since $u\in U$, it sends at least $(1 - \eps)n - |U| \geq (49/50 - \eps)n$ edges of $G$ outside of $U$.  Since $R$ is $(\rho, 10\gamma, G')$-typical and $k \leq \gamma n$, at least $(\rho(49/50 - \eps) - 10\gamma - \gamma)n$ are in $R\setminus \bigcup_{i=1}^{\ell - 1}N_i^{\mathrm{abs}}$, and all but at most $\gamma n$ of these have an end in $V_\ell$ by smallness.}
  $d_{H_{\ell}}(u) = |N_R(u) \cap V_{\ell}| \ge |N_R(u) \cap (V(\cH) \setminus U)| - v(N_{\ell}) \ge  \rho |N_{G'}(u) \cap (V(\cH) \setminus U)| - 10 \gamma n - v(N_{\ell}) \ge \rho ((1 - \eps)n - |U|) - 10 \gamma n - v(N_{\ell}) \ge (\rho(49/50 - \eps) - 10\gamma - \gamma)n$, so $d_{H_{\ell}'}(u) \geq (\rho(49/50 - \eps) - 10\gamma - \gamma - \gamma)n \geq \rho n / 7 \geq 2\rho |U_{\ell}|$,
  as desired.  Therefore~\eqref{huge-absorption-crossing-degree-bound} holds in both cases, so we have the matchings $N_{1}^{\mathrm{abs}}, \dots, N_{s}^{\mathrm{abs}}$, as claimed.

  \begin{claim}\label{lem6.11claim}
    There exist matchings $N_{s+1}^{\rm abs}, \dots, N_k^{\rm abs}$ and distinct vertices $u_{s+1}, \dots, u_k$ such that for each $i \in \{s+1, \dots, k\}$, $u_i \in U_i$, $N_i^{\rm abs}$ is a matching in $G_i$ covering all the vertices of $U_i \setminus \{ u_i \}$, and the matchings $N_{1}^{\rm abs}, \dots, N_k^{\rm abs}$ are pairwise edge-disjoint. Moreover, for each $i \in \{s+1, \dots, k\}$, if $|U| \leq (1-10\eps)n$, then $N_i^{\rm abs}$ covers all vertices in $U_i$, and otherwise $u_i \in S$. 
  \end{claim}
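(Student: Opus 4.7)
The plan is to adapt the proof of Lemma~\ref{internal-absorption-lemma} to handle the two absorption cases (a) and (b) of Lemma~\ref{huge-edge-absorption-lemma}. I proceed iteratively over $i = s+1, \ldots, k$, first choosing $u_i$ (and an auxiliary vertex $v_i$ in the first sub-case below), then constructing the matching $N_i^{\rm abs}$.

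For Step~1, I choose the $u_i$ as follows. If $|U| \leq (1-10\eps)n$, I pick $u_i \in U_i$ together with $v_i \in V(\cH) \setminus V(N_i) \setminus U$ such that $u_iv_i \in R$, with $\{u_i, v_i\}$ disjoint from all previously chosen $\{u_j, v_j\}$ and $u_iv_i$ disjoint from all previously selected absorber edges $N_1^{\rm abs}, \ldots, N_{i-1}^{\rm abs}$. This greedy selection succeeds because, using typicality of $R$ with respect to $U$ combined with $v(N_i) \leq \gamma n$ in case (a), or with respect to $U \cup V(N_i) \in \cV$ combined with the not-difficult condition in case (b), every $u \in U_i$ has at least $\rho \eps n / 2$ candidate neighbours $v$, while only $O(\gamma n)$ vertices and at most $k \le \gamma n$ edges incident to each vertex need be avoided. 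If $|U| > (1-10\eps)n$, I instead pick $u_i \in S \cap U_i$ distinct from previous $u_j$; this works since $|S \cap U_i| \geq |S| - v(N_i) \geq \gamma n \geq k$ in both cases (a) and (b), using $|V(N_i) \cap U| \leq \eps n$ in case (b).

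For Step~2, I set $U_i'' \coloneqq U_i \setminus \{u_i\}$ if $|U_i|$ is odd and $U_i'' \coloneqq U_i$ otherwise, so that $|U_i''|$ is even, and then find a perfect matching $M_i$ in $G_i''[U_i'']$, where $G_i'' \coloneqq G_i \setminus \bigcup_{t<i} N_t^{\rm abs}$, by applying Observation~\ref{internal-matching-prop}. Upper $(\rho, O(\xi))$-regularity of $G_i''[U_i'']$ is inherited from upper $(\rho, \xi, G')$-regularity of $R$ (losing only a bounded factor since $|U_i''| \geq n/200$). The minimum-degree bound $d_{G_i''[U_i'']}(v) \geq 3\rho |U_i''|/4$ for every $v \in U_i''$ follows from typicality of $R$ with respect to $U$ in case (a) (accounting for the at most $\gamma n$ vertices in $V(N_i) \cap U$) or with respect to $U \setminus V(N_i) \in \cV$ in case (b), combined with $d_G(v) \geq (1-\eps) n$ for $v \in U$ and the fact that $|U_i''| \leq |U|$; this requires $\eps$ to be sufficiently small, as permitted by the hierarchy.

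Finally, I define $N_i^{\rm abs} \coloneqq M_i \cup \{u_i v_i\}$ in the first sub-case when $|U_i|$ is odd, and $N_i^{\rm abs} \coloneqq M_i$ otherwise. Then $N_i^{\rm abs}$ is a matching in $G_i$ covering $U_i \setminus \{u_i\}$ in every case, and additionally covering $u_i$ in the first sub-case (either via $M_i$ when $|U_i|$ is even, or via $u_iv_i$ when $|U_i|$ is odd). Pairwise edge-disjointness of $N_1^{\rm abs}, \ldots, N_k^{\rm abs}$ holds by the iterative construction. The main obstacle I anticipate is verifying the minimum-degree bound in the tightest regime---case (a) with $|U|$ close to $n/100$---where one needs $\rho(|U|/4 - \eps n) \geq 11\gamma n$, i.e.\ $\rho(1/400 - \eps) \geq 11\gamma$. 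This reduces to taking $\eps$ sufficiently small so that the slack $1/400 - \eps$ is bounded away from zero, after which $\gamma \ll \rho$ closes the argument.
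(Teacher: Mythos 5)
Your proposal is correct and follows essentially the same route as the paper's proof: you choose the defect vertex $u_i$ (and the auxiliary escape edge $u_iv_i$ in the $|U|\leq(1-10\eps)n$ case) using typicality of $R$ with respect to $U$ in case (a) or $U\cup V(N_i)\in\cV$ in case (b), parity-correct the set $U_i$, and find perfect matchings via Observation~\ref{internal-matching-prop}, with the minimum-degree bound coming from typicality with respect to $U$ (case (a)) or $U\setminus V(N_i)\in\cV$ (case (b)). The only cosmetic difference is that the paper fixes all $u_i$ before building any matching while you interleave the two, and your numerical slack estimate in the "tightest regime" uses $|U|/4\geq n/400$ rather than the paper's $|U'_\ell|/4\geq n/800$, but both reduce to the same requirement $\eps\ll 1$ and $\gamma\ll\rho$.
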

  
\begin{claimproof} We choose distinct $u_i \in U_i$ for $s+1 \leq i \leq k$ as follows. Let $G_{i}' := G_{i} \setminus \bigcup_{t=1}^{s} N_t^{\rm abs}$ for $s+1 \leq i \leq k$.
  \begin{itemize}
      \item If $|U| \leq (1 - 10\eps)n$, then every $u\in U$ satisfies $|N_G(u) \setminus U| \geq 9\eps n$ (since $d_G(u) \ge (1-\eps)n$), and moreover if $N_i$ is not difficult, then $|V(\cH)\setminus (U\cup V(N_i))| \geq |V(\cH)\setminus U|/4 \geq 2\eps n$ (since $|V(N_i) \cap (V(\cH)\setminus U)| \leq 3|V(\cH)\setminus U|/4$), which implies that every $u\in U$ satisfies $|N_G(u) \setminus (U\cup V(N_i))| \geq |V(\cH)\setminus (U\cup V(N_i))|-\eps n \ge \eps n$ (since $d_G(u) \ge (1-\eps)n$). If (b) holds, then $R$ is $(\rho, 10\gamma , G')$-typical with respect to $\cV\ni U\cup V(N_i)$ by~\ref{AB3}\ref{AB-huge}, so we have $|N_{G_i'}(u) \setminus (U \cup V(N_i))| \geq \rho |N_G(u) \setminus (U\cup V(N_i))| - 10 \gamma n - k \geq \rho\eps  n / 2 > \gamma n \geq k$ for every $u\in U_i$.  If (a) holds, then $v(N_i) \leq \gamma n$ and $R$ is $(\rho, 10\gamma , G')$-typical with respect to $\cV \ni U$ by~\ref{AB1}, so we have $|N_{G_i'}(u) \setminus (U \cup V(N_i))| \geq \rho |N_G(u)\setminus U| - 10 \gamma n - k - v(N_i) > 8\rho\eps n > k$ for every $u\in U_i$.  Thus, $|N_{G_i'}(u) \setminus (U \cup V(N_i))| > k$ for each $i \in \{s+1, \dots, k\}$ and for all $u \in U_i$, and since $|U_i| > n/100 > k$, we can choose $u_i \in U_i$ one-by-one such that there is a matching $\{ u_i v_i \: : \: i \in \{s+1, \dots, k\}\}$ where $v_i \in N_{G_i'}(u_i) \setminus (U \cup V(N_i))$ for each $i \in \{s+1, \dots, k\}$.
      
      \item Otherwise, we have $|S| > (\gamma + \eps) n$. Using~\ref{AB3}\ref{AB-med} or~\ref{AB3}\ref{AB-huge}, and $\gamma \ll \eps$, we have $|V(N_i) \cap U| \leq \eps n$. Since $S\subseteq U$ it then follows that $|S\setminus V(N_i)| \geq |S| - |V(N_i)\cap U| > \gamma n$ for each $i \in \{s+1, \dots, k\}$. Since $k \leq \gamma n$, we can choose $u_i \in S \setminus V(N_i)$ one-by-one such that they are distinct, as required.
  \end{itemize}
Now let $U'_i \coloneqq U_i\setminus\{u_i\}$ if $|U_i|$ is odd. Otherwise, let $U'_i\coloneqq U_i$.  By the choice of the vertices $u_i$, it suffices to find pairwise edge-disjoint perfect matchings ${N'}_i^{\mathrm{abs}}$ in $G'_i[U'_i]$ for each $i\in \{s+1, \dots, k\}$. Indeed, if $|U| \leq (1-10\eps)n$ and $|U_i|$ is odd, then $N_i^{\rm abs} \coloneqq {N'}_i^{\mathrm{abs}} \cup \{u_i v_i \}$ satisfies the claim, and otherwise $N_i^{\rm abs} \coloneqq {N'}_i^{\mathrm{abs}}$ satisfies the claim.
  
  We find these matchings  (${N'}_i^{\mathrm{abs}}$ for $i\in \{s+1, \dots, k\}$) one-by-one using Proposition~\ref{internal-matching-prop}.  To this end, we assume that for some $s+1 \le \ell \leq k$, we have found such matchings ${N'}^{\mathrm{abs}}_i$ for all $s+1 \le i \le \ell - 1$, we let $G''_\ell \coloneqq G'_\ell[U'_\ell] \setminus \bigcup_{s+1 \leq i \leq \ell-1}{N'}^{\mathrm{abs}}_i$, and we show that there exists such a matching ${N'}^{\mathrm{abs}}_{\ell}$ in $G''_\ell$, which proves the claim. Note that
  \begin{equation}\label{eq:lowerboundUell}
  |U'_{\ell}| \geq |U_{\ell}|-1 \geq n / 100 - 1 \ge n/ 200.
  \end{equation}
  
 Since $R$ is upper $(\rho, \xi, G)$-regular and \eqref{eq:lowerboundUell} holds, $G''_\ell$ is upper $(\rho, 200\xi)$-regular by Observation~\ref{upper_reg_subgraphs}. So by Proposition~\ref{internal-matching-prop}, it suffices to show that     every $u\in U'_\ell$ satisfies
  \begin{equation}\label{huge-absorption-internal-degree-bound}
 d_{G_{\ell}''}(u)  \geq 3 \rho |U'_{\ell}| / 4.
  \end{equation}

  To prove \eqref{huge-absorption-internal-degree-bound}, first suppose (b) holds.  
Since $R$ is a $(\rho,10\gamma,\xi,\eps)$-absorber for $\mathcal V \ni U_{\ell}$ by~\ref{AB3}\ref{AB-huge}, every $u\in U'_{\ell}$ satisfies\COMMENT{$u$ sends $|U_{\ell}|-\eps n$ edges of $G$ to $U_{\ell}$, and since $R$ is a $(\rho,10\gamma,\xi,\eps)$-absorber, we have $d_{G_{\ell}}(u) \ge \rho(|U_{\ell}|-\eps n) - 10 \gamma n$. So $d_{G''_{\ell}}(u) \ge \rho(|U_{\ell}|-\eps n) - 10 \gamma n - k -1 \ge \rho(|U_{\ell}|-\eps n) - 10 \gamma n - \gamma n -1$}
  \begin{equation*}
   d_{G_{\ell}''}(u) \geq \rho(|U_{\ell}| - \eps n) - 10 \gamma n - \gamma n - 1 \geq \rho(|U_{\ell}'| - \eps n) - 12 \gamma n \overset{\eqref{eq:lowerboundUell}}{\geq} 3 \rho |U'_{\ell}| / 4,
  \end{equation*}
  as desired.  
  Therefore, we assume (a) holds. 
Since $R$ is $(\rho, 10\gamma , G')$-typical with respect to $\cV\ni U$ by~\ref{AB1}, every $u\in U'_\ell$ satisfies\COMMENT{$u$ sends at least $|U| - \eps n$ edges of $G$ to $U$.  Since $R$ is $(\rho, 10\gamma , G')$-typical and $k \leq \gamma n$, at least $\rho(|U| - \eps n) - 10\gamma n - \gamma n$ of these edges are in $R\setminus \bigcup_{i=1}^{\ell - 1}{N'}_i^{\mathrm{abs}}$, and all but at most $\gamma n + 1$ of these have an end in $U'_\ell$.}
  \begin{equation*}
      d_{G_{\ell}''}(u) \geq \rho(|U| - \eps n) - 10 \gamma n  - \gamma n - \gamma n - 1 \geq \rho(|U_{\ell}'| - \eps n) - 13 \gamma n \overset{\eqref{eq:lowerboundUell}}{\geq} 3 \rho |U'_{\ell}| / 4,
  \end{equation*}
  as desired.  Therefore~\eqref{huge-absorption-internal-degree-bound} holds in both cases, so we have the matchings $N_{s+1}^{\mathrm{abs}}, \dots, N_k^{\mathrm{abs}}$, which proves Claim~\ref{lem6.11claim}.
  \end{claimproof}

  Now, let $N_i' \coloneqq N_i \cup N_i^{\mathrm{abs}}$ for each $i \in [k]$, and let $\cN' := \{N_1' , \dots , N_k'\}$. Note that $N_i'\supseteq N_i$ and $N_i'\setminus N_i \subseteq R$ for each $i \in [k]$.  Moreover, by the definition of $U_i$ and Claim~\ref{lem6.11claim} (together with the discussion before it), if $|U| \leq (1 - 10\eps)n$, then $N'_i \in \cN'$ covers every vertex of $U$ for each $i \in [k]$, so $\cN'$ has perfect coverage of $U$; otherwise, $N'_i \in \cN'$ covers every vertex of $U$ for each $i \in [s]$, and $N'_i \in \cN'$ covers every vertex of $U \setminus \{u_i\}$ where $u_i \in S$ for each $i \in \{s+1, \ldots, k\}$ and the vertices $u_{s+1}, \ldots, u_k$ are distinct, so $\cN'$ has nearly-perfect coverage of $U$ with defects in $S$, as desired. 
\end{proof}

We will use the following lemma in {Step}~\ref{step:diff} of the proof in Section~\ref{proof-section} to extend a difficult matching. Unlike the other cases, this extension does not quite achieve nearly-perfect coverage, but we are able to prove something similar which is sufficiently strong for our purposes.

\begin{lemma}\label{difficult-edge-absorption}
  Let $0 < 1/n_0 \ll \beta \ll 1$, and let $n \geq n_0$.  If $\cH$ is an $n$-vertex linear hypergraph with no singleton edge, $G \coloneqq \cH^{(2)}$, and $M \coloneqq \{e\}$ is a difficult matching where $e$ is huge,
    then at least one of the following holds:
    \begin{enumerate}[(\ref{difficult-edge-absorption}:a), topsep = 6pt]
    \item There is a matching $M'$ such that $M\subseteq M'$, $M' \setminus M \subseteq E(G)$, and $M'$ covers every vertex of $V^{(n - 1)}(\cH)$ and all but at most five vertices of $V^{(n - 2)}(\cH)$, or\label{difficult-edge-covering}
    \item $\chi'(\cH) \leq n$.\label{difficult-edge-extremal}
    \end{enumerate}
\end{lemma}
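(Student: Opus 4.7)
Set $T := V(\cH) \setminus e$, $U' := V^{(n-1)}(\cH) \cap T$, and $W' := V^{(n-2)}(\cH) \cap T$. My first step is a structural observation. Since $\cH$ has no singleton edges, linearity gives $\sum_{f \ni v}(|f|-1) \le n-1$ for every vertex $v$. Hence if $d_\cH(v) = n-1$ then $|f| = 2$ for every edge $f$ through $v$ and $v$ is adjacent in $G$ to every other vertex of $\cH$; and if $d_\cH(v) = n-2$ then $v$ is adjacent in $G$ to all but at most two other vertices. In particular $U'$ induces a clique in $G[T]$, and for every $X \subseteq T$ the induced subgraph $G[W' \cap X]$ has minimum degree at least $|W' \cap X| - 3$. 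I then split into two cases according to whether $T = U'$ with $|T|$ odd.

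In Case~I ($T \neq U'$ or $|T|$ even) I construct a matching $N \subseteq E(G[T])$ covering $U'$ and all but at most five vertices of $W'$, and take $M' := M \cup N$. Since $U'$ is a clique, I pair it up internally if $|U'|$ is even; otherwise I pair up $|U'| - 1$ of its vertices internally and match the remaining one to some $x \in T \setminus U' \neq \varnothing$ via a graph edge, which exists because every $U'$-vertex is graph-adjacent to every other vertex. Call this matching $N_1$ and set $T' := T \setminus V(N_1)$. If $|W' \cap T'| \ge 6$ then $\delta(G[W' \cap T']) \ge |W' \cap T'| - 3$ satisfies Dirac's condition, yielding a Hamilton cycle in $G[W' \cap T']$ from which I extract a matching $N_2$ missing at most one vertex; if $|W' \cap T'| < 6$ I set $N_2 := \varnothing$. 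In either case $N := N_1 \cup N_2$ covers $U'$ and misses at most five vertices of $W'$, giving option~(a).

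In Case~II ($T = U'$ and $|T|$ odd) the structural observation forces $W' = \varnothing$, and linearity excludes both any edge of $\cH$ of size at least three other than $e$ (it would share two vertices with some graph edge inside $T$) and any graph edge inside $e$ (it would share two vertices with $e$). Hence $\cH = \{e\} \cup (K_n - K_e)$, where $K_e$ denotes the complete graph on $e$. I prove $\chi'(\cH) \le n$, giving option~(b). If $n$ is even, take any proper $(n-1)$-edge-colouring of $K_n$ (which exists because $K_n$ is class~$1$ for even $n$), restrict it to $\cH \setminus \{e\}$, and assign $e$ a fresh colour. If $n$ is odd, then $|e| = n - |T|$ is even, and I produce an $n$-edge-colouring of $K_n$ in which one colour class $c^*$ is the near-perfect matching formed by a perfect matching of $e$ together with a near-perfect matching of $T$ missing exactly one vertex of $T$. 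Restricting this colouring to $\cH \setminus \{e\}$ deletes all $|e|/2$ edges of $c^*$ lying inside $e$, so $c^*$ appears at no vertex of $e$ in the restriction and can be reused on the hyperedge $e$. The main difficulty is realising $c^*$ as a prescribed colour class of some $n$-edge-colouring of $K_n$; I resolve this by adjoining a phantom vertex $*$ to extend $c^*$ to a perfect matching of $K_{n+1}$ and invoking the classical fact that $K_{n+1}$ (with $n+1$ even) admits a $1$-factorization containing any prescribed perfect matching.
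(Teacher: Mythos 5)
Your proof is correct, but it takes a genuinely different route from the paper's, and it is worth spelling out the difference.

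The paper's proof splits on whether $V^{(n-2)}(\cH)$ is empty, and in the non-empty case applies the Tutte–Berge formula directly to $G[V^{(n-1)}(\cH)\cup V^{(n-2)}(\cH)]$ (minus one vertex to fix parity): the minimum-degree bound $\delta(G')\ge v(G')-3$ forces any Tutte–Berge deficiency witness $S$ to satisfy $|S|\le 1$, and the two resulting configurations ($|S|=1$ giving a four-vertex star plus a $3$-edge, and $S=\varnothing$ giving two isolated vertices with $V^{(n-1)}(\cH)=\varnothing$) directly yield the ``miss at most five'' conclusion. In the degenerate subcase ($V^{(n-2)}(\cH)=\varnothing$, $V(\cH)=e\cup V^{(n-1)}(\cH)$, $|V^{(n-1)}(\cH)|$ odd) the paper removes $M_1\cup\{e\}$ and applies the strong form of Vizing's theorem (already Theorem~4.7 of the paper), using that the leftover graph has a unique vertex of maximum degree $n-1$.

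Your proof instead splits on whether $T:=V(\cH)\setminus e$ equals $U':=V^{(n-1)}(\cH)$ with $|T|$ odd. In the generic case you first cover the clique $U'$ internally (borrowing one vertex from $T\setminus U'$ if $|U'|$ is odd), and then apply Dirac's theorem on what remains of $W':=V^{(n-2)}(\cH)$, using the same minimum-degree observation $\delta(G[W'\cap X])\ge|W'\cap X|-3$; the Dirac threshold $|W'\cap T'|\ge 6$ is exactly where the ``at most five misses'' bound comes from. In the degenerate case you identify $\cH=\{e\}\cup(K_n-K_e)$ and produce an $n$-edge-colouring directly: trivially when $n$ is even, and when $n$ is odd by prescribing the colour class $c^*$ and invoking the extension of a given perfect matching of $K_{n+1}$ to a $1$-factorization (which does hold: take any $1$-factorization, pick one of its perfect matchings, and relabel vertices).

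Both arguments are sound and of comparable length. The paper's Tutte–Berge step handles parity and degree defects in one uniform stroke and reuses Vizing's strong form, which is already in the toolbox; yours decomposes the matching problem into a trivial clique part and a Dirac part, and replaces Vizing by an explicit $1$-factorization argument that is self-contained but introduces an extra (if classical) fact. The ``at most five'' bound, which looks ad hoc in the statement, is explained by the $|S|=1$ Tutte–Berge configuration in the paper's proof and by the Dirac threshold $|W'\cap T'|<6$ in yours — a nice coincidence worth noting.
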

\begin{proof}     
  Let $U_1 \coloneqq V^{(n - 1)}(\cH)$, let $U_2 \coloneqq V^{(n - 2)}(\cH)$, and let $X \coloneqq V(\cH)\setminus (e\cup U_1\cup U_2)$. Since $\cH$ is linear and $e$ is huge, $e \cap U_i = \varnothing$ for $i\in\{1, 2\}$.

  First, suppose $U_2 = \varnothing$.  If $|U_1|$ is even, then we can find a perfect matching $M_1$ in $G[U_1]$, and $M' \coloneqq M \cup M_1$ satisfies~\ref{difficult-edge-covering}, so we assume $|U_1|$ is odd.  If $X\neq\varnothing$, then there is an edge $uv \in E(G)$ such that $u\in U_1$ and $v \in X$, and there is a perfect matching $M_1$ in $G[U_1\setminus\{u\}]$.  Now $M'\coloneqq M\cup M_1\cup \{uv\}$ satisfies~\ref{difficult-edge-covering}, so we assume $X = \varnothing$, and we show $\chi'(\cH) \leq n$.  Note that if $X, U_2 = \varnothing$, then the only edge in $\cH\setminus E(G)$ is $e$, since otherwise, an edge (different from $e$) of size at least three would need to contain at least two vertices of $U_1$ or $e$ -- either case contradicts the linearity of $\cH$.
  Let $w \in U_1$, let $M_1$ be a perfect matching in $G[U_1\setminus\{w\}]$, and let $G' \coloneqq \cH\setminus(M_1\cup e)$.  Now $G'$ is a graph with exactly one vertex of degree $n - 1$ (namely $w$), so by Theorem~\ref{thm:vizing}, $\chi'(G') \leq n - 1$.  By combining a proper $(n - 1)$-edge-colouring of $G'$ with the colour class consisting of $M_1\cup e$, we have $\chi'(\cH) \leq n$, as desired.

  Therefore we assume $U_2\neq\varnothing$, let $u \in U_2$, and let $m\coloneqq |U_1\cup U_2|$. 
  Let $G'\coloneqq G[U_1 \cup U_2] - u$ if $m$ is odd and let $G' \coloneqq G[U_1\cup U_2]$ otherwise.  If $G'$ has a perfect matching $P$, then $M' \coloneqq M \cup P$ satisfies ~\ref{difficult-edge-covering}, so we assume otherwise.  Thus, by Tutte's theorem~\cite{tutte1947}, there is a set $S$ such that $G' - S$ has at least $|S|+1$ odd components. Since $G'$ has an even number of vertices, both $|S|$ and the number of odd components of $G'-S$ have the same parity. Therefore, $G' - S$ has at least $|S|+2$ odd components. 
  Note that if a vertex has degree at least $n-2$ in $\cH$ then it is incident to at most one edge of size at least three, so it has degree at least $n-3$ in $G$. Since $G'$ is an induced subgraph of $G$, we have $\delta(G') \geq v(G') - 3$. 
Thus, for every component $C$ of $G'-S$, there are at most two vertices in $G'-(S\cup V(C))$, so there are at most three components in $G'-S$. Since $G'-S$ has at least $|S|+2$ odd components, we have $|S| \leq 1$.
  
  If $|S| = 1$, then there are exactly three components in $G'-S$. Since there are at most two vertices outside of each component of $G'-S$, we deduce that each component of $G'-S$ has exactly one vertex, so $v(G') = 4$. Let $V(G') = \{a,b,c,d \}$ with $S=\{d\}$. Since $ab,ac,bc \notin E(G)$, we have $a,b,c \in U_2$ by the definition of $G'$. Since $U_1 \cup U_2 = V(G') \cup \{u\} = \{a,b,c,d,u\}$ and $u \in U_2$, we have $|U_2| \leq 5$ and $|U_1| \leq 1$. If $U_1 = \varnothing$, then $M'\coloneqq M$ satisfies~\ref{difficult-edge-covering}, and otherwise, $U_1 = \{ d \}$ and $M'\coloneqq M\cup\{du\}$  satisfies~\ref{difficult-edge-covering}, as desired.
  
  If $S = \varnothing$, then there are at least two odd components in $G'$, so $U_1 = \varnothing$. Moreover, since there are at most two vertices outside of each component of $G'$, each odd component of $G'$ has exactly one vertex, which implies that $\delta(G') = 0$ and $v(G') \leq \delta(G') + 3 = 3$. 
Therefore, $|U_1 \cup U_2| = |V(G') \cup \{ u \}| \leq 4$, and $M' \coloneqq M$ satisfies~\ref{difficult-edge-covering}, as desired.  
\end{proof}

Note that if $\cH$ is a degenerate plane, then our proof returns outcome ~\ref{difficult-edge-extremal}.
This is also the case for certain hypergraphs $\cH$ which are similar to a degenerate plane. 
\section{Colouring small edges that are not in the reservoir}\label{small-non-graph-section}

In this section, we prove three lemmas which will be applied to colour all of the small edges that are not in the reservoir (where the reservoir is constructed in Section~\ref{proof-section}). Since we may need to reuse the colours already used for large edges and medium edges  (given by Theorem~\ref{large-edge-thm}), we need to formulate the lemmas to colour the small edges (that are not in the reservoir) by extending the colour classes given by Theorem~\ref{large-edge-thm}.

The lemma below is used repeatedly in the proof of Lemma~\ref{lem:nibble2} to colour most of the non-reserved small edges in such a way that every colour class exhibits some pseudorandom properties.

\begin{lemma}[Nibble Lemma]\label{lem:pseudorandom_matching}
Let $0 < 1/n_0\ll 1/r , \beta \ll \kappa , \gamma \ll 1$, let $n \geq n_0$, and let $D \in [n^{1/2} , n]$.  Let $\cH$ be an $n$-vertex linear multi-hypergraph.
\begin{itemize}
    \item Let $\cH' \subseteq \cH$ be a linear multi-hypergraph such that $V(\cH) = V(\cH')$, every $e \in \cH'$ satisfies $|e| \leq r$, and for every $w \in V(\cH')$ we have $d_{\cH'}(w) = (1 \pm \beta)D$,
    
    \item let $\cF_V$ and $\cF_E$ be a family of subsets in $V(\cH')$ and $E(\cH')$, respectively, such that $|\cF_V|,|\cF_E|  \leq n^{\log n}$, and
    
    \item let\COMMENT{we can imagine $M_1 , \dots,  M_D \subseteq \cH$ as colour classes of large edges} $M_1 , \dots,  M_D \subseteq \cH \setminus \cH'$ be pairwise edge-disjoint matchings such that for every $i \in [D]$, $|V(M_i)| \leq \beta D$, and for every edge $e \in \cH'$, we have 
    $|\{ i \in [D] \: : \: e \cap V(M_i) \neq \varnothing \}| \leq \beta D$.
\end{itemize}

Then there exist pairwise edge-disjoint matchings $N_1 , \dots , N_D$ in $\cH$ such that for any $i \in [D]$,
\begin{enumerate}
    \item[\mylabel{NB1}{(\ref{lem:pseudorandom_matching}.1)}] $N_i \supseteq M_i$ and $N_i \setminus M_i \subseteq \cH'$,
    
    \item[\mylabel{NB2}{(\ref{lem:pseudorandom_matching}.2)}] $N_i$ is $(\gamma , \kappa)$-pseudorandom with respect to $\cF_V$, and
    
    \item[\mylabel{NB3}{(\ref{lem:pseudorandom_matching}.3)}] $|F \setminus \bigcup_{j=1}^{D} N_j| \leq \gamma |F| + \kappa \max(|F|,D)$ for each $F \in \cF_E$.
\end{enumerate}
\end{lemma}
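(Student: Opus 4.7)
The plan is to build an auxiliary linear hypergraph $\cH^*$ whose matchings correspond to proper partial edge-colourings extending $M_1,\dots,M_D$, and then apply Corollary~\ref{cor:sparse_egj} to find a single matching in $\cH^*$ whose pseudorandom properties translate directly into \ref{NB2} and \ref{NB3}. Concretely, I set $V(\cH^*):=\cH'\cup\{(v,i):v\in V(\cH'),\,i\in[D],\,v\notin V(M_i)\}$ and, for each pair $(e,i)$ with $e\in\cH'$, $i\in[D]$ and $e\cap V(M_i)=\varnothing$, introduce a hyperedge $h_{e,i}:=\{e\}\cup\{(v,i):v\in e\}$; padding each $h_{e,i}$ with $r-|e|$ private dummy vertices makes $\cH^*$ into an $(r+1)$-uniform hypergraph. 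Linearity of $\cH^*$ follows from linearity of $\cH'$ together with the matching structure of the $M_i$: two distinct $h_{e,i}$ and $h_{e',i'}$ can share a token of the form $e$ only if $e=e'$, and can share a token $(v,j)$ only if $i=i'=j$ and $v\in e\cap e'$, so linearity of $\cH'$ forces $|e\cap e'|\leq 1$. Using $|V(M_i)|\leq \beta D$ and $|\{i:e\cap V(M_i)\neq\varnothing\}|\leq\beta D$, every non-dummy vertex of $\cH^*$ has degree $(1\pm 3\beta)D$, while dummy vertices have degree one; to remedy this I invoke Lemma~\ref{lem:embed} to embed $\cH^*$ into an $(r+1)$-uniform linear hypergraph $\cH^*_{\mathrm{unif}}$ in which every vertex has degree in $[D-C,D]$ for a constant $C$, noting that since the non-dummy vertices of $\cH^*$ already have degree near $D$, the embedding adds no new edges through them and our analysis of these vertices is unaffected.

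A matching $N^*\subseteq\cH^*_{\mathrm{unif}}$ yields the desired matchings via $N_i:=M_i\cup\{e\in\cH':h_{e,i}\in N^*\}$; the matching property of $N^*$ together with our construction guarantees \ref{NB1}. To force \ref{NB2} and \ref{NB3}, I feed Corollary~\ref{cor:sparse_egj} the test family $\cF^*:=\cF_E\cup\{S^*_i:S\in\cF_V,\,i\in[D]\}$, where $S^*_i:=\{(v,i):v\in S\setminus V(M_i)\}$. Since $v(\cH^*_{\mathrm{unif}})=\mathrm{poly}(n)$, $D\geq n^{1/2}$, and $|\cF_V|,|\cF_E|\leq n^{\log n}$, the hypotheses of the corollary are satisfied, and choosing its error parameter to be $\kappa/8$ produces $N^*$ such that every $T\in\cF^*$ with $|T|\geq D^{1/20}$ satisfies $|T\setminus V(N^*)|=(\gamma\pm\kappa/2)|T|$. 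For $F\in\cF_E$ the identity $F\setminus\bigcup_j N_j=F\setminus V(N^*)$ yields \ref{NB3} directly. For $S\in\cF_V$ and $i\in[D]$, since the non-dummy vertex $(v,i)\in V(\cH^*)$ satisfies $(v,i)\in V(N^*)$ iff $v\in V(N_i\setminus M_i)$, and $V(M_i)\subseteq V(N_i)$, we obtain $|S\setminus V(N_i)|=|S^*_i\setminus V(N^*)|$, which together with $|S^*_i|=|S|\pm\beta D$ yields \ref{NB2}. Test sets $T$ with $|T|<D^{1/20}$ satisfy both conclusions trivially, since the error terms $\kappa\max(|F|,D)$ and $\kappa n$ already dominate the full cardinality in that regime.

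The main obstacle is the bookkeeping: we must simultaneously control the $\beta$-error from the preassigned matchings $M_i$, the $3\beta$-error in the regularity of $\cH^*$, the constant error introduced by the embedding, and the $\kappa'$-error in Corollary~\ref{cor:sparse_egj}, and absorb all of them into the single $\kappa$-error appearing in \ref{NB2} and \ref{NB3}. This is enabled by the hierarchy $1/r,\beta\ll\kappa\ll\gamma\ll 1$ in the statement. The delicate step is arranging that a single matching $N^*$ realises pseudorandomness simultaneously for a family of size $\mathrm{poly}(n)^{O(\log n)}$, which is precisely what the exponential concentration window in Corollary~\ref{cor:sparse_egj} provides.
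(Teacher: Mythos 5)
Your proposal is correct and takes essentially the same route as the paper: both proofs construct an $(r+1)$-uniform auxiliary ``incidence'' hypergraph whose vertices are edge-tokens together with $D$ indexed clone sets of $V(\cH')$ with the clones of $V(M_i)$ deleted, apply Lemma~\ref{lem:embed} to regularise, and then invoke Corollary~\ref{cor:sparse_egj} on a suitably chosen test family to obtain a single pseudorandom matching that decodes into the $N_i$. The only difference is the order of the two constructions: the paper first applies Lemma~\ref{lem:embed} to $\cH'$ (so the clone sets are built from an already-regular $r$-uniform hypergraph, and the auxiliary hypergraph inherits almost-regularity automatically), whereas you build the auxiliary hypergraph directly from $\cH'$ with private dummy padding and then regularise afterwards; this swap is harmless and either ordering works.
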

Note that if we let $E(\cH') \in  \cF_E$, then (\ref{lem:pseudorandom_matching}.3) implies that $\bigcup_{j=1}^{D} N_j$ contains almost all of the edges of $\cH'$. The matchings $M_1 , \dots,  M_D$ will play the role of some of the colour classes given by Theorem~\ref{large-edge-thm}.

The overall idea of the proof of Lemma~\ref{lem:pseudorandom_matching} is as follows. First we embed $\cH'$ into an $r$-uniform linear hypergraph $\cH_{\rm unif}$ using Lemma~\ref{lem:embed}. We then embed $\cH_{\rm unif}$ into an $(r+1)$-uniform auxiliary hypergraph $\cH_{\rm aux}$, and we find a pseudorandom matching $N^*$ in $\cH_{\rm aux}$ using Corollary~\ref{cor:sparse_egj}, which yields $D$ edge-disjoint pseudorandom matchings $N'_1 , \dots , N'_D$ in $\cH'$. Then we will show that the matchings $N_i := N'_i \cup M_i$ for $i \in [D]$ satisfy the desired properties.

\begin{proof}
We apply\COMMENT{We first apply the embedding lemma to make the given multi-hypergraph $\cH'$ uniform -- each singleton will turn into a different $r$-edge, so the resulting $\cH_{\rm unif}$ will not contain multiple edges.} Lemma~\ref{lem:embed} to $\cH'$ with $(1+\beta)D$, $2\beta D$, and $n$ playing the roles of $D$, $C$, and $N$, respectively, to obtain an $r$-uniform linear hypergraph $\cH_{\rm unif}$ such that 
\begin{enumerate}[(a)]
    \item\label{cond:unif1} $V(\cH_{\rm unif}) \supseteq V(\cH')$ and $v(\cH_{\rm unif}) \leq n^5$, and
   
    \item\label{cond:unif2} every vertex $w \in \cH_{\rm unif}$ satisfies $d_{\cH_{\rm unif}}(w) = (1 \pm \beta)D$. Moreover, $d_{\cH'}(w) = d_{\cH_{\rm unif}}(w)$ for any $w \in V(\cH')$.
\end{enumerate}

Indeed, since $\cH' \subseteq \cH_{\rm unif}|_{V(\cH')}$ by ~\ref{E1}, we have $V(\cH_{\rm unif}) \supseteq V(\cH')$, and by ~\ref{E3}, we have $v(\cH_{\rm unif}) \leq r(r-1)^2((1+\beta)D)^3 n \leq n^5$, so ~\ref{cond:unif1} holds. By the first statement of ~\ref{E2}, every vertex $w \in \cH_{\rm unif}$ satisfies $d_{\cH_{\rm unif}}(w) = (1 \pm \beta)D$. Moreover, for any $w \in V(\cH')$, we have $d_{\cH'}(w) \geq (1-\beta)D = (1+\beta)D - 2\beta D$, so $d_{\cH'}(w) = d_{\cH_{\rm unif}}(w)$ by the second statement of~\ref{E2}. Thus ~\ref{cond:unif2} holds as well. 

Now by the second statement of ~\ref{cond:unif2}, $\cH_{\rm unif}|_{V(\cH')} - \cH'$ contains no singleton edges, so by ~\ref{E1}, we have
\begin{enumerate}[(c)]
    \item\label{cond:unif3} $\cH' = \cH_{\rm unif}|_{V(\cH')}$.
\end{enumerate}

Let\COMMENT{The following map $\psi$ allows us to identify the edges of $\cH'$ with some edges in $\cH_{\rm unif}$. We may have added some dummy vertices to each edge $e \in \cH'$, and $\psi^{-1}(e)$ represents the transformed edge in $\cH_{\rm unif}$.} $E_{\rm meet} := \{e \in \cH_{\rm unif} : e \cap V(\cH') \not = \varnothing \}$. By~\ref{cond:unif3}, we have a bijective map\COMMENT{$\psi$ allows us to identify the edges in  $E_{\rm meet}$ with $\cH'$}
\begin{equation*}
    \textrm{$\psi : E_{\rm meet} \to \cH'$ such that $e^* \mapsto e^* \cap V(\cH')$.}
\end{equation*}

Thus, for any $w \in V(\cH')$, we have $E_{\cH'}(w) = \{\psi(e^*) \: : \: w \in e^* \in \cH_{\rm unif} \}$.\COMMENT{For any $e^* \in \cH_{\rm unif}$, if $e^*$ satisfies $|\{ i \in [D] \: : \: e^* \cap V(M_i) \neq \varnothing \}| > \beta D$, then $e^* \in E_{\rm meet}$ and $\psi(e^*) = e^* \cap V(\cH')$; it follows that $\psi(e^*)\in \cH'$ also satisfies $|\{ i \in [D] \: : \: \psi(e^*) \cap V(M_i) \neq \varnothing \}| > \beta D$, contradicting our assumption.} Note that the assumption $| \{ i \in [D] : e \cap V(M_i) \ne \varnothing \} | \leq \beta D$ for any $e \in \cH'$, implies that
\begin{equation}\label{eqn:unif_intersect}
    \textrm{for every $e^* \in \cH_{\rm unif}$, we have  $|\{ i \in [D] \: : \: e^* \cap V(M_i) \neq \varnothing \}| \leq \beta D$.}
\end{equation}

We construct\COMMENT{Now we construct an incidence hypergraph $\cH_{\rm aux}$ of $\cH_{\rm unif}$, where $V_i$ denotes the $i$th clone set of $V(\cH_{\rm unif})$ that avoids a prescribed set $V(M_i)$.} an $(r+1)$-uniform linear hypergraph $\cH_{\rm aux}$ based on $\cH_{\rm unif}$ and the sets $V(M_1), \dots , V(M_D)$, as follows. 

\begin{itemize}
    \item For any $i \in [D]$, let $V_i^* := \{w^i \: : \:w \in V(\cH_{\rm unif}) \}$, where for any distinct $i_1, i_2 \in [D]$, we have $V_{i_1}^* \cap V_{i_2}^* = \varnothing$. Now let us define a map $\varphi : [D] \times V(\cH_{\rm unif}) \to \bigcup_{i=1}^{D} V_i^*$ such that $\varphi(i,w) := w^i$ for any $(i,w) \in [D] \times V(\cH_{\rm unif})$.\COMMENT{Thus $\varphi(i,\cdot) : V(\cH_{\rm unif}) \to V_i^*$ is a bijective map that identifies $w$ with $w^i$. For a subset $S \subseteq V(\cH_{\rm unif})$, we may use a standard notion that $\varphi(i,S) := \{ \varphi(i,s) \: : \: s\in S \}$.} 
    
    \item For any $i \in [D]$, let $V_i := V_i^* \setminus \varphi(i,V(M_i))$.
      
    \item Let $V(\cH_{\rm aux}) := \cH_{\rm unif} \cup \bigcup_{i \in [D]} V_i$, where $\cH_{\rm unif} \cap V_i = \varnothing$ for $i \in [D]$.
      
    \item Let $\cH_{\rm aux} := \{ \{f, v_1^i, \dots, v_r^i\} \: : \: f = \{v_1,\dots,v_r \} \in \cH_{\rm unif} \: , \{v_1^i,\dots,v_r^i \} \subseteq V_i \: , \: i \in [D] \}.$
\end{itemize}

Note that for every $i \in [D]$, an edge $\{f, v_1^i, \dots, v_r^i\} \in \cH_{\rm aux}$ if and only if $f = \{v_1,\dots,v_r \} \in \cH_{\rm unif}$ and $f \cap V(M_i) = \varnothing$ (since $V_i := V_i^* \setminus \varphi(i,V(M_i))$). Thus, for every $w \in V(\cH_{\rm unif})$ and $i \in [D]$ such that $w^i \in V_i$, $d_{\cH_{\rm aux}}(w^i) = d_{\cH_{\rm unif}}(w) - |\{f \in \cH_{\rm unif} \: : \: w \in f \text{ and } f \cap V(M_i) \ne \varnothing \}|$. Thus, $d_{\cH_{\rm unif}}(w) - |V(M_i)| \leq d_{\cH_{\rm aux}}(w^i) \leq d_{\cH_{\rm unif}}(w)$, since $\cH_{\rm unif}$ is linear. Since $|V(M_i)| \leq \beta D$ and (b) holds, this implies $d_{\cH_{\rm aux}}(w^i) = (1 \pm 2\beta)D$. Moreover, for every $e^* = \{u_1, \ldots, u_r\} \in \cH_{\rm unif}$, since $\{e^*, u_1^i, \dots, u_r^i\} \in \cH_{\rm aux}$ if and only if $e^* \cap V(M_i) = \varnothing$, we have $d_{\cH_{\rm aux}}(e^*) = D - |\{ i \in [D] \: : \: e^* \cap V(M_i) \ne \varnothing  \}|$, so by~\eqref{eqn:unif_intersect}, $(1-\beta)D \leq d_{\cH_{\rm aux}}(e^*) \leq D$. In summary, we have the following.
\begin{equation}\label{eqn:degree_aux}
    \textrm{For every vertex $w \in V(\cH_{\rm aux})$, we have $d_{\cH_{\rm aux}}(w) = (1 \pm 2\beta)D$.}
\end{equation}

\COMMENT{Note that $\sum_{i=1}^{D} |V_i| \geq D|V(\cH')| - \sum_{i=1}^{D}|V(M_i)| \geq (1-\beta)D |V(\cH')|$.}By the construction of $\cH_{\rm aux}$, we have
\begin{equation}\label{eqn:numvertices_aux}
    n \leq (1-\beta)D|V(\cH')| + |\cH_{\rm unif}| \leq n' := |V(\cH_{\rm aux})| \leq D|V(\cH_{\rm unif})| + |\cH_{\rm unif}| \overset{\ref{cond:unif1},\ref{cond:unif2}}{\leq} n^7.
\end{equation}

Let
\begin{equation*}
\cF^{\rm aux} := \bigcup_{i=1}^{D} \{ \varphi(i , A) \setminus \varphi(i,V(M_i)) \: : \: A \in \cF_V \} \cup \{ \psi^{-1} (F) \: : \: F \in \cF_E \},
\end{equation*}
where $|\cF^{\rm aux}| \leq D|\cF_V| + |\cF_E|  \leq n^{2 \log n} \leq (n')^{2 \log n'}$. Since ~\eqref{eqn:degree_aux} holds and $D \geq n^{1/2} \overset{\eqref{eqn:numvertices_aux}}{\geq} (n')^{1/14}$, we can apply Corollary~\ref{cor:sparse_egj}, with $n'$, $\cH_{\rm aux}$, $D$, $2 \beta$, $\gamma$, $\cF^{\rm aux}$, $r+1$, playing the roles of $n$, $\cH$, $D$, $\kappa$, $\gamma$, $\cF$, $r$, respectively, to obtain a matching $N^*$ in $\cH_{\rm aux}$, such that $|A \setminus V(N^*)| = (\gamma \pm 8 \beta)|A|$ for every $A \in \cF^{\rm aux}$ with $|A| \geq D^{1/20}$. This implies that for any $A \in \cF^{\rm aux}$,
\begin{equation}\label{eqn:a_uncovered}
    |A \setminus V(N^*)| = \gamma |A| \pm (8 \beta |A| + D^{1/20}).
\end{equation}

For $i \in [D]$, let 
\begin{equation*}
    N_i' := \{ \psi(e^*) \: : \: e^* = \{v_1,\dots,v_r \} \in E_{\rm meet}\:,\:\{e^* , v_1^i , \dots , v_r^i \} \in N^*\}\text{ and }N_i := N_i' \cup M_i.
\end{equation*}
Then $N_1' , \dots , N_D' \subseteq \cH'$ satisfy the following properties.
\begin{enumerate}[(i)]
    \item\label{Ni_vertex} For any $w \in V(\cH')$, we have $w \in V(N_i')$ if and only if $w^i = \varphi(i,w) \in V(N^*)$. (Indeed, if $w^i \in V(N^*)$, then $w^i \in \{e^* , v_1^i , \dots , v_r^i \} \in N^*$ for some $e^* = \{v_1,\dots,v_r \}$. Then $w \in e^*$. But since $w \in V(\cH')$, we have $w \in V(\cH') \cap e^* = \psi(e^*) \in N_i'$. The other direction is obvious.)
    
    \item\label{Ni_edge} For any $e \in \cH'$, we have $e \in \bigcup_{i=1}^{D} N_i'$ if and only if $\psi^{-1}(e) \in V(N^*)$. (Indeed, note that $e \in \bigcup_{i=1}^{D} N_i'$, if and only if $e = \psi(e^*)$ for some $e^* = \{v_1,\dots,v_r \}$ such that $\{e^* , v_1^i , \dots , v_r^i \} \in N^*$, i.e., $e^* = \psi^{-1}(e) \in V(N^*)$.)
\end{enumerate}
We claim that $N_1' , \dots , N_D'$ are pairwise edge-disjoint matchings in $\cH'$, and that for every $i \in [D]$, we have $V(N_i') \cap V(M_i) = \varnothing$. Indeed, since the edges of $N^*$ are disjoint, they contain distinct elements $\psi^{-1}(e)$ of $\cH_{\rm unif}$, which implies that the corresponding edges $e \in \bigcup_{i=1}^{D} N_i'$ are distinct (as $\psi$ is a bijective map), so $N_1' , \dots , N_D'$ are pairwise edge-disjoint matchings in $\cH'$. Moreover, for every $i \in [D]$ and $e^* = \{v_1,\dots,v_r \} \in E_{\rm meet}$ such that $\psi(e^*) \in N_i'$, we have\COMMENT{The first equality below follows since $\varphi(i,\cdot) : V(\cH_{\rm unif}) \to V_i^*$ is a bijection.}
\begin{equation*}
\varphi(i,e^* \cap V(M_i)) = \varphi(i,e^*) \cap \varphi(i, V(M_i)) = \{v_1^i , \dots , v_r^i \} \cap \varphi(i,V(M_i)) \subseteq V_i \cap \varphi(i,V(M_i)) = \varnothing.   
\end{equation*}
Thus $\psi(e^*) \cap V(M_i) \subseteq e^* \cap V(M_i) = \varnothing$, so $V(N_i') \cap V(M_i) = \varnothing$, as claimed.

Moreover, recall that $M_1, M_2, \dots, M_D \subseteq \cH \setminus \cH'$ are pairwise edge-disjoint. Altogether this implies that $N_1 , \dots , N_D$ are pairwise edge-disjoint matchings in $\cH$. This proves~\ref{NB1}.

For any $F \in \cF_E$, since $\psi^{-1}(F) \subseteq \cH_{\rm unif} \subseteq V(\cH^{\rm aux})$ and $\psi^{-1}(F) \in \cF^{\rm aux}$, we have $$|F \setminus \bigcup_{i=1}^{D} N_i'| \overset{\ref{Ni_edge}}{=} |\psi^{-1}(F) \setminus V(N^*)| \overset{\eqref{eqn:a_uncovered}}{=} \gamma |F| \pm (8 \beta |F| + D^{1/20} ) \leq \gamma |F| + \kappa \max(|F|,D).$$ Thus, ~\ref{NB3} holds.

Finally, we prove ~\ref{NB2}. Let us consider any $A \in \cF_V$ and $i \in [D]$. Since
\begin{equation*}
    |A \setminus V(N_i)| = |(A \setminus V(M_i)) \setminus V(N_i')| \overset{\ref{Ni_vertex}} {=} |\varphi(i , A \setminus V(M_i)) \setminus V(N^*)| = |(\varphi(i, A)\setminus \varphi(i, V(M_i)))\setminus V(N^*)|
\end{equation*}
and $\varphi(i,A) \setminus \varphi(i,V(M_i)) \in \cF^{\rm aux}$, by~\eqref{eqn:a_uncovered}, we have\COMMENT{here we use $\gamma|A \setminus V(M_i)| \pm (8 \beta |A \setminus V(M_i)| + D^{1/20}) = \gamma|A|\pm ( |V(M_i)| +8 \beta |A \setminus V(M_i)| + D^{1/20}) = \gamma|A|\pm ( \beta n +8 \beta n + D^{1/20}) = \gamma|A|\pm  \kappa n$.}  $|A \setminus V(N_i')| = \gamma|A| \pm \kappa n$.
Thus, the matching $N_i$ is $(\gamma , \kappa)$-pseudorandom with respect to $\cF_V$, proving~\ref{NB2}.
\end{proof}

In the next lemma, using the absorption lemmas from Section~\ref{absorption-section}, we extend the matchings given by the previous lemma in such a way that each matching will cover all but at most one vertex of $V_+^{(1-\eps)}(G)$, where the uncovered vertex of $V_+^{(1-\eps)}(G)$ must lie in a prescribed defect set $S$. In principle, we could apply Lemma~\ref{lem:pseudorandom_matching} directly with $D = (1 - \rho)n$ to colour almost all of the non-reserved small edges, but in order to be able to apply Lemmas~\ref{crossing-absorption-lemma} and~\ref{internal-absorption-lemma} to each matching, we actually need to partition the hypergraph into subhypergraphs of maximum degree at most $\kappa n$, and we apply Lemma~\ref{lem:pseudorandom_matching} to each part successively, alternating with applications of one of Lemma~\ref{crossing-absorption-lemma} or~\ref{internal-absorption-lemma}.

\begin{lemma}[Main colouring lemma]\label{lem:nibble2}
Let $0 < 1/n_0 \ll 1/r , \xi , \beta \ll \gamma \ll \varepsilon , \rho \ll 1$, let $n \geq n_0$, and let $D \in [n^{2/3} , n]$.  Let $\cH$ be an $n$-vertex linear multi-hypergraph, let $G := \cH^{(2)}$ and let $U := V_+^{(1-\varepsilon)}(G)$.
\begin{enumerate}
    \item[\mylabel{C1}{{\bf C1}}] Let $S \subseteq U$ satisfy $|S| \geq D + \gamma n$ if $|U| > (1-2\eps)n$,
    
    \item[\mylabel{C2}{{\bf C2}}] let $R \subseteq E(G)$ be a $(\rho,\gamma,\xi,\varepsilon)$-absorber for $\mathcal{V}$ such that $U,V(\cH) \in \cV$, 
    
    \item[\mylabel{C3}{{\bf C3}}] let $\cH' \subseteq \cH \setminus R$ be\COMMENT{we can think of $\cH'$ as the hypergraph with all of the small non-reserved edges} 
    a linear multi-hypergraph such that $V(\cH) = V(\cH')$, every edge $e \in \cH'$ satisfies $|e| \leq r$, and $d_{\cH'}(w) = (1 \pm \beta)D$ for every vertex $w \in V(\cH')$, and 
    
    \item[\mylabel{C4}{{\bf C4}}] let $\cM = \{M_1 , \dots,  M_{D}\}$ be a set of edge-disjoint matchings in $\cH \setminus (\cH' \cup R)$ such that $|V(M_i)| \leq \beta D$ for every $i \in [D]$, and $|\{ i \in [D] \: : \: e \cap V(M_i) \neq \varnothing \}| \leq \beta D$ for every edge $e \in \cH'$.
\end{enumerate}

Then there exists a set $\cN := \{N_1 , \dots , N_{D} \}$ of edge-disjoint matchings in $\cH$ satisfying the following. 
\begin{enumerate}
    \item[\mylabel{N1}{(\ref{lem:nibble2}.1)}] For every $i \in [D]$, we have $N_i \supseteq M_i$ and $N_i \setminus M_i \subseteq \cH' \cup R$.
    
    \item[\mylabel{N2}{(\ref{lem:nibble2}.2)}] For every vertex $w \in V(\cH)$, $|E_R(w) \cap \bigcup_{k=1}^{D} N_k| \leq \gamma D$ and  $|E_{\cH'}(w) \setminus \bigcup_{k=1}^{D} N_k| \leq \gamma D$.
    
    \item[\mylabel{N3}{(\ref{lem:nibble2}.3)}]  If $|U| \leq (1-2\eps)n$, then $\cN$ has perfect coverage of $U$. Otherwise, $\cN$ has nearly-perfect coverage of $U$ with defects in $S$.
\end{enumerate}
\end{lemma}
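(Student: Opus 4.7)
I will prove Lemma~\ref{lem:nibble2} by processing the matchings $M_1,\dots,M_D$ in $T$ batches, alternating the Nibble Lemma~\ref{lem:pseudorandom_matching} with an application of Lemma~\ref{crossing-absorption-lemma} or Lemma~\ref{internal-absorption-lemma} in each batch. Choose an auxiliary parameter $\kappa$ with $\beta \ll \kappa \ll \gamma$ and set $T := \lceil 4D/(\kappa n) \rceil$, so that $T = O(1/\kappa)$ and each batch $J_g \subseteq [D]$ can be taken with $|J_g| \leq \kappa n/2$, which is what the absorption lemmas require. Randomly partition the edges of $\cH'$ into $\cH'_1,\dots,\cH'_T$ by placing each edge $e \in \cH'$ into $\cH'_g$ with probability $|J_g|/D$; a Chernoff argument (using $\beta \ll \kappa$) shows that each $\cH'_g$ is approximately $|J_g|$-regular with multiplicative error much less than $\gamma$. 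Similarly, randomly partition $R$ into $R_1,\dots,R_T$, so that each $R_g$ is a $(\rho|J_g|/D, 10\gamma', \xi', \eps)$-absorber for $\cV$ for sufficiently small $\gamma' \ll \rho |J_g|/D$ (by concentration for typicality and upper regularity).

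In round $g$, I apply the Nibble Lemma~\ref{lem:pseudorandom_matching} to $\cH'_g$ with $|J_g|$ playing the role of $D$, the matchings $\{M_i : i \in J_g\}$, the family
\[
\cF_V^{(g)} := \{U,\, S^{(g)}\} \cup \bigl\{N_{R_g}(u)\cap U,\ N_{R_g}(u)\setminus U : u \in U\bigr\},
\]
and $\cF_E^{(g)} := \{E_{\cH'_g}(w) : w \in V(\cH)\} \cup \{\cH'_g\}$, where $S^{(g)} \subseteq S$ is the set of candidate defect vertices not yet chosen in earlier rounds. This produces edge-disjoint matchings $\{N_i^{(g)} : i \in J_g\}$ that are $(\gamma',\beta)$-pseudorandom with respect to $\cF_V^{(g)}$, which is precisely the hypothesis needed so that $(\cH, N_i^{(g)}, R_g, S^{(g)})$ is absorbable by pseudorandomness of $N_i^{(g)}$ (Definition~\ref{def:absorbable}\ref{AB3}\ref{AB-pseudo}). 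I then apply Lemma~\ref{crossing-absorption-lemma} if $|U| \leq n/100$ and Lemma~\ref{internal-absorption-lemma} otherwise to extend each $N_i^{(g)}$ to a matching $N_i$ using edges from $R_g$, and I update $S^{(g+1)} := S^{(g)}$ minus the at most $|J_g|$ newly chosen defect vertices. Since in the case $|U| > (1-2\eps)n$ we assume $|S| \geq D + \gamma n$, we have $|S^{(g)}| \geq \gamma n$ throughout, meeting the hypothesis of Lemma~\ref{internal-absorption-lemma}. Edge-disjointness across rounds is automatic since the $R_g$ and $\cH'_g$ are pairwise disjoint.

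Verification of the conclusions: Property~(\ref{lem:nibble2}.1) is immediate from the inclusions $N_i^{(g)} \supseteq M_i$, $N_i^{(g)} \setminus M_i \subseteq \cH'_g$, and $N_i \setminus N_i^{(g)} \subseteq R_g$. For~(\ref{lem:nibble2}.3), when $|U| \leq (1-2\eps)n$ each batch achieves perfect coverage and so does $\cN$; when $|U| > (1-2\eps)n$, within each batch each matching misses at most one vertex of $U$ which lies in $S^{(g)} \subseteq S$, and the defect-update rule guarantees that these defects are distinct across batches, so each vertex of $U$ is missed by at most one matching in $\cN$, giving nearly-perfect coverage with defects in $S$. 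The second bound in~(\ref{lem:nibble2}.2) follows by summing~(\ref{lem:pseudorandom_matching}.3) over the $T$ rounds, since $E_{\cH'_g}(w) \in \cF_E^{(g)}$. For the first bound in~(\ref{lem:nibble2}.2) at $w \in U$, the pseudorandomness of each $N_i^{(g)}$ with respect to $U \in \cF_V^{(g)}$ implies that $w$ is left uncovered by $N_i^{(g)}$ for only $\gamma' |J_g| + O(\beta n)$ indices $i$ per round, so at most that many absorbing edges of $R_g$ at $w$ are used in round $g$, summing to at most $\gamma D$ overall.

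The principal obstacle is the first bound of~(\ref{lem:nibble2}.2) at a vertex $w \notin U$: the absorbing matching $N_\ell^{\rm abs}$ uses at most one edge at $w$ per matching, and naively this could accumulate to roughly $D$ rather than the required $\gamma D$. I expect this to be resolved by exploiting the upper $(\rho|J_g|/D,\xi')$-regularity of $R_g$ together with a suitably balanced selection of $N_\ell^{\rm abs}$ in Lemmas~\ref{crossing-absorption-lemma}~and~\ref{internal-absorption-lemma}; a natural implementation is to include each $N_{R_g}(w)\cap U$ for $w \notin U$ in $\cF_V^{(g)}$ and to construct $N_\ell^{\rm abs}$ via a randomised Hall-type argument so that its endpoints in $V(\cH) \setminus U$ are spread uniformly over their possible locations. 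Combined with the hierarchy $\xi \ll \beta \ll \kappa \ll \gamma \ll \rho, \eps$ and the per-batch bound $|A_\ell| \leq \gamma n/50$ coming from pseudorandomness, this balance argument should give the required $\gamma D$ bound at every vertex.
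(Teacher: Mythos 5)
Your batching approach—alternating the Nibble Lemma with one of the absorption lemmas, with batch size $O(\kappa n)$—is the same as the paper's, and partitioning $R$ into $R_1,\dots,R_T$ (rather than, as the paper does, keeping $R$ whole and passing the leftover $R_i := R \setminus \bigcup_{k<i}\bigcup_{N\in\cN_k}N$ forward via Observation~\ref{obs:del_absorber}) is a workable variant, provided you are willing to run the absorption lemmas at the much smaller density $\rho\kappa$ and shrink $\gamma'$ accordingly, as you observe.

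However, your verification of~\ref{N2} contains a genuine error, and the ``principal obstacle'' you flag is a phantom caused by that error. You claim that pseudorandomness of each $N_i^{(g)}$ with respect to $U$ ``implies that $w$ is left uncovered by $N_i^{(g)}$ for only $\gamma'|J_g| + O(\beta n)$ indices $i$ per round.'' This does not follow: pseudorandomness with respect to $U$ controls $|U\setminus V(N_i^{(g)})|$ for each \emph{individual} $i$, but says nothing about how a \emph{fixed} vertex fares across all $i$ in the batch — a priori the same unlucky $w$ could be the uncovered one in every matching. The correct argument, which you already have all the ingredients for, is a counting one using the near-regularity of $\cH'_g$ and~\ref{NB3}: every $w\in V(\cH)$ has degree $(1\pm O(\beta))|J_g|$ in $\cH'_g$, and since $E_{\cH'_g}(w)\in\cF_E^{(g)}$, conclusion~\ref{NB3} guarantees that all but $O(\gamma'|J_g|)$ of those edges are placed into some $N_i^{(g)}$. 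Since each $N_i^{(g)}$ is a matching, at most one edge at $w$ lies in each, so $w$ is covered by all but $O(\gamma'|J_g|)$ of the $|J_g|$ matchings in round $g$. This bound holds for \emph{every} $w\in V(\cH)$, not just $w\in U$. Now observe that an absorbing edge of $R_g$ is used at $w$ only in rounds and indices where $w\notin V(N_i^{(g)})$: if $w\in U$ this is because $N_\ell^{\mathrm{abs}}$ covers $A_\ell := U\setminus V(N_\ell)$ and avoids $V(N_\ell)$, and if $w\notin U$ it is because $N_\ell^{\mathrm{abs}}$ lives on vertices outside $V(N_\ell)$ and is a matching. So both halves of~\ref{N2} follow directly from this single counting observation, and no ``randomised Hall-type'' spreading argument is needed; indeed, trying to engineer one would fight the deterministic Hall-based constructions inside Lemmas~\ref{crossing-absorption-lemma} and~\ref{internal-absorption-lemma}.
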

\begin{proof}
Let $K := \lceil \kappa^{-1} \rceil$, where we choose $\kappa$ so that $r^{-1} , \xi , \beta \ll \kappa \ll \gamma$. First, we find a partition of $\cH'$ into pairwise edge-disjoint hypergraphs $\cH_1' , \dots , \cH_K'$ such that $\bigcup_{j=1}^{K} \cH_j' = \cH'$, and every vertex has degree $(1 \pm 2 \beta)D/K$ in $\cH_i'$ for $i \in [K]$. To show that the desired partition exists, for each $e \in \cH'$, we put $e$ into exactly one of the hypergraphs $\cH_{1}' , \dots , \cH_{K}'$ with probability $K^{-1}$ independently at random. Then for every $w \in V(\cH)$ and $i \in [K]$, $\mathbb{E}[d_{\cH_i'}(w)] =  d_{\cH'}(w)/K = (1 \pm \beta)D/K$. Hence, by Theorem~\ref{thm:chernoff}, since $D \geq n^{2/3}$, we have $d_{\cH_{i}'}(w) = \mathbb{E}[d_{\cH_i'}(w)] \pm \beta D / K = (1 \pm 2 \beta) D/K$ with probability at least $1-\exp\left({-n^{1/4}}\right)$.

Now, for each $i \in [K]$, we may choose $n_{i}$ to be either $\lfloor D/K \rfloor$ or $\lceil D/K \rceil$ such that $\sum_{j=1}^{K} n_{j} = D$. Let us partition the set $[D]$ into $K$ disjoint parts $I_1 , \dots , I_{K}$ such that $|I_i| = n_i$. Then $n_i \leq \kappa n$, and every vertex in $\cH_{i}'$ has degree $(1 \pm 3 \beta) n_{i}$.

Let us define the following statements for $0 \le j \le K$.
\begin{enumerate}[(i)$_j$]
    \item\label{N1_appl} For any $1 \leq k \leq j$, there exists a set $\cN_k := \{ N_c \: : \: c\in I_k \}$ of $n_k$ matchings in $\cH$ such that $M_c \subseteq N_c$ and $N_c \setminus M_c \subseteq \cH_k' \cup R$ for every $c \in I_k$. Moreover, the matchings in $\bigcup_{k=1}^{j} \cN_k$ are pairwise edge-disjoint.
    
    \item\label{N2_appl} For every $w \in V(\cH)$,
    \begin{equation*}
    |E_{R}(w) \cap \bigcup_{k \in [j]}\bigcup_{N \in \cN_k} N| \leq \gamma \sum_{k \in [j]} n_k \:\:\:\textrm{and}\:\:\: |E_{\bigcup_{k=1}^{j}\cH_k' }(w) \setminus \bigcup_{k \in [j]}\bigcup_{N \in \cN_k} N| \leq \gamma \sum_{k \in [j]} n_k.
    \end{equation*}

    \item\label{N3_appl}  If $|U| \leq (1-2\eps)n$, then $\bigcup_{k=1}^{j} \cN_{k}$ has perfect coverage of $U$. Otherwise, $\bigcup_{k=1}^{j} \cN_{k}$ has nearly-perfect coverage of $U$ with defects in $S$.
\end{enumerate}

Using induction on $j$, we will show that
~\hyperref[N1_appl]{${\rm (i)}_K$}--\hyperref[N3_appl]{${\rm (iii)}_K$}
hold which clearly proves the lemma. Indeed, then $\cN = \bigcup_{k=1}^{K} \cN_{k}$ is the desired set of edge-disjoint matchings in $\cH$ satisfying ~\ref{N1} (by~\hyperref[N1_appl]{${\rm (i)}_K$}), ~\ref{N2} (by~\hyperref[N2_appl]{${\rm (ii)}_K$} and the fact that $\sum_{k \in [K]} n_k = D$ and $\bigcup_{k=1}^{K}\cH_k' = \cH'$) and ~\ref{N3} (by ~\hyperref[N3_appl]{${\rm (iii)}_K$}). 

Note that~\hyperref[N1_appl]{${\rm (i)}_0$}--\hyperref[N3_appl]{${\rm (iii)}_0$} trivially hold. Let $i \in [K]$, and suppose that
~\hyperref[N1_appl]{${\rm (i)}_{i-1}$}--\hyperref[N3_appl]{${\rm (iii)}_{i-1}$} hold. 
Our goal is to find a collection $\cN_i$ of $n_i$ pairwise edge-disjoint matchings in $\cH$ satisfying
~\hyperref[N1_appl]{${\rm (i)}_i$}--\hyperref[N3_appl]{${\rm (iii)}_i$}.

Let $R_i := R \setminus \bigcup_{k=1}^{i-1} \bigcup_{N \in \cN_k} N$, let $S_{i} := S \setminus \bigcup_{k=1}^{i-1} \bigcup_{N \in \cN_k} (U \setminus V(N))$, and 
\begin{equation}\label{def:cv_nibble}
    \textrm{$\cW := \cF(R_i) \cup \{U,S_i \}$, where $\cF(R_i) := \{ N_{R_i}(u) \cap U \: : \: u \in U \} \cup \{ N_{R_i}(u) \setminus U \: : \: u \in U \}$.}
\end{equation}

Note that $d_{\cH_{i}'}(w) = (1 \pm 3 \beta) n_{i} = (1 \pm \beta^{1/2})n_i$ for every $w \in V(\cH_{i}')$, $n^{1/2} \le D/(2K) \le n_i \le 2D/K \le n$, $|\cW| = 2|U|+2 \le n^{\log n}$, $|\{ E_{\cH_i'}(w) \: : \: w \in V(\cH) \}| = n \le n^{\log n}$, and for every $c \in I_i$ we have $|V(M_c)| \le \beta D \le \beta^{1/2} n_i$, and since $\beta \ll \kappa$, by ~\ref{C4}, for every edge $e \in \cH_{i}'$ we have $|\{ c \in [I_i] \: : \: e \cap V(M_c) \neq \varnothing \}| \leq \beta D \le \beta^{1/2} n_i$. So we can apply Lemma~\ref{lem:pseudorandom_matching} with $\cH_i' , \cW , \{ E_{\cH_i'}(w) \: : \: w \in V(\cH) \}, \{ M_c \: : \: c \in I_i \},  \beta^{1/2} , \gamma / 4 , n_i$ playing the roles of $\cH' , \cF_V , \cF_E , \{ M_1 , \dots , M_D \} , \beta , \gamma , D$\COMMENT{we use the index set $I_i$ instead of $[n_i]$ for the matchings.} to obtain a set $\cN_i' := \{ N_c' \: : \: c \in I_i \}$ of $n_{i}$ pairwise edge-disjoint matchings in $\cH$ such that the following hold.

\begin{enumerate}
    \item[\mylabel{cond:dis_1}{(a)\textsubscript{\textit{i}}}] For every $c \in I_i$, $N_c' \supseteq M_c$ and $N_c' \setminus M_c \subseteq \cH_i'$. In particular, $N_c' \cap R = \varnothing$ (by ~\ref{C3} and ~\ref{C4}).

    \item[\mylabel{cond:dis_2}{(b)\textsubscript{\textit{i}}}]  For every $c \in I_i$, $N_c'$ is $(\gamma / 4 ,\kappa)$-pseudorandom with respect to $\cW$.
    
    \item[\mylabel{cond:dis_3}{(c)\textsubscript{\textit{i}}}] For every\COMMENT{here we used that $|E_{\cH_i'}(w)| = d_{\cH_i'}(w)$} $w \in V(\cH)$, $|E_{\cH_i'}(w) \setminus \bigcup_{c \in I_i} N_c'| \leq \gamma d_{\cH_i'}(w)/4 + \kappa \max(d_{\cH_i'}(w),n_i) \le \gamma  n_i / 2$ (since $d_{\cH_i'}(w) \le (1+\beta^{1/2})n_i$ and $\beta, \kappa \ll \gamma$).
    
    \item[\mylabel{cond:dis_4}{(d)\textsubscript{\textit{i}}}] For every $w \in V(\cH)$, the number of matchings in $\cN_i'$ not covering $w$ is at most 
    \begin{equation*}
        |\cN_i'| - (d_{\cH_i'}(w) - |E_{\cH_i'}(w) \setminus \bigcup_{c \in I_i} N_c'|) \overset{\ref{cond:dis_3}}{\leq} n_i - (1 - 3 \beta)n_i + \gamma n_i / 2 \leq \gamma n_i.
    \end{equation*}
\end{enumerate}

Now we show that for any given $c \in I_i$, $(\cH , N_c' , R_i , S_i)$ is $(\rho , \eps , \gamma/4 , \kappa , \xi)$-absorbable by pseudorandomness of $N_c'$, as follows.
\begin{itemize}
    \item Using the fact that $R$ is a $(\rho,\gamma,\xi,\varepsilon)$-absorber for $\mathcal{V}$, 
    ~\hyperref[N2_appl]{${\rm (ii)}_{i-1}$}, and Observation~\ref{obs:del_absorber}, we deduce that $R_i$ is a $(\rho , 2\gamma , \xi , \eps)$-absorber for $\cV$, showing~\ref{AB1}.
    
    \item \ref{cond:dis_1} implies~\ref{AB2}.
    
    \item By~\ref{cond:dis_2}, $N_c'$ is $(\gamma / 4 ,\kappa)$-pseudorandom with respect to $\cW$, so~\ref{AB-pseudo} of~\ref{AB3} holds, as required.
\end{itemize}

Moreover, if $|U| > (1-2\eps)n$, then
by~\hyperref[N3_appl]{${\rm (iii)}_{i-1}$}, $\bigcup_{k=1}^{i-1} \cN_{k}$ has nearly-perfect coverage of $U$ with defects in $S$, so each $N \in \bigcup_{k=1}^{i-1} \cN_{k}$ satisfies $|U \setminus V(N)| \le 1$ and $U \setminus V(N) \subseteq S$. Thus, by ~\ref{C1}, $|S_{i}| = |S \setminus \bigcup_{k=1}^{i-1} \bigcup_{N \in \cN_k} (U \setminus V(N))| \geq |S|-\sum_{k=1}^{i-1} n_{k} \geq D + \gamma n - D = \gamma n$, so we can apply either Lemma~\ref{internal-absorption-lemma} or Lemma~\ref{crossing-absorption-lemma} (which does not require the set $S$) depending on the size of $U$, with $\gamma/4 , R_i , S_i , \cN_i'$ playing the roles of $\gamma , R , S , \cN$.  This yields a set $\cN_i := \{ N_c \: : \: c \in I_i \}$ of $n_{i}$ pairwise edge-disjoint matchings in $\cH$ such that for every $c \in I_i$, we have $N_c \supseteq N_c'$ and $N_c \setminus N_c' \subseteq R_i$. Moreover, if $|U| \leq (1-2\eps)n$, then $\cN_i$ has perfect coverage of $U$. Otherwise, $\cN_i$ has nearly-perfect coverage of $U$ with defects in $S_i \subseteq S$. Using these properties of $\cN_i$, we now show that~\hyperref[N1_appl]{${\rm (i)}_i$}--\hyperref[N3_appl]{${\rm (iii)}_i$} hold (as desired).

\begin{itemize}
    \item For every $c \in I_i$, we have $N_c \supseteq N_c'$ and $N_c \setminus N_c' \subseteq R_i$. Therefore, since $N_c' \supseteq M_c$ and $N_c' \setminus M_c \subseteq \cH_i'$ by~\ref{cond:dis_1}, we have $N_c \supseteq M_c$ and $N_c \setminus M_c \subseteq \cH_i' \cup R_i$ for every $c \in I_i$. Since $R_i = R \setminus \bigcup_{k=1}^{i-1} \bigcup_{N \in \cN_k} N$, and $\cH_i'$ is disjoint from $\bigcup_{k=1}^{i-1}\cH_k' \cup R$ (by ~\ref{C3}), it follows that the matchings in $\cN_i$ are edge-disjoint from the matchings in $\bigcup_{k=1}^{i-1} \cN_k$ (which are also pairwise edge-disjoint by ~\hyperref[N1_appl]{${\rm (i)}_{i-1}$}). This shows~\hyperref[N1_appl]{${\rm (i)}_i$}.
    
    \item By~\ref{cond:dis_3}, for any $w \in V(\cH)$, $|E_{\cH_i'}(w) \setminus \bigcup_{c \in I_i}N_c| \leq \gamma n_i/2$. 
    Moreover, by~\ref{cond:dis_4}, all but at most $\gamma n_i$ of the matchings in $\cN_i'$ cover $w$, and $w$ may be covered using an edge of $R_i$ only if $w$ is not covered by the matchings in $\cN_i'$ (since the matchings in $\cN_i'$ are disjoint from $R_i \subseteq R$ by ~\ref{cond:dis_1}).
    Thus, $|E_{R_i}(w) \cap \bigcup_{c \in I_i} N_c| \leq \gamma n_i$ for any $w \in V(\cH)$. Combining these facts with ~\hyperref[N2_appl]{${\rm (ii)}_{i-1}$}, 
    it follows that 
    ~\hyperref[N2_appl]{${\rm (ii)}_i$} holds.
    
    \item If $|U| \leq (1-2\eps)n$, then $\cN_i$ has perfect coverage of $U$, and $\bigcup_{k=1}^{i-1} \cN_{k}$ has perfect coverage of $U$ (by~\hyperref[N3_appl]{${\rm (iii)}_{i-1}$}), so $\bigcup_{k=1}^{i} \cN_{k}$ has perfect coverage of $U$. Otherwise, $\cN_i$ has nearly-perfect coverage of $U$ with defects in $S_i \subseteq S$, and $\bigcup_{k=1}^{i-1} \cN_{k}$ has nearly-perfect coverage of $U$ with defects in $S$ (by~\hyperref[N3_appl]{${\rm (iii)}_{i-1}$}). Since $S_{i} = S \setminus \bigcup_{k=1}^{i-1} \bigcup_{N \in \cN_k} (U \setminus V(N))$, this shows that $\bigcup_{k=1}^{i} \cN_{k}$ has nearly-perfect coverage of $U$ with defects in $S$. So~\hyperref[N3_appl]{${\rm (iii)}_i$} holds. \qedhere
\end{itemize}
\end{proof}

Lemma~\ref{lem:nibble2} colours most of the non-reserved small edges (as shown in (\ref{lem:nibble2}.2)). We will use the following lemma to colour the remaining non-reserved small edges such that every colour class covers all but at most one vertex of $V_+^{(1-\eps)}(G)$. Since the proportion of remaining non-reserved small edges is small, we can afford to be less efficient in the number of colours we use in this step in order to ensure that each colour class is small, which allows us to use Lemma~\ref{huge-edge-absorption-lemma} to extend them.

\begin{lemma}[Leftover colouring lemma]\label{lem:leftover}
Let $0 < 1/n_0 \ll 1/r,\xi \ll \gamma \ll \rho,\eps \ll 1$, let $n \geq n_0$, and let $D \in [n^{2/3} , n]$.  Let $\cH$ be an $n$-vertex linear hypergraph, let $G := \cH^{(2)}$, and let $U := V_+^{(1-\eps)}(G)$.
\begin{enumerate}
    \item[\mylabel{L1}{{\bf L1}}] Let $C$ be a set of colours with $\gamma D/2 \leq |C| \leq \gamma D$,
      
    \item[\mylabel{L2}{{\bf L2}}] let $\cM := \{M_c \: : \: c \in C \}$ be a set of pairwise edge-disjoint matchings in $\cH$, where $|V(M_c)| \leq \gamma n / 2$ for every $c \in C$, 
      
    \item[\mylabel{L3}{{\bf L3}}] let $R \subseteq E(G) \setminus \bigcup_{c \in C}M_c$ be a $(\rho , 10\gamma , \xi , \eps)$-absorber for $\cV := \{ V(\cH) , U \}$,
    
    \item[\mylabel{L4}{{\bf L4}}] let $\cH_{\rm rem} \subseteq \cH \setminus (R \cup \bigcup_{c \in C} M_c)$ such that $V(\cH) = V(\cH_{\rm rem})$, $\Delta(\cH_{\rm rem}) \leq \gamma^2 D / 20$, every edge $e \in \cH_{\rm rem}$ satisfies $|e| \leq r$ and $|\{ c \in C \: : \: e \cap V(M_c) \ne \varnothing \}| \leq \gamma^2 D / 100$,
      
    \item[\mylabel{L5}{{\bf L5}}] let $S \subseteq U$ be a subset satisfying $|S| > (\gamma + \eps)n$ if $|U| > (1-10\eps)n$.
\end{enumerate}

Then there exists a set $\cN := \{ N_c \: : \: c \in C \}$ of pairwise edge-disjoint matchings such that the following hold.
\begin{enumerate}[(\ref{lem:leftover}.1)]
    \item\label{F1} For any $c \in C$, we have $N_c \supseteq M_c$ and $\cH_{\rm rem} \subseteq \bigcup_{c \in C}(N_c \setminus M_c) \subseteq \cH_{\rm rem} \cup R$.
    
    \item\label{F2} If $|U| \leq (1-10\eps)n$, then $\cN$ has perfect coverage of $U$. Otherwise, $\cN$ has nearly-perfect coverage of $U$ with defects in $S$. 
\end{enumerate}
\end{lemma}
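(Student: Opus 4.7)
The plan is to colour $\cH_{\rm rem}$ using colours from $C$ in a way compatible with each $M_c$ so that each extended matching $M_c^* \coloneqq M_c \cup \phi^{-1}(c)$ is small, and then apply Lemma~\ref{huge-edge-absorption-lemma} to extend each $M_c^*$ via $R$.

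First, I would apply Theorem~\ref{thm:kahn} to $\cH_{\rm rem}$ with $\Delta(\cH_{\rm rem}) \leq \gamma^2 D/20$ playing the role of $D$, colour set $C$, and forbidden lists $F(e) \coloneqq \{c \in C \: : \: e \cap V(M_c) \neq \varnothing\}$ playing the role of $C(e)$. Setting $\alpha \coloneqq 10/\gamma - 1$, the hypotheses $|C| \geq \gamma D/2 = (1+\alpha)\Delta(\cH_{\rm rem})$ and $|F(e)| \leq \gamma^2 D/100 \leq \alpha\Delta(\cH_{\rm rem})/2$ are satisfied, and by~\ref{L4} the maximum edge size is $r$ with $1/r \ll \alpha$. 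Theorem~\ref{thm:kahn} therefore produces a proper edge-colouring $\phi : \cH_{\rm rem} \to C$ with $\phi(e) \notin F(e)$ for every $e \in \cH_{\rm rem}$.

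Next, I would rebalance $\phi$ so that every colour class satisfies $|V(\phi^{-1}(c))| \leq \gamma n/2$. The aggregate vertex count is
\[
\sum_{c \in C} |V(\phi^{-1}(c))| \;=\; \sum_{v \in V(\cH)} d_{\cH_{\rm rem}}(v) \;\leq\; n\,\Delta(\cH_{\rm rem}) \;\leq\; n\gamma^2 D/20,
\]
so the average per class is at most $\gamma n/10$. Since $|C| \geq \gamma D/2$ is much larger than $\chi'(\cH_{\rm rem}) = (1+o(1))\Delta(\cH_{\rm rem})$, there is substantial room to redistribute the at most $\gamma D/5$ heavy classes into underused colours, while respecting both the $F(e)$-constraints and the properness of the colouring. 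After this balancing, setting $M_c^* \coloneqq M_c \cup \phi^{-1}(c)$ gives a matching in $\cH \setminus R$ with $v(M_c^*) \leq v(M_c) + |V(\phi^{-1}(c))| \leq \gamma n/2 + \gamma n/2 = \gamma n$ by~\ref{L2}.

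Using~\ref{L3} together with Observation~\ref{obs:del_absorber}, the tuple $(\cH, M_c^*, R, S)$ then satisfies~\ref{AB1},~\ref{AB2}, and~\ref{AB3}\ref{AB-med}, so it is $(\rho, \eps, \gamma, \kappa, \xi)$-absorbable by smallness of $M_c^*$. Since $v(M_c^*) \leq \gamma n \ll n$, no $M_c^*$ is difficult. Finally, I would apply Lemma~\ref{huge-edge-absorption-lemma} to $\{M_c^*\}_{c \in C}$, noting that $k = |C| \leq \gamma D \leq \gamma n$ and that~\ref{L5} supplies the required condition on $S$. This yields pairwise edge-disjoint matchings $N_c \supseteq M_c^*$ with $N_c \setminus M_c^* \subseteq R$ and the required coverage of $U$. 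Writing $N_c \setminus M_c = \phi^{-1}(c) \cup (N_c \setminus M_c^*)$ gives $\cH_{\rm rem} = \bigcup_c \phi^{-1}(c) \subseteq \bigcup_c (N_c \setminus M_c) \subseteq \cH_{\rm rem} \cup R$, establishing~\ref{F1}, and~\ref{F2} is immediate from Lemma~\ref{huge-edge-absorption-lemma}.

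The hard part will be the balancing step, because Theorem~\ref{thm:kahn} on its own gives no control over individual colour-class sizes, and the obvious ``move-an-edge-from-a-heavy-class-to-a-light-class'' procedure becomes delicate when $\cH_{\rm rem}$ contains edges $e$ with $|e| \cdot \Delta(\cH_{\rm rem})$ comparable to $|C|$, since then almost every colour may already be blocked either by $F(e)$ or by an adjacent edge. Overcoming this will likely require separating $\cH_{\rm rem}$ by edge size (treating edges of size $\lesssim 1/\gamma$ with the redistribution scheme and assigning larger edges to singleton colours, since linearity forces them to be few in number) or, alternatively, applying Theorem~\ref{thm:kahn} iteratively on suitably defined sub-hypergraphs with carefully chosen subsets of $C$.
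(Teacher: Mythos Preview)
Your overall framework---colour $\cH_{\rm rem}$ compatibly with the $M_c$'s so that each combined matching $M_c^* = M_c \cup \phi^{-1}(c)$ has at most $\gamma n$ vertices, then invoke Lemma~\ref{huge-edge-absorption-lemma} via absorbability by smallness---is exactly the paper's approach, and your verification of~\ref{AB1}--\ref{AB3} and the final application of Lemma~\ref{huge-edge-absorption-lemma} are correct. The gap you flag is real: a single application of Theorem~\ref{thm:kahn} gives no control on individual colour-class sizes, and your redistribution argument does not go through, since with $1/r \ll \gamma$ an edge $e$ of size near $r$ may have $|e|\cdot\Delta(\cH_{\rm rem})$ of order $r\gamma^2 D \gg \gamma D \geq |C|$, so potentially every colour is blocked for $e$ by adjacency alone.

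The paper sidesteps rebalancing entirely with the following device. Partition $C$ into $t = \lceil 4/\gamma \rceil$ parts $C_1, \ldots, C_t$, each of size at least $\gamma^2 D/10$. For each $i$ apply Theorem~\ref{thm:kahn} (with $\alpha = 1/2$) to obtain a proper colouring $\psi_i : \cH_{\rm rem} \to C_i$ avoiding the forbidden lists $C(e)$. Then set $\psi(e) \coloneqq \psi_{i(e)}(e)$, where $i(e) \in [t]$ is chosen uniformly and independently for each edge. This $\psi$ is automatically proper (each $\psi_i$ is, and the $C_i$ are disjoint), and for any fixed $c \in C_j$ the random variable $|V(\psi^{-1}(c))| = \sum_{e \in \psi_j^{-1}(c)} |e|\,\mathbf{1}_{\{i(e)=j\}}$ has expectation at most $n/t \leq \gamma n/4$; weighted Chernoff and a union bound over $c \in C$ then give $|V(\psi^{-1}(c))| < \gamma n/2$ for all $c$ with positive probability. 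The key point is that no edge is ever recoloured---each edge simply chooses one of $t$ pre-validated colours---so the adjacency obstruction you identified never arises. Your second suggestion (iterating Theorem~\ref{thm:kahn} with subsets of $C$) is close in spirit; the random-combination step is the missing ingredient that makes the class sizes controllable. (Minor aside: your choice $\alpha = 10/\gamma - 1$ violates the stated hypothesis $\alpha < 1$ of Theorem~\ref{thm:kahn}; take $\alpha = 1/2$ instead, as the paper does.)
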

\begin{proof}
Let us choose $\kappa$ such that $r^{-1},\xi \ll \kappa \ll \gamma$. Let $t := \lceil 4 \gamma^{-1} \rceil$ and $C_1 , \dots , C_t$ be a partition of $C$ into $t$ sets such that $|C_i| \geq \gamma^2 D / 10$ for $1 \leq i \leq t$.

We will first show that for every $i\in [t]$, there exists a proper colouring of the edges of $\cH_{\rm rem}$ using colours from $C_i$. To that end, let $C(e) := \{c \in C \: : \: e \cap V(M_c) \ne \varnothing \}$ for every edge $e \in \cH_{\rm rem}$. Since $\Delta(\cH_{\rm rem}) \leq \gamma^2 D / 20$, $|C(e)| \leq \gamma^2 D / 100$ and $|C_i| \geq \gamma^2 D / 10$, we can apply Theorem~\ref{thm:kahn} with $\gamma^2 D / 20$ and $1/2$ playing the roles of $D$ and $\alpha$, respectively, to show that for every $i\in [t]$, there exists a proper edge-colouring  $\psi_i : \cH_{\rm rem} \to C_i$ such that $\psi_i(e) \not \in C(e)$ for every $e \in \cH_{\rm rem}$. By the definition of $C(e)$, this implies that $V(\psi_i^{-1}(c)) \cap V(M_c) = \varnothing$ for any $i \in [t]$ and $c \in C_i$.

Let us now define a proper edge-colouring $\psi : \cH_{\rm rem} \to C$ by choosing $i(e) \in [t]$ uniformly and independently at random for each $e \in \cH_{\rm rem}$ and setting $\psi(e) := \psi_{i(e)}(e)$. Fix an arbitrary colour $c \in C$. Then there is a unique $j \in [t]$ such that $c \in C_j$, and $|V(\psi^{-1}(c))| = \sum_{e \in \psi_j^{-1}(c)}|e| {\bf 1}_{i(e) = j}$, so by the linearity of expectation, $\mathbb{E}[|V(\psi^{-1}(c))|] = \sum_{e \in \psi_j^{-1}(c)} |e|\cdot\mathbb{P}(i(e) = j) \leq n/t \leq \gamma n / 4$.

Since $|V(\psi^{-1}(c))|$ is a weighted sum of independent indicator random variables with maximum weight at most $r$, by Theorem~\ref{thm:chernoff}, 	$\mathbb{P}(||V(\psi^{-1}(c))| - \mathbb{E}[|V(\psi^{-1}(c))|]| \geq \gamma n / 4) \leq e^{-\sqrt{n}}$, so by the union bound, we have $|V(\psi^{-1}(c))| < \gamma n / 2$ for all $c \in C$ with non-zero probability. Combining this with the fact that for every $c \in C$, $V(\psi^{-1}(c)) \cap V(M_c) = \varnothing$ and $|V(M_c)| \leq \gamma n / 2$, it follows that there exists a proper edge-colouring $\psi : \cH_{\rm rem} \to C$ such that $ \{ \psi^{-1}(c) \cup M_c : c \in C \}$ is a set of edge-disjoint matchings in $\cH$ where for every $c \in C$, $|V(\psi^{-1}(c)) \cup V(M_c)| \leq \gamma n$. Thus, for every $c \in C$, $(\cH , M_c \cup \psi^{-1}(c) , R , S)$ is $(\rho,\eps,\gamma,\kappa,\xi)$-absorbable\COMMENT{By Definition~\ref{def:absorbable} this means $R$ is a $(\rho , 10\gamma , \xi , \eps)$-absorber. See also the remark following the definition.} by smallness of $M_c \cup \psi^{-1}(c)$. So we can apply Lemma~\ref{huge-edge-absorption-lemma} with $\{ M_c \cup \psi^{-1}(c) : c \in C \}$ playing the role\COMMENT{we used $|C| \leq \gamma n$} of $\cN$\COMMENT{Here, we also use the index set $C$ instead of $[|C|]$.} to obtain a set $\cN = \{N_c : c \in C\}$ of pairwise edge-disjoint matchings in $\cH$ such that the following hold.
\begin{itemize}
    \item For every $c \in C$, $N_c \supseteq M_c \cup \psi^{-1}(c)$ and $N_c \setminus (M_c \cup \psi^{-1}(c)) \subseteq R$; thus~\ref{F1} holds.
    
    \item If $|U| \leq (1-10\eps)n$, then $\cN$ has perfect coverage of $U$. Otherwise, $\cN$ has nearly-perfect coverage of $U$ with defects in $S$; thus~\ref{F2} holds. \qedhere
\end{itemize} 
\end{proof}
 
\section{Optimal edge-colourings}\label{graph-edge-section}

In this section we will prove colouring results (Lemma~\ref{hall-based-finishing-lemma} and Corollary~\ref{cor:optcol}) which will be used to colour the leftover edges of the reservoir in the final step of the proof of Theorem~\ref{main-thm}. Lemma~\ref{hall-based-finishing-lemma} will be used when $\cH$ satisfies (\ref{large-edge-thm}:b) of Theorem~\ref{large-edge-thm}, i.e., when $\cH$ is close to a projective plane.
Corollary~\ref{cor:optcol} will be used when $\cH$ is close to a complete graph.

\subsection{Edge-colourings with forbidden lists}

The following proposition follows easily from Hall's theorem.

\begin{proposition}\label{obs:matching_A}
Let $G$ be a bipartite graph with bipartition $\{A,B \}$, and let $\delta_A$ and $\delta_B$ be the minimum degrees of the vertices in $A$ and $B$, respectively. If $|A| \leq |B|$ and $\delta_A + \delta_B \geq |A|$, then $G$ has a matching covering $A$.
\end{proposition}
\begin{proof}
Suppose that $G$ has no matching covering $A$.
By Hall's theorem, there exists a non-empty set $A' \subseteq A$ and $B' := N_G(A')$ such that $|B'| \leq |A'|-1$, thus $B \setminus B'$ is non-empty. Let $a \in A'$ and $b \in B \setminus B'$. Then
\begin{equation*}
    \delta_A \leq d_G(a) \leq |B'| \leq |A'|-1\:\:\:,\:\:\:\delta_B \leq d_G(b) \leq |A|-|A'|,
\end{equation*}
since $N_G(a) \subseteq B'$ and $N_G(b) \subseteq A \setminus A'$. Hence $\delta_A + \delta_B \leq |A|-1$, contradicting our assumption.
\end{proof}

\begin{lemma}\label{hall-based-finishing-lemma}
Let $\delta \in (0,1)$, let $H$ be an $n$-vertex graph, let $C$ be a set of colours satisfying $|C| \geq 7 \delta n$, and for every $w \in V(H)$, let $C_w \subseteq C$ such that the following hold.
\begin{enumerate}
    \item[\mylabel{cond:hall1}{{\rm (i)}}] For any $w \in V(H)$, $d_H(w) \leq |C|-|C_w|$.
    
    \item[\mylabel{cond:hall2}{{\rm (ii)}}] There is a set $U \subseteq V(H)$ with $|U| \leq \delta n$ such that every edge of $H$ is incident to a vertex of $U$.
    
    \item[\mylabel{cond:hall3}{{\rm (iii)}}] For every\COMMENT{we imagine $C_w$ as the set of forbidden colours} vertex $w \in V(H)$, $|C_w| \le \delta n$.
    
    \item[\mylabel{cond:hall4}{{\rm (iv)}}] For every $c \in C$, $|\{w \in V(H) \: : \: c \in C_w \}| \leq \delta n$.
\end{enumerate}
Then there exists a proper edge-colouring $\phi : E(H) \to C$ such that every edge $uv \in E(H)$ satisfies $\phi(uv) \notin C_u \cup C_v$.
\end{lemma}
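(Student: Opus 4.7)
\textbf{Proof plan for Lemma~\ref{hall-based-finishing-lemma}.} The plan is to colour the edges of $H$ by processing the vertices of $U$ one at a time, since by condition~\ref{cond:hall2} every edge has at least one endpoint in $U$. Fix an arbitrary ordering $u_1, \dots, u_k$ of $U$ (where $k \leq \delta n$). At step~$i$ we will extend the proper edge-colouring built in steps $1,\dots,i-1$ to all remaining uncoloured edges incident to $u_i$; since every edge has an end in $U$, after step $k$ every edge of $H$ has been coloured. Observe that at the start of step $i$ the coloured edges at any vertex $v$ are exactly the edges $vu_j$ with $j<i$, and in particular this gives at most $i-1 \leq \delta n$ coloured edges at any such $v$. (If $v\notin U$ then in fact all edges at $v$ go into $U$, and if $v \in U$ then only the edges to already-processed vertices contribute, so in both cases the bound $\delta n$ is valid.)

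At step $i$, let $E^*$ denote the set of uncoloured edges at $u_i$, so $|E^*| = d_H(u_i) - d_{u_i}^{\mathrm{old}}$, where $d_{u_i}^{\mathrm{old}} \leq i-1 \leq \delta n$ is the number of edges $u_iu_j$ with $j<i$. Let
\[
  C^* := C \setminus \left(C_{u_i} \cup \{\phi(u_iu_j) : j<i,\ u_iu_j\in E(H)\}\right).
\]
Then $|C^*| \geq |C| - |C_{u_i}| - d_{u_i}^{\mathrm{old}} \geq 7\delta n - 2\delta n = 5\delta n$, and condition~\ref{cond:hall1} yields $|C^*| \geq d_H(u_i) - d_{u_i}^{\mathrm{old}} = |E^*|$. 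For each $e = u_iv \in E^*$, define the \emph{allowed set}
\[
  A(e) := C^* \setminus \left(C_v \cup \{\phi(vu_j) : j<i,\ vu_j\in E(H)\}\right).
\]
Finding a valid colour assignment for $E^*$ at step $i$ is equivalent to finding a matching covering $E^*$ in the bipartite auxiliary graph $B$ with parts $E^*$ and $C^*$ and edges $\{(e,c) : c \in A(e)\}$. We will verify the hypotheses of Observation~\ref{obs:matching_A} to obtain such a matching.

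On the $E^*$-side, $|A(e)| \geq |C^*| - |C_v| - d_v^{\mathrm{old}} \geq |C^*| - 2\delta n$ by~\ref{cond:hall3} and the degree bound above. On the $C^*$-side, for a fixed $c \in C^*$, the edge $u_iv \in E^*$ is \emph{not} incident to $c$ in $B$ only if either $c \in C_v$ or $c = \phi(vu_j)$ for some $j<i$. Condition~\ref{cond:hall4} contributes at most $\delta n$ vertices $v$ of the first type. For the second type, properness of the already-built colouring implies that at most one edge at each of $u_1,\dots,u_{i-1}$ carries colour $c$, so at most $i-1 \leq \delta n$ vertices $v$ arise, and consequently at most $2\delta n$ edges of $E^*$ are non-adjacent to $c$ in $B$. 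Hence the minimum degree on the $C^*$-side is at least $|E^*| - 2\delta n$. Summing, the two minimum degrees satisfy
\[
  (|C^*| - 2\delta n) + (|E^*| - 2\delta n) \;=\; |C^*| + |E^*| - 4\delta n \;\geq\; |E^*|,
\]
since $|C^*| \geq 5\delta n > 4\delta n$. As $|E^*| \leq |C^*|$, Observation~\ref{obs:matching_A} produces a matching in $B$ covering $E^*$, which yields the required extension of $\phi$ at step $i$. Iterating this for $i=1,\dots,k$ completes the proof. The main obstacle is merely bookkeeping: one must be careful that the ``already-coloured'' edges at a neighbour $v$ (whether $v \in U$ or not) contribute at most $\delta n$ forbidden colours, and that the constant $7\delta n$ in the hypothesis $|C| \geq 7\delta n$ is enough slack to absorb all the $\delta n$ losses ($|C_{u_i}|$, $d_{u_i}^{\mathrm{old}}$, and the two $2\delta n$ deficits from the Hall condition).
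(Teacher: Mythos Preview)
Your proof is correct and follows essentially the same approach as the paper: order the vertices of $U$, and at each step use Observation~\ref{obs:matching_A} (the Hall-type lemma) on an auxiliary bipartite graph between the uncoloured edges at $u_i$ and the available colours to extend the partial colouring. Your degree estimates are in fact marginally sharper than the paper's (you count at most $i-1$ ``already-used'' bad vertices on the colour side by noting that the other endpoint of each edge in $E^*$ lies outside $\{u_1,\dots,u_{i-1}\}$, whereas the paper uses the cruder bound $2(i-1)$), but both comfortably fit under the $|C|\geq 7\delta n$ hypothesis.
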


\begin{proof}\COMMENT{the inductive steps of the proof are quite straightforward, perhaps we can omit the proof below when submitting our paper, and leave it in Appendix.} Let $U := \{u_1 , \dots , u_t \}$, where $t := |U| \leq \delta n$. Let $\phi_0 : \varnothing \to C$ be an empty function, and for every $1 \leq j \leq t$, let us inductively define a proper edge-colouring $\phi_j : \bigcup_{k=1}^{j} E_{H}(u_k) \to C$ such that
\begin{enumerate}
    \item[\mylabel{cond:pa}{(a)\textsubscript{\textit{j}}}] $\phi_j (uv) \notin C_u \cup C_v$ for each $uv \in \bigcup_{k=1}^{j} E_H(u_k)$, and
    \item[\mylabel{cond:pb}{(b)\textsubscript{\textit{j}}}] $\phi_j$ is a proper edge-colouring extending $\phi_{j-1}$.
\end{enumerate}

Since every edge of $H$ is incident to a vertex of $U$ by~\ref{cond:hall2}, $\phi := \phi_t$ satisfies the assertion of the lemma. Let $i \in [t]$, and suppose we have already defined $\phi_j$ satisfying both~\ref{cond:pa} and~\ref{cond:pb} for $j \in [i - 1]$; now we aim to construct $\phi_i$ satisfying
~\hyperref[cond:pa]{${\rm (a)}_i$} and~\hyperref[cond:pb]{${\rm (b)}_i$}.

For each $v \in V(H) \setminus \{u_1 , \dots , u_{i-1} \}$, let $C_v^* := \phi_{i-1} ( E_H(v) \cap \bigcup_{j=1}^{i-1} E_H(u_j) )$ be the set of colours of edges incident to $v$ in $\phi_{i-1}$. Since any vertex $v \in V(H) \setminus \{u_1 , \dots , u_{i-1} \}$ is adjacent to at most $i-1$ vertices in $\{u_1 , \dots , u_{i-1} \}$, we have
\begin{equation}\label{eqn:boundcv*}
    |C_v^*| \overset{\hyperref[cond:pb]{ {\rm (b)}_{i-1}}}{=} |E_H(v) \cap \bigcup_{j=1}^{i-1} E_H(u_j)| \leq i-1 \leq \delta n.
\end{equation}

Let $A := E_H(u_i) \setminus \bigcup_{j=1}^{i-1} E_H(u_j)$ and $B := C \setminus (C_{u_i} \cup C_{u_i}^*)$. Let $G_i$ be an auxiliary bipartite graph with the bipartition $\{A,B\}$ such that $\{e,c\} \in E(G_i)$ for $e = u_i v \in A$ and $c \in B$ if and only if $c \notin C_v \cup C_v^*$. Thus, the following hold.
\begin{itemize}
    \item $|A| \leq |B|$. Indeed, $|A| = d_{H}(u_i) - |E_H(u_i) \cap \bigcup_{j=1}^{i-1} E_H(u_j)|$ and $|B| \overset{\eqref{eqn:boundcv*}}{\geq} |C| - |C_{u_i}| - |E_H(u_i) \cap \bigcup_{j=1}^{i-1} E_H(u_j)| \overset{\ref{cond:hall1}}{\geq} |A|$.
    
    \item For each $e = u_i v \in A$, we have\COMMENT{below we used $|C| \ge 7 \delta n$.}
    \begin{equation}\label{eqn:mindeg_A}
        d_{G_i}(e) = |B|-|C_v \cup C_v^*| \geq |C| - |C_{u_i} \cup C_{u_i}^*| - |C_v \cup C_v^*| \overset{\eqref{eqn:boundcv*},~\ref{cond:hall3}}{\geq} 3 \delta n.
    \end{equation}
    
    \item For each $c \in B$, since there are at most $i-1$ edges in $\bigcup_{j=1}^{i-1} E_H(u_j)$ which could be assigned the colour $c$ by $\phi_{i-1}$, we have $| \{ v \in V(H) : c \in C_v^* \} | \leq 2(i-1) \leq 2 \delta n$. Thus 
    \begin{equation}\label{eqn:mindeg_B}
        d_{G_i}(c) \geq |A| - |\{ v \in V(H) : c \in C_v^* \}| - |\{w \in V(H) \: : \: c \in C_w \}| \overset{\ref{cond:hall4}}{\geq} |A| - 3 \delta n.
    \end{equation}
\end{itemize}

Let $\delta_A$ and $\delta_B$ be the minimum degrees of the vertices in $A$ and $B$ in $G_i$, respectively. Then by~\eqref{eqn:mindeg_A} and~\eqref{eqn:mindeg_B}, we have $\delta_A + \delta_B \geq |A|$. Moreover, $|A| \leq |B|$, so there exists a matching $M_i$ in $G_i$ covering $A$ by Proposition~\ref{obs:matching_A}.

For each $e \in A = E_H(u_i) \setminus \bigcup_{j=1}^{i-1} E_H(u_j)$, let $c_e \in B$ be the unique element such that $\{e, c_e\} \in E(M_i)$. Let us define
$\phi_{i}(e) := \phi_{i-1}(e)$ for $e \in \bigcup_{j=1}^{i-1} E_H(u_j)$, and $\phi_i(e) :=c_e$ for $e \in E_H(u_i) \setminus \bigcup_{j=1}^{i-1} E_H(u_j)$. 

Now we show that~\hyperref[cond:pa]{${\rm (a)}_i$} and~\hyperref[cond:pb]{${\rm (b)}_i$} hold. 
By the definitions of $A$, $B$, and $G_i$, for every $e = u_iv \in E_H(u_i) \setminus \bigcup_{j=1}^{i-1} E_H(u_j)$, we have $\phi_i(e) = c_e \notin C_{u_i} \cup C_{u_i}^* \cup C_v \cup C_v^*$, implying
~\hyperref[cond:pa]{${\rm (a)}_i$} holds.
Moreover, by the definitions of $C_{u_i}^*$ and $C_v^*$, the colour $c_e = \phi_i(e)$ is different  from the colours of the edges intersecting $e = u_i v$ which are already coloured in $\phi_1 , \dots , \phi_{i-1}$, and all edges in $E_H(u_i) \setminus \bigcup_{j=1}^{i-1} E_H(u_j)$ receive different colours in $\phi_i$, since $M_i$ is a matching,
so~\hyperref[cond:pb]{${\rm (b)}_i$} holds, as desired.
\end{proof}

\subsection{Edge-colouring pseudorandom graphs}\label{Delta-edge-col-section}

Here we derive an optimal colouring result for pseudorandom graphs (Corollary~\ref{cor:optcol}) from a result (Theorem~\ref{thm:optcol}) on the overfull subgraph conjecture, which in turn is a consequence of the main result in~\cite{kuhn2013hamilton} on Hamilton decompositions of robustly expanding regular graphs.

\begin{definition}[Lower regularity]
Let $\rho,\xi \in (0,1)$, and let $G$ be an $n$-vertex graph. A set $R \subseteq E(G)$ is \textit{lower $(\rho, \xi, G)$-regular} if for every pair of disjoint sets $S, T\subseteq V(G)$ with $|S|, |T| \geq \xi n$, we have $|E_{G}(S, T)\cap R| \geq \rho e_{G}(S, T) - \xi |S||T|$.
In particular, for $0 < \rho' < \rho$ and $\xi < \xi' < 1$, $R$ is also lower $(\rho',\xi',G)$-regular.

A graph $H$ is \textit{lower $(\rho,\xi)$-regular} if $E(H)$ is lower $(\rho,\xi,K_{v(H)})$-regular, i.e., for every pair of disjoint sets $S,T \subseteq V(H)$ with $|S|,|T| \geq \xi v(H)$, we have $e_H(S,T) \geq (\rho-\xi)|S||T|$.
\end{definition}

\begin{proposition}[Robustness of lower regularity]\label{obs:del_lowerreg}
Let $\alpha , \xi , \rho \in (0,1)$, and let $G$ be a graph. Then the following hold.
\begin{enumerate}
    \item[\mylabel{LR1}{(\ref{obs:del_lowerreg}.1)}] If $R \subseteq E(G)$ is lower $(\rho,\xi,G)$-regular and $R' \subseteq R$ satisfies $\Delta(R-R') \leq \alpha v(G)$, then $R'$ is lower $(\rho,\xi+\alpha^{1/2},G)$-regular.
    
    \item[\mylabel{LR2}{(\ref{obs:del_lowerreg}.2)}] If $G$ is lower $(\rho,\xi)$-regular and $G \subseteq H$ such that $v(H) \leq (1 + \alpha \xi) v(G)$, then $H$ is lower $(\rho(1-\alpha)^2 , \frac{\xi}{1-\alpha})$-regular.
\end{enumerate}
\end{proposition}
\begin{proof}
First we prove~\ref{LR1}.
Let $v(G) = n$. For any disjoint $S,T \subseteq V(G)$ with $|S|,|T| \geq (\xi + \alpha^{1/2})n$, we have
$|E_{G}(S,T) \cap R'| \geq |E_{G}(S,T) \cap R| - \alpha n|S| \geq \rho e_{G}(S,T) - \xi |S||T| - \alpha n |S| \geq \rho e_{G}(S,T) - |S| (\xi|T| + \alpha n) \geq \rho e_{G}(S,T) - |S||T| (\xi + \alpha^{1/2})$, since $|T| \geq \alpha^{1/2} n$. Thus, $R'$ is lower $(\rho , \xi+\alpha^{1/2} , G)$-regular, as desired.

Now we prove~\ref{LR2}.
For any disjoint $A,B \subseteq V(H)$ such that $|A|,|B| \geq \frac{\xi}{1-\alpha} n$, we have $|A \cap V(G)|,|B \cap V(G)| \geq \frac{\xi}{1-\alpha} n - \alpha \xi n \ge \xi n$. Moreover, $|A \cap V(G)| \geq |A| - \alpha \xi n \geq |A| - \alpha \xi (1 - \alpha)\frac{|A|}{\xi} \geq (1 - \alpha)|A|$ and, similarly, $|B \cap V(G)| \geq (1 - \alpha)|B|$. Thus, since $G$ is lower $(\rho,\xi)$-regular, $e_{H}(A,B) \geq e_G(A \cap V(G),B \cap V(G)) \geq (\rho-\xi)|A \cap V(G)| |B \cap V(G)| \geq (\rho-\xi)(1-\alpha)^2 |A||B| \geq (\rho(1-\alpha)^2 - \xi )|A||B| \geq (\rho(1-\alpha)^2 - \frac{\xi}{1 - \alpha} )|A||B|$, as desired.
\end{proof}

\begin{theorem}[Glock, K\"uhn, and Osthus \cite{GKO2016}]\label{thm:optcol}
Let $0 < 1/n_0 \ll \nu , \varepsilon \ll p < 1$, and let $n \geq n_0$.  Let $G$ be an $n$-vertex graph that is lower $(p,\varepsilon)$-regular and satisfies $\Delta(G) - \delta(G) \leq \nu n$. Let $\defi_G (v) := \Delta(G) - d_G(v)$ for any $v \in V(G)$. If $n$ is even and
\begin{equation}\label{eqn:def_ineq}
    \defi_G (w) \leq \sum_{v \in V(G) \setminus \{ w\}} \defi_G (v)
\end{equation}
for some vertex $w \in V(G)$ with $d_G(w) = \delta(G)$, then $\chi'(G) = \Delta(G)$. 
\end{theorem}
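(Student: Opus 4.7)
The plan is a \emph{regularise then $1$-factorise} strategy. We embed $G$ into a $\Delta(G)$-regular graph $G^*$ on the same vertex set by adding a suitable factor of the complement $\overline{G}$, apply the $1$-factorisation theorem for robust expanders from \cite{kuhn2013hamilton} to $G^*$, and restrict the resulting perfect matchings back to $G$ to obtain a proper $\Delta(G)$-edge-colouring. Since $\chi'(G) \geq \Delta(G)$ trivially, this proves $\chi'(G) = \Delta(G)$.

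First I would construct $G^* \supseteq G$ that is $D$-regular, where $D \coloneqq \Delta(G)$. For each $v \in V(G)$ set $f(v) \coloneqq \defi_G(v)$; the goal is an $f$-factor $F$ in $\overline{G}$, after which $G^* \coloneqq G \cup F$ is $D$-regular by construction. Note $\sum_v f(v) = Dn - 2e(G)$ is even because $n$ is even. The hypothesis \eqref{eqn:def_ineq} says exactly that the largest value of $f$ does not exceed the sum of the remaining values, which is the standard necessary condition for an $f$-factor. To verify sufficiency I would invoke Lov\'asz's $(g,f)$-factor theorem with $g = f$: the Tutte--Berge-type inequality that must hold for every disjoint pair $(S,T)$ of vertex subsets reduces, after small-$S,T$ cases are handled by \eqref{eqn:def_ineq}, to checking that $\overline{G}$ contains enough edges between $S$ and $T$. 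Here the lower $(p,\varepsilon)$-regularity of $G$ yields $e_{\overline{G}}(S,T) \geq (1-p-\varepsilon)|S||T|$ for $|S|,|T| \geq \varepsilon n$, which, combined with $f(v) \leq \nu n$ and $\nu \ll p, \varepsilon$, gives the required inequality with enormous slack. Thus $F$ exists and $G^*$ is $D$-regular.

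Next I would show $G^*$ is a robust expander in the sense needed by \cite{kuhn2013hamilton}. Lower $(p,\varepsilon)$-regularity with $\varepsilon \ll p$ implies that for every $S \subseteq V(G)$ with $\tau n \leq |S| \leq (1-\tau)n$ (for suitable $\tau$ with $\varepsilon \ll \tau \ll p$) the robust neighbourhood $RN_{\nu', G}(S)$ has size at least $|S| + \nu' n$ for some $\nu'$ with $\varepsilon \ll \nu' \ll p$: between any two linear-sized sets $G$ contains at least a $(p-\varepsilon)$-fraction of possible edges, so few vertices can lie outside the robust neighbourhood. Since $G^* \supseteq G$, $G^*$ inherits this robust expansion, and its regular degree $D$ satisfies $D \geq \delta(G) \geq (p-\varepsilon)(n-1)$, comfortably in the regime where the $1$-factorisation theorem of \cite{kuhn2013hamilton} applies. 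That theorem then produces a decomposition $M_1, \dots, M_D$ of $G^*$ into perfect matchings; taking $\{M_i \cap E(G) : i \in [D]\}$ yields the required proper $D$-edge-colouring of $G$.

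The main obstacle is the regularisation step: \eqref{eqn:def_ineq} is exactly the combinatorial condition needed for the factor to exist, but turning it into sufficiency requires the complement $\overline{G}$ to have enough edges in the right places. The lower $(p,\varepsilon)$-regularity of $G$ supplies precisely this pseudorandomness, but one must carefully handle the small-set cases in Lov\'asz's theorem (where the lower-regularity hypothesis gives no information), using instead the bound $f(v) \leq \nu n$ and that vertices of near-maximum deficit are few. A secondary subtlety is confirming that the hypotheses of the $1$-factorisation theorem in \cite{kuhn2013hamilton} are satisfied by $G^*$: robust expansion plus the assumption that $n$ is even should suffice, but one must check that adding the sparse factor $F$ (with $\Delta(F) \leq \nu n$) does not destroy any parameter needed by that theorem, which follows since $\nu \ll \nu', \tau, p$ in the chosen hierarchy.
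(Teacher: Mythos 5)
The paper itself does not prove Theorem~\ref{thm:optcol}: it is imported wholesale from~\cite{GKO2016}, with only a remark explaining that the overfull-subgraph hypothesis used there can be relaxed to~\eqref{eqn:def_ineq}. The paper's discussion tells you that the proof in~\cite{GKO2016} proceeds exactly as you propose — regularise $G$ by a factor of $\overline{G}$, observe that the resulting $\Delta(G)$-regular graph inherits robust expansion from $G$, and invoke the $1$-factorisation theorem of~\cite{kuhn2013hamilton} for robustly expanding regular graphs of even order. So the plan is the right one and matches the source.

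However, your regularisation step as written has two genuine problems. First, you claim that lower $(p,\varepsilon)$-regularity of $G$ gives $e_{\overline{G}}(S,T) \geq (1-p-\varepsilon)|S||T|$ for $|S|,|T|\geq \varepsilon n$; in fact $e_G(S,T) \geq (p-\varepsilon)|S||T|$ yields the opposite bound $e_{\overline{G}}(S,T) \leq (1-p+\varepsilon)|S||T|$. Moreover, in Lov\'asz's $(g,f)$-factor condition for $H=\overline{G}$ the quantity $e_H(S,T)$ enters with a \emph{minus} sign, so one in fact wants an \emph{upper} bound on $e_{\overline{G}}(S,T)$, not a lower one; you have reversed both the inequality and its role, and it is only by this double reversal that the correct conclusion (lower regularity of $G$ supplies exactly the needed upper bound on $e_{\overline{G}}(S,T)$) happens to be reached. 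Second, applying Lov\'asz's theorem with $g=f$ leaves the parity term $\tau(S,T)$, which does \emph{not} vanish when $g=f$ and can in principle be of order $n$; you never address it. Either one must argue separately that $\overline{G}-S-T$ contributes few odd components in the regime where the other terms are not overwhelmingly positive (plausible given the density of $\overline{G}$, but not automatic for small $|S|,|T|$ where~\eqref{eqn:def_ineq} is your only leverage), or one should regularise in two stages: first take a $(g,f)$-factor with $g(v)=\max(f(v)-1,0)$ so the degrees are within $1$ of $\Delta(G)$, and then fix the (even, since $n$ is even) set of deficit-one vertices by a matching in the remaining part of $\overline{G}$. As written, your argument has a gap at both of these points.
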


Even though the statement of~\cite[Theorem 1.6]{GKO2016} requires $G$ to have no overfull subgraph, it is shown in its proof that it suffices to assume that $G$ satisfies ~\eqref{eqn:def_ineq} (see the remark below~\cite[Theorem 1.6]{GKO2016}). We will use the following corollary of Theorem~\ref{thm:optcol}\COMMENT{which is weaker than~\cite[Conjecture 1.7]{GKO2016} but enough for our purpose}.

\begin{corollary}\label{cor:optcol}
Let $0 < 1/n_0 \ll \eta , \varepsilon \ll p < 1$, and let $n \geq n_0$.  Let $G$ be an $n$-vertex graph that is lower $(p,\varepsilon)$-regular and satisfies $\Delta(G) - \delta(G) \leq \eta n$. If there are at least $\Delta(G)$ vertices in $G$ having degree less than $\Delta(G)$, then $\chi'(G) = \Delta(G)$.
\end{corollary}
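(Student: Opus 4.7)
The plan is to deduce Corollary~\ref{cor:optcol} from Theorem~\ref{thm:optcol} by verifying its deficiency hypothesis. The only real subtlety is the parity of $n$, which we handle by adding an auxiliary vertex when $n$ is odd.

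First I would record a lower bound on $\Delta(G)$ coming from regularity. Taking two disjoint subsets $S,T \subseteq V(G)$ with $|S|=|T|=\lfloor n/2\rfloor$, lower $(p,\varepsilon)$-regularity gives $e_G(S,T) \geq (p-\varepsilon)|S||T|$, so some vertex in $S$ has degree at least $(p-\varepsilon)\lfloor n/2\rfloor \geq pn/3$ into $T$. Thus $\Delta(G) \geq pn/3$, and since $\eta \ll p$ we have $\eta n \leq \Delta(G)/10$ and $\delta(G) \geq \Delta(G)-\eta n \geq 9\Delta(G)/10$. The hypothesis also yields $\sum_{v\in V(G)}\defi_G(v) \geq \Delta(G)$, since at least $\Delta(G)$ vertices each contribute at least $1$.

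Case 1 ($n$ even): Choose $w$ with $d_G(w)=\delta(G)$. Then $\defi_G(w) \leq \eta n \leq \Delta(G)/10$ and
$$\sum_{v \neq w}\defi_G(v) \;\geq\; \Delta(G) - \defi_G(w) \;\geq\; \tfrac{9}{10}\Delta(G) \;\geq\; \defi_G(w),$$
so Theorem~\ref{thm:optcol} (with $\nu := \eta$) applies directly and gives $\chi'(G)=\Delta(G)$.

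Case 2 ($n$ odd): Let $S_0 := \{v \in V(G) : d_G(v) < \Delta(G)\}$, so $|S_0| \geq \Delta(G) \geq \delta(G)$. Form $G^*$ from $G$ by adding a new vertex $z$ and joining $z$ to exactly $\delta(G)$ vertices of $S_0$. Then $v(G^*)=n+1$ is even, $\Delta(G^*)=\Delta(G)$ (each neighbour of $z$ had $d_G \leq \Delta(G)-1$), and every $v\in V(G)$ satisfies $d_{G^*}(v) \geq d_G(v) \geq \delta(G) = d_{G^*}(z)$, so $z$ is a vertex of minimum degree and $\delta(G^*)=\delta(G)$. Hence $\Delta(G^*)-\delta(G^*) = \Delta(G)-\delta(G) \leq \eta n$, and by Observation~\ref{obs:del_lowerreg}\ref{LR2} (applied with $\alpha := 1/(\varepsilon n)$) the graph $G^*$ is lower $(p/2,2\varepsilon)$-regular for $n$ large. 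Setting $w^*:=z$, each of $z$'s $\delta(G)$ neighbours loses exactly one unit of deficiency in passing from $G$ to $G^*$ (and all other vertices of $V(G)$ keep their deficiency), so
$$\sum_{v \neq z} \defi_{G^*}(v) \;=\; \sum_{v\in V(G)}\defi_G(v) - \delta(G) \;\geq\; \Delta(G)-\delta(G) \;=\; \defi_{G^*}(z).$$
Thus Theorem~\ref{thm:optcol} yields $\chi'(G^*)=\Delta(G^*)=\Delta(G)$, and $G \subseteq G^*$ gives $\chi'(G) \leq \Delta(G)$; the reverse inequality is trivial.

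The only genuinely delicate step is the odd-$n$ construction: the number of vertices to which $z$ is joined must simultaneously (i) be at most $\Delta(G)$ so that $\Delta(G^*)=\Delta(G)$, (ii) make $z$ a minimum-degree vertex of $G^*$ so that Theorem~\ref{thm:optcol} can be applied at $w^*=z$, and (iii) leave enough residual deficiency in $G^*$ to satisfy the deficiency inequality. Joining $z$ to exactly $\delta(G)$ vertices of $S_0$ achieves all three requirements; everything else is verification of the hierarchies required for Theorem~\ref{thm:optcol}.
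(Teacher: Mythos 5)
Your proof is correct and takes essentially the same route as the paper's: split on the parity of $n$, verify the deficiency inequality directly when $n$ is even, and when $n$ is odd add an auxiliary vertex of high degree joined into the low-degree set and verify the inequality for the augmented graph. The only cosmetic difference is the degree chosen for the new vertex — you use $\delta(G)$ while the paper uses $\lfloor \Delta(G) - 2\eta n\rfloor$ — but both choices satisfy the same three requirements you identify, and your bookkeeping of the lost deficiency is correct.
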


\begin{proof}\COMMENT{Since the proof of Corollary~\ref{cor:optcol} is quite straightforward, perhaps we can omit the proof from our journal version.}Note that~\eqref{eqn:def_ineq} holds for some vertex $w \in V(G)$ if and only if
\begin{align}\label{eqn:def_ineq_2}
    2(\Delta(G) - \delta(G)) \leq \sum_{v \in V(G)} (\Delta(G) - d_G(v)).
\end{align}

Let $V_1 \subseteq V(G)$ be a subset such that $\varepsilon n \leq |V_1| \leq 2 \varepsilon n$, and let $V_2 := V(G) \setminus V_1$.
Since $G$ is lower $(p,\varepsilon)$-regular,
there exists $w \in V_1$ such that 
\begin{equation*}
    d_G(w) \geq |N_G(w) \cap V_2| \geq \frac{e_G(V_1 , V_2)}{|V_1|} \geq (p - \varepsilon)|V_2| \geq (p-\varepsilon)(1 - 2 \varepsilon)n \geq (p - 3 \varepsilon)n,
\end{equation*}
and thus, $\Delta(G) \geq (p - 3 \varepsilon)n$. 

Now we prove the corollary. First suppose $n$ is even. Since by the assumption, $G$ has at least $\Delta(G)$ vertices having degree less than $\Delta(G)$, we have $2(\Delta(G) - \delta(G)) \leq 2 \eta n \leq (p-3\varepsilon)n \leq \Delta(G) \leq \sum_{v \in V(G)}(\Delta(G) - d_G(v))$. Thus ~\eqref{eqn:def_ineq_2} holds, which implies that ~\eqref{eqn:def_ineq} holds. So we can apply Theorem~\ref{thm:optcol} to show that $\chi'(G) = \Delta(G)$.

Now suppose $n$ is odd. Let $t := \lfloor \Delta(G) - 2 \eta n \rfloor$. Let $G^*$ be a graph obtained from $G$ by adding a new vertex $v^*$ adjacent to exactly $t$ vertices of $G$ having degree less than $\Delta(G)$ in $G$, which is possible by the assumption of the corollary. Then
\begin{enumerate}[(a)]
    \item\label{cond:a1} $\Delta(G^*) = \Delta(G)$,
    \item\label{cond:a2} $\delta(G^*) = d_{G^*}(v^*) = t$,
    \item\label{cond:a3} $|\{ v \in V(G) : \Delta(G^*) > d_{G^*}(v) \}| \geq \Delta(G) - d_{G^*}(v^*)$, and
    \item\label{cond:a4} $G^*$ is lower\COMMENT{set $\alpha = 1/2$, $\rho = p$ and $\xi = \eps$} $(p/4,2\eps)$-regular by~\ref{LR2}.\end{enumerate}

This implies that
\begin{align*}
    \sum_{v \in V(G^*)} (\Delta(G^*) - d_{G^*}(v)) &\overset{\ref{cond:a2}}{=} \sum_{v \in V(G)} (\Delta(G^*) - d_{G^*}(v)) + (\Delta(G^*) - \delta(G^*)) \overset{\ref{cond:a1},\ref{cond:a2},\ref{cond:a3}}{\geq} 2(\Delta(G^*) - \delta(G^*)).
\end{align*}

Thus, $G^*$ satisfies~\eqref{eqn:def_ineq_2}, so it satisfies ~\eqref{eqn:def_ineq}. Moreover (d) holds, and $\Delta(G^*) - \delta(G^*) \leq 3 \eta v(G^*)$ by (a) and (b), so applying Theorem~\ref{thm:optcol} with $G^*, p/4, 2\eps , 3\eta$ playing the roles of $G, p, \eps, \nu$, respectively, we deduce that $\Delta(G) \leq \chi'(G) \leq \chi'(G^*) = \Delta(G^*) \overset{\ref{cond:a1}}{=} \Delta(G)$, as desired.
\end{proof}

\section{Constructing reservoirs}\label{reservoir-section}

In this section, we construct a set $R_{\rm res} \subseteq E(G)$ called a \emph{reservoir} with several pseudorandom properties.

Recall that an absorber is defined in Definition~\ref{def:absorber}.
\begin{definition}[Pseudorandom / Regularising reservoirs]\label{def:pseudoreg}
Let $\varepsilon , \rho , \xi \in (0,1)$, let $\cH$ be an $n$-vertex linear hypergraph, let $G \coloneqq \cH^{(2)}$, let $G'$ be the spanning subgraph consisting of the edges of $G$ with at least one vertex in $V_+^{(1 - \eps)}(G)$, and let $\cV$ be a set of subsets of $V(\cH)$.
\begin{itemize}
    \item A subset $R_{\rm res} \subseteq E(G)$ is a \textit{$(\rho, \xi, \eps , \cV)$-pseudorandom reservoir} if
    \begin{enumerate}
        \item[\mylabel{P1}{(\rm P1)}] for each $v \in V(\cH)$, $d_{R_{\rm res}}(v) = \rho d_G(v) \pm \xi n$, and

        \item[\mylabel{P2}{(\rm P2)}] $R_{\rm res} \cap E(G')$ is a $(\rho, \xi, \xi , \eps)$-absorber for $\cV$.
    \end{enumerate}
        
    \item Suppose $R_{\rm abs}$ is a $(\rho/2,\xi,\xi,\eps)$-absorber for $\cV$. A set $R_{\mathrm{reg}} \subseteq E(G) \setminus R_{\rm abs}$ is a \textit{$(\rho, \xi, \eps , R_{\rm abs} , \cV)$-regularising reservoir} if $R_{\rm res} \coloneqq R_{\mathrm{abs}} \cup R_{\mathrm{reg}}$ satisfies the following.
    \begin{enumerate}
        \item[\mylabel{R1}{(\rm R1)}] For each $v \in V_+^{(1-\eps)}(G)$, $d_{R_{\rm res}}(v) = \rho d_G(v) \pm \xi n$.
        \item[\mylabel{R2}{(\rm R2)}] For each $w\in V(\cH)\setminus V_+^{(1-\eps)}(G)$, $\max(\rho d_G(w),(\rho - 20 \varepsilon) n) \leq d_{R_{\rm res}}(w) \leq \rho(1-\varepsilon)n + \xi n.$
    \end{enumerate}
\end{itemize}
\end{definition}

Now we define various types of reservoirs. The type of reservoir we choose to use will depend on the structure of the hypergraph $\cH$.

\begin{definition}[Types of reservoirs]\label{def:reservoir_type}
Let $\varepsilon , \rho , \xi \in (0,1)$, let $\cH$ be an $n$-vertex linear hypergraph, let $G \coloneqq \cH^{(2)}$, and let $G'$ be the spanning subgraph consisting of the edges of $G$ that are incident to a vertex in $V_+^{(1-\eps)}(G)$.

For a collection $\cV$ of subsets of $V(\cH)$, and $R_{\rm res} \subseteq E(G)$, we say $R_{\rm res}$ is

\begin{itemize}
    \item a \emph{$(\rho, \xi, \eps,\cV)$-reservoir of Type $\rm A_1$} if $R_{\rm res}$ is a $(\rho, \xi, \eps,\cV)$-pseudorandom reservoir,
    
    \item a \emph{$(\rho, \xi , \eps,\cV)$-reservoir of Type $\rm A_2$} if $R_{\rm res} = R_{\rm abs} \cup R_{\rm reg}$, where 
    \begin{itemize}
        \item[$\circ$] $R_{\rm abs}$ is a $(\rho/2, \xi , \xi,\eps)$-absorber for $\cV$ that is also lower $(\rho/2, \xi ,  G')$-regular, and
        
        \item[$\circ$] $R_{\rm reg}$ is a $(\rho,\xi,\eps,R_{\rm abs} , \cV)$-regularising reservoir, and
    \end{itemize}
    
    \item a \emph{$(\rho, \xi, \eps,\cV)$-reservoir of Type $\rm B$} if $R_{\rm res}$ is a $(\rho, \xi, \xi , \eps)$-absorber for $\cV$.
\end{itemize}

For brevity, we often omit the type if it is clear from the context.
\end{definition}

We will use reservoirs of Type~${\rm A_1}$ when $\cH$ is neither $(\rho,\eps)$-full nor FPP-extremal (which are defined in Definitions~\ref{def:rho-eps-full} and~\ref{def:edgesize}), reservoirs of Type~${\rm A_2}$ when $\cH$ is $(\rho,\eps)$-full but not FPP-extremal, and reservoirs of Type~${\rm B}$ when $\cH$ is FPP-extremal.

Now we show the existence of a suitable absorber, a pseudorandom reservoir, and a regularising reservoir.

\begin{proposition}[The existence of a pseudorandom reservoir and an absorber]\label{prop:pseudo_reservoir}
Let $0 < 1 / n_0 \ll \xi , \varepsilon , \rho < 1$, and let $n \geq n_0$. Let $\cH$ be an $n$-vertex linear hypergraph, let $G \coloneqq \cH^{(2)}$, and let $G'$ be the spanning subgraph of $G$ consisting of the edges of $G$ incident to a vertex in $V_+^{(1-\eps)}(G)$. If $\cV$ is a collection of subsets of $V(\cH)$ such that $|\cV| \leq n^{\log n}$, then there exists $R_{\rm rnd} \subseteq E(G)$ such that 
\begin{itemize}
    \item $R_{\rm rnd} \cap E(G')$ is a $(\rho , \xi , \xi , \eps)$-absorber for $\cV$, and lower $(\rho,\xi,G')$-regular, and
    
    \item $R_{\rm rnd}$ is a $(\rho, \xi , \eps , \cV)$-pseudorandom reservoir. 
\end{itemize}

In particular, $R_{\rm rnd}$ is a $(\rho, \xi, \eps,\cV)$-reservoir of Type~${\rm A_1}$ and $R_{\mathrm{rnd}} \cap E(G')$ is a $(\rho, \xi, \eps,\cV)$-reservoir of Type~${\rm B}$.
\end{proposition}

To prove Proposition~\ref{prop:pseudo_reservoir}, it suffices to consider a set $R_{\rm rnd} \subseteq E(G)$ of edges chosen independently and uniformly at random with probability $\rho$. By  Chernoff's inequality (Theorem~\ref{thm:chernoff} with all weights equal to 1), with high probability, both $R_{\rm rnd}$ and $R_{\rm rnd}' = R_{\rm rnd} \cap E(G')$ satisfy all desired conditions.
\begin{proof}
Let $R_{\rm rnd} \subseteq E(G)$ be obtained by choosing each edge $e \in E(G)$ with probability $\rho$ independently at random, and let $R_{\rm rnd}' \coloneqq R_{\rm rnd} \cap E(G')$. For any $S \in \cV$ and $w \in V(G)$, we have $\mathbb{E}[d_{R_{\rm rnd}}(w)] = \rho d_G(w)$, $\mathbb{E}[|N_{R_{\rm rnd}'}(w) \cap S|] = \rho|N_{G'}(w) \cap S|$, and $\mathbb{E}[|N_{R_{\rm rnd}'}(w) \setminus S|] = \rho|N_{G'}(w) \setminus S|$. By Theorem~\ref{thm:chernoff}, with probability at least $1 - \exp({-n^{1/2}})$, we have $d_{R_{\rm rnd}}(w) = \rho d_G(w) \pm \xi n$,
\begin{align*}
    |N_{R_{\rm rnd}'}(w) \cap S| = \rho |N_{G'}(w) \cap S| \pm \xi n, \text{ and } |N_{R_{\rm rnd}'}(w) \setminus S| = \rho|N_{G'}(w) \setminus S| \pm \xi n.
\end{align*}

Since there are at most $|\cV| \leq n^{\log n}$ choices of $S \in \cV$ and $n$ choices of $w \in V(G)$, with probability at least $1 - n |\cV| \exp(-n^{1/2}) \geq 1 - \exp(-n^{1/3})$, $d_{R_{\rm rnd}}(w) = \rho d_G(w) \pm \xi n$, thus $R_{\rm rnd}$ satisfies~\ref{P1}, 
and $R_{\rm rnd}'$ is $(\rho , \xi , G')$-typical with respect to $\cV$.
Similarly, for any two disjoint sets $A,B \subseteq V(G)$ with $|A|,|B| \geq \xi n$, $\mathbb{E}[e_{R'_{\rm rnd}}(A,B)] = \rho e_{G'}(A,B)$. Applying Theorem~\ref{thm:chernoff} again, with probability at least $1 - \exp({-n^{3/2}})$, $e_{R_{\rm rnd}'}(A,B) = \rho e_{G'}(A,B) \pm \xi |A||B|$. Since there are at most $4^n$ choices of disjoint $A,B \subseteq V(G)$, with probability at least $1 - 4^n \exp(-n^{3/2}) \geq 1 - \exp(-n^{5/4})$, $R_{\rm rnd}'$ is both lower $(\rho , \xi , G')$-regular and upper $(\rho , \xi , G')$-regular, so $R_{\rm rnd}'$ is a $(\rho , \xi , \xi , \eps)$-absorber for $\cV$ that is also lower $(\rho,\xi,G')$-regular, and thus $R_{\rm rnd}$ satisfies~\ref{P2}.
Hence, there exists a choice of $R_{\rm rnd}$ such that $R_{\rm rnd}$ is a $(\rho, \xi , \eps , \cV)$-pseudorandom reservoir, and $R_{\rm rnd}' = R_{\rm rnd} \cap E(G')$ is a $(\rho , \xi , \xi , \eps)$-absorber for $\cV$ that is also lower $(\rho,\xi,G')$-regular, as desired.
\end{proof}

\begin{lemma}[The existence of a regularising reservoir]\label{lem:reg_reservoir}
Let $0 < 1/n_0 \ll \xi \ll \varepsilon \ll \rho \ll 1$, and let $n \geq n_0$.  Let $\cH$ be an $n$-vertex linear hypergraph, and let $G \coloneqq \cH^{(2)}$. If $\cV$ is a collection of subsets of $V(\cH)$ such that $|\cV| \leq n^{\log n}$,   $V(\cH) \in \cV$, and $\cH$ is $(\rho,\varepsilon)$-full, then for any $(\rho/2 , \xi , \xi ,  \varepsilon)$-absorber $R_{\rm abs}$ for $\cV$, there exists a $(\rho , \xi , \varepsilon , R_{\rm abs} , \cV)$-regularising reservoir $R_{\rm reg} \subseteq E(G) \setminus R_{\rm abs}$.
\end{lemma}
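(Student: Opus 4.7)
Plan. Let $U := V^{(1-\eps)}_+(G)$ and $U^* := V^{(n-1)}(G)$. By the $(\rho,\eps)$-fullness of $\cH$, $|U| \geq (1 - 10\eps)n$ and $|U^*| \geq (\rho - 15\eps)n$, and each $u \in U^*$ is adjacent in $G$ to every other vertex. I will construct $R_{\rm reg} \subseteq E(G) \setminus R_{\rm abs}$ in two stages: a random base $R_1$ which already supplies the correct degree $\rho d_G(v)$ on $U$ and the upper bound on $V(\cH) \setminus U$, followed by a deterministic \emph{swap} stage that reroutes some random edges of $R_1 \cap \binom{U^*}{2}$ into edges between $U^*$ and $V(\cH) \setminus U$, boosting the degrees outside $U$ while leaving the $U$-degrees invariant.

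For the random base, set $p := \rho/(2-\rho)$ so that $(\rho/2) + p(1-\rho/2) = \rho$, and independently include each $e \in E(G) \setminus R_{\rm abs}$ in $R_1$ with probability $p$. Since $R_{\rm abs}$ is typical with respect to $V(\cH) \in \cV$, $d_{R_{\rm abs}}(v) = (\rho/2) d_{G'}(v) \pm \xi n$, where $d_{G'}(v) = d_G(v)$ for $v \in U$ and $d_{G'}(v) = |N_G(v) \cap U|$ otherwise. A weighted Chernoff bound (Theorem~\ref{thm:chernoff}) and a union bound over $V(G)$ then yield, with positive probability:
\begin{itemize}
  \item[(i)] for every $v \in U$, $d_{R_{\rm abs} \cup R_1}(v) = \rho d_G(v) \pm \xi n/4$, giving \ref{R1};
  \item[(ii)] for every $v \notin U$, $d_{R_{\rm abs} \cup R_1}(v) = \rho|N_G(v) \cap U| + p|N_G(v) \setminus U| \pm \xi n/4 \leq \rho d_G(v) + \xi n/4$, giving the upper bound of \ref{R2};
  \item[(iii)] $R_1 \cap \binom{U^*}{2}$ contains at least $\rho^3 n^2/50$ edges, using $|U^*| \geq \rho n/2$ and the upper regularity of $R_{\rm abs}$.
\end{itemize}

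For each $v \notin U$, let $\Delta_v := \max(0, \lceil \max(\rho d_G(v), (\rho - 20\eps)n) - d_{R_{\rm abs} \cup R_1}(v) \rceil)$; by (ii), $\Delta_v \leq (\rho - 20\eps)n + 1 < |U^*|$, and $\sum_{v \notin U} \Delta_v \leq |V(\cH) \setminus U| \cdot \rho n \leq 10 \rho \eps n^2$. Form a multiset $\cB$ containing $\Delta_v$ copies of each $v \notin U$ (adjusting $\Delta_v$ by $\pm 1$ at one vertex if needed so that $|\cB|$ is even) and pair its elements into $|\cB|/2$ arbitrary pairs. Process the pairs greedily: for each pair $(v, v')$, select an unused edge $uu' \in R_1 \cap \binom{U^*}{2}$ with $uv, u'v' \notin R_{\rm abs} \cup R_1 \cup R_{\rm det}$ (where $R_{\rm det}$ records the edges added by previous swaps), record $uu'$ into $R_{\rm rem}$, and insert $uv, u'v'$ into $R_{\rm det}$. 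The number of swaps is $|\cB|/2 \leq 5 \rho \eps n^2 \ll |R_1 \cap \binom{U^*}{2}|$ (since $\eps \ll \rho^2$), and an averaging argument based on the upper regularity of $R_{\rm abs}$ and the Chernoff concentration of $R_1$ shows that all but at most $\xi n$ vertices $v \notin U$ have a positive density of compatible $u \in U^*$; the $O(\xi n)$ exceptional vertices absorb the slack in $|R_1 \cap \binom{U^*}{2}|$. Set $R_{\rm reg} := (R_1 \setminus R_{\rm rem}) \cup R_{\rm det} \subseteq E(G) \setminus R_{\rm abs}$.

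\emph{Verification and main obstacle.} Swaps leave every $v \in U \setminus U^*$ untouched, so (i) still gives \ref{R1}. For $v \in U^*$, each swap at $v$ removes exactly one $R_1$-edge at $v$ and inserts exactly one $R_{\rm det}$-edge at $v$, preserving $d_{R_{\rm res}}(v) = d_{R_{\rm abs} \cup R_1}(v) = \rho(n - 1) \pm \xi n/4$, which satisfies \ref{R1}. For $v \notin U$, $d_{R_{\rm res}}(v) = d_{R_{\rm abs} \cup R_1}(v) + \Delta_v$, which by definition of $\Delta_v$ lies in the interval $[\max(\rho d_G(v), (\rho - 20\eps)n), \max(\rho d_G(v), (\rho - 20\eps)n) + 1] \subseteq [\max(\rho d_G(v), (\rho - 20\eps)n), \rho(1-\eps)n + \xi n]$ (using $d_G(v) \leq (1-\eps)n$), so \ref{R2} holds. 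The main obstacle is that the total boost $\sum \Delta_v = \Theta(\rho \eps n^2)$ far exceeds what any single $U^*$-vertex could absorb within the $\xi n$ tolerance of \ref{R1}; the swap mechanism resolves this by pairing every deterministic addition at $u \in U^*$ with a simultaneous random-edge deletion at $u$, whose other endpoint also lies in $U^*$, so the $U^*$ bookkeeping self-closes. Feasibility ultimately rests on $|U^*| \geq (\rho - 15\eps)n$ and the density of $R_1[U^*]$, both guaranteed by the $(\rho, \eps)$-fullness of $\cH$.
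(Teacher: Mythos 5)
Your construction is genuinely different from the paper's: they build $R_{\rm reg}$ from a \emph{deterministic} bundle $R'$ of $S$--$w$ edges (with $S \subseteq V^{(n-1)}(G)$) for vertices of small $G$-degree outside $U$, together with a $(g,f)$-factor $R''$ in $G[U\cup U']-R_{\rm abs}$, invoking Lov\'asz's theorem to guarantee existence. You instead take a random base $R_1$ calibrated so that $(\rho/2)+p(1-\rho/2)=\rho$ and then repair the degrees outside $U$ by a greedy sequence of degree-preserving \emph{swaps} inside $U^* := V^{(n-1)}(G)$. The calibration and the observation that swaps at $U^*$-vertices are degree-neutral are correct ideas, but the argument as written has two problems, one minor and one substantive.

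\textbf{Minor, fixable.} Your claim (i) that $d_{R_{\rm abs}\cup R_1}(v)=\rho d_G(v)\pm\xi n/4$ is too strong. Since $d_{R_{\rm abs}}(v)=(\rho/2)d_{G'}(v)\pm\xi n$ and $\mathbb E[d_{R_1}(v)]=p(d_G(v)-d_{R_{\rm abs}}(v))$, one gets $d_{R_{\rm abs}\cup R_1}(v)=\rho d_G(v)\pm\bigl((1-p)\xi n + o(\xi n)\bigr)$. With $p=\rho/(2-\rho)\approx\rho/2$ for small $\rho$, the coefficient $(1-p)$ is close to $1$, not $1/4$. The conclusion $\pm\xi n$ of~\ref{R1} is still reachable (the slack is $\approx(\rho/2)\xi n$), but $\xi n/4$ is simply false, and the same mis-tracking of constants appears in (ii).

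\textbf{Substantive gap: feasibility of the swap sequence.} You must find, for each of $|\cB|/2 \leq 5\rho\eps n^2$ pairs $(v,v')$, a \emph{distinct, unused} edge $uu'\in R_1\cap\binom{U^*}{2}$ with $u$ compatible with $v$ and $u'$ compatible with $v'$, where the compatible sets $A_v\subseteq U^*$ shrink as the process runs. You dismiss this as ``an averaging argument'' plus ``$O(\xi n)$ exceptional vertices absorb the slack,'' but neither part is spelled out, and the naive count does not close. By the time the last swaps for a vertex $v$ are processed, $|A_v|$ can be as small as about $5\eps n$ (the slack between the initial compatible count and $\Delta_v$ is only $\approx 5\eps n$ for $v$ in the boundary regime). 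For a pair $(v,v')$ with both compatible sets of size $\Theta(\eps n)$, the number of $R_1$-edges between them is $\Theta(\rho\eps^2 n^2)$, which is an order of magnitude smaller than the total number of removals $\Theta(\rho\eps n^2)$. So a greedy process that is allowed to make arbitrary choices can exhaust the valid edges for such a pair before it is processed. To close this you would need either a randomized selection with a concentration argument showing the removed edges are spread evenly across $U^*\times U^*$, or a global Hall/SDR-type (or $(g,f)$-factor) argument --- which is exactly what the paper provides via Lov\'asz's theorem. As written, the swap stage is not established, so the proof has a genuine gap.
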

\begin{proof}
Let $U \coloneqq V_+^{(1-\eps)}(G)$ and let $G'$ be the spanning subgraph of $G$ consisting of the edges of $G$ incident to a vertex of $U$. Let $R_{\rm abs}$ be a $(\rho/2 , \xi , \xi , \eps)$-absorber for $\cV$. Since $V(\cH) \in \cV$ and $R_{\rm abs}$ is $(\rho/2 , \xi, G')$-typical with respect to $\cV$,
\begin{equation}\label{cond:deg_abs} 
    \text{for any $v \in V(G)$, $d_{R_{\rm abs}}(v) = \rho d_{G'}(v)/2 \pm \xi n$.} 
\end{equation}
    
Let 
\begin{equation}\label{eq:defU'}
    U' \coloneqq \{w \in V(\cH) \setminus U \: : \: d_{G}(w) \geq (1 - 20 \varepsilon \rho^{-1})n \}.   
\end{equation}

\COMMENT{The main reason why we separate $U'$ from $U$ is the following. We should reserve at least $\rho d_G(w) - \xi n$ edges for each vertex $w \in V(\cH)$, to satisfy~\eqref{eqn:bound_deg_R_3} in the proof of Lemma~\ref{lem:degree-lemma}. However, the method below only utilises the edges incident to vertices of $V^{(n-1)}(G)$; we can only reserve at most $(\rho - O(\eps))n$ many edges at each non-$U$ vertex, since there could be at most $(\rho - O(\eps))n$ vertices in $V^{(n-1)}(G)$ by the definition of $(\rho,\eps)$-fullness.  However, $(\rho - O(\eps))n$ could be smaller than $\rho d_G(w) - \xi n$ for some vertices $w \in U'$, so we should reserve additional edges via $(g,f)$-factor theorem as well. For convenience, instead of first reserving the edges incident to $V^{(n-1)}(G)$ for the vertices of $U'$ and then trying to augment the degrees, we directly apply $(g,f)$-factor theorem to the vertices of $U' \cup U$.}
Since $\cH$ is $(\rho,\varepsilon)$-full, we can choose a subset $S \subseteq V^{(n-1)}(G)$ with $|S| = \lceil (\rho - 20\eps)n \rceil$. Note that every vertex of $S$ is adjacent to all the other vertices of $G$.

For each vertex $w \in V(\cH) \setminus (U \cup U')$, we choose $\lceil (\rho - 20\eps)n - d_{R_{\rm abs}}(w) \rceil > 0$ edges of $E_{G'} (S,\{w\}) \setminus R_{\rm abs}$\COMMENT{This is possible by the following reason: By \eqref{cond:deg_abs}, $d_{R_{\rm abs}}(w) \leq (\rho/2 + \xi)n < (\rho - 20\eps)n$, and the vertices in $S \subseteq V^{(n-1)}(G)$ are adjacent to all other vertices in $G$. In particular, all the vertices of $S$ are adjacent to $w$.}, and let $R' \subseteq E(G') \setminus R_{\rm abs}$ be the union of all such edges for all $w \in V(\cH) \setminus (U \cup U')$. Then for any $w \in V(\cH) \setminus (U \cup U')$, we have
\begin{equation}\label{eqn:smalldeg_res}
    \rho d_G(w) \overset{\eqref{eq:defU'}}{\leq} (\rho-20\eps)n \leq d_{R_{\rm abs}}(w) + d_{R'}(w) \leq (\rho-20\eps)n+1 \leq \rho(1-\varepsilon)n.
\end{equation}

For each vertex $w \in U \cup U'$, let us define
\begin{align}\label{eqn:def_fg}
    f(w) \coloneqq \lfloor \rho d_G(w) + \xi n - d_{R_{\rm abs}}(w) - d_{R'}(w)\rfloor \text{    and    } g(w) \coloneqq f(w) - 1.
\end{align}

\begin{claim}
There exists a $(g,f)$-factor $R''$ in $H \coloneqq G[U \cup U'] - R_{\rm abs} = G[U \cup U'] - R_{\rm abs} - R'$.
\end{claim}
\begin{claimproof}
Since $\varepsilon \ll \rho$, for any $w \in U \cup U'$, we have $d_G(w) \geq (1-\rho)n$. Moreover, for any $w \in U \cup U'$, since $\cH$ is $(\rho, \eps)$-full and $R' \subseteq E_G(V(\cH)\setminus (U \cup U') , U)$, $d_{R'}(w) \leq |V(\cH) \setminus U| \leq 10 \varepsilon n \leq \rho n/10$, and by \eqref{cond:deg_abs}, $d_{R_{\rm abs}}(w) \le \rho n/2 + \xi n$. Hence, for any $w \in U \cup U'$ we have 
\begin{align*}
    \frac{3\rho n}{10} \leq \rho(1-\rho)n + \xi n - \left(\frac{\rho n}{2} + \xi n\right) - \frac{\rho n}{10}  - 1 \leq f(w) \leq \rho n + \xi n < \frac{3\rho n}{2}.
\end{align*}

Therefore, for any $w \in U \cup U'$,
\begin{equation}
\label{eq:fg-range}
f(w),g(w) \in [\rho n / 4 , 2 \rho n]. 
\end{equation}

Moreover, for any $w \in U \cup U'$, we have
\begin{align}
    d_{G - R_{\rm abs}}(w) \geq (1-\rho)n - \left ( \frac{\rho n}{2} + \xi n \right )  \geq (1-2\rho)n. \label{eqn:bound_deg_h}
\end{align}

By Theorem~\ref{thm:gf_factor}, there exists a $(g,f)$-factor in $H$ if 
\begin{itemize}
    \item[(i)] $0 \leq g(w) < f(w) \leq d_H(w)$ for each $w \in U \cup U'$, and
    
    \item[(ii)] for any pair of disjoint sets $S,T \subseteq V(H)$, $c(S,T) \geq 0$, where 
        \begin{align*}
            c(S,T) \coloneqq \sum_{t \in T}(d_{H}(t) - g(t)) + \sum_{s \in S}f(s) - e_{H}(S,T) = \sum_{t \in T}(d_{H-S}(t) - g(t)) + \sum_{s \in S}f(s).
        \end{align*}
\end{itemize}

Since $\cH$ is $(\rho,\eps)$-full and $\eps \ll \rho$, note that
\begin{equation}\label{eqn:bound_rem}
    |V(\cH) \setminus (U \cup U')| \leq |V(\cH) \setminus U| \leq 10\eps n \leq \rho n.
\end{equation}

Hence, for any $w \in U \cup U'$, we have 
\begin{equation*}
    d_H(w) \geq d_{G - R_{\rm abs}}(w) -  |V(G) \setminus (U \cup U')| \overset{\eqref{eqn:bound_deg_h}, \eqref{eqn:bound_rem}}{\geq} (1-2\rho)n - \rho n = (1-3 \rho)n \overset{\eqref{eq:fg-range}}{\geq} f(w),
\end{equation*}
so (i) holds. Now, we verify (ii). If $|S| \leq (1 - 5\rho)n$, then for any $t \in T$, we have
\begin{equation*}
    d_{H-S}(t) \geq d_{G-R_{\rm abs}}(t) - |V(G) \setminus (U \cup U')| - |S| \overset{\eqref{eqn:bound_deg_h},\eqref{eqn:bound_rem}}{\geq} (1-2\rho)n - \rho n - |S| \geq 2 \rho n \overset{\eqref{eq:fg-range}}{\geq} g(t),
\end{equation*}
so $c(S,T) \geq 0$. Hence we may assume that $|S| > (1-5\rho)n$, which implies that $|T| < 5 \rho n$. Then
\begin{align*}
    \sum_{t \in T}g(t) &\overset{\eqref{eq:fg-range}}{\leq} |T|\cdot 2 \rho n \leq 10 \rho^2 n^2 \leq (1-5\rho)n \cdot \frac{\rho n}{4} < |S| \cdot \frac{\rho n}{4} \overset{\eqref{eq:fg-range}}{\leq} \sum_{s \in S} f(s),
\end{align*}
so $c(S,T) \geq -\sum_{t \in T}g(t) + \sum_{s \in S}f(s) \geq 0$, proving (ii) and thus the claim.
\end{claimproof}

Finally, we show that $R_{\rm reg} \coloneqq R' \cup R''$ is a $(\rho , \xi , \varepsilon , R_{\rm abs} , \cV)$-regularising reservoir. Let $R_{\rm res} \coloneqq R_{\rm abs} \cup R' \cup R''$. For each $w \in U \cup U'$, by~\eqref{eqn:def_fg},
\begin{align*}
    d_{R_{\rm res}}(w) &\geq (f(w) - 1) + d_{R_{\rm abs}}(w) + d_{R'}(w) \geq \rho d_G(w) + \xi n - 2, \\
    d_{R_{\rm res}}(w) &\leq f(w) + d_{R_{\rm abs}}(w) + d_{R'}(w) \leq \rho d_G(w) + \xi n.
\end{align*}
Thus,~\ref{R1} holds, and for $w \in U'$, $(\rho - 20\varepsilon)n \overset{\eqref{eq:defU'}}{\leq} \rho d_G(w) \leq  d_{R_{\rm res}}(w) \leq \rho(1-\varepsilon)n + \xi n$, as required by~\ref{R2}. Moreover, for any $w \in V(\cH) \setminus (U \cup U')$, $d_{R_{\rm res}}(w) = d_{R_{\rm abs}}(w) + d_{R'}(w)$ since $R'' \subseteq E(H) \subseteq E(G[U \cup U'])$. Therefore, by \eqref{eqn:smalldeg_res}, $\max(\rho d_G(w) , (\rho-20\eps)n) \leq d_{R_{\rm res}}(w) \leq \rho(1-\eps)n$, showing that \ref{R2} holds for $w \in V(\cH) \setminus (U \cup U')$. Thus,~\ref{R2} holds for all $w \in V(\cH)\setminus U$. This completes the proof.
\end{proof}

\begin{definition}[Regularised linear multi-hypergraph]\label{def:hreg}
For an $n$-vertex linear hypergraph $\cH$, let $\cH_{\rm reg}$ be the linear multi-hypergraph obtained from $\cH$ by adding\COMMENT{We cannot have $n-2-d_{\cH}(w)$ or a larger number here instead of $n-3-d_\cH(w)$ because in order to be able to apply Corollary~\ref{cor:optcol}, we have to ensure that no vertices of degree at most $n-3$ in $\cH$ will attain maximum degree in the GKO case. (Recall that a vertex of degree at most $n-3$ in $\cH$ may be marked twice.)} $\max(0 , n-3-d_{\cH}(w))$ singleton edges incident to each $w \in V(\cH)$.
\end{definition}

Recall that $\cH_{\rm small} \coloneqq \{ e \in \cH \: : \: |e| \leq r_1 \}$. In order to be able to use Lemma~\ref{lem:nibble2}, we need to embed $\cH_{\rm small} \setminus R_{\rm res}$\COMMENT{Since the reservoir $R_{\rm res}$ contains an absorber $R$ as a subset, it is clear that $\cH_{\rm small} \setminus R_{\rm res} \subseteq \cH_{\rm small} \setminus R$.} into an almost-regular linear multi-hypergraph $\cH'$ by adding singleton edges. In particular, for each vertex $w \in V(\cH)$, we add at most $\max(0 , n - 3 - d_{\cH}(w))$ singleton edges containing $w$, so that $\cH' \subseteq \cH_{\rm reg}$.

\begin{lemma}[Regularising lemma]\label{lem:degree-lemma}
Let $0 < 1/n_0 \ll \xi , 1/r_1 \ll \beta , \varepsilon , \rho \ll 1$, and let $n \geq n_0$.  Let $\cH$ be an $n$-vertex linear hypergraph, and let $\cV$ be a collection of subsets in $V(\cH)$ such that $V(\cH) \in \cV$. If either 
\begin{enumerate}[(i)]
    \item $R_{\rm res}$ is a $(\rho,\xi,\eps , \cV)$-reservoir of Type $\rm A_1$ or $\rm A_2$, or
    \item $R_{\rm res}$ is a $(\rho,\xi,\eps , \cV)$-reservoir of Type $\rm B$ and $3 \rho \leq \varepsilon$, 
\end{enumerate}
then there exists a linear multi-hypergraph $\cH' \subseteq \cH_{\rm reg}$ such that

\begin{itemize}
    \item $\cH'$ is obtained from $\cH_{\mathrm{small}} \setminus R_{\rm res}$ by adding singleton edges, and
    \item for every $w \in V(\cH)$, we have $d_{\cH'}(w) =  (1-\rho) (n-1 \pm \beta n)$.
\end{itemize}
\end{lemma}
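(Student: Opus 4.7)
I would construct $\cH'$ by attaching $s(w)$ singleton edges at each vertex $w$, where the integer $s(w) \in [0, \max(0, n-3-d_\cH(w))]$ (the latter bound ensuring $\cH' \subseteq \cH_{\rm reg}$) is chosen so that $d(w) + s(w)$ lies within $(1-\rho)\beta n$ of the target $T \coloneqq (1-\rho)(n-1)$, writing $d(w) \coloneqq d_{\cH_{\rm small}\setminus R_{\rm res}}(w)$. The whole proof reduces, at each vertex $w$ and for each reservoir type, to two numerical estimates: (A) the upper bound $d(w) \leq T + (1-\rho)\beta n$ (so that $s(w) = 0$ is admissible on the top), and (B) the lower bound $d(w) + \max(0, n-3-d_\cH(w)) \geq T - (1-\rho)\beta n$ (so that enough singleton slack exists to reach the bottom). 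The verification rests on three consequences of the linearity inequality $\sum_{e \ni w}(|e|-1) \leq n-1$: (i) since medium and large edges have size exceeding $r_1$ with $1/r_1 \ll \beta$, $d_{\cH_{\rm med}}(w)+d_{\cH_{\rm large}}(w) \leq (n-1)/r_1 \leq \beta n/200$, so $d(w) = d_\cH(w) - d_{R_{\rm res}}(w) \pm \beta n/200$; (ii) letting $h(w)$ count the edges of size at least $3$ containing $w$, $d_G(w) + 2h(w) \leq n-1$, hence $d_\cH(w) \leq (n-1+d_G(w))/2$ and $d_G(w) \geq 2d_\cH(w)-(n-1)$; (iii) for any $\rho \leq 1/2$, $d_\cH(w) - \rho d_G(w) = (1-\rho)d_G(w) + h(w) \leq (1-\rho)(n-1)$.

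For Type $\rm A_1$, and for Type $\rm A_2$ when $w \in U$, the reservoir estimate $d_{R_{\rm res}}(w) = \rho d_G(w) \pm \xi n$ from~\ref{P1}/\ref{R1} combined with (iii) gives (A) immediately. For Type $\rm A_2$ with $w \notin U$, splitting on whether $\rho d_G(w)$ or $(\rho-20\eps)n$ is the dominant lower bound in~\ref{R2} and applying (iii) or (ii) accordingly yields (A) up to an $O(1)$ additive error. For Type $\rm B$ with $w \notin U$, the absorber sits inside $E(G')$ so $d_{R_{\rm res}}(w) = \rho|N_G(w)\cap U| \pm \xi n$ (with no useful lower bound); the estimate $d_\cH(w) \leq (1-\eps/2)n$ coming from $d_G(w)<(1-\eps)n$ and (ii), combined with the hypothesis $3\rho \leq \eps$, then forces $d(w) \leq T + O(1)$. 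This last step is the only place where $3\rho \leq \eps$ is used, and is the main obstacle of the proof.

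The lower bound (B) is easier. If $d_\cH(w) \geq n-2$, then (ii) forces $d_G(w) \geq n-3 \geq (1-\eps)n$ (using $n \geq n_0 \gg 1/\eps$), so $w \in U$ and the preceding estimates give $d(w) = T \pm O(\beta n)$, making $s(w) = 0$ suffice. If instead $d_\cH(w) \leq n-3$, the maximum achievable degree $n - 3 - (d_{\cH_{\rm med}}(w)+d_{\cH_{\rm large}}(w)) - d_{R_{\rm res}}(w)$ is at least $T - (1-\rho)\beta n$, since $d_{R_{\rm res}}(w) \leq \rho(1-\eps)n + \xi n$ holds in all three types and $\xi, 1/r_1 \ll \beta, \rho, \eps$. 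Any integer $s(w)$ in the intersection of the attainable interval $[d(w), d(w)+\max(0, n-3-d_\cH(w))]$ with the target window $T \pm (1-\rho)\beta n$ then completes the construction of $\cH'$.
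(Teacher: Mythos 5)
Your proof follows essentially the same approach as the paper: both establish a uniform upper bound $d_{\cH_{\rm small}\setminus R_{\rm res}}(w)\leq(1-\rho)(n-1)+O(\xi n)$ via the linearity inequality $d_G(w)+2h(w)\leq n-1$ and the reservoir typicality (with the $3\rho\leq\eps$ hypothesis playing exactly the same role in the Type~B, $w\notin U$ case), then pad with singletons and verify $\cH'\subseteq\cH_{\rm reg}$. One small slip: the asserted bound $d_{R_{\rm res}}(w)\leq\rho(1-\eps)n+\xi n$ for all three types is false for Type $\rm A_1$ (where one only has $d_{R_{\rm res}}(w)\leq\rho d_G(w)+\xi n\leq(\rho+\xi)n$), but the weaker bound $(\rho+\xi)n$—which is what the paper itself uses—is all that your estimate (B) actually needs, so the argument is unaffected.
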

\begin{proof}
Let $G\coloneqq \cH^{(2)}$, and let $U \coloneqq V_+^{(1 - \varepsilon)}(G)$.  Since $\cH$ is linear and every $w\in V(\cH)$ is contained in at most one singleton,\COMMENT{
since $1/2 \le 1 - \rho$.}

\begin{equation}\label{eqn:non-graph-degree}
    d_{\cH\setminus E(G)}(w) \leq \frac{n-1-d_G(w)}{2} + 1 \leq (1 - \rho)(n-1-d_G(w)) + 1.
\end{equation}

We will show that for any $w \in V(\cH)$,
\begin{equation}\label{eqn:estimate_hsmall}
d_{\cH  \setminus R_{\rm res}}(w) \leq (1-\rho) (n-1) + \xi n + 1.
\end{equation}

Let us first consider the case when $R_{\rm res}$ is a $(\rho , \xi , \eps , \cV)$-reservoir of Type~${\rm A_1}$ or~${\rm A_2}$. In this case, for any $w \in V(\cH)$, we have
\begin{equation}\label{eqn:bound_deg_R_3}
    \rho d_G(w) - \xi n \leq d_{R_{\rm res}}(w) \leq (\rho + \xi)n.
\end{equation}

Indeed, if $R_{\rm res}$ is of Type $\rm A_1$, then~\eqref{eqn:bound_deg_R_3} holds by~\ref{P1}, and if $R_{\rm res}$ is of Type $\rm A_2$, then~\eqref{eqn:bound_deg_R_3} holds by~\ref{R1} and~\ref{R2}. Now, by \eqref{eqn:non-graph-degree} and~\eqref{eqn:bound_deg_R_3}, every $w \in V(\cH)$ satisfies
\begin{align}
    d_{\cH \setminus R_{\rm res}}(w) = d_{G-R_{\rm res}}(w) + d_{\cH  \setminus E(G)}(w) \overset{\eqref{eqn:bound_deg_R_3}}&{\leq} (1-\rho)d_G(w) + \xi n + d_{\cH  \setminus E(G)}(w)  \nonumber \\
    \overset{\eqref{eqn:non-graph-degree}}&\leq (1-\rho)(n-1) + \xi n + 1,\label{eqn:hsmall_r_deg}
\end{align}
proving~\eqref{eqn:estimate_hsmall} when $R_{\rm res}$ is a $(\rho , \xi , \eps , \cV)$-reservoir of Type~${\rm A_1}$ or~${\rm A_2}$.

Now let us consider the case when $R_{\rm res}$ is a $(\rho , \xi , \eps , \cV)$-reservoir of Type~${\rm B}$. Let $G'$ be the spanning subgraph of $G$ consisting of the edges of $G$ incident to a vertex of $U$. Since $R_{\rm res}$ is a $(\rho,\xi,\xi,\eps)$-absorber for $\cV$ and $V(\cH) \in \cV$, every $w \in V(\cH)$ satisfies
\begin{equation}\label{eqn:bound_deg_R_4}
    d_{R_{\rm res}}(w) = \rho d_{G'}(w) \pm \xi n.
\end{equation}

If $w \in U$, then $d_{G'}(w) = d_G(w)$, so $d_{R_{\rm res}}(w) = \rho d_{G}(w) \pm \xi n$ and by the same reasoning as in~\eqref{eqn:hsmall_r_deg}, one can show that \eqref{eqn:estimate_hsmall} holds if $w \in U$.

It remains to show that \eqref{eqn:estimate_hsmall} holds for $w \in V(\cH) \setminus U$. Indeed, every $w\in V(\cH)\setminus U$ satisfies
\begin{align*}
    d_{\cH \setminus R_{\rm res}}(w) &= d_{G'-R_{\rm res}}(w) + d_{G-E(G')-R_{\rm res}}(w) + d_{\cH  \setminus E(G)}(w)\\ 
    \overset{\eqref{eqn:bound_deg_R_4}}&{\leq} (1-\rho)d_{G'}(w)+ \xi n + d_{G-E(G')}(w) + d_{\cH  \setminus E(G)}(w) \\
    &= (1-\rho)d_{G}(w) + \rho d_{G-E(G')}(w) + d_{\cH  \setminus E(G)}(w) + \xi n\\
    \overset{\eqref{eqn:non-graph-degree}}&\leq (1-\rho)d_{G}(w) + \rho (1-\eps)n + \frac{n-1-d_G(w)}{2} + 1 +\xi n\\
    &= (1/2-\rho)d_{G}(w) + \rho (1-\eps)n + \frac{n-1}{2} + \xi n + 1\\
    &\leq (1-\rho)(n-1) + \xi n + 1,
\end{align*}
as desired. Note that the last inequality is equivalent to $(1/2-\rho)d_G(w) + \rho(1-\eps)n \leq (1/2-\rho)(n-1)$, which holds since\COMMENT{$(1/2-\rho)d_G(w) + \rho(1-\eps)n \leq (1/2-\rho)(1-\eps)n + \rho(1-\eps)n = (1/2)(1-\eps)n$ and this is at most $(1/2-\rho)(n-1)$ provided $2\rho + \frac{1-2\rho}{n} \leq \eps$ which holds since $2\rho + \frac{1-2\rho}{n} \leq 3\rho$ and  $3\rho\leq \eps$ by our assumption.} $d_G(w) \leq (1-\eps)n$ (since $w \notin U$) and $3\rho \leq \eps$.

Now let $k \coloneqq \lfloor (1-\rho)(n-1) - \beta n / 2 \rfloor$, and let $\cH'$ be the linear multi-hypergraph obtained from $ \cH_{\mathrm{small}}  \setminus R_{\rm res}$ as follows. For every vertex $w \in V(\cH)$ satisfying $d_{ \cH_{\mathrm{small}}  \setminus R_{\rm res}}(w) < k$, we add $k - d_{ \cH_{\mathrm{small}}  \setminus R_{\rm res}}(w)$ singleton edges containing $w$.
Thus, by~\eqref{eqn:estimate_hsmall}, $k \leq \delta(\cH') \leq \Delta(\cH') \leq (1-\rho)(n-1) + \xi n + 1$. 
Since $\xi \ll \beta$, this implies that for every $w \in V(\cH)$,
\begin{equation}\label{eqn:degh*}
   d_{\cH'}(w) = (1-\rho)(n-1) \pm 2\beta n / 3 = (1-\rho) (n-1 \pm \beta n),
\end{equation}
as desired. Now we prove that $\cH' \subseteq \cH_{\rm reg}$ by showing that $\cH'$ is obtained from $\cH_{\rm small} \setminus R_{\rm res}$ by adding at most $\max(0 , n-3-d_{\cH}(w))$ singleton edges incident to each vertex $w \in V(\cH)$. Indeed, for any vertex $w \in V(\cH)$ with $d_{\cH_{\mathrm{small}}  \setminus R_{\rm res}}(w) < k$, we add at most
\begin{align*}
    k - d_{ \cH_{\mathrm{small}} \setminus R_{\rm res} }(w) &= k - d_{\cH_{\rm small}}(w) + d_{R_{\rm res}}(w) \\
    \overset{\eqref{eqn:bound_deg_R_3},\eqref{eqn:bound_deg_R_4}}&{\leq} (1-\rho)n - \frac{\beta n}{2} - \left ( d_{\cH}(w) - \frac{2n}{r_1} \right ) + (\rho + \xi)n \leq n - 3 - d_{\cH}(w)
\end{align*}
singleton edges incident to $w$, since $d_\cH (w) - d_{\cH_{\rm small}}(w) \leq 2n/r_1$ and $\xi, 1/r_1 \ll \beta$. This completes the proof of the lemma.
\end{proof}

\section{Proof of Theorem~\ref{main-thm}}\label{proof-section}

Now we are ready to prove our main theorem. As discussed in Section~\ref{sec:overview}, the proof depends on the structure of $\cH$. The relevant properties of $\cH$ are captured by the following definition. (Recall that $(\rho, \eps)$-full linear hypergraphs were introduced in Definition~\ref{def:rho-eps-full}.)

\begin{definition}[Types of hypergraphs and colourings] 
\label{Def:typeofhypandcolor}
Let $\cH$ be a linear $n$-vertex hypergraph, and let $\phi : \cH_{\rm med} \cup \cH_{\rm large} \to [n]$ be a proper edge-colouring.
We say $\phi$ is of Type~${\rm A}$ if it satisfies~\ref{non-extremal-colouring} of Theorem~\ref{large-edge-thm}, and $\phi$ is of Type~${\rm B}$ if it satisfies~\ref{extremal-colouring} of Theorem~\ref{large-edge-thm}. We say $(\cH , \phi)$ is of
\begin{itemize}
    \item Type~${\rm A_1}$ if $\phi$ is of Type A, and $\cH$ is not $(\rho,\eps)$-full,
    \item Type~${\rm A_2}$ if $\phi$ is of Type A, and $\cH$ is $(\rho,\eps)$-full,
    \item Type~${\rm B}$ if $\phi$ is of Type B.
\end{itemize}
\end{definition}
Let us note that if $\cH$ is close to a projective plane, then $(\cH, \phi)$ is of Type~${\rm B}$, and otherwise $(\cH, \phi)$ is of Type~${\rm A}$.  In this case, if $\cH$ is close to a complete graph, then $(\cH, \phi)$ is of Type~${\rm A_2}$, and otherwise $(\cH, \phi)$ is of Type~${\rm A_1}$.  If $(\cH, \phi)$ is of Type~${\rm A_1}$ and is close to a degenerate plane, then it is coloured by Lemma~\ref{difficult-edge-absorption}.

\begin{proof}[Proof of Theorem~\ref{main-thm}]
Recall the hierarchy of the parameters
$$0 < 1/ n_0 \ll 1/r_0 \ll \xi \ll 1/r_1 \ll \beta \ll \kappa \ll \gamma_1 \ll \eps_1 \ll \rho_1 \ll \sigma \ll \delta \ll \gamma_2 \ll \rho_2 \ll \eps_2 \ll 1,$$
where $r_0$ and $r_1$ are integers.  Let $n \geq n_0$, and let $\cH$ be an $n$-vertex linear hypergraph. Without loss of generality, we may assume that $\cH$ has no singleton edges. Our aim is to find $n$ pairwise edge-disjoint matchings containing all of the edges of $\cH$.\COMMENT{Indeed, if we prove $\chi'(\cH') \leq n$ holds for the linear hypergraph $\cH'$ obtained from $\cH$ by removing all singleton edges, then any vertex is incident to at most $n-1$ edges in $\cH'$, so for any singleton edge incident to a vertex $w$ in $\cH$, there is an unused colour available that can be assigned to this singleton edge.}

The first step of the proof is to find a colouring $\phi$ of the medium and large edges of $\cH$. 

\begin{pf-step}\label{step:step1}
    Colour large and medium edges, and define the corresponding parameters.
\end{pf-step}
Let $\phi : \cH_{\rm med} \cup \cH_{\rm large} \to [n]$ be the proper edge-colouring given by Theorem~\ref{large-edge-thm}. 

Now we define some parameters depending on the type of $(\cH , \phi)$ as follows.
\begin{itemize}
    \item $\rho \coloneqq \rho_1$ , $\rho_{\rm abs} \coloneqq \rho_1$ , $\eps \coloneqq \eps_1$ , and $\gamma \coloneqq \gamma_1$ if $(\cH , \phi)$ is of Type~${\rm A_1}$,
    
    \item $\rho \coloneqq \rho_1$ , $\rho_{\rm abs} \coloneqq \rho_1 / 2$ , $\eps \coloneqq \eps_1$ , and $\gamma \coloneqq \gamma_1$ if $(\cH , \phi)$ is of Type~${\rm A_2}$, and
    
    \item $\rho \coloneqq \rho_2$ , $\rho_{\rm abs} \coloneqq \rho_2$ , $\eps \coloneqq \eps_2$ , and $\gamma \coloneqq \gamma_2$ if $(\cH , \phi)$ is of Type~${\rm B}$.
\end{itemize}

Thus, we have 
\begin{equation}\label{eqn:hierarchy}
    \kappa \ll \gamma \ll \rho, \rho_{\rm abs}, \eps \ll 1.
\end{equation}

Let $G \coloneqq \cH^{(2)}$, let $U \coloneqq V_+^{(1-\eps)}(G)$, and let $G'$ be the spanning subgraph of $G$ consisting of the set of edges of $G$ incident to a vertex of $U$.

Recall that difficult matchings were defined in Definition~\ref{defn:difficult}, and as discussed in Section~\ref{subsection:combining-overview}, difficult matchings arise when $\cH$ is close to the degenerate plane.
The following claim is used in the later steps. 
\begin{claim}\label{claim:easyobs}
The following hold.
\begin{enumerate}
    \item[\mylabel{EP1}{(\ref{claim:easyobs}.1)}] For any edge $e \in \cH$ such that $e \cap U \ne \varnothing$, we have $|e| \leq \eps n$.
    
    \item[\mylabel{EP2}{(\ref{claim:easyobs}.2)}] For any edge $e \in \cH_{\rm small}$, we have $|\{f \in \cH_{\rm large} : e \cap f \ne \varnothing \}| \leq 2r_1 n/r_0$.
    
    \item[\mylabel{EP3}{(\ref{claim:easyobs}.3)}] If $\phi$ is of Type B, then $|U| \leq 2 \delta n$, and there is no difficult colour class in $\phi$.
    
    \item[\mylabel{EP4}{(\ref{claim:easyobs}.4)}] If $\phi$ is of Type~${\rm A}$, then there is at most one colour $c \in [n]$ such that $\phi^{-1}(c)$ contains a huge edge and $\phi^{-1}(c)$ is difficult. Moreover, if such a colour $c$ exists, then $\phi^{-1}(c) = \{e\}$ for some huge edge $e$.
\end{enumerate}
\end{claim}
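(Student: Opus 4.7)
I plan to dispose of all four parts using only linearity of $\cH$ together with the structural information on $\phi$ from Theorem~\ref{large-edge-thm}; each is essentially a short calculation.

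For \ref{EP1}, pick $v \in e \cap U$: by linearity, the $\geq (1-\eps)n$ graph-neighbours of $v$ are disjoint from $e\setminus\{v\}$ (any vertex $u$ in both would give two distinct edges $\{u,v\}$ and $e$ sharing two vertices), yielding $n-1 \geq (1-\eps)n + (|e|-1)$, hence $|e| \leq \eps n$. For \ref{EP2}, linearity together with $|f|>r_0$ gives $d_{\cH_{\rm large}}(v) \leq (n-1)/r_0$ for every vertex (each large edge through $v$ contributes $\geq r_0$ ``new'' vertices by linearity); linearity also ensures any large edge meets a small $e$ in at most one vertex, so summing $d_{\cH_{\rm large}}(v)$ over $v \in e$ gives at most $r_1(n-1)/r_0 \leq 2r_1 n/r_0$.

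The only step with any content is \ref{EP3}. Theorem~\ref{large-edge-thm}\ref{extremal-colouring} supplies a set $W$ of FPP-extremal edges with $\vol_\cH(W) \geq 1-\delta$, so at most $\delta\binom{n}{2}$ pairs are uncovered by $W$. Every edge of $W$ has size $(1\pm\delta)\sqrt{n}>2$, so by linearity each pair already covered by a graph-edge of $\cH$ is uncovered by $W$; hence every $v \in U$ contributes at least $(1-\eps)n$ uncovered pairs. Double-counting pairs yields $|U|(1-\eps)n \leq 2\delta\binom{n}{2}$, and hence $|U| \leq 2\delta n$. Consequently $|V(\cH)\setminus U| \geq (1-2\delta)n$, so any difficult colour class would have to cover at least $(3/4)(1-2\delta)n \geq 2n/3$ vertices; but \ref{extremal-huge-edge}--\ref{extremal-colour-class-bound} force every colour class of $\phi$ to cover at most $\max(\delta n, \gamma_1 n, \beta n) = \delta n$ vertices, a contradiction.

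Finally, for \ref{EP4}, condition \ref{non-extremal-huge-edge} directly gives the ``moreover'' clause: any colour class containing a huge edge is a singleton $\{e\}$. For uniqueness, suppose $\{e_1\},\{e_2\}$ were two distinct such difficult singletons. Each satisfies $|e_i\setminus U| \geq \tfrac{3}{4}|V(\cH)\setminus U|$, while linearity gives $|e_1\cap e_2|\leq 1$, so
\[
  |V(\cH)\setminus U| \;\geq\; |(e_1\cup e_2)\setminus U| \;\geq\; \tfrac{3}{2}|V(\cH)\setminus U| - 1,
\]
forcing $|V(\cH)\setminus U|\leq 2$; combined with the difficulty requirement $|V(\cH)\setminus U|\geq 2$, this gives $|V(\cH)\setminus U|=2$, then $|e_i\setminus U|\geq 2$ implies $e_1\setminus U = e_2\setminus U = V(\cH)\setminus U$, so $|e_1\cap e_2|\geq 2$, contradicting linearity. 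The only mildly non-routine point is the verification in \ref{EP3} that the three colour-class size bounds of Theorem~\ref{large-edge-thm}\ref{extremal-colouring} become small enough to rule out difficult classes once $|U|\leq 2\delta n$ has been established; everything else reduces to a one-line application of linearity.
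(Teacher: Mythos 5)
Your proof is correct and follows essentially the same approach as the paper's: the volume double-counting for \ref{EP3}, the linearity degree-count for \ref{EP2}, and the overlap contradiction via linearity for \ref{EP4} (you present the latter as a single inequality $|V(\cH)\setminus U|\geq\tfrac{3}{2}|V(\cH)\setminus U|-1$ rather than the paper's two-case split, but the content is identical). The only small elision is in \ref{EP1}, where your argument that a shared vertex yields ``two distinct edges'' tacitly assumes $|e|\geq 3$; the paper restricts to $e\notin E(G)$ for this reason, and the graph-edge case is trivial since $|e|=2\leq\eps n$.
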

\begin{claimproof}
Let us first prove~\ref{EP1}. For any edge $e \in \cH \setminus E(G)$ containing a vertex $w \in U$, since $\cH$ is linear, the vertex $w$ is not adjacent (in $G$) to any vertex of $e$.  Thus, $(1-\eps)n \leq d_G(w)  \leq n - |e|$, implying that $|e| \leq \eps n$, as desired.

Now we prove~\ref{EP2}. Since $\cH$ is linear, every vertex $w \in V(\cH)$ is incident to at most $n/(r_0-1)$ edges of $\cH_{\rm large}$.  Thus, 
$|\{f \in \cH_{\rm large} : e \cap f \ne \varnothing \}| \leq {|e| n} / (r_0-1) \leq 2r_1n/r_0$.

Now we show~\ref{EP3}. Since there is a set of FPP-extremal edges in $\cH$ with volume at least $1-\delta$, the set $E(G)$ has volume at most $\delta$. 
Thus, $|U| (1-\eps)n/2 \leq \sum_{w \in U} d_G(w) / 2 \leq |E(G)| \leq \delta \binom{n}{2}$, implying $|U| \leq 2 \delta n$. 
If $\phi^{-1}(c)$ is difficult, then $|V(\phi^{-1}(c))| \geq 3|V(\cH) \setminus U|/4 \geq n/2$, which is impossible since each colour class covers at most $\delta n$ vertices by~\ref{extremal-colouring}\ref{extremal-huge-edge},~\ref{extremal-colouring}\ref{extremal-medium-bound}, and~\ref{extremal-colouring}\ref{extremal-colour-class-bound} of Theorem~\ref{large-edge-thm}.

Finally, we prove~\ref{EP4}.
Suppose for some $c_1 \ne c_2$, that $\phi^{-1}(c_1)$ and $\phi^{-1}(c_2)$ are difficult matchings containing huge edges $e_1$ and $e_2$, respectively.
By~\ref{non-extremal-colouring}\ref{non-extremal-huge-edge} of Theorem~\ref{large-edge-thm}, every colour class of $\phi$ containing a huge edge consists of a unique edge, so $\phi^{-1}(c_1) = \{e_1 \}$ and $\phi^{-1}(c_2) = \{ e_2 \}$. 
Moreover, by the definition of a difficult matching, $|V(\cH) \setminus U| \geq 2$ and for $i \in \{1,2\}$, $e_i \setminus U$ has at least $\lceil 3|V(\cH) \setminus U|/4 \rceil$ vertices in $V(\cH) \setminus U$.
If $|V(\cH) \setminus U| = 2$, then both $e_1$ and $e_2$ contain $V(\cH) \setminus U$ since $\lceil 3|V(\cH) \setminus U|/4 \rceil = 2$, contradicting the linearity of $\cH$. Otherwise, if $|V(\cH) \setminus U| \geq 3$, then for $i \in \{1,2 \}$, we have $|e_i \setminus U| \geq 3|V(\cH) \setminus U|/4$, so $|(e_1 \cap e_2) \setminus U| \geq |V(\cH) \setminus U|/2 > 1$, also contradicting the linearity of $\cH$.
\end{claimproof}

\begin{pf-step}
\label{step:step2}
    Choose a reservoir $R_{\rm res}$ and a defect-set $S$.
\end{pf-step}
Let us define 
\begin{equation}\label{def:cv}
    \cV \coloneqq \{U , V(\cH) \} \cup \bigcup_{i=1}^{n} \{ U \cup V(\phi^{-1}(i)) , U \setminus V(\phi^{-1}(i)) \}.
\end{equation}

If $(\cH , \phi)$ is of Type ${\rm A_1}$ or Type ${\rm B}$, then since $|\cV| \leq 2n+2$, by Proposition~\ref{prop:pseudo_reservoir}, there exists a $(\rho , \xi , \eps , \cV)$-reservoir $R_{\rm res} \subseteq E(G)$ of Type ${\rm A_1}$ or Type ${\rm B}$, respectively, which also contains a $(\rho , \xi , \xi , \eps)$-absorber $R_{\rm abs} \subseteq E(G')$ for $\cV$ as a subset as described in Definitions~\ref{def:pseudoreg} and~\ref{def:reservoir_type}.
Otherwise, if $(\cH , \phi)$ is of Type ${\rm A_2}$, then since $|\cV| \leq 2n+2$, by Proposition~\ref{prop:pseudo_reservoir}, there exists a $(\rho/2 , \xi , \xi , \eps)$-absorber $R_{\rm abs} \subseteq E(G')$ for $\cV$ which is lower $(\rho/2 , \xi , G')$-regular. Since $\cH$ is $(\rho,\eps)$-full, applying Lemma~\ref{lem:reg_reservoir}, we obtain a $(\rho , \xi , \eps , R_{\rm abs} , \cV)$-regularising reservoir $R_{\rm reg} \subseteq E(G) \setminus R_{\rm abs}$, thus obtaining a reservoir $R_{\rm res} \coloneqq R_{\rm abs} \cup R_{\rm reg}$ of Type ${\rm A_2}$. To summarise,
\begin{enumerate}
    \item[\mylabel{RES1}{\bf RES1}] $R_{\rm res}$ is a $(\rho , \xi , \eps , \cV)$-reservoir of Type $i$ if $(\cH , \phi)$ is of Type $i$, for $i \in \{ {\rm A_1},{\rm A_2},{\rm B} \}$, and
    \item[\mylabel{RES2}{\bf RES2}] $R_{\rm res}$ contains a $(\rho_{\rm abs} , \xi , \xi , \eps)$-absorber $R_{\rm abs}$ for $\cV$. Moreover, if $(\cH,\phi)$ is of Type~${\rm A_2}$, then $R_{\rm abs}$ is lower $(\rho/2 , \xi , G')$-regular, and if $(\cH,\phi)$ is of Type~${\rm B}$ then $R_{\rm res} = R_{\rm abs}$.
\end{enumerate}

Now let us define the `defect' set $S$ by
\begin{align}\label{def:s}
S \coloneqq
  \begin{cases}
    U \setminus V^{(n-1)}(\cH) &\text{if }(\cH,\phi) \text{ is of Type~${\rm A_1}$.}\\
    U &\text{if }(\cH,\phi) \text{ is of Type~${\rm A_2}$ or~${\rm B}$.}
  \end{cases}
\end{align}

Suppose $|U| > (1 - 10\eps)n$. 
If $\phi$ is of Type ${\rm A_1}$, then $\cH$ is not $(\rho,\eps)$-full, so $|V^{(n-1)}(\cH)| \leq (\rho - 15\eps)n$ and $|S| = |U| - |V^{(n-1)}(\cH)| \geq (1-\rho)n + 5 \eps n$. If $\phi$ is of Type ${\rm A_2}$, then $S=U$, so $|S| = |U| \geq (1 - 10\eps)n \geq (1-\rho)n + 5 \eps n$ since $\eps = \eps_1 \ll \rho_1 = \rho$.
Otherwise if $\phi$ is of Type ${\rm B}$, then by~\ref{EP3} of Claim~\ref{claim:easyobs}, $|U| \leq 2\delta n < (1-10\eps)n$. Thus, we can deduce the following.
\begin{equation}\label{eqn:rs}
    \textrm{If $|U| > (1-10\eps)n$, then $|S| \geq (1-\rho)n + 5 \eps n$.}
\end{equation}

\begin{pf-step}
\label{step:step3}
    Define various subsets of colours.
\end{pf-step}

In this step, we will define sets of colours $C_{\rm med}$, $C_{\rm diff}$, $C_{\rm huge}$, $C_{\rm main}$, $C_{\rm buff}$, $C_{\rm final}$ which partition $[n]$ (i.e., $[n] = C_{\rm med} \cup C_{\rm diff}  \cup C_{\rm huge} \cup C_{\rm main} \cup C_{\rm buff} \cup C_{\rm final}$) and a set $C_{\rm large} \subseteq [n]$ (see Figure~\ref{fig:colourset} in Section~\ref{subsection:combining-overview}).

In the following steps, roughly, our goal is to extend the colour classes $\phi^{-1}(c)$ for $c \in [n] \setminus C_{\rm final}$ in such a way that the maximum degree in the hypergraph of remaining uncoloured edges is at most $|C_{\rm final}|$.
To that end, first, for each $c \in C_{\rm diff}$, we will extend the colour class $\phi^{-1}(c)$ to cover every vertex of $V^{(n-1)}(\cH)$ and all but at most five vertices of $V^{(n-2)}(\cH)$ using some edges of $G$. Then for each $c \in C_{\rm huge} \cup C_{\rm med}$ we will extend the colour class $\phi^{-1}(c)$ to cover all but at most one vertex of $U$ using some edges of $R_{\rm abs}$.
Finally, using the nibble and absorption strategy described at the end of Section~\ref{subsection:combining-overview}, we will extend the colour classes $\phi^{-1}(c)$ for $c \in C_{\rm main} \cup C_{\rm buff}$ to contain all of the remaining edges in $\cH_{\rm small} \setminus R_{\rm res}$ and further extend the resulting colour classes using some edges of $R_{\rm abs}$.

Now we define the following parameters and sets of colours.
\begin{enumerate}[label=${\bf S{\arabic*}}$, topsep = 6pt]
    \item\label{def:dd'} Let $D \coloneqq \lfloor (1-\rho)(n-1) \rfloor$ and $D' \coloneqq \lfloor 10 \gamma^{1/2} D \rfloor$.
    
    \item\label{def:cmed} Let $C_{\rm med} \subseteq [n]$ be the set of at most $\gamma n$ colours such that $\phi(\cH_{\rm med}) \subseteq C_{\rm med}$ which is guaranteed by~\ref{non-extremal-colouring}\ref{non-extremal-medium-bound} and~\ref{extremal-colouring}\ref{extremal-medium-bound} of Theorem~\ref{large-edge-thm}.
    
    \item\label{def:cdiff} By~\ref{EP3} and~\ref{EP4} of Claim~\ref{claim:easyobs}, there is at most one colour $c_{\rm diff} \in \phi(\cH_{\rm huge}) \setminus C_{\rm med}$ such that $\phi^{-1}(c_{\rm diff})$ is difficult. If such a colour $c_{\rm diff}$ exists, then let $C_{\rm diff} \coloneqq \{c_{\rm diff} \}$. Otherwise, let $C_{\rm diff} \coloneqq \varnothing$.
    
    \item\label{def:chuge} Let $C_{\rm huge} \coloneqq \phi(\cH_{\rm huge}) \setminus (C_{\rm med} \cup C_{\rm diff})$. Note that, since $\cH$ is linear and for every $e \in C_{\rm huge}$, $|e| \geq \beta n/4$, we have
    \begin{equation}\label{eqn:num_huge}
        |C_{\rm huge} \cup C_{\rm diff}| \leq e(\cH_{\rm huge}) \overset{\eqref{eqn:huge-edge-bound}}{\leq} 8 \beta^{-1}.
    \end{equation}
    
    \item\label{def:clarge} Let $C_{\rm large} \coloneqq \phi(\cH_{\rm large}) \setminus (C_{\rm med} \cup C_{\rm diff} \cup C_{\rm huge})$.
    
    \item\label{def:cmain} Let $C_{\rm main} \subseteq [n] \setminus (C_{\rm med} \cup C_{\rm diff} \cup C_{\rm huge})$ be a subset of size $D$ that maximises $|C_{\rm large} \cap C_{\rm main}|$. Note that such a subset exists since $n - |C_{\rm med} \cup C_{\rm diff} \cup C_{\rm huge}| \geq n - \gamma n - 8\beta^{-1} \geq D$ by~\eqref{eqn:hierarchy} and~\eqref{eqn:num_huge}. In particular, if $\phi$ is of Type~${\rm A}$, then $C_{\rm main} \supseteq C_{\rm large}$, since $|C_{\rm large}| \leq (1-\sigma)n < D$ by~\ref{non-extremal-colouring} of Theorem~\ref{large-edge-thm} and~\ref{def:dd'}.
    
    \item\label{def:cbuff} Let $C_{\rm buff} \subseteq [n] \setminus (C_{\rm med} \cup C_{\rm diff} \cup C_{\rm huge} \cup C_{\rm main})$ be a subset of size $D'$. Note that such a subset exists since $n - |C_{\rm med} \cup C_{\rm diff} \cup C_{\rm huge} \cup C_{\rm main}| \geq n - \gamma n - 8\beta^{-1} - (1-\rho)n \geq \rho n / 2 \geq D'$ by~\eqref{eqn:hierarchy},~\ref{def:dd'},  and~\eqref{eqn:num_huge}.
    
    \item\label{def:cfinal} Let $C_{\rm final} \coloneqq [n] \setminus (C_{\rm med} \cup C_{\rm diff} \cup C_{\rm huge} \cup C_{\rm main} \cup C_{\rm buff})$.
\end{enumerate}

We will use the following observations later.

\begin{enumerate}
    \item[\mylabel{T1}{\bf T1}] If $\phi$ is of Type~${\rm A}$, then for any $c \in C_{\rm buff} \cup C_{\rm final}$, we have $\phi^{-1}(c) = \varnothing$. 
\end{enumerate}

To see this, note that $\phi(\cH_{\rm med} \cup \cH_{\rm large}) \subseteq C_{\rm med} \cup C_{\rm diff} \cup C_{\rm huge} \cup C_{\rm large}$ by~\ref{def:cmed},~\ref{def:chuge}, and~\ref{def:clarge}.
If $\phi$ is of Type~${\rm A}$ then $C_{\rm large} \subseteq C_{\rm main}$ by~\ref{def:cmain}, so $(C_{\rm buff} \cup C_{\rm final}) \cap \phi(\cH_{\rm med} \cup \cH_{\rm large}) = \varnothing$ by~\ref{def:cbuff} and~\ref{def:cfinal}. Since the domain of $\phi$ is $\cH_{\rm med} \cup \cH_{\rm large}$, this shows~\ref{T1}. 

\begin{enumerate}
    \item[\mylabel{T2}{\bf T2}] If $\phi$ is of Type~${\rm B}$, then for any $c \in C_{\rm buff} \cup C_{\rm final}$, we have $\phi^{-1}(c) \subseteq \cH_{\rm large} \setminus \cH_{\rm huge}$. 
Thus,~\ref{extremal-colouring}\ref{extremal-colour-class-bound} of Theorem~\ref{large-edge-thm} implies that $|V(\phi^{-1}(c))| \leq \beta n$ for any $c \in C_{\rm buff} \cup C_{\rm final}$.
\end{enumerate}

To see this, note that $\phi(\cH_{\rm med} \cup \cH_{\rm huge}) \subseteq C_{\rm med} \cup C_{\rm diff} \cup C_{\rm huge}$ by~\ref{def:cmed} and~\ref{def:chuge}. Thus by~\ref{def:cbuff} and~\ref{def:cfinal}, $(C_{\rm buff} \cup C_{\rm final}) \cap \phi(\cH_{\rm med} \cup \cH_{\rm huge}) = \varnothing$. Since the domain of $\phi$ is $\cH_{\rm med} \cup \cH_{\rm large}$, this shows~\ref{T2}.

Also note that
\begin{align}\label{eqn:bound_prev_col}
    (1-\rho)n \leq |C_{\rm med} \cup C_{\rm diff} \cup C_{\rm huge} \cup C_{\rm main} \cup C_{\rm buff}| \leq (1-\rho + 15 \gamma^{1/2} )n,
\end{align}
since~\ref{def:dd'},~\ref{def:cmed},~\ref{def:cmain}, and~\ref{def:cbuff} together imply $|C_{\rm med}| \leq \gamma n$, $|C_{\rm main} \cup C_{\rm buff}| = D + D' \leq (1-\rho + 10\gamma^{1/2})n$, and~\eqref{eqn:num_huge} holds.

\begin{pf-step}\label{step:diff}
    Extend the colour classes in $\{\phi^{-1}(c) : c \in C_{\rm diff}\}$ using Lemma~\ref{difficult-edge-absorption}.
\end{pf-step}

In this step, for each $c \in C_{\rm diff}$, we extend the colour class $\phi^{-1}(c)$ to cover every vertex of $V^{(n-1)}(\cH)$ and all but at most five vertices of $V^{(n-2)}(\cH)$, by  using only edges of $G$. 

If $C_{\rm diff} = \varnothing$, we let $\cM_{\rm diff} \coloneqq \varnothing$. Otherwise if $C_{\rm diff} \ne \varnothing$, then by~\ref{def:cdiff}, 
$C_{\rm diff} = \{c_{\rm diff} \}$, $\phi$ is of Type~${\rm A}$, and $\phi^{-1}(c_{\rm diff}) = \{e\}$ for some huge edge $e$. Applying Lemma~\ref{difficult-edge-absorption} with $\phi^{-1}(c_{\rm diff})$ playing the role of $M$, either $\chi'(\cH) \leq n$ or we have a set $\cM_{\rm diff} \coloneqq \{ M_{c_{\rm diff}} \}$ such that the following holds.
\begin{enumerate}
    \item[\mylabel{D1}{\bf D1}] 
    If $C_{\rm diff} \ne \varnothing$, then $C_{\rm diff} = \{ c_{\rm diff} \}$. Moreover, $M_{c_{\rm diff}} \supseteq \phi^{-1}({c_{\rm diff}})$, $M_{c_{\rm diff}} \setminus \phi^{-1}({c_{\rm diff}}) \subseteq E(G)$, $M_{c_{\rm diff}}$ covers every vertex of $V^{(n-1)}(\cH)$, and $|V^{(n-2)}(\cH) \setminus V(M_c)| \leq 5$.
\end{enumerate}

Let us define
\begin{equation}\label{def:r1s1}
    \textrm{$R_1 \coloneqq R_{\rm abs} \setminus \bigcup_{c \in C_{\rm diff}}M_c$ and $S_1 \coloneqq S \setminus \bigcup_{c \in C_{\rm diff}}(V^{(n-2)}(\cH) \setminus V(M_c))$.}    
\end{equation}

Combining ~\ref{RES2} and Observation~\ref{obs:del_absorber} with the fact that $\Delta(R_{\rm abs} \setminus R_1) \leq |C_{\rm diff}| \leq 1$, we have the following.
\begin{align}
&\text{$R_1$ is a $(\rho_{\rm abs} , 10\xi , \xi , \eps)$-absorber for $\cV$, so it is also a $(\rho_{\rm abs} , 3\gamma/2 , \xi , \eps)$-absorber for $\cV$.}\label{eq:R1abs}\\
&\text{If $|U| > (1-10\eps)n$, then $|S_1| \overset{\ref{D1}}{\geq} |S| - 5 \overset{\eqref{eqn:rs}}{\geq} 2 \eps n > (\eps + 3 \gamma/2)n$.}\label{eq:boundS1}
\end{align}

\begin{pf-step}
\label{step:step5}
    Extend the colour classes in $\{ \phi^{-1}(c) : c \in C_{\rm huge} \cup C_{\rm med} \}$ using Lemma~\ref{huge-edge-absorption-lemma}.
\end{pf-step}

In this step, for each $c \in C_{\rm huge} \cup C_{\rm med}$, we extend the colour class $\phi^{-1}(c)$ to cover all but at most one vertex of $U$, by  using only edges in $R_1 \subseteq R_{\rm abs}$. 
Combining \eqref{eqn:num_huge} and the fact that $|C_{\mathrm{med}}| \leq \gamma n$, we have
\begin{equation}\label{eqn:size_hm}
    |C_{\rm huge} \cup C_{\rm med}| \leq 3\gamma n / 2.
\end{equation}

First suppose $c \in C_{\rm huge}$. The colour $c$ is assigned to a huge edge by~\ref{def:chuge}. We claim that $|V(\phi^{-1}(c)) \cap U| \leq \eps n$.  Indeed, if $\phi$ is of Type~${\rm A}$, then $\phi^{-1}(c)$ contains exactly one edge by  \ref{non-extremal-colouring}\ref{non-extremal-huge-edge} of Theorem~\ref{large-edge-thm}, so $|V(\phi^{-1}(c)) \cap U| \leq \eps n$ by~\ref{EP1} of Claim~\ref{claim:easyobs}. Otherwise, if $\phi$ is of Type~${\rm B}$, by~\ref{extremal-colouring}\ref{extremal-huge-edge} of Theorem~\ref{large-edge-thm}, we have $|V(\phi^{-1}(c))| \leq \delta n < \eps n$, so again $|V(\phi^{-1}(c)) \cap U| \leq \eps n$. Moreover, as noted in~\eqref{eq:R1abs}, $R_1$ is a $(\rho_{\rm abs} , 3\gamma/2 , \xi , \eps)$-absorber for $\cV$, $\phi^{-1}(c) \subseteq \cH \setminus \cH_{\rm small} \subseteq \cH \setminus R_1$, 
and $U \cup V(\phi^{-1}(c)) , U \setminus V(\phi^{-1}(c)), U, V(\cH) \in \cV$ by \eqref{def:cv}, so $(\cH , \phi^{-1}(c) , R_1 , S_1)$ is $(\rho_{\rm abs} , \eps , 3\gamma/2 , \kappa , \xi)$-absorbable by typicality of $R_1$ if $c \in C_{\rm huge}$. Moreover, if $c \in C_{\rm huge}$ then $\phi^{-1}(c)$ is not difficult by~\ref{def:cdiff} and~\ref{def:chuge}.

Now suppose $c \in C_{\rm med}$. Then $|V(\phi^{-1}(c))| \leq \gamma n$ by~\ref{def:cmed},~\ref{non-extremal-colouring}\ref{non-extremal-medium-bound}, and~\ref{extremal-colouring}\ref{extremal-medium-bound} of Theorem~\ref{large-edge-thm}. So, again, since $R_1$ is a $(\rho_{\rm abs} , 3\gamma/2 , \xi , \eps)$-absorber for $\cV$, since $U, V(\cH) \in \cV$ by~\eqref{def:cv}, and since $\phi^{-1}(c) \subseteq \cH \setminus \cH_{\rm small} \subseteq \cH \setminus R_1$, 
it follows that $(\cH , \phi^{-1}(c) , R_1 , S_1)$ is $(\rho_{\rm abs} , \eps , 3\gamma/2 , \kappa , \xi)$-absorbable by smallness of $\phi^{-1}(c)$ if $c \in C_{\rm med}$. Moreover, since~\eqref{eq:boundS1} and~\eqref{eqn:size_hm} hold, we can\COMMENT{we apply Lemma~\ref{huge-edge-absorption-lemma} with $3\gamma/2$ playing the role of $\gamma$. So we need that the number of matchings is at most $3 \gamma n/2$, which holds by \eqref{eqn:size_hm}. Moreover, since $R_1$ is a $(\rho_{\rm abs} , 10\xi , \xi , \eps)$-absorber, it is also a $(\rho_{\rm abs} , 10(3\gamma/2) , \xi , \eps)$-absorber by $\xi \ll \gamma$.} 
apply Lemma~\ref{huge-edge-absorption-lemma} with $\{ \phi^{-1}(c) : c \in C_{\rm huge} \cup C_{\rm med} \}$, $R_1$, $S_1$, $\rho_{\rm abs}$, and $3\gamma/2$ playing the roles of $\cN$, $R$, $S$, $\rho$, and $\gamma$ respectively, to obtain the set $\cM_{\rm hm} \coloneqq \{ M_c : c \in C_{\rm huge} \cup C_{\rm med} \}$ of pairwise edge-disjoint matchings in $\cH$ such that the following hold.
\begin{enumerate}
    \item[\mylabel{HM1}{\bf HM1}] For each $c \in C_{\rm huge} \cup C_{\rm med}$, we have $M_c \supseteq \phi^{-1}(c)$ and $M_c \setminus \phi^{-1}(c) \subseteq R_1 \subseteq R_{\rm abs}$. Thus, $M_c$ is edge-disjoint from the matchings in $\cM_{\rm diff}$ by~\ref{D1} and~\eqref{def:r1s1}.
    
    \item[\mylabel{HM2}{\bf HM2}] If $|U| \leq (1-10\eps)n$, then $\cM_{\rm hm}$ has perfect coverage of $U$. Otherwise, $\cM_{\rm hm}$ has nearly-perfect coverage of $U$ with defects in
    $S_1$.\COMMENT{Note that $S_1$ may contain vertices of degree at most $n-3$, as we only removed some vertices with degree $n-2$ from $S$ to define $S_1$. This implies that the vertices with degree at most $n-3$ are marked/uncovered when extending the difficult matching, and they are allowed to be marked again when dealing with non-difficult huge edges.}
\end{enumerate}

Now let us define
\begin{equation}\label{def:r2s2}
    R_2 \coloneqq R_1 \setminus \bigcup_{c \in C_{\rm huge} \cup C_{\rm med}} M_c \overset{\eqref{def:r1s1}}{=} R_{\rm abs} \setminus \bigcup_{M \in \cM_{\rm diff} \cup \cM_{\rm hm}}M \text{  and  }S_2 \coloneqq S_1 \setminus \bigcup_{c \in C_{\rm huge} \cup C_{\rm med}}(U \setminus V(M_c)).
\end{equation}

By~\eqref{eq:R1abs},~\eqref{eqn:size_hm},~Observation~\ref{obs:del_absorber}(ii), and~\ref{def:dd'}, the following hold.
\begin{align}
 &\text{$R_2$ is a $(\rho_{\rm abs} , 2\gamma , \xi , \eps)$-absorber for $\cV$.}\label{eq:R2abs}\\
 &\text{If $|U| >  (1-10\eps)n$, then } |S_2| \overset{\eqref{eqn:size_hm},\ref{HM2}}{\geq} |S_1| - 3 \gamma n/2 \overset{\eqref{eq:boundS1}}{\geq} |S| - 5 - 3 \gamma n/2 \overset{\eqref{eqn:hierarchy},\eqref{eqn:rs}}{\geq} D + 4\eps n. \label{eq:boundS2}
\end{align}

\begin{pf-step}
\label{step:step6}
    Colour most of the edges of $\cH_{\rm small} \setminus (R_{\rm res} \cup \bigcup_{M \in \cM_{\rm diff}}M )$ by extending the  colour classes in $\{\phi^{-1}(c) : c \in C_{\rm main} \}$ using Lemma~\ref{lem:nibble2}.
\end{pf-step}

Note that $\cH_{\rm small} \setminus (R_{\rm res} \cup \bigcup_{M \in \cM_{\rm diff}}M )$ does not contain any edge of the matchings in $\cM_{\rm diff} \cup \cM_{\rm hm}$ by~\ref{HM1} and since $R_{\rm abs} \subseteq R_{\rm res}$ by~\ref{RES2}. 
In this step, we will first colour most of the edges in $\cH_{\rm small} \setminus (R_{\rm res} \cup \bigcup_{M \in \cM_{\rm diff}}M )$ with colours from $C_{\rm main}$ by extending the colour classes in $\{ \phi^{-1}(c) : c \in C_{\rm main} \}$, and the resulting colour classes are further extended by  using only edges of $R_2 \subseteq R_{\rm abs}$. To do this, we will use Lemma~\ref{lem:nibble2}. (Note that after this step there are only a few remaining uncoloured edges in $\cH_{\rm small} \setminus (R_{\rm res} \cup \bigcup_{M \in \cM_{\rm diff}}M )$ incident to each vertex, which will be coloured in the next step.)

To be able to apply Lemma~\ref{lem:nibble2}, we need to first embed $\cH_{\rm small} \setminus (R_{\rm res} \cup \bigcup_{M \in \cM_{\rm diff}}M )$ into an almost-regular linear multi-hypergraph $\cH^*$. In order to define $\cH^*$, let $\cH'$ be the linear multi-hypergraph obtained by applying Lemma~\ref{lem:degree-lemma} (Regularising lemma) with $\cH$, $R_{\rm res}$, and $\beta/2$ playing the roles of $\cH$, $R_{\rm res}$, and $\beta$, respectively. In particular, $\cH'$ is a linear multi-hypergraph obtained from $\cH_{\rm small} \setminus R_{\rm res}$
by adding singleton edges such that $\cH' \subseteq \cH_{\rm reg}$ (which is defined in Definition~\ref{def:hreg}). Now let $\cH^* \coloneqq \cH' \setminus \bigcup_{M \in \cM_{\rm diff}} M$. Then it is clear that $\cH^*$ can be obtained from $\cH_{\rm small} \setminus (R_{\rm res} \cup \bigcup_{M \in \cM_{\rm diff}}M )$ by adding singleton edges, so we have\COMMENT{we wrote $2 \beta$ below simply because we will make $2 \beta$ to play the role of $\beta$ later on. We can change it if you'd like.} the following.\COMMENT{Applying Lemma~\ref{lem:degree-lemma} we get $d_{\cH'}(w) = (1-\rho)(n-1 \pm \beta n) = (1 \pm 3\beta/2)D$, since $D = \lfloor (1-\rho)(n-1) \rfloor$. Since $\cH^*$ is a linear multi-hypergraph obtained from $\cH'$ by removing at most one edge incident to each vertex, we have $d_{\cH^*}(w) = (1 \pm 2\beta)D$.}
(Recall that $\cH$ has no singleton edges, assumed in the very first paragraph of the proof.)
\begin{equation}\label{eqn:h*_inside}
    \textrm{$\cH^* = \cH' \setminus \bigcup_{M \in \cM_{\rm diff}} M \subseteq \cH_{\rm reg} \setminus R_{\rm res} \subseteq \cH_{\rm reg} \setminus R_2$, and $d_{\cH^*}(w) = (1 \pm 2\beta)D$ for any $w \in V(\cH)$.}
\end{equation}

Now we want to apply Lemma~\ref{lem:nibble2} with $\cH_{\rm reg}$, $\cH^*$, $\{ \phi^{-1}(c) : c\in C_{\rm main} \}$, $R_2$, $S_2$, $r_1$, $\rho_{\rm abs}$, $2\beta$, and $2 \gamma$ playing the roles of $\cH$ , $\cH'$, $\cM$, $R$, $S$, $r$, $\rho$, $\beta$, and $\gamma$, respectively. To that end, we need to check that the  assumptions~\ref{C1}--\ref{C4} of Lemma~\ref{lem:nibble2} are satisfied.

First, note that $|C_{\rm main}| = D$ by~\ref{def:cmain} and~\eqref{eq:boundS2} implies that if $|U| >  (1-10\eps)n$, then $|S_2| \geq D + 2 \gamma n$, so \ref{C1} holds. By \eqref{eq:R2abs} and \eqref{def:cv}, \ref{C2} holds. 
Since every edge $e \in \cH^*$
satisfies $|e| \leq r_1$,~\ref{C3} follows from~\eqref{eqn:h*_inside}. Lastly, we show that~\ref{C4} holds. By~\ref{def:cmain}, 
for any $c \in C_{\rm main}$, $\phi^{-1}(c)$ is either empty or is contained in $\cH_{\rm large}$, where $\cH_{\rm large} \subseteq \cH_{\rm reg}$ and $\cH_{\rm large} \cap (\cH^* \cup R_2) = \varnothing$ from the definition of $\cH^*$, so $\phi^{-1}(c) \subseteq \cH_{\rm reg} \setminus (\cH^* \cup R_2)$\COMMENT{since $\cH^* \cap \cH_{\rm large} = \varnothing$}.  Moreover, by (\ref{large-edge-thm}:a)(iii) and (\ref{large-edge-thm}:b)(iii) of Theorem~\ref{large-edge-thm}, $|V(\phi^{-1}(c))| \leq \beta n \leq 2 \beta D$. Furthermore, by~\ref{EP2} of Claim~\ref{claim:easyobs}, for any $e \in \cH^*$, $| \{c \in C_{\rm main} : e \cap V(\phi^{-1}(c)) \ne \varnothing \} | \leq 2 r_1 n/r_0 \leq 2\beta D$, as desired.

Thus, we can apply Lemma~\ref{lem:nibble2} to obtain a set $\cM_{\rm main}^* \coloneqq \{ M_c^* : c \in C_{\rm main} \}$ of $D$ edge-disjoint matchings in $\cH_{\rm reg}$ such that the following hold.

\begin{enumerate}
    \item[\mylabel{MA1_old}{\bf MA1*}] For any $c \in C_{\rm main}$, we have $M_c^* \supseteq \phi^{-1}(c)$ and $M_c^* \setminus \phi^{-1}(c) \subseteq \cH^* \cup R_2 \subseteq \cH^* \cup R_{\rm abs}$. Thus, $M_c^*$ is edge-disjoint from all the matchings in $\cM_{\rm diff} \cup \cM_{\rm hm}$ by~\ref{HM1} and~\eqref{def:r2s2}.
    \COMMENT{This follows since $\cH^*$ is disjoint from the matching in $\cM_{\rm diff}$, medium/large edges, and $R_{\rm res}$, by the definition of $\cH^*$. Thus~\ref{HM1} and the definition of $R_2$ together  imply that $M_c^*$ is edge-disjoint from all the matchings in $\cM_{\rm hm}$ as well.}
    \item[\mylabel{MA2_old}{\bf MA2*}] For any $w \in V(\cH)$, 
    \begin{itemize}
        \item[(i)] $|E_{R_{\rm abs}}(w) \cap \bigcup_{c \in C_{\rm main}} M_c^*| =\COMMENT{By the definitions of $R_1$ and $R_2$, we have $R_2 = R_{\rm abs} \setminus \bigcup_{c \in C_{\rm diff} \cup C_{\rm huge} \cup C_{\rm med}} M_c$. Thus by~\ref{MA1_old}, the equality holds.}
        |E_{R_{2}}(w) \cap \bigcup_{c \in C_{\rm main}} M_c^*| \leq 2 \gamma D$, and
        \item[(ii)] $|E_{\cH^*}(w) \setminus \bigcup_{c \in C_{\rm main}} M_c^*| \leq 2 \gamma D$.
    \end{itemize}

    \item[\mylabel{MA3}{\bf MA3*}] If $|U| \leq (1-2\eps)n$, then $\cM_{\rm main}^*$ has perfect coverage of $U$. Otherwise, $\cM_{\rm main}^*$ has nearly-perfect coverage of $U$ with defects in $S_2$.
\end{enumerate}

For every $c \in C_{\rm main}$, let $M_c$ be obtained from $M_c^*$ by removing all singleton edges, and let $\cM_{\rm main} \coloneqq \{ M_c : c \in C_{\rm main} \}$. Then, since $\cM_{\rm main}^*$ is a set of matchings in $\cH_{\rm reg} \supseteq \cH$, and $\cH$ is obtained from $\cH_{\rm reg}$ by removing all singleton edges, $\cM_{\rm main}$ is a set of matchings in $\cH$. Since we obtain $\cH_{\rm small} \setminus (R_{\rm res} \cup \bigcup_{M \in \cM_{\rm diff}}M )$ after removing all singleton edges from $\cH^*$,~\ref{MA1_old} and~\ref{MA2_old} immediately imply~\ref{MA1} and~\ref{MA2} stated below, since all hypergraphs in~\ref{MA1} and~\ref{MA2} are subhypergraphs of $\cH$, and $\cH$ has no singleton edge.
Let 
\begin{align*}
    \cH_{\rm rem} \coloneqq \cH_{\rm small} \setminus (R_{\rm res} \cup \bigcup_{M \in \cM_{\rm diff} \cup \cM_{\rm main}}M) = \cH_{\rm small} \setminus (R_{\rm res} \cup \bigcup_{c \in C_{\rm diff} \cup C_{\rm main}} M_c).
\end{align*}

\begin{enumerate}
    \item[\mylabel{MA1}{\bf MA1}] For any $c \in C_{\rm main}$, $M_c$ is edge-disjoint from all the matchings in $\cM_{\rm diff} \cup \cM_{\rm hm}$, $M_c \supseteq \phi^{-1}(c)$, and $M_c \setminus \phi^{-1}(c) \subseteq \left ( \cH_{\rm small} \setminus (R_{\rm res} \cup \bigcup_{M \in \cM_{\rm diff}}M ) \right ) \cup R_2$.

    \item[\mylabel{MA2}{\bf MA2}]
    \begin{itemize}
        \item[(i)] For any $w \in V(\cH)$, $|E_{R_{\rm abs}}(w) \cap \bigcup_{c \in C_{\rm main}} M_c| = |E_{R_{2}}(w) \cap \bigcup_{c \in C_{\rm main}} M_c| \leq 2 \gamma D$, and
        \item[(ii)] $\Delta(\cH_{\rm rem}) \leq 2 \gamma D$.
    \end{itemize}
\end{enumerate}

Now let us define
\begin{align}\label{def:r3}
    R_3 \coloneqq R_2 \setminus \bigcup_{c \in C_{\rm main}} M_c^* = R_2 \setminus \bigcup_{c \in C_{\rm main}} M_c \overset{\eqref{def:r2s2}}{=} R_{\rm abs} \setminus \bigcup_{M \in \cM_{\rm diff} \cup \cM_{\rm hm} \cup \cM_{\rm main}} M \overset{\ref{RES2}}{\subseteq} R_{\rm res},
\end{align}
where the first equality holds since $M_c$ is obtained from $M_c^*$ by removing all singleton edges and $R_2 \subseteq \cH$ does not contain any singleton edge. Let us also define
\begin{align}\label{def:s3}
    S_3 \coloneqq S_2 \setminus \bigcup_{c \in C_{\rm main}} (U \setminus V(M_c^*)).
\end{align}

Since $|C_{\rm main}| = D$, 
\begin{equation}\label{eq:boundS3}
    \textrm{if $|U| > (1-10\eps)n$, then $|S_3| \overset{\ref{MA3}}{\geq} |S_2| - D \overset{\eqref{eq:boundS2}}{\geq} 4 \eps n$.}    
\end{equation}
Moreover, by~\ref{MA2_old}(i), $\Delta(R_2 \setminus R_3) \leq 2 \gamma D$. Thus by Observation~\ref{obs:del_absorber}(ii) and~\eqref{eq:R2abs},
\begin{equation}
\label{eq:R3abs}
\text{$R_3$ is a $(\rho_{\rm abs} , 4\gamma , \xi , \eps)$-absorber for $\cV$.}
\end{equation}

\begin{pf-step}
\label{step:step7}
    Finish colouring the remaining uncoloured edges of $\cH_{\rm small} \setminus R_{\rm res}$ by extending the colour classes in $\{\phi^{-1}(c) : c \in C_{\rm buff} \}$ using Lemma~\ref{lem:leftover}.
\end{pf-step}

Recall that $\cH_{\rm rem} = \cH_{\rm small} \setminus (R_{\rm res} \cup \bigcup_{c \in C_{\rm diff} \cup C_{\rm main}} M_c)$ and $R_{\rm abs} \subseteq R_{\rm res}$ by~\ref{RES2}. Thus, by~\ref{HM1}, 
\begin{align}\label{eqn:hrem_prop}
    \text{$\cH_{\rm rem}$ does not contain any edge of the matchings in $\cM_{\rm diff} \cup \cM_{\rm hm} \cup \cM_{\rm main}$.}
\end{align}

In this step, we will first colour all the edges of $\cH_{\rm rem}$ with colours from the `buffer' set $C_{\rm buff}$ by extending the colour classes in $\{\phi^{-1}(c) : c \in C_{\rm buff} \}$, and the resulting colour classes are further extended to cover all but at most one vertex of $U$, by using only edges of $R_3 \subseteq R_{\rm abs}$. To do this, we want to apply Lemma~\ref{lem:leftover} with $C_{\rm buff}$, $\{ \phi^{-1}(c) : c\in C_{\rm buff} \}$, $r_1$, $\rho_{\rm abs}$, $R_3$ and $10 \gamma^{1/2}$ playing the roles of $C$, $\cM$, $r$, $\rho$, $R$ and $\gamma$, respectively. So now we check that the assumptions~\ref{L1}--\ref{L5} of Lemma~\ref{lem:leftover} are satisfied.

First, $|C_{\rm buff}| = D' = \lfloor 10 \gamma^{1/2} D \rfloor$, so~\ref{L1} holds. 
For each $c \in C_{\rm buff}$, by~\ref{T1} and~\ref{T2}, the set  $\phi^{-1}(c)$ is either empty or is contained in $\cH_{\rm large} \setminus \cH_{\rm huge}$, 
and \COMMENT{Here we used $\beta \ll \gamma \ll 1$} $|V(\phi^{-1}(c))| \leq \beta n \leq 5\gamma^{1/2} n$. 
Thus~\ref{L2} holds. Note that $\{U, V(\cH) \} \subseteq \cV$ by~\eqref{def:cv}, and $\bigcup_{c \in C_{\rm buff}}\phi^{-1}(c) \subseteq \cH \setminus \cH_{\rm small} \subseteq \cH \setminus R_3$. 
It follows that~\ref{L3} holds by~\eqref{eq:R3abs}\COMMENT{so $R_3$ is a $(\rho_{\rm abs} , 100 \gamma^{1/2} , \xi , \eps)$-absorber as well} and Observation~\ref{obs:del_absorber}(i). By~\eqref{eq:boundS3}, if $|U| >  (1-10\eps)n$, then $|S_3| \geq 4 \eps n \geq (10 \gamma^{1/2} + \eps) n$\COMMENT{Here we used $\gamma \ll \eps \ll 1$}, so~\ref{L5} holds. Lastly, we show that~\ref{L4} holds. Note that~\ref{MA2}(ii) implies that $\Delta(\cH_{\rm rem}) \leq 2 \gamma D \leq (10 \gamma^{1/2})^2 D / 20$. Moreover, since for each $c \in C_{\rm buff}$, $\phi^{-1}(c)$ is either empty or is contained in $\cH_{\rm large}$, and $R_3 \subseteq R_{\rm abs} \subseteq R_{\rm res}$ by~\ref{RES2}, we have $\cH_{\rm rem} \subseteq \cH \setminus (R_3 \cup \bigcup_{c \in C_{\rm buff}} \phi^{-1}(c))$, and for any $e \in \cH_{\rm rem} \subseteq \cH_{\rm small}$, we have $| \{c \in C_{\rm buff} : e \cap V(\phi^{-1}(c)) \ne \varnothing \} | \leq 2 r_1 n/r_0 \leq (10\gamma^{1/2})^2 D/100$,\COMMENT{Here we used $r_0^{-1} \ll \gamma \ll 1$} by~\ref{EP2} of Claim~\ref{claim:easyobs}, as desired.

Thus, by Lemma~\ref{lem:leftover}, we obtain a set of pairwise edge-disjoint matchings $\cM_{\rm buff} \coloneqq \{M_c : c \in C_{\rm buff} \}$ such that the following hold.
\begin{enumerate}
    \item[\mylabel{B1}{\bf B1}] For any $c \in C_{\rm buff}$, we have $M_c \supseteq \phi^{-1}(c)$ and $\cH_{\rm rem} \subseteq \bigcup_{c \in C_{\rm buff}}(M_c \setminus \phi^{-1}(c)) \subseteq \cH_{\rm rem} \cup R_3$. Thus, by~\eqref{def:r3} and~\eqref{eqn:hrem_prop},
    $M_c$ is edge-disjoint from all the matchings in $\cM_{\rm diff} \cup \cM_{\rm hm} \cup \cM_{\rm main}$. Moreover, since $\cH_{\rm rem} \subseteq \cH_{\rm small} \setminus R_{\rm res}$ and by~\eqref{def:r3}, 
$\cH_{\rm small} \setminus R_{\rm res} \subseteq \bigcup_{c \in C_{\rm diff} \cup C_{\rm main} \cup C_{\rm buff}}(M_c \setminus \phi^{-1}(c)) \subseteq \cH_{\rm small}$.
    
    \item[\mylabel{B2}{\bf B2}] If $|U| \leq (1-10\eps)n$, then $\cM_{\rm buff}$ has perfect coverage of $U$. Otherwise, $\cM_{\rm buff}$ has nearly-perfect coverage of $U$ with defects in $S_3$.
\end{enumerate}

Now we combine all the matchings constructed previously. Let us define
\begin{equation*}
\cM_{\rm prev}^* \coloneqq \cM_{\rm diff} \cup \cM_{\rm hm} \cup \cM_{\rm main}^* \cup \cM_{\rm buff} \text{ and } \cM_{\rm prev} \coloneqq \cM_{\rm diff} \cup \cM_{\rm hm} \cup \cM_{\rm main} \cup \cM_{\rm buff},
\end{equation*}
where both $\cM_{\rm prev}^*$ and $\cM_{\rm prev}$ consist of pairwise edge-disjoint matchings by~\ref{HM1},~\ref{MA1_old}, \ref{MA1}, and~\ref{B1}. 
Let us also define
\begin{equation}\label{def:finalsets}
    R_{\rm final} \coloneqq R_{\rm res} \setminus \bigcup_{M \in \cM_{\rm prev}} M,\:\:\:\:G_{\rm final} \coloneqq (V(\cH) , R_{\rm final}),\:\:\:\text{and }\:\:\:\cH_{\rm final} \coloneqq R_{\rm final} \cup \bigcup_{c \in C_{\rm final}} \phi^{-1}(c).
\end{equation}

In the next step, we will show that the edges of $G_{\rm final}$ are the only remaining edges of $\cH$ which are not contained in either the matchings of $\cM_{\rm prev}$ or the colour classes of $\phi$ (see~\ref{eqn:h_final}). 
Thus, in the final step, we will colour the edges of $G_{\rm final}$ with colours in $C_{\rm final}$ in such a way that the colouring is compatible with $\phi$.  Prior to this, in {Step}~\ref{step:step9}, we show that the degrees in $\cH_{\rm final}$ are appropriately bounded.

\begin{pf-step}
\label{step:step8}
    Analyse properties of $G_{\rm final}$ and $\cH_{\rm final}$.

\end{pf-step}

In this step we will prove the following properties of $G_{\rm final}$ and $\cH_{\rm final}$. 
\begin{enumerate}[label = {{\bf{F}\arabic*}}]
    \item\label{eqn:bound_prev_colours}
    $(1-\rho)n \leq |\cM_{\rm prev}^*| = |\cM_{\rm prev}| = n - |C_{\rm final}| \leq (1-\rho + 15 \gamma^{1/2} )n.$
    \item\label{stat:r_rem}
    If $(\cH , \phi)$ is of Type~${\rm A_2}$, then $\Delta(G_{\rm final}) - \delta(G_{\rm final}) \leq 22\eps n$, and $G_{\rm final}$ is lower $(\rho/4, \eps^{1/3})$-regular.
    \item\label{eqn:h_final}
    $\cH_{\rm final} = \cH \setminus \bigcup_{M \in \cM_{\rm prev}} M = \cH \setminus \bigcup_{M \in \cM_{\rm prev}^*} M$. Moreover, if $\phi$ is of Type~${\rm A}$, $\cH_{\rm final} = R_{\rm final}$.
\end{enumerate}

First, since $\cM_{\rm prev} = \{M_c : c \in C_{\rm med} \cup C_{\rm diff} \cup C_{\rm huge} \cup C_{\rm main} \cup C_{\rm buff} \}$ and $C_{\rm final} = [n] \setminus (C_{\rm med} \cup C_{\rm diff} \cup C_{\rm huge} \cup C_{\rm main} \cup C_{\rm buff})$,~\ref{eqn:bound_prev_colours} follows from~\eqref{eqn:bound_prev_col}.

Now we prove~\ref{stat:r_rem}. First we show that for any $w \in V(\cH)$,
\begin{equation}\label{eqn:deficit}
    d_{R_{\rm res} \setminus R_{\rm final}}(w) \leq 15 \gamma^{1/2} n.
\end{equation}

Indeed, by~\eqref{def:finalsets},
we have
\begin{align*}
    d_{R_{\rm res} \setminus R_{\rm final}}(w) \leq |\cM_{\rm diff} \cup \cM_{\rm hm} \cup \cM_{\rm buff}| + \left |E_{R_{\rm res}}(w) \cap \bigcup_{c \in C_{\rm main}} M_c \right | \overset{\ref{MA1},~\ref{MA2}(i)}{\leq} |\cM_{\rm diff} \cup \cM_{\rm hm} \cup \cM_{\rm buff}| + 2 \gamma n,
\end{align*}
so it suffices to bound $|\cM_{\rm diff} \cup \cM_{\rm hm} \cup \cM_{\rm buff}|$. By~\eqref{eqn:size_hm}, $|\cM_{\rm hm}| \leq 3\gamma n / 2$. Together with the facts that $|\cM_{\rm diff}| \leq 1$ and $|\cM_{\rm buff}| = D' \leq 10\gamma^{1/2}n$ (by ~\ref{def:cbuff}), we obtain 
~\eqref{eqn:deficit} as desired.

Now we show that  $\Delta(G_{\rm final}) - \delta(G_{\rm final}) \leq 22 \eps n$. Indeed, since $\xi \ll \eps$, $R_{\rm final} \subseteq R_{\rm res}$ by~\eqref{def:finalsets}, and $R_{\rm res}$ is a $(\rho , \xi , \eps , \cV)$-reservoir of Type $A_2$, \ref{R1} and~\ref{R2} of Definition~\ref{def:pseudoreg} imply that for each $w \in V(\cH)$,
\begin{equation*}
    (\rho - 20\eps)n \leq d_{R_{\rm res}}(w) = d_{R_{\rm final}}(w) + d_{R_{\rm res} \setminus R_{\rm final}}(w) \leq (\rho + \xi)n.
\end{equation*}
Thus, $\Delta(G_{\rm final}) \leq (\rho + \xi)n$, and by~\eqref{eqn:deficit}, we have $d_{R_{\rm final}}(w) \geq d_{R_{\rm res}}(w) - 15 \gamma^{1/2} n \geq (\rho - 20 \eps)n - 15 \gamma^{1/2} n \geq (\rho - 21 \eps)n$ for any $w \in V(\cH)$, so $\delta(G_{\rm final}) \geq (\rho - 21\eps)n$. Thus, $\Delta(G_{\rm final}) - \delta(G_{\rm final}) \leq (21 \eps + \xi)n \le 22 \eps n$, as desired. 

Now we show that $G_{\rm final}$ is lower $(\rho/4 , \eps^{1/3})$-regular. 
Since $(\cH , \phi)$ is of Type~${\rm A_2}$, $\cH$ is $(\rho,\eps)$-full, so $|V(\cH) \setminus U| \leq 10\eps n$. Thus, by~\ref{LR2} (with $G_{\rm final}[U]$, $G_{\rm final}$, $\rho/3$, $\eps^{1/2}$, and $10^{-4}$ playing the roles of $G$,$H$,$\rho$,$\xi$, and $\alpha$, respectively), it suffices to show that $G_{\rm final}[U]$ is lower $(\rho/3 , \eps^{1/2})$-regular.\COMMENT{We may apply~\ref{LR2} with a small $\alpha$, for example, $\alpha = 1/10000$, and $\rho/3, \eps^{1/2}$ playing the roles of $\rho, \xi$.  Indeed, $v(G_{\rm final})/v(G_{\rm final}[U]) = n/|U| \leq n/(n - 10\eps n) \leq 1 + 100 \eps \leq 1 + \alpha \eps^{1/2}$.}

To that end, note that by~\ref{RES2}, $R_{\rm abs}$ is a lower $(\rho/2 , \xi , G')$-regular subset of $R_{\rm res}$, so~\eqref{eqn:deficit} and~\ref{LR1}\COMMENT{Note that if $d_{R_{\rm res} \setminus R_{\rm final}}(w)\leq 15 \gamma^{1/2} n$ then, $d_{R_{\rm abs} \setminus R_{\rm final}}(w)\leq 15 \gamma^{1/2} n$ since $R_{\rm abs} \subseteq R_{\rm res}$.} imply that $R_{\rm abs} \cap R_{\rm final}$ is lower $(\rho/2 , \gamma^{1/5} , G')$-regular.
Moreover, for any two disjoint sets $A,B \subseteq U$ with $|A|,|B| \geq \eps^{1/2}|U| \geq \eps^{1/2}n/2$,\COMMENT{here we used $|U| \geq (1-10\eps)n$ as $\cH$ is $(\rho,\eps)$-full} we have $e_{G'}(A,B) \geq (|A| - \eps n)|B| \geq (1 - 2 \eps^{1/2})|A||B|$, since each vertex in $B$ is adjacent to all but at most $\eps n$ vertices in $A$ by the definition of $U$.
Therefore, for any two disjoint sets $A,B \subseteq U$ with $|A|,|B| \geq \eps^{1/2}|U| \geq \gamma^{1/5} n$, we have
\begin{equation*}
    e_{G_{\rm final}[U]} (A,B) \geq e_{R_{\rm abs} \cap R_{\rm final}}(A,B) \geq 
    \frac{\rho}{2} e_{G'}(A,B) - \gamma^{1/5} |A||B| \geq \frac{\rho }{2} (1 - 2 \eps^{1/2})|A||B| - \gamma^{1/5} |A||B| \geq \frac{\rho}{3}|A||B|,
\end{equation*}
so $G_{\rm final}[U]$ is lower $(\rho/3 , \eps^{1/2})$-regular, completing the proof of~\ref{stat:r_rem}.

Finally, we prove~\ref{eqn:h_final}. Since $\cH = \cH_{\rm small} \cup (\cH_{\rm med} \cup \cH_{\rm large})$ and $\cH_{\rm final} = R_{\rm final} \cup \bigcup_{c \in C_{\rm final}} \phi^{-1}(c)$, in order to show that $\cH_{\rm final} = \cH \setminus \bigcup_{M \in \cM_{\rm prev}}M$, it suffices to prove the following two statements.
\begin{equation}\label{eqn:h_final_eq}
    \cH_{\rm small} \setminus \bigcup_{M \in \cM_{\rm prev}} M = R_{\rm final}\:\:\:\:\:\text{ and }\:\:\:\:\:(\cH_{\rm med} \cup \cH_{\rm large}) \setminus \bigcup_{M \in \cM_{\rm prev}} M = \bigcup_{c \in C_{\rm final}}\phi^{-1}(c).
\end{equation}

The first statement of~\eqref{eqn:h_final_eq} directly follows from the definition of $R_{\rm final}$ and the fact that $\cH_{\rm small} \setminus R_{\rm res} \subseteq \bigcup_{M \in \cM_{\rm prev}} M$ which is guaranteed by \ref{B1}. 
Now we show the second statement of~\eqref{eqn:h_final_eq}. 
To that end, first note that~\ref{D1},~\ref{HM1},~\ref{MA1},~\ref{B1}, and~\ref{def:cfinal} together imply that for each $c \in [n] \setminus C_{\rm final}$, we have $\phi^{-1}(c) \subseteq M_c \in \cM_{\rm prev}$ and $M_c \setminus \phi^{-1}(c) \subseteq \cH_{\rm small}$. 
Thus, $\bigcup_{c \in [n]\setminus C_{\rm final}} \phi^{-1}(c) \subseteq \bigcup_{M \in \cM_{\rm prev}} M \overset{\ref{eqn:bound_prev_colours}}{=} \bigcup_{c \in [n]\setminus C_{\rm final}}M_c \subseteq \bigcup_{c \in [n] \setminus C_{\rm final}} \phi^{-1}(c) \cup \cH_{\rm small}$, where the right-hand side does not contain any edges of $\bigcup_{c \in C_{\rm final}} \phi^{-1}(c)$.
Since $\cH_{\rm med} \cup \cH_{\rm large} = \bigcup_{c \in [n]} \phi^{-1}(c) = \bigcup_{c \in [n] \setminus C_{\rm final}} \phi^{-1}(c) \cup \bigcup_{c \in C_{\rm final}} \phi^{-1}(c)$, we have
\begin{align*}
    (\cH_{\rm med} \cup \cH_{\rm large}) \setminus \bigcup_{M \in \cM_{\rm prev}} M = \Big( \bigcup_{c \in [n] \setminus C_{\rm final}} \phi^{-1}(c) \setminus \bigcup_{M \in \cM_{\rm prev}} M \Big) \cup \bigcup_{c \in C_{\rm final}} \phi^{-1}(c) = \bigcup_{c \in C_{\rm final}}\phi^{-1}(c),
\end{align*}
proving the second statement of~\eqref{eqn:h_final_eq}. 

Note that~\eqref{eqn:h_final_eq} shows that
$\cH_{\rm final} = \cH \setminus \bigcup_{M \in \cM_{\rm prev}}M$. Since $\cH$ has no singleton edges, it immediately follows that $\cH \setminus \bigcup_{M \in \cM_{\rm prev}}M = \cH \setminus \bigcup_{M \in \cM_{\rm prev}^*} M$. Moreover, if $\phi$ is of Type~${\rm A}$, then by~\ref{T1}, $\bigcup_{c \in C_{\rm final}}\phi^{-1}(c) = \varnothing$, so $\cH_{\rm final} = R_{\rm final}$. This completes the proof of~\ref{eqn:h_final}.

\begin{pf-step}
\label{step:step9}
    Bound the degrees of vertices in $\cH_{\rm final}$.
\end{pf-step}

In this step, we prove the following statements which bound the degrees of vertices in $\cH_{\rm final}$ in terms of the number of colours available for the final step.
(In particular, when $\phi$ is of Type B, to apply Lemma~\ref{hall-based-finishing-lemma} in the final step, one needs to bound the degrees of vertices in $\cH_{\rm final}$ instead of $G_{\rm final}$ and take into account how many large edges in $\bigcup_{c \in C_{\rm final}} \phi^{-1}(c)$ are incident to each vertex.)

\begin{enumerate}
    \item[\mylabel{UC1}{\bf UC1}] For any $w \in V(\cH) \setminus U$, $d_{\cH_{\rm final}}(w) \leq |C_{\rm final}| - \rho \eps n/4$.
    
    \item[\mylabel{UC2}{\bf UC2}] For any $w \in U \setminus V^{(n-1)}(\cH)$, $d_{\cH_{\rm final}}(w) \leq |C_{\rm final}| - 1$.
    
    \item[\mylabel{UC3}{\bf UC3}] For any $w \in V^{(n-1)}(\cH)$, $d_{\cH_{\rm final}}(w) \leq |C_{\rm final}|$. Moreover, there are at least $|C_{\rm final}|$ vertices of degree less than $|C_{\rm final}|$ in $\cH_{\rm final}$.
    
    \item[\mylabel{UC4}{\bf UC4}] If $(\cH , \phi)$ is either Type~${\rm A_1}$ or~${\rm B}$, then for any $w \in V^{(n-1)}(\cH)$, $d_{\cH_{\rm final}}(w) \leq |C_{\rm final}| - 1$.
\end{enumerate}

Now we prove~\ref{UC1}. First we show that for any $w \in V(\cH) \setminus U$, $d_{R_{\rm res}} (w) \leq \rho(1-\eps)n + \xi n$.  Indeed, if $R_{\rm res}$ is a $(\rho, \xi , \eps , \cV)$-reservoir of Type~${\rm A_1}$ or~${\rm A_2}$, then it follows from~\ref{P1} or \ref{R2}. Otherwise, if $R_{\rm res}$ is a $(\rho, \xi , \eps , \cV)$-reservoir of Type~${\rm B}$, then it also follows since $R_{\rm res}$ is $(\rho, \xi , G')$-typical with respect to $\mathcal V \ni V(\cH)$ (see Definitions~\ref{def:typicality},~\ref{def:absorber},~\ref{def:reservoir_type}, and~\eqref{def:cv}).
Thus, for any $w \in V(\cH) \setminus U$, since $\xi,\gamma \ll \eps,\rho$,\COMMENT{For the first inequality where we used \ref{eqn:bound_prev_colours} below, the calculation is as follows. Since $\xi, \gamma \ll \rho, \eps$, we have $\rho(1-\eps)n + \xi n \leq (\rho - 15 \gamma^{1/2})n + (15 \gamma^{1/2} + \xi - \rho \eps)n \leq n - |\cM_{\rm prev}| - \rho \eps n / 2$.} 
\begin{align}
    d_{G_{\rm final}} (w) &\leq d_{R_{\rm res}} (w) \leq \rho(1-\eps)n + \xi n
    \leq (\rho - 15\gamma^{1/2} - \rho \eps / 2)n \nonumber \\
    \overset{~\ref{eqn:bound_prev_colours}}&{\leq}
    n - |\cM_{\rm prev}| - \rho \eps n / 2 \overset{~\ref{eqn:bound_prev_colours}}{=} |C_{\rm final}| - \rho \eps n / 2. \label{boundingGfinal}
\end{align}    

Since $w$ is incident to at most $2n/r_0 \leq \rho \eps n / 4$ edges of $\cH_{\rm large}$, and $\cH_{\rm final} \setminus R_{\rm final} = \bigcup_{c \in C_{\rm final}} \phi^{-1}(c) \subseteq \cH_{\rm large}$ by~\ref{T1} and~\ref{T2}, we deduce that $d_{\cH_{\rm final} \setminus R_{\rm final}}(w) \leq \rho \eps n / 4$. This together with \eqref{boundingGfinal} implies that for any $w \in V(\cH) \setminus U$, $d_{\cH_{\rm final}}(w) = d_{G_{\rm final}}(w) + d_{\cH_{\rm final} \setminus R_{\rm final}}(w) \leq |C_{\rm final}| - \rho \eps n / 4$, proving \ref{UC1}.

Before proving~\ref{UC2},~\ref{UC3}, and~\ref{UC4}, we need to collect some facts. For any $w \in V(\cH)$, let $m(w)$ be the number of the matchings in $\cM_{\rm prev}^*$ not covering $w$. Since $\cM_{\rm prev}^*$ is a set of edge-disjoint matchings in $\cH_{\rm reg}$, and $\cH_{\rm reg}$ is a linear multi-hypergraph obtained from $\cH$ by adding $\max(0 , n - 3 - d_\cH(w))$ singleton edges incident to each vertex $w \in V(\cH)$ (where $\cH$ has no singleton edges), we deduce that all but at most $m(w) + \max(0 , n-3-d_\cH(w))$ matchings in $\cM_{\rm prev}$ cover $w$. Moreover, by \ref{eqn:h_final}, $\cH_{\rm final} = \cH \setminus \bigcup_{M \in \cM_{\rm prev}}M$, so for any $w \in V(\cH)$ we have\COMMENT{The equality below follows by using $|\cM_{\rm prev}| = n - |C_{\rm final}|$ and $d_{\cH}(w) - n + \max(0,n-3-d_\cH(w)) = \max(d_\cH(w) - n , -3) = -\min(n - d_\cH(w) , 3)$.}
\begin{align}
    d_{\cH_{\rm final}}(w) &\leq d_{\cH}(w) - (|\cM_{\rm prev}| - m(w) - \max(0,n-3-d_{\cH}(w))) \nonumber \\
    \overset{\ref{eqn:bound_prev_colours}}&{=} d_\cH(w) - n + |C_{\rm final}| + m(w) + \max(0 , n-3-d_\cH(w)) \nonumber \\
    &= |C_{\rm final}| + m(w) - \min(n - d_{\cH}(w) , 3).\label{eqn:deg_h_final}
\end{align}

Recall that $S_1 = S \setminus \bigcup_{c \in C_{\rm diff}}(V^{(n-2)}(\cH) \setminus V(M_c))$ by~\eqref{def:r1s1}. Note that~\ref{D1} implies that every matching in $\cM_{\rm diff}$ covers all of the vertices in $V^{(n-1)}(\cH)$, 
$\cM_{\rm hm}$ has nearly-perfect coverage of $U$ with defects in $S_1$ by~\ref{HM2}, $\cM_{\rm main}^*$ has nearly-perfect coverage of $U$ with defects in $S_2$ by~\ref{MA3}, where $S_2$ is the set of vertices in $S_1$ contained in all matchings in $\cM_{\rm hm}$ by~\eqref{def:r2s2}, and by~\ref{B2}, $\cM_{\rm buff}$ has nearly-perfect coverage of $U$ with defects in $S_3$, where $S_3$ is the set of vertices in $S_2$ contained in all matchings in $\cM_{\rm main}^*$ by~\eqref{def:s3}. Thus,
\begin{equation}\label{eqn:nearly_perfect}
    \text{$\cM_{\rm hm} \cup \cM_{\rm main}^* \cup \cM_{\rm buff} = \cM_{\rm prev}^* \setminus \cM_{\rm diff} $ has nearly-perfect coverage of $U$ with defects in $S_1$.}
\end{equation}

In particular, every matching in $\cM_{\rm prev}^* \setminus \cM_{\rm diff}$ covers all the vertices in $U \setminus S_1$, every matching in $\cM_{\rm diff}$ covers all the vertices in $V^{(n-1)}(\cH) \setminus S_1$, and $|\cM_{\rm diff}| \leq 1$. Thus, we have
\begin{equation}\label{eqn:mbound_1}
    \text{$m(u) \leq 1$ for any $u \in U \setminus S_1$, and $m(v) = 0$ for any $v \in V^{(n-1)}(\cH) \setminus S_1$.}
\end{equation}

By~\eqref{eqn:nearly_perfect}, note that every vertex in $S_1$ is covered by all but at most one matching in $\cM_{\rm prev}^* \setminus \cM_{\rm diff}$, every matching in $\cM_{\rm diff}$ covers all the vertices in $V_+^{(n-2)}(\cH) \cap S_1$ by~\eqref{def:r1s1}, and $|\cM_{\rm diff}| \leq 1$. So we have $m(u) \leq 2$ for any $u \in S_1$ and $m(v) \leq 1$ for any $v \in V_+^{(n-2)}(\cH) \cap S_1$. Combining this with~\eqref{eqn:mbound_1}, we deduce that
\begin{equation}\label{eqn:mbound_2}
    \text{$m(u) \leq 2$ for any $u \in U \setminus V_+^{(n-2)}(\cH)$, and $m(v) \leq 1$ for any $v \in V_+^{(n-2)}(\cH)$.}
\end{equation}

Finally, if $|U| \leq (1-10\eps)n$ then $\cM_{\rm prev}^* \setminus \cM_{\rm diff}$ has perfect coverage of $U$ by~\ref{HM2},~\ref{MA3}, and~\ref{B2}. This combined with the fact that every matching in $\cM_{\rm diff}$ covers all the vertices in $V^{(n-1)}(\cH)$ and $|\cM_{\rm diff}| \leq 1$, implies that 
\begin{equation}\label{eqn:mbound_3}
    \text{if $|U| \leq (1-10\eps)n$, then $m(u) \leq 1$ for $u \in U \setminus V^{(n-1)}(\cH)$, and $m(v) = 0$ for $v \in V^{(n-1)}(\cH)$.}
\end{equation}

Now we are ready to prove~\ref{UC2},~\ref{UC3}, and~\ref{UC4}. Note that \eqref{eqn:mbound_2} and~\eqref{eqn:deg_h_final} together imply that for any $v \in U \setminus V^{(n-1)}(\cH)$, $d_{\cH_{\rm final}}(v) \leq |C_{\rm final}| - 1$ , thus~\ref{UC2} holds.

Now we prove~\ref{UC3}. Note that \eqref{eqn:mbound_2} and~\eqref{eqn:deg_h_final} together imply that for any $v \in V^{(n-1)}(\cH)$, $d_{\cH_{\rm final}}(v) \leq |C_{\rm final}|$. To prove the second statement of \ref{UC3}, we will bound the number of vertices $v \in V^{(n-1)}(\cH)$ satisfying $m(v) = 0$. Since $\cM_{\rm prev}^* \setminus \cM_{\rm diff}$ has nearly-perfect coverage of $U$ with defects in $S_1$, every matching in $\cM_{\rm prev}^* \setminus \cM_{\rm diff}$ covers all but at most one vertex in $U$. Moreover, every matching in $\cM_{\rm diff}$ covers all of the vertices in $V^{(n-1)}(\cH)$. Thus, by~\ref{eqn:bound_prev_colours}, there are at most $|\cM_{\rm prev}| = n - |C_{\rm final}|$ vertices $v \in V^{(n-1)}(\cH)$ satisfying $m(v) \geq 1$, so every other vertex of $V^{(n-1)}(\cH)$ has degree less than $|C_{\rm final}|$ by~\eqref{eqn:deg_h_final}.  
This fact combined with~\ref{UC1} and~\ref{UC2} implies that there are at least $|C_{\rm final}|$ vertices with degree less than $|C_{\rm final}|$ in $\cH_{\rm final}$, proving \ref{UC3}.

Finally, if $(\cH , \phi)$ is of Type~${\rm A_1}$ then $S_1 \subseteq S = U \setminus V^{(n-1)}(\cH)$ by~\eqref{def:s}, so by \eqref{eqn:mbound_1} every vertex $v \in V^{(n-1)}(\cH)$ satisfies $m(v) = 0$. On the other hand, if $(\cH , \phi)$ is of Type~${\rm B}$, then  by~\ref{EP3} of Claim~\ref{claim:easyobs}, $|U| \leq 2 \delta n$, so again every vertex $v \in V^{(n-1)}(\cH)$ satisfies $m(v) = 0$ by \eqref{eqn:mbound_3}. Thus, in either case we have $d_{\cH_{\rm final}}(v) \leq |C_{\rm final}| - 1$ by~\eqref{eqn:deg_h_final}, proving~\ref{UC4}.

\begin{pf-step}
\label{step:step10}
    Colour the edges of $G_{\rm final}$ with colours in $C_{\rm final}$.
\end{pf-step}

Recall from~\ref{eqn:h_final} in Step~\ref{step:step8} that the only edges which are not in the matchings $\cM_{\rm prev}$ or are not coloured by $\phi$ are those of $G_{\rm final}$.
In this step, we colour the edges of $G_{\rm final}$ using colours in $C_{\rm final}$ in such a way that the colouring is compatible with $\phi$. Since $\cH_{\rm final} = R_{\rm final} \cup \bigcup_{c \in C_{\rm final}} \phi^{-1}(c)$, it suffices to show that there is a set $\cM_{\mathrm{final}} = \{M_c : c \in C_{\mathrm{final}}\}$ of edge-disjoint matchings in $\cH_{\mathrm{final}}$ such that for every $c\in C_{\mathrm{final}}$, $\phi^{-1}(c) \subseteq M_c$ and $\bigcup_{c\in C_{\mathrm{final}}} M_c = \cH_{\mathrm{final}}$. Indeed, then $\cH = \bigcup_{M \in \cM_{\rm prev} \cup \cM_{\rm final}} M$ by~\ref{eqn:h_final}, so $\cM_{\rm prev} \cup \cM_{\rm final}$ would be the desired set of $n$ pairwise edge-disjoint matchings in $\cH$, proving Theorem~\ref{main-thm}. We divide the proof into three cases depending on the type of $(\cH , \phi)$.\\

\noindent \textit{Case 1}: $(\cH , \phi)$ is of Type~${\rm A_1}$. 

Note that in this case $\cH_{\rm final} = R_{\rm final}$ by~\ref{eqn:h_final}, so $R_{\rm final}$ is precisely the set of edges not coloured so far. Also recall that $G_{\rm final} = (V(\cH), R_{\rm final})$. Thus, by~\ref{UC1},~\ref{UC2}, and~\ref{UC4}, $\Delta(G_{\rm final}) \leq |C_{\rm final}| - 1$. Applying Vizing's theorem (Theorem~\ref{thm:vizing}) to $G_{\rm final}$, we obtain a set $\cM_{\rm final} = \{ M_c : c \in C_{\rm final} \}$ of edge-disjoint matchings in $G_{\rm final}$ such that $\bigcup_{c \in C_{\rm final}} M_c = R_{\rm final} = \cH_{\mathrm{final}}$. Moreover, by~\ref{T1}, for any $c \in C_{\rm final}$, $\phi^{-1}(c) = \varnothing$, so  $\phi^{-1}(c) \subseteq M_c$ trivially holds, as desired. This completes the proof of Theorem~\ref{main-thm} in the case when $(\cH , \phi)$ is of Type~${\rm A_1}$.\\

\noindent \textit{Case 2}: $(\cH , \phi)$ is of Type~${\rm A_2}$. 

Note that in this case again $\cH_{\rm final} = R_{\rm final}$ by~\ref{eqn:h_final}. Thus, by \ref{UC1},~\ref{UC2}, and~\ref{UC3},  $\Delta(G_{\rm final}) \leq |C_{\rm final}|$, and 
\begin{equation}\label{eqn:count_maxdeg}
    \textrm{there are at least $|C_{\rm final}|$ vertices having degree less than $|C_{\rm final}|$ in $G_{\rm final}$.}
\end{equation}

If $\Delta(G_{\rm final}) \leq |C_{\rm final}| - 1$, then we may apply Vizing's theorem (Theorem~\ref{thm:vizing}) to $G_{\rm final}$ to obtain the desired set $\cM_{\rm final} = \{ M_c : c \in C_{\rm final} \}$ of edge-disjoint matchings where for any $c\in C_{\mathrm{final}}$, $\phi^{-1}(c) \subseteq M_c$, and $\bigcup_{c \in C_{\rm final}} M_c = R_{\rm final} = \cH_{\mathrm{final}}$ as in the previous case. 

Otherwise, if $\Delta(G_{\rm final}) = |C_{\rm final}|$, then by~\eqref{eqn:count_maxdeg},~\ref{stat:r_rem}, and since $\eps = \eps_1 \ll \rho = \rho_1$, we can apply\COMMENT{we used $\rho \gg \eps$ in this case} Corollary~\ref{cor:optcol} with $\rho/4$, $\eps^{1/3}$, $22\eps$, and $G_{\rm final}$ playing the roles of $p$, $\eps$, $\eta$, and $G$, respectively, to obtain a set $\cM_{\rm final} = \{ M_c : c \in C_{\rm final} \}$ of edge-disjoint matchings such that $\bigcup_{c \in C_{\rm final}} M_c = R_{\rm final} = \cH_{\mathrm{final}}$. Moreover, by~\ref{T1},  for any $c \in C_{\rm final}$, $\phi^{-1}(c) = \varnothing$, so  $\phi^{-1}(c) \subseteq M_c$ trivially holds, as desired. This completes the proof of Theorem~\ref{main-thm} in the case when $(\cH , \phi)$ is of Type~${\rm A_2}$.\\

\noindent \textit{Case 3}: $(\cH , \phi)$ is of Type~${\rm B}$. 

First, for each $w \in V(\cH)$, let us define $C_w \coloneqq \{c \in C_{\rm final} : w \in V(\phi^{-1}(c)) \}$. 
By~\eqref{def:finalsets} and since the domain of $\phi$ is $\cH_{\rm med} \cup \cH_{\rm large}$ while $R_{\rm final} \subseteq R_{\rm res} \subseteq E(G)$, we have $\cH_{\rm final} \setminus R_{\rm final} = \bigcup_{c \in C_{\rm final}} \phi^{-1}(c)$. Moreover, by~\ref{T2}, for any $c \in C_{\rm final}$, $\phi^{-1}(c) \subseteq \cH_{\rm large}$. Thus, for any vertex $w \in V(\cH)$, we have
\begin{equation}\label{eqn:boundc_w}
    |C_w| = d_{\cH_{\rm final} \setminus R_{\rm final}}(w) \leq 2 \delta n,
\end{equation}
since $\phi$ is a proper edge-colouring, and $w$ is incident to at most $2n / r_0 \leq 2 \delta n$ edges of $\cH_{\rm large}$. Note that~\ref{UC1},~\ref{UC2}, and~\ref{UC4} imply $\Delta(\cH_{\rm final}) \leq |C_{\rm final}| - 1$, so for any vertex $w \in V(\cH)$, we have
\begin{equation}\label{eqn:bound_deg_all}
    d_{R_{\rm final}}(w) = d_{\cH_{\rm final}}(w) - d_{\cH_{\rm final}\setminus R_{\rm final}}(w) \overset{\eqref{eqn:boundc_w}}{\leq} |C_{\rm final}| - 1 - |C_w|.
\end{equation}

Now we apply Lemma~\ref{hall-based-finishing-lemma} with $G_{\rm final}$, $C_{\rm final}$, and $2 \delta$ playing the roles of $H$, $C$, and $\delta$, respectively. To that end, we need to check that the assumptions~\ref{cond:hall1}--\ref{cond:hall4} of Lemma~\ref{hall-based-finishing-lemma} are satisfied. First, by~\ref{eqn:bound_prev_colours}, $|C_{\rm final}| \geq (\rho - 15 \gamma^{1/2})n \geq 14\delta n$. 
By~\eqref{eqn:bound_deg_all}, \ref{cond:hall1} of Lemma~\ref{hall-based-finishing-lemma} holds. Now, by~\ref{EP3} of Claim~\ref{claim:easyobs}, $|U| \leq 2 \delta n$, and by Definition~\ref{def:absorber}\ref{cond:absorber1} and~\ref{RES2}, $R_{\rm final} \subseteq R_{\rm res} = R_{\rm abs} \subseteq E(G')$. Thus, every edge of $G_{\rm final}$ is incident to a vertex of $U$ by the definition of $G'$ given in {Step~\ref{step:step1}}, so \ref{cond:hall2} of Lemma~\ref{hall-based-finishing-lemma} holds. By~\eqref{eqn:boundc_w}, \ref{cond:hall3} of Lemma~\ref{hall-based-finishing-lemma} holds. Finally, by \ref{T2}, for any $c \in C_{\rm final}$, $|\{w \in V(\cH) : c \in C_w \}| = |V(\phi^{-1}(c))| \leq \beta n \leq 2 \delta n$, so \ref{cond:hall4} of Lemma~\ref{hall-based-finishing-lemma} holds.

Hence, by applying Lemma~\ref{hall-based-finishing-lemma}, we obtain a proper edge-colouring $\psi : R_{\rm final} \to C_{\rm final}$ such that every $uv \in R_{\rm final}$ satisfies $\psi(uv) \notin C_u \cup C_v$, implying that $V(\psi^{-1}(c)) \cap V(\phi^{-1}(c)) = \varnothing$ for every $c \in C_{\rm final}$. Now let $M_c \coloneqq \psi^{-1}(c) \cup \phi^{-1}(c)$ for each $c \in C_{\rm final}$. Then the set $\cM_{\rm final} \coloneqq \{ M_c : c \in C_{\rm final} \}$ consists of pairwise edge-disjoint matchings\COMMENT{This easily follows since $\psi^{-1}(c)$ and $\phi^{-1}(c)$ are colour classes.} such that $\phi^{-1}(c) \subseteq M_c$ for each $c \in C_{\rm final}$, and $\bigcup_{c \in C_{\rm final}} M_c = R_{\rm final} \cup \bigcup_{c \in C_{\rm final}} \phi^{-1}(c) = \cH_{\rm final}$, as desired. This completes the proof of Theorem~\ref{main-thm}.
\end{proof}

\section*{Acknowledgements}
We are grateful to the referees for a detailed reading of the manuscript and thoughtful comments which were very helpful in improving the presentation.

\bibliographystyle{amsabbrv}
\bibliography{ref}

\vspace{1cm}

{\footnotesize \obeylines \parindent=0pt
Dong Yeap Kang, Tom Kelly, Daniela K\"{u}hn, Abhishek Methuku, Deryk Osthus
\vspace{0.3cm}
School of Mathematics, University of Birmingham, Edgbaston, Birmingham, B15 2TT, UK}
\vspace{0.3cm}

{\footnotesize \parindent=0pt
\begin{flushleft}
{\it{E-mail addresses}:}
\tt{[d.y.kang.1,d.kuhn,d.osthus]@bham.ac.uk, tom.kelly@gatech.edu, abhishekmethuku@gmail.com}
\end{flushleft}
}

\end{document}